\setlist{nolistsep}
\tikzset{%
	myvertex/.style = {circle, draw, fill, very thick, outer sep=3.5pt, inner sep=0pt, minimum size=#1},
	myvertex/.default = 3.5pt,
	myblob/.style = {myvertex, fill=white, minimum size=#1},
	myedge/.style = {draw=black, very thick, line cap=round, line join=round},
}
\definecolor{ugentblue}{RGB}{30,100,200}
\definecolor{ugentyellow}{RGB}{120,190,0}
\definecolor{ugentred}{RGB}{220,78,40}
\newcommand{\diagnode}[1]{\fill #1 circle (.1);}
\newcommand{\distorbit}[1]{\draw #1 circle (.2);}
\newcommand{\distlongorbit}[1]{\draw (#1 - .2, .4) -- (#1 - .2, -.4) arc (-180:0:.2) -- (#1 + .2, -.4) -- (#1 + .2, .401) arc (0:180:.2) -- cycle;}
    \crefname{enumi}{}{}
    \Crefname{enumi}{Item}{Items}
    \crefname{equation}{}{}
    \Crefname{equation}{Equation}{Equations}
    \crefname{notation}{Notation}{Notations}
    \crefname{maintheorem}{Theorem}{Theorems}
\newtheorem{proposition}{Proposition}[section]
\newtheorem{lemma}[proposition]{Lemma}
\newtheorem{corollary}[proposition]{Corollary}
\newtheorem{theorem}[proposition]{Theorem}
\newtheorem{maintheorem}{Theorem}
\theoremstyle{definition}
\newtheorem{definition}[proposition]{Definition}
\newtheorem{notation}[proposition]{Notation}
\newtheorem{remark}[proposition]{Remark}
\newtheorem{assumption}[proposition]{Assumption}
\DeclareMathOperator{\Aut}{Aut}
\DeclareMathOperator{\Gal}{Gal}
\DeclareMathOperator{\id}{id}
\DeclareMathOperator{\ad}{ad}
\DeclareMathOperator{\Char}{char}
\DeclareMathOperator{\Fix}{Fix}
\DeclareMathOperator{\E}{\mathcal{E}}
\DeclareMathOperator{\F}{\mathcal{F}}
\DeclareMathOperator{\dist}{dist}
\DeclareMathOperator{\Exp}{Exp}
\newcommand{\LL}{L}
\newcommand{\g}{L}
\newcommand{\Z}{\mathbb{Z}}
\newcommand{\dash}{\nobreakdash-\hspace{0pt}}
\begin{document}

\title[$5 \times 5$-graded Lie algebras]{$\mathbf{5 \times 5}$-graded Lie algebras, cubic norm structures and quadrangular algebras}

\author{Tom De Medts}
\author{Jeroen Meulewaeter}

\date{\today}

\begin{abstract}
    We study simple Lie algebras generated by extremal elements, over arbitrary fields of arbitrary characteristic.
    We show:
    (1) If the extremal geometry contains lines, then the Lie algebra admits a $5 \times 5$-grading that can be parametrized by a cubic norm structure;
    (2) If there exists a field extension of degree at most $2$ such that the extremal geometry over that field extension contains lines, and in addition, there exist symplectic pairs of extremal elements, then the Lie algebra admits a $5 \times 5$-grading that can be parametrized by a quadrangular algebra.
    
    One of our key tools is a new definition of exponential maps that makes sense even over fields of characteristic $2$ and $3$, which ought to be interesting in its own right.
\end{abstract}

\maketitle

\begin{flushright}
\vspace*{-4ex}
\begin{minipage}[t]{0.60\linewidth}\itshape\small
Not only was Jacques Tits a constant source of inspiration through his work, he also had a direct personal influence, notably through his threat to speak evil of our work if it did not include the characteristic 2 case.

\vspace{2mm}

\hfill\upshape \textemdash ``The Book of Involutions'' \cite[p.\@~xv]{Knus1998}
\end{minipage}
\end{flushright}

\section*{Introduction}
\addcontentsline{toc}{chapter}{Introduction}

The classification of simple Lie algebras over algebraically closed fields is a classical subject, and the situation is, in many aspects, well understood.
That is not the case at all over arbitrary fields. Typically, this situation is studied by first extending to a splitting field (e.g. the algebraic closure) and then performing Galois descent, using tools such as Galois cohomology.

This is not the only possible approach, however.
Inspired by the successful applications (e.g., by Efim Zel'manov and Alekseĭ­ Kostrikin \cite{Zelcprimemanov1990}) of \emph{sandwich elements}, i.e., non-zero elements $x$ in a Lie algebra $L$ such that $\ad_x^2 = 0$, various people have studied simple Lie algebras through the concept of \emph{extremal elements}. A non-zero element $x \in L$ is called extremal if $[x, [x, L]] \leq kx$ (together with an additional axiom that is relevant only in characteristic $2$); notice that this implies $\ad_x^3 = 0$. A typical example of an extremal element is a long root element in a classical Lie algebra. In fact, these extremal elements span a one-dimensional \emph{inner ideal} of the Lie algebra, a notion going back to John Faulkner \cite{Faulkner1973} and Georgia Benkart \cite{Benkart1974}. The first paper explicitly underlying the importance of one-dimensional inner ideals that we are aware of, is Alexander Premet's paper \cite{Premet1986} from 1986; see, in particular, its main result stating that any finite-dimensional Lie algebra over an algebraically closed field of characteristic $p>5$ contains a one-dimensional inner ideal (Theorem~1).
The idea of extremal elements has also famously been used by Vladimir Chernousov in his proof of the Hasse principle for algebraic groups of type $E_8$ \cite{Chernousov1989}. (He called the extremal elements \emph{root elements} because in his situation, they belong to root groups. A detailed exposition can be found in \cite[Chapter 6, Section 8]{Platonov1994}; see, in particular, the discussion about root elements starting on p.\@~387.)

Around the turn of the century, Arjeh Cohen and his collaborators (Anja Steinbach, Rosane Ushirobira, David Wales; Gábor Ivanyos; Dan Roozemond) obtained the clever insight that the extremal elements have a \emph{geometric} interpretation, and the corresponding one-dimensional inner ideals can serve as the point set of a so-called shadow space of a building \cite{Cohen2001,Cohen2006,Cohen2007,Cohen2008}. That idea led them to the notion of a \emph{root filtration space} and makes it possible to visualize the different relations between extremal elements. In particular, distinct extremal points can be in four possible relations: collinear, symplectic, special or hyperbolic. These notions will play an important role in \cref{section grading,se:l-exp} of our paper.
In particular, a crucial ingredient will be the result by Cohen and Ivanyos that each \emph{hyperbolic} pair of extremal points gives rise to a $5$-grading of the Lie algebra.

Since then, extremal elements in Lie algebras, and the corresponding extremal geometry, have been studied by various people; see, for instance, \cite{Draisma2008,intpanhuis2009,Roozemond2011,Cohen2012,Cuypers2015,Lopez2016,Cuypers2018,Cuypers2021,Cuypers2023} and \cite[Chapter 6]{FernandezLopez2019}.
The origin of the terminology ``extremal elements'' remains slightly mysterious.
We have learned from Arjeh Cohen that he believes that he picked it up from Jean-Pierre Serre and consequently used it in their paper \cite{Cohen2001}, but neither Cohen nor we have been able to find a written source confirming this origin, and in fact, Serre informed us that he does not think that he is responsible for this terminology, but ``that it is possible that [he] used it once in a discussion.''

\medskip

In our paper, we will adopt a different point of view on extremal elements, and relate the Lie algebras to two types of exceptional algebraic structures: \emph{cubic norm structures}, and \emph{quadrangular algebras}.

Cubic norm structures are well known: The Book of Involutions \cite{Knus1998} spends a whole Chapter IX on them. In particular, \S 38 and the Notes at the end of Chapter~IX contain a lot of additional information and (historical) background. The key players in this theory are Hans Freudenthal \cite{Freudenthal1954}, Tonny Springer \cite{Springer1962}, Kevin McCrimmon \cite{McCrimmon1969a}, Holger Petersson and Michel Racine \cite{Petersson1984,Petersson1986a,Petersson1986}.
We will also encounter ``twin cubic norm structures'' (or ``cubic norm pairs''), a notion that seems to have appeared only once before in the literature in a paper by John Faulkner \cite{Faulkner2001}.

Quadrangular algebras are certainly not so well known. They first appeared implicitly in the work of Jacques Tits and Richard Weiss on the classification of Moufang polygons, which are geometric structures (spherical buildings of rank 2) associated to simple linear algebraic groups of relative rank 2 \cite{Tits2002}. The first explicit definition of these algebras, in the anisotropic case, appeared in Weiss' monograph \cite{Weiss2006}. The general definition of arbitrary quadrangular algebras is even more recent and is due to Bernhard Mühlherr and Richard Weiss \cite{Muehlherr2019}. The definition looks rather daunting (see \cref{prelim:def quadr alg} below) mostly because of technicalities in characteristic $2$, although the definition itself is characteristic-free.

It is worth pointing out that the anisotropic quadrangular algebras are precisely the parametrizing structures for \emph{Moufang quadrangles}, and that the anisotropic cubic norm structures are precisely the parametrizing structures for \emph{Moufang hexagons}. More generally, arbitrary quadrangular algebras and arbitrary cubic norm structures (which might or might not be anisotropic) have been used in the theory of \emph{Tits quadrangles} and \emph{Tits hexagons}, respectively \cite{Muehlherr2022}. See, in particular, the appendix of~\cite{Muehlherr2019} for the connection between quadrangular algebras and Tits quadrangles, and \cite[Chapter 2]{Muehlherr2022} for the connection between cubic norm structures and Tits hexagons.
Notice that Moufang quadrangles are, in fact, buildings of type $B_2$ or $BC_2$, and that Moufang hexagons are buildings of type $G_2$. (In the language of linear algebraic groups, this is the type of the \emph{relative} root system.)

In our approach, we will start from a simple Lie algebra admitting extremal elements satisfying two possible sets of assumptions (see \cref{main:B,main:C} below).
In each of these two cases, we will use \emph{two different} hyperbolic pairs of extremal points, giving rise to two different $5$-gradings on the Lie algebra, so we obtain a \emph{$5 \times 5$-graded Lie algebra}.
Depending on which of the two cases we are considering, this $5 \times 5$-grading takes a different shape.
For the reader's convenience, we have reproduced smaller versions of \cref{fig:cns,fig:qa} of these gradings below, that can be found in full size on pages \pageref{fig:cns} and \pageref{fig:qa}, respectively.

\vspace*{-3ex}
\[
\scalebox{.39}{%
	\begin{tikzpicture}[x=28mm, y=28mm, 	label distance=-3pt]
		\node[myblob=8mm] at (0,3) (N03) {$\langle x\rangle$};
		\node[myblob=8mm] at (1,1) (N11) {$\langle d \rangle$};
		\node[myblob=16mm] at (1,2) (N12) {$J'$};
		\node[myblob=16mm] at (1,3) (N13) {$J$};
		\node[myblob=8mm] at (1,4) (N14) {$\langle c \rangle$};
		\node[myblob=16mm] at (2,1) (N21) {$[q, J]$};
		\node[myblob=24mm] at (2,2) (N22) {$L_0\cap L'_0$};
		\node[myblob=16mm] at (2,3) (N23) {$[p, J']$};
		\node[myblob=8mm] at (3,0) (N30) {$\langle q \rangle$};
		\node[myblob=16mm] at (3,1) (N31) {$[y,J']$};
		\node[myblob=16mm] at (3,2) (N32) {$[y,J]$};
		\node[myblob=8mm] at (3,3) (N33) {$\langle p \rangle$};
		\node[myblob=8mm] at (4,1) (N41) {$\langle y\rangle$};
		\path[ugentred]
			(0,4.5) node (C0) {\Large $L_{-2}$}
			(1,4.5) node (C1) {\Large $L_{-1}$}
			(2,4.5) node (C2) {\Large $L_{0}$}
			(3,4.5) node (C3) {\Large $L_{1}$}
			(4,4.5) node (C4) {\Large $L_{2}$};
		\path[ugentblue]
			(4.5,0) node (R0) {\Large $L'_{2}$}
			(4.5,1) node (R1) {\Large $L'_{1}$}
			(4.5,2) node (R2) {\Large $L'_{0}$}
			(4.5,3) node (R3) {\Large $L'_{-1}$}
			(4.5,4) node (R4) {\Large $L'_{-2}$};
		\path[ugentyellow]
			(2.8,3.2) node (H0) {$L''_{2}$}
			(1.5,3.65) node (H1) {$L''_{1}$}
			(1.5,2.65) node (H2) {$L''_{0}$}
			(.53,2.65) node (H3) {$L''_{-1}$}
			(.8,1.2) node (H4) {$L''_{-2}$};
		\draw[myedge,ugentblue]
			(N11) -- (N21) -- (N31) -- (N41)
			(N12) -- (N22) -- (N32)
			(N03) -- (N13) -- (N23) -- (N33);
		\draw[myedge,ugentblue,opacity=.15]
			(-.25,0) -- (N30) -- (R0)
			(-.25,1) -- (N11) (N41) -- (R1)
			(-.25,2) -- (N12) (N32) -- (R2)
			(-.25,3) -- (N03) (N33) -- (R3)
			(-.25,4) -- (N14) -- (R4);
		\draw[myedge,ugentred]
			(N11) -- (N12) -- (N13) -- (N14)
			(N21) -- (N22) -- (N23)
			(N30) -- (N31) -- (N32) -- (N33);
		\draw[myedge,ugentred,opacity=.15]
			(0,-.25) -- (N03) -- (C0)
			(1,-.25) -- (N11) (N14) -- (C1)
			(2,-.25) -- (N21) (N23) -- (C2)
			(3,-.25) -- (N30) (N33) -- (C3)
			(4,-.25) -- (N41) -- (C4);
		\draw[myedge,ugentyellow]
			(N03) --  (N12) -- (N21) -- (N30)	
			(N13) -- (N22) -- (N31) 
			(N14) --  (N23) -- (N32) -- (N41);
	\end{tikzpicture}
} \hspace*{5ex}
	\scalebox{.39}{%
	\begin{tikzpicture}[x=28mm, y=28mm, 	label distance=-3pt]
		\node[myblob=7mm] at (0,2) (N02) {$\langle x\rangle$};
		\node[myblob=14mm] at (1,1) (N11) {$V'$};
		\node[myblob=20mm] at (1,2) (N12) {$X$};
		\node[myblob=14mm] at (1,3) (N13) {$V$};
		\node[myblob=7mm] at (2,0) (N20) {$\langle d\rangle$};
		\node[myblob=20mm] at (2,1) (N21) {$[d, X']$};
		\node[myblob=24mm] at (2,2) (N22) {$L_0\cap L'_0$};
		\node[myblob=20mm] at (2,3) (N23) {$X'$};
		\node[myblob=7mm] at (2,4) (N24) {$\langle c\rangle$};
		\node[myblob=14mm] at (3,1) (N31) {$[y,V']$};
		\node[myblob=20mm] at (3,2) (N32) {$[y,X]$};
		\node[myblob=14mm] at (3,3) (N33) {$[y,V]$};
		\node[myblob=7mm] at (4,2) (N42) {$\langle y\rangle$};
		\path[ugentred]
			(0,4.5) node (C0) {\Large $L_{-2}$}
			(1,4.5) node (C1) {\Large $L_{-1}$}
			(2,4.5) node (C2) {\Large $L_{0}$}
			(3,4.5) node (C3) {\Large $L_{1}$}
			(4,4.5) node (C4) {\Large $L_{2}$};
		\path[ugentblue]
			(4.5,0) node (R0) {\Large $L'_{2}$}
			(4.5,1) node (R1) {\Large $L'_{1}$}
			(4.5,2) node (R2) {\Large $L'_{0}$}
			(4.5,3) node (R3) {\Large $L'_{-1}$}
			(4.5,4) node (R4) {\Large $L'_{-2}$};
		\draw[myedge,ugentblue]
			(N11) -- (N21) -- (N31)
			(N02) -- (N12) -- (N22) -- (N32) -- (N42)
			(N13) -- (N23) -- (N33);
		\draw[myedge,ugentblue,opacity=.15]
			(-.25,0) -- (N20) -- (R0)
			(-.25,1) -- (N11) (N31) -- (R1)
			(-.25,2) -- (N02) (N42) -- (R2)
			(-.25,3) -- (N13) (N33) -- (R3)
			(-.25,4) -- (N24) -- (R4);
		\draw[myedge,ugentred]
			(N11) -- (N12) -- (N13)
			(N20) -- (N21) -- (N22) -- (N23) -- (N24)
			(N31) -- (N32) -- (N33);
		\draw[myedge,ugentred,opacity=.15]
			(0,-.25) -- (N02) -- (C0)
			(1,-.25) -- (N11) (N13) -- (C1)
			(2,-.25) -- (N20) (N24) -- (C2)
			(3,-.25) -- (N31) (N33) -- (C3)
			(4,-.25) -- (N42) -- (C4);
		\draw[myedge,ugentyellow]
			(N02) --  (N13) -- (N24)	
			(N12) --  (N23) 
			(N11) --  (N22) -- (N33)	
			(N21) --  (N32) 
			(N20) --  (N31) -- (N42)	;
	\end{tikzpicture}
	}
\]
\vspace*{-4.5ex}

A closer look at these gradings reveals that, in fact, the $5 \times 5$-grading in \cref{fig:cns} is a $G_2$-grading, and the $5 \times 5$-grading in \cref{fig:qa} is a $BC_2$-grading.
We will show that in the first case, the Lie algebra is parametrized by a cubic norm pair, in the sense that each of the pieces arising in the decomposition as well as the explicit formulas describing their Lie brackets, are determined by the structure of this cubic norm pair (\cref{main:B}).
In the second case, it turns out that the Lie algebra is parametrized by a quadrangular algebra (\cref{main:C}).

The study of Lie algebras \emph{graded by root systems} is not new, and we refer, for instance, to the work of Bruce Allison, Georgia Benkart, Yun Gao and Efim Zel'manov \cite{ABG02,BZ96}. However, their goals have been rather different from ours. Whereas earlier research has focused on classifying all Lie algebras graded by certain root systems in terms of central extensions of TKK-constructions, we focus on \emph{simple} Lie algebras, but we have as goal to understand the \emph{internal structure} of the Lie algebra, via the different pieces of the grading, directly in terms of an algebraic structure.%
\footnote{Our description of the resulting internal structure of the Lie algebra is perhaps more in line with Seligman's approach from \cite{Seligman1976}. For the case of a $G_2$-grading, Seligman's Theorem III.6 somewhat resembles our description in terms of a cubic norm pair, but his Theorem III.10 for the $BC_2$-case clearly illustrates our point that the connection with quadrangular algebras reveals what is really going on internally.}
In addition, we do not \emph{assume} the existence of a $BC_2$- or $G_2$-grading, but it is a consequence of some natural conditions on the extremal elements of the Lie algebra. These conditions are satisfied very often, and in fact, there are many Lie algebras for which \emph{both} sets of conditions are satisfied, and hence admit both a $BC_2$- and a $G_2$-grading, and hence they can be parametrized both by a cubic norm structure and by a quadrangular algebra!
It is our hope that this observation will lead to a deeper understanding of connections between different forms of exceptional Lie algebras (and exceptional algebraic groups, as well as their geometric counterparts, namely the corresponding Moufang polygons and Tits polygons mentioned above). We have indicated a first impression in this direction in \cref{rem:both} in the appendix.

\medskip

We formulate our main results. We emphasize that $k$ is an arbitrary field of arbitrary characteristic.
\begin{maintheorem}\label{main:A}
	Let $L$ be a simple Lie algebra.
	Assume that for some Galois extension $k'/k$ with $k'\neq\mathbb F_2$, $L_{k'} := L \otimes_k k'$ is a simple Lie algebra generated by its pure extremal elements such that the extremal geometry contains lines.

	Consider a hyperbolic pair $(x,y)$ of extremal elements and let $L=L_{-2}\oplus L_{-1}\oplus L_0\oplus L_1\oplus L_2$ be the associated $5$\dash grading.

	Then for each $l \in L_1$, we can define an ``$l$-exponential automorphism''
	\[ \alpha \colon L \to L \colon m \mapsto m+[l,m]+q_{\alpha}(m)+n_{\alpha}(m)+v_{\alpha}(m) , \]
    where $q_{\alpha},n_{\alpha},v_{\alpha} \colon L \to L$ are maps with
	\begin{equation*}
    	q_{\alpha}(L_i)\subseteq L_{i+2}, \quad n_{\alpha}(L_i)\subseteq L_{i+3}, \quad v_{\alpha}(L_i)\subseteq L_{i+4}
	\end{equation*}
    for all $i\in [-2,2]$.
\end{maintheorem}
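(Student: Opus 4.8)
The plan is to reduce the construction to exponentials of single extremal elements, assemble $\alpha$ as a product of such exponentials after extending scalars to $k'$, and then descend the resulting automorphism back to $k$. The building block is the characteristic-free exponential of an extremal element set up in \cref{se:l-exp}: for extremal $e \in L_1$ one has the automorphism $\exp(e)\colon m \mapsto m + [e,m] + g(e,m)\,e$, where $g$ is the extremal form. The decisive feature is that this formula is division-free and stays an automorphism in characteristics $2$ and $3$ (in characteristic $2$ this uses the additional extremal axiom). Since $e \in L_1$, the operator $\ad_e$ raises the grading by $1$, while the quadratic term $m \mapsto g(e,m)\,e$ is supported on $L_{-1}$ and lands in $L_1$; hence $\exp(e)$ shifts the $5$-grading only by $0$, $+1$ and $+2$.

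For the existence over $k'$, I would first invoke the structural results of \cref{section grading}: because the extremal geometry of $L_{k'}$ contains lines, $L_1 \otimes_k k'$ is spanned by the extremal elements it contains (this is where $k' \neq \mathbb F_2$ is used). Writing $l = e_1 + \dots + e_r$ with each $e_i \in L_1 \otimes_k k'$ extremal, I set $\alpha' := \exp(e_1)\cdots\exp(e_r) \in \Aut(L_{k'})$ and decompose it into components by how far they shift the grading. The degree-preserving part is $\id$; the shift-$(+1)$ part is $\sum_i \ad_{e_i} = \ad_l$, because every cross term between distinct factors shifts by at least $+2$ (all $e_i$ lie in $L_1$); the remaining parts shift by $+2$, $+3$, $+4$, and any shift of $+5$ or more vanishes since the grading is concentrated in degrees $-2,\dots,2$. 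Thus $\alpha'$ already has exactly the shape asserted in the statement, but a priori only over $k'$.

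The descent is the core of the argument. Let $U \leq \widetilde U \leq \Aut(L)$ be the subgroups of automorphisms that shift the grading only by nonnegative amounts, with $\widetilde U$ allowing shifts $\geq 1$ and $U$ shifts $\geq 2$; both are defined over $k$ since the grading is. Reading off the shift-$(+1)$ component is a homomorphism $\widetilde U \to \bigoplus_i \Hom(L_i,L_{i+1})$ with kernel $U$, so the set $C$ of automorphisms whose shift-$(+1)$ part equals $\ad_l$ is a coset of $U$, hence a $U$-torsor, and it is defined over $k$ because $l \in L_1$. The previous step exhibits a $k'$-point of $C$. Filtering $U$ by shift-degree shows it is split unipotent with successive quotients of the form $\bigl(\bigoplus_i \Hom(L_i,L_{i+j})\bigr)\otimes_k k'$; as the Galois cohomology of such vector groups vanishes (additive Hilbert~90), dévissage yields $H^1(\Gal(k'/k),U(k')) = 0$, so the torsor $C$ acquires a $k$-point $\alpha$. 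Its shift-$(+2)$, $(+3)$ and $(+4)$ components are the maps $q_\alpha$, $n_\alpha$, $v_\alpha$.

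I expect the genuine obstacle to lie in the existence step over $k'$, not in the descent: one must establish that $L_1 \otimes_k k'$ is spanned by extremal elements and that the single-element exponential is a grading-respecting automorphism uniformly across characteristics. Characteristic $2$ is the delicate case, where the extremal form $g$ degenerates and the supplementary extremal axiom is needed to keep $\exp(e)$ an automorphism; this division-free behaviour is exactly the input the rest of the proof is engineered to exploit.
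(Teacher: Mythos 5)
Your construction over $k'$ coincides with the paper's: the spanning of $L_1 \otimes_k k'$ by the extremal elements it contains is \cref{recover:sum of extr}, the support property of the quadratic term of $\exp(e)$ is \cref{recover:extr in 1 prop}, and the product $\exp(e_1)\dotsm\exp(e_r)$ together with your grading bookkeeping is exactly how \cref{th:alg}\cref{th:alg:l-exp} produces an $l$-exponential automorphism. The descent, however, is where your argument has a genuine gap --- and it is not the routine step you take it to be.

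The gap is your claim that $U$ is split unipotent with successive quotients of the form $\bigl(\bigoplus_i \Hom(L_i,L_{i+j})\bigr)\otimes_k k'$. Reading off the shift-$j$ component does embed the $j$-th subquotient of your filtration into that vector group, but the image is a proper and a priori quite mysterious Galois-stable subgroup: a map $\id + u$ with $u$ of shift $j$ is an automorphism only if $u$ satisfies strong compatibilities with the bracket (for instance, a shift-$4$ component $L_{-2}\to L_2$ must intertwine the adjoint action of $L_0$), and in fact the paper's uniqueness theorem (\cref{th:alg}\cref{th:alg:unique}, resting on \cref{recover:char 2 uniqueness,recover:uniqueness}) shows that your entire group $U$ is nothing but the one-parameter group $\Exp(y)$. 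Additive Hilbert 90 applies to modules of the form $W\otimes_k k'$ (a $k'$-vector space with semilinear Galois action), \emph{not} to arbitrary Galois-stable subgroups of vector groups: in characteristic $2$ with $[k':k]=2$, the subgroup $\mathbb F_2\subset (k',+)$ is Galois-stable, yet $H^1(\Gal(k'/k),\mathbb F_2)=\Hom(\Gal(k'/k),\mathbb F_2)\neq 0$ --- and characteristic $2$ with quadratic extensions is precisely the situation this theorem is designed to cover. So the vanishing of $H^1(\Gal(k'/k),U(k'))$ is unproven, and establishing it essentially amounts to identifying $U$, i.e., to proving the uniqueness statement, which is the real technical heart of the paper (requiring the delicate characteristic-$2$ and characteristic-$3$ analysis of extremal elements with prescribed graded components). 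The paper then avoids cohomology altogether: it normalizes the $k'$-automorphism so that $q_\alpha(x)$ has no $[x,y]$-component in a basis of $L_0$, and uniqueness forces this normalized representative to commute with the Galois action (\cref{recover:Galois descent}). Your closing assessment that the only genuine obstacle lies in the existence step over $k'$ is therefore backwards: the spanning theorem is indeed substantial, but the descent is every bit as delicate, and your torsor shortcut does not circumvent it.
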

This is shown in \cref{recover:Galois descent} below, with the essential part being obtained by \cref{th:alg}, along with uniqueness results and other auxiliary results. (Notice that $\alpha$ is not unique: there is a one-parameter family of such automorphisms.)

In particular, when $\Char(k) \neq 2$, we can always choose $q_\alpha(m) = \tfrac{1}{2}[l, [l, m]]$, and if, in addition, $\Char(k) \neq 3$, then it is a consequence that $n_\alpha(m) = \tfrac{1}{6} [l, [l, [l, m]]]$ and that $v_\alpha(m) = \tfrac{1}{24} [l, [l, [l, [l, m]]]]$, so our $l$-exponential automorphism really are a generalization of the usual exponential automorphisms.

It is worth mentioning that the condition ``being generated by pure extremal elements'' is easily fulfilled. For instance, the main result of \cite{Cohen2008} shows that as soon as $\Char(k) > 5$, each simple Lie algebra admitting a single pure extremal element is, in fact, generated by its extremal elements. (See also \cite[Chapter~6]{FernandezLopez2019}.)
On the other hand, there exist interesting simple Lie algebras not admitting any extremal element, and in that case, studying the \emph{minimal inner ideals} instead is an interesting approach.
We refer, for instance, to our earlier work \cite{DeMedts2024} for examples of this situation.

In addition, notice that the existence of a Galois extension $k'/k$ such that the extremal geometry of the Lie algebra over $k'$ contains lines, is also easily fulfilled. Indeed, if $\Char(k) \neq 2$, then it follows from \cite[Theorem 1.1]{Cuypers2023} and the paragraph following this result that unless the Lie algebra is a symplectic Lie algebra (i.e., of the form $\mathfrak{fsp}(V, f)$ for some non-degenerate symplectic space $(V,f)$), there always even exists an extension $k'/k$ \emph{of degree at most $2$} fulfilling the requirement that the extremal geometry contains lines.
(It is a more subtle fact that the \emph{simplicity} of the Lie algebra is preserved after such a base extension, but we deal with this in \cref{le:simple'}.)

\medskip

Our \cref{main:B,main:C} are very similar in structure.
Notice the difference between the conditions between the two results.
In \cref{main:B}, the main condition is the fact that we require the existence of lines already over the base field $k$.
In \cref{main:C}, the main condition is the existence of a symplectic pair of extremal elements.
(Notice that by the preceding discussion, condition (i) in the statement of \cref{main:C} essentially only excludes symplectic Lie algebras.)

\begin{maintheorem}\label{main:B}
    Let $L$ be a simple Lie algebra defined over a field $k$ with $|k| \geq 4$.
    Assume that $L$ is generated by its pure extremal elements and that the extremal geometry contains lines.
    
    Then we can find two hyperbolic pairs $(x,y)$ and $(p,q)$ of extremal elements, with corresponding $5$-gradings
    \begin{align*}
    	L &= L_{-2}\oplus L_{-1}\oplus L_0\oplus L_1\oplus L_2, \\
    	L &= L'_{-2}\oplus L'_{-1}\oplus L'_0\oplus L'_1\oplus L'_2,
    \end{align*}
    such that the gradings intersect as in \cref{fig:cns}.
    Moreover, the structure of the Lie algebra induces various maps between the different components of this $5 \times 5$-grading, resulting in a ``twin cubic norm structure'' on the pair
    \[ J = L_{-1} \cap L'_{-1}, \quad J' = L_{-1} \cap L'_0. \]
    If the norm of this structure is not identically zero, then this results in a genuine cubic norm structure.
\end{maintheorem}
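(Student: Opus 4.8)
The plan is to build the $5\times 5$-grading and the twin cubic norm structure in several stages, starting from the two $5$-gradings and their interaction. First I would produce the two hyperbolic pairs. Starting from the hypothesis that the extremal geometry contains lines, I would invoke the line structure to locate a first hyperbolic pair $(x,y)$ whose associated $5$-grading $L = L_{-2}\oplus\cdots\oplus L_2$ has the property that $L_{-2} = \langle x\rangle$ and $L_2 = \langle y\rangle$ are extremal points on a common line; the existence of lines is precisely what guarantees that $L_{-1}$ is sufficiently rich (it should contain the ``points collinear with $x$'' which form the space $J$). I would then choose a second hyperbolic pair $(p,q)$ adapted to this line, with $p \in L_1$ and $q \in L_{-1}$ chosen so that its $5$-grading $L = L'_{-2}\oplus\cdots\oplus L'_2$ is \emph{transverse} to the first one, meaning each $L_i$ decomposes compatibly along the $L'_j$. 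The condition $|k| \geq 4$ enters here: it guarantees enough scalars to separate the common eigenspaces of the two commuting $\ad$-semisimple elements $h = [x,y]$ and $h' = [p,q]$, so that the simultaneous eigenspace decomposition has exactly the thirteen nonzero pieces drawn in \cref{fig:cns}.

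Next I would verify that the simultaneous grading has the shape of \cref{fig:cns}, that is, that the bigraded pieces $L_{i}\cap L'_{j}$ vanish outside the $G_2$-pattern and are one- or higher-dimensional exactly as indicated. The key point is that $h$ and $h'$ span a two-dimensional toral subalgebra, so $L$ decomposes into common eigenspaces indexed by pairs of weights; I would show that the weights that actually occur are precisely the (short and long) roots of $G_2$ together with $0$, by using that $\ad_x^3 = \ad_y^3 = 0$ (and likewise for $p,q$) to bound the possible eigenvalues, and using the extremal relations to rule out the positions not appearing in the figure. This identifies $J = L_{-1}\cap L'_{-1}$ and $J' = L_{-1}\cap L'_0$ as the two ``short-root'' spaces on the $L_{-1}$ layer.

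Then I would extract the twin cubic norm structure on $(J, J')$. The Lie bracket, restricted appropriately, induces the data of a cubic norm structure: an adjoint/quadratic map and a cubic norm. Concretely, I would define the relevant maps as compositions of brackets, for example a bilinear pairing $J \times J' \to L_{-2} = \langle x\rangle$ recovering the trace form, a quadratic map $J \to J'$ (or its dual) coming from $\ad^2$ along the short-root directions, and a cubic map recovered by bracketing three times into $\langle x\rangle$. Using the Jacobi identity together with the extremal identities (in particular the linearized extremal axiom, which is what survives in characteristic $2$ and $3$), I would check that these maps satisfy the defining axioms of a twin cubic norm structure, i.e.\ a cubic norm pair in the sense of Faulkner. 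The \emph{twin} (pair) version is forced here because over an arbitrary field the two spaces $J$ and $J'$ need not be isomorphic; they are dual partners exchanged by the opposition coming from swapping the two hyperbolic pairs.

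The main obstacle I anticipate is verifying the cubic norm axioms in characteristics $2$ and $3$, where the usual normalizations by $\tfrac12$ and $\tfrac16$ are unavailable. This is exactly where \cref{main:A} does the heavy lifting: the $l$-exponential automorphisms $\alpha(m) = m + [l,m] + q_\alpha(m) + n_\alpha(m) + v_\alpha(m)$ furnish characteristic-free substitutes for $\exp(\ad_l)$, and the maps $q_\alpha, n_\alpha, v_\alpha$ are precisely the objects whose defining relations encode the quadratic and cubic data of the norm structure. I would use these automorphisms to transport extremal points along the line and read off the composition laws, so that the adjoint and norm identities become consequences of $\alpha$ being an automorphism (i.e.\ respecting brackets). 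The final clause, that a nonzero norm yields a genuine cubic norm structure rather than merely a pair, would follow by showing that nonvanishing of the cubic map forces an isomorphism $J \cong J'$ intertwining the two adjoint maps, collapsing the pair to a single structure.
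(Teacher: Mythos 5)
The central gap is in your construction of the $5\times 5$-grading. You propose to realize the thirteen pieces of \cref{fig:cns} as simultaneous eigenspaces of the commuting elements $h=[x,y]$ and $h'=[p,q]$, bounding the eigenvalues via $\ad_x^3=0$ and using $|k|\geq 4$ to ``separate'' them. This cannot work in characteristics $2$ and $3$, which the theorem must cover: the eigenvalues of $\ad_{[x,y]}$ on $L_i$ are the integers $i\in[-2,2]$ reduced mod $\Char(k)$, so in characteristic $2$ an eigenspace decomposition can only distinguish $L_{\mathrm{even}}$ from $L_{\mathrm{odd}}$, and in characteristic $3$ it merges $L_{-2}$ with $L_1$ and $L_{-1}$ with $L_2$. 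Note that \cref{pr:5gr}\cref{5gr:grading der} only asserts that $L_i$ is \emph{contained in} the $i$-eigenspace; the grading is never recovered from eigenspaces. The paper instead intersects the two gradings directly (\cref{recover:L1 decomposition} and its companions): for $l\in L_{-1}$ it combines the identity $[[c,q],l]=-l'_{-1}+l'_1$ with the torus automorphisms $\varphi_\lambda$ of \cref{recover:autom of grading}, arriving at $(\lambda^{-1}+\lambda-2)\,l'_{-1}\in L_{-1}$, and the characteristic-$2$ case is handled by choosing $\lambda$ with $\lambda+\lambda^{-1}\neq 2$ --- this is where a lower bound on $|k|$ genuinely enters, not in any eigenvalue separation. (The hypothesis $|k|\geq 4$ is actually consumed by \cref{prelim:hex less cond}, the Tits--Weiss reduction of the cubic norm axioms, and a $|k|\geq 3$ condition is used to compare quadratic and linear terms when proving the norm identity.)

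A second, smaller misstep: you first propose to define the adjoint and norm as iterated brackets (``$\ad^2$ along the short-root directions'' and ``bracketing three times into $\langle x\rangle$''). Such formulas are only the characteristic-zero shadows $\tfrac12[a,[a,q]]$ and $\tfrac16[a,[a,[a,q]]]$, unavailable in characteristics $2,3$. Your final paragraph corrects this in the right direction, and it is indeed the paper's mechanism: by \cref{recover:determine J - unique}, for each $a\in J$ there is a \emph{unique} extremal element of the form $N(a)x+a^\sharp+[a,q]+q$ with $a^\sharp\in J'$, namely $\alpha(q)$ for a unique $a$-exponential automorphism $\alpha$; the norm and adjoint are \emph{defined} as the $L_{-2}$- and $J'$-components of this element (only $T$ and $\times$ are defined by brackets), and the axioms follow by comparing $\alpha\beta(q)$ with $\beta\alpha(q)$ and exploiting the uniqueness statement. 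Finally, your last clause (nonzero norm gives a genuine cubic norm structure) is right in spirit but needs the base-point mechanism: one rescales $c,d$ so that $N(z)=1$ for some $z\in J$ (\cref{le:cns:basept}) and then writes down the explicit mutually inverse maps $\sigma,\sigma'$ of \cref{recover:def sigma 12}, which intertwine $\sharp$ and $\sharp'$; ``nonvanishing forces an isomorphism'' is not by itself an argument.
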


\begin{maintheorem}\label{main:C}
    Let $L$ be a simple Lie algebra defined over a field $k$ with $|k| \geq 3$.
	Assume that $L$ is generated by its pure extremal elements and that:
	\begin{enumerate}
		\item there exists a Galois extension $k'/k$ of degree at most $2$ such that the extremal geometry of $L\otimes k'$ contains lines;
		\item there exist symplectic pairs of extremal elements.
	\end{enumerate}
    Then we can find two hyperbolic pairs $(x,y)$ and $(p,q)$ of extremal elements, with corresponding $5$-gradings
    \begin{align*}
    	L &= L_{-2}\oplus L_{-1}\oplus L_0\oplus L_1\oplus L_2, \\
    	L &= L'_{-2}\oplus L'_{-1}\oplus L'_0\oplus L'_1\oplus L'_2,
    \end{align*}
    such that the gradings intersect as in \cref{fig:qa}.
    Moreover, the structure of the Lie algebra induces various maps between the different components of this $5 \times 5$-grading, resulting in a quadrangular algebra on the pair
    \[ V = L_{-1} \cap L'_{-1}, \quad X = L_{-1} \cap L'_0. \]
\end{maintheorem}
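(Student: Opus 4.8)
The plan is to follow the template established for \cref{main:B}, but to arrange the two $5$\dash gradings into the $BC_2$\dash pattern of \cref{fig:qa} rather than a $G_2$\dash pattern, the difference being dictated by the symplectic pairs of condition~(ii). First I would produce the two hyperbolic pairs. Starting from a symplectic pair supplied by~(ii), the general theory of extremal geometries (Cohen--Ivanyos) yields a hyperbolic pair $(x,y)$ with associated $5$\dash grading $L=\bigoplus_i L_i$; inside its degree\dash zero part $L_0$ one then locates a second hyperbolic pair $(p,q)$ with grading $L=\bigoplus_i L'_i$, the position of which is controlled by the symplectic relation. The two $\mathfrak{sl}_2$\dash triples $\langle x,[x,y],y\rangle$ and $\langle p,[p,q],q\rangle$ are mutually orthogonal, and diagonalising $\ad_{[x,y]}$ and $\ad_{[p,q]}$ simultaneously produces the refinement drawn in \cref{fig:qa}. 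The hypothesis $|k|\ge 3$ guarantees that the occurring eigenvalues are distinct, so that the $5\times 5$\dash grading is genuine, while the symplectic relation is what prevents the configuration from collapsing and forces precisely the displayed pattern, in particular that all components lying outside the diamond vanish.

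The central tool is \cref{main:A}: condition~(i) is exactly its hypothesis, so for each element $l$ of a degree\dash one component of either grading we obtain an $l$\dash exponential automorphism $\alpha\colon L\to L$, defined over $k$. With the grading in place, the quadrangular algebra data is read off by degree\dash matching. The underlying quadratic space is $V=L_{-1}\cap L'_{-1}$, whose form is governed by the bracket $[[v,y],v']\in L_0\cap L'_{-2}$, a one\dash dimensional space identified with $k$; the associated bilinear form is automatically symmetric, since $[v,v']\in L_{-2}\cap L'_{-2}=0$ makes the Jacobi identity collapse the two orderings. The module is $X=L_{-1}\cap L'_0$, carrying the skew form $[\xi,\xi']\in L_{-2}\cap L'_0$, and the remaining operations---the scalar multiplication $X\times V\to X$, the map $h\colon X\times X\to V$ and the map $\theta$---are produced by composing brackets with the degree\dash shifting operators $\ad_x,\ad_y,\ad_p,\ad_q$ so as to land back in the prescribed component.

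Once the data is defined, the bulk of the proof is the verification of the axioms of \cref{prelim:def quadr alg}. Here the exponential automorphisms carry the load: comparing the components of a fixed degree on the two sides of $\alpha([m,m'])=[\alpha(m),\alpha(m')]$ turns each axiom into an identity among iterated brackets, which is then reduced using the Jacobi identity and the extremal identities. The degree\dash two extension $k'/k$ enters only through \cref{main:A}, and is precisely what accounts for the quadratic\dash norm ingredient of the resulting quadrangular algebra.

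I expect the main obstacle to be characteristic~$2$. There the polarisation of $[[v,y],v]$ degenerates, so the quadratic form on $V$---and likewise the map $\theta$---cannot be recovered from its bilinear companion and must instead be extracted intrinsically from the characteristic\dash two extremal form and then checked against the linearised axioms by hand. A secondary difficulty is to show, using condition~(ii) together with the classification of the possible relations between pairs of extremal points, that the two hyperbolic pairs can always be chosen so that their gradings meet exactly as in \cref{fig:qa}, with no interaction surviving between components in forbidden positions.
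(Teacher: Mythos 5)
There is a genuine gap, and it sits exactly at the point this theorem is designed to overcome: characteristic $2$ (and $3$). Your primary definitions of the data are bracket-theoretic: you propose to read off the quadratic form on $V$ from $[[v,y],v']\in L_0\cap L'_{-2}$ and to build $\theta$ by ``composing brackets with the degree-shifting operators.'' But that bracket only produces the \emph{bilinear} form $T$ (in the paper, $[[y,v],w]=T(v,w)c$), and in characteristic $2$ neither $Q$ nor $\theta$ is recoverable from it; your fallback---extracting $Q$ ``from the characteristic-two extremal form''---does not work, because the relevant elements of $V$ are not extremal at all (in the paper, a nonzero $v\in V$ is extremal precisely when $Q(v)=0$, see \cref{recover quadr:Q zero}). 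The paper's actual mechanism is different and is the core idea: $Q(v)$ is \emph{defined} as the unique scalar $\lambda$ such that $\lambda x+[v,d]+d$ is extremal (\cref{recover quadr:unique Q,def:Q and T}), and $\theta(a,v):=q_a([y,v])$ where $\Theta_a$ is an $a$-exponential automorphism singled out by the normalization $[q_{\Theta_a}(f),[d,\delta]]=0$ (\cref{pr:delta-std,def:theta}). That normalization is essential: by \cref{th:alg}\cref{th:alg:unique}, $l$-exponential automorphisms form a one-parameter family, so without pinning one down, $\theta$ is only well defined up to adding multiples of $v$, and the quadrangular-algebra axioms (in particular $\delta$-standardness and axiom \cref{prelim:def quadr alg ax 8}, proved via the commutator relation \cref{le:Theta_av}) cannot even be formulated. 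Your proposal invokes \cref{main:A} for verifications but not for the \emph{definitions}, which is where it is indispensable; the bracket formula $\theta(a,v)=\tfrac12 h(a,a\cdot v)$ of \cref{pr:theta char not 2} is a consequence valid only when $\Char(k)\neq 2$, never a definition.

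A second, smaller error: you claim the $5\times 5$ refinement is obtained by simultaneously diagonalising $\ad_{[x,y]}$ and $\ad_{[p,q]}$, with $|k|\geq 3$ ensuring distinct eigenvalues. In characteristic $2$ the eigenvalues $0,\pm1,\pm2$ collapse to $\{0,1\}$ (and in characteristic $3$ to $\{0,1,2\}$), so eigenspace decomposition cannot separate the pieces; \cref{pr:5gr}\cref{5gr:grading der} only gives \emph{containment} of $L_i$ in eigenspaces. The paper instead derives the intersection pattern of \cref{fig:qa} from the filtration characterization \cref{pr:5gr}\cref{5gr:indep}, the orthogonality relations $g_c(L_{\pm1}\oplus L_{\pm2})=0$ (\cref{recover quadr:prop g}), and the torus automorphisms $\varphi_\lambda$ of \cref{recover:autom of grading} (this is where $|k|>2$ is really used, to find $\lambda$ with $\lambda+\lambda^{-1}\neq 0$ in characteristic $2$); see \cref{recover quadr:decomp in 3}. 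Finally, producing the configuration of two hyperbolic pairs whose members are pairwise symplectic (\cref{recover quadr:existence ext elements}) is itself nontrivial when the extremal geometry over $k$ has no lines: it requires passing to $k'$, using symplecta there, and descending via Galois-invariance and Hilbert~90---a step your outline flags as a ``secondary difficulty'' but does not supply.
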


\cref{main:B} is shown in \cref{se:lines} culminating in \cref{recover:hex system prop} and \cref{main:C} is shown in \cref{se:symplectic} culminating in \cref{th:quadr main}.
For both results, we rely in a crucial way on \cref{main:A}.
For \cref{main:B}, we use this result to define the norm~$N$ and the adjoint~$\sharp$ of the cubic norm structure in \cref{recover:define norm cross etc}, relying on \cref{recover:determine J - unique}.
For \cref{main:C}, we use it to define the quadratic form $Q$ on $V$ in \cref{def:Q and T} (relying on \cref{recover quadr:unique Q}) and to define the map $\theta \colon X \times V \to V$ in \cref{def:theta}. Not surprisingly, the corresponding \cref{th:alg} is used in many places of our proof.

\subsection*{Acknowledgments}

This work has been initiated as part of the PhD project of the second author, supported by the Research Foundation Flanders (Belgium) (F.W.O.- Vlaanderen), 166032/1128720N \cite{MeulewaeterPhD}.

The first author has presented%
\footnote{A video of this presentation is available at \url{https://www.youtube.com/watch?v=fr3JaJ_ZHsw}.}
a preliminary version of these results at the Collège de France on the occasion of the ``Colloque pour honorer la mémoire de Jacques Tits'' (11--13 December 2023). We thank the organizers and the Fondation Hugo du Collège de France for the invitation.

We are grateful to Michel Brion and Erhard Neher for their comments on that occasion, which have encouraged us to write down our results in the current form.
We thank Vladimir Chernousov and Arjeh Cohen for the discussions about the origin of the terminology of extremal elements,
and Holger Petersson for providing us with the reference \cite{Faulkner2001} for cubic norm pairs.

We thank both referees for their valuable comments and their interesting suggestions for future research.

\section{Preliminaries}
\label{chapter:prelim}

Throughout, $k$ is an arbitrary commutative field, of arbitrary characteristic.
All algebras occurring in this paper are assumed to be $k$-algebras.
This preliminary section consists of three parts.
\Cref{ss:Lie} deals with Lie algebras and introduces the basic theory of extremal elements and the extremal geometry.
Most of this material is taken from~\cite{Cohen2006}.
We then introduce cubic norm structures in \cref{ss:cns} and quadrangular algebras in \cref{ss:qa}.

\subsection{Lie algebras}\label{ss:Lie}

\begin{definition}
	A \emph{$\Z$-grading} of a Lie algebra $L$ is a vector space decomposition $L=\bigoplus_{i\in \Z} L_i$ such that $[L_i,L_j]\leq L_{i+j}$ for all $i,j\in\Z$.
	If $n$ is a natural number such that $L_i=0$ for all $i\in\Z$ such that $|i|>n$ while $L_{-n}\oplus L_n\neq 0$, then we call this grading a \emph{$(2n+1)$-grading}.
	We call $L_{-n}$ and $L_n$ the \emph{ends} of this grading.
	The \emph{$i$-component} of $x\in\LL$ is the image of the projection of $x$ onto $\LL_i$.
	We also set $L_{\leq i}=\bigoplus_{j\leq i} L_j$ and $L_{\geq i}=\bigoplus_{j\geq i} L_j$.
\end{definition}



\begin{definition}[{\cite[Definition 14]{Cohen2006}}]\label{def of extremal}
	Let $\g$ be a Lie algebra over $k$.
	\begin{enumerate}
	    \item
        	A non-zero element  $x\in \g$ is called {\em extremal} if there is a map $g_x \colon \g \to k$, called the {\em extremal form} on $x$, such that for all $y,z \in \g$, we have
        	\begin{align}
            	\label{extr}
        			\big[x,[x,y]\big] &= 2g_x(y)x, \\
        		\label{P1}
        			\big[[x,y],[x,z]\big] &= g_x\big([y,z]\big)x+g_x(z)[x,y]-g_x(y)[x,z], \\
        		\label{P2}
        			\big[x,[y,[x,z]]\big] &= g_x\big([y,z]\big)x-g_x(z)[x,y]-g_x(y)[x,z].
        	\end{align}
        	The last two identities are called the {\em Premet identities}.
        	If the characteristic of $k$ is not $2$, then the Premet identities \cref{P1,P2} follow from~\cref{extr}; see \cite[Definition~14]{Cohen2006}.
        	Moreover, using the Jacobi identity, \cref{P1} and \cref{P2} are equivalent if \cref{extr} holds.

        	Note that the extremal form $g_x$ might not be unique if $\Char(k) = 2$.
	    \item
            We call  $x\in \g$ a {\em sandwich} or an \emph{absolute zero divisor} if $[x,[x,y]]=0$ and $[x,[y,[x,z]]]=0$ for all $y,z\in \g$.
            An extremal element is called {\em pure} if it is not a sandwich.
        \item
            The Lie algebra $L$ is \emph{non-degenerate} if it has no non-trivial sandwiches.
	    \item
            We denote the set of extremal elements of a Lie algebra $\g$ by $E(\g)$ or, if $\g$   is clear from the context, by $E$.
            Accordingly, we denote the set  $\{ k x \mid x\in E(\g)\}$ of {\em extremal points} in the projective space on $\g$ by $\mathcal{E}(\g)$ or $\mathcal{E}$.
	\end{enumerate}
\end{definition}
\begin{remark}\label{rem:gx}
    \begin{enumerate}
        \item
            If $x \in L$ is a sandwich, then the extremal form $g_x$ can be chosen to be identically zero;
            we adopt the convention from \cite{Cohen2006} that {\em $g_x$ is identically zero whenever $x$ is a sandwich in $\g$}.
        \item
            By \cite[Lemma 16]{Cohen2006}, the existence of two distinct functions $g_x$ and $g'_x$ satisfying the identities \cref{extr,P1,P2} implies that $\Char(k)=2$ and that $x$ is a sandwich. Combined with our convention in (i), this means that $g_x$ is always \emph{uniquely determined}.
    \end{enumerate}
\end{remark}

\begin{definition}\label{def inner ideal}
    Let $\LL$ be a Lie algebra.
    An \emph{inner ideal} of $\LL$ is a subspace $I$ of $\LL$ satisfying $[I,[I,\LL]]\leq I$.

    If $\Char(k) \neq 2$, then the $1$-dimensional inner ideals of $L$ are precisely the subspaces spanned by an extremal element.
	We will need inner ideals of dimension $>1$ only once, namely in the proof of \cref{recover:sum of extr}.
\end{definition}

Recall that if $D$ is a linear map from a Lie algebra $L$ to itself such that $D^n=0$ for some $n\in\mathbb N$ and $(n-1)!\in k^\times$, then $\exp(D)=\sum_{i=0}^{n-1}\frac{1}{i!}D^i$.
A crucial aspect of extremal elements is that they allow for exponential maps in any characteristic.
\begin{definition}[{\cite[p.\@~444]{Cohen2006}}]\label{def:exp}
    Let $x \in E$ be an extremal element, with extremal form~$g_x$. Then we define
    \[ \exp(x) \colon L \to L \colon y \mapsto y + [x,y] + g_x(y)x . \]
    We also write $\Exp(x) := \{ \exp(\lambda x) \mid \lambda \in k \}$.
\end{definition}
\begin{lemma}[{\cite[Lemma 15]{Cohen2006}}]\label{le:exp}
    For each $x \in E$, we have $\exp(x) \in \Aut(L)$.
    Moreover, $\exp(\lambda x) \exp(\mu x) = \exp((\lambda + \mu)x)$ for all $\lambda,\mu \in k$, so $\Exp(x)$ is a subgroup of $\Aut(L)$.
\end{lemma}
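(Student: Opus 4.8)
The plan is to check directly that $\phi := \exp(x)$ is a bijective homomorphism of Lie algebras and that $\lambda \mapsto \exp(\lambda x)$ is a group homomorphism from $(k,+)$ into $\Aut(L)$; the subgroup claim for $\Exp(x)$ is then automatic as the image of this homomorphism. Linearity of $\phi$ is immediate from \cref{def:exp}, since both $y \mapsto [x,y]$ and the extremal form $g_x$ are linear (for the linearity of $g_x$ in characteristic $2$ one appeals to the uniqueness in \cref{rem:gx}).

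The core step is the identity $[\phi(y),\phi(z)] = \phi([y,z])$, which I would obtain by expanding $[\phi(y),\phi(z)]$ bilinearly into nine brackets and simplifying. Three simplifications do all the work: the pair $[y,[x,z]] + [[x,y],z]$ collapses to $[x,[y,z]]$ by the Jacobi identity; the two terms $g_x(y)[x,[x,z]]$ and $g_x(z)[[x,y],x]$ cancel after applying \cref{extr} (together with $[x,x]=0$); and the surviving terms $-g_x(z)[x,y] + g_x(y)[x,z] + [[x,y],[x,z]]$ reduce to exactly $g_x([y,z])\,x$ once the Premet identity \cref{P1} is substituted for the last bracket. Collecting gives $[\phi(y),\phi(z)] = [y,z] + [x,[y,z]] + g_x([y,z])\,x = \phi([y,z])$. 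It is worth emphasising that this computation is characteristic-free precisely because \cref{P1} is imposed as an axiom when $\Char(k) = 2$, where it no longer follows from \cref{extr}.

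For the group law I would first record the scaling $g_{\lambda x} = \lambda g_x$, read off from \cref{extr}, so that $\exp(\lambda x)(y) = y + \lambda[x,y] + \lambda^2 g_x(y)\,x$, and secondly the two elementary facts $g_x(x) = 0$ and $g_x([x,y]) = 0$. Applying the linear map $\exp(\lambda x)$ term by term to $\exp(\mu x)(y)$, one finds that $\exp(\lambda x)\bigl(\exp(\mu x)(y)\bigr)$ has $y$-coefficient $1$, $[x,y]$-coefficient $\lambda + \mu$, and $x$-coefficient $(\lambda+\mu)^2 g_x(y) + \lambda^2\mu\, g_x([x,y]) + \lambda^2\mu^2 g_x(x) g_x(y)$; the last two error terms vanish by the two recorded facts, so $\exp(\lambda x)\exp(\mu x) = \exp((\lambda+\mu)x)$. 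Specialising to $\mu = -\lambda$ yields $\exp(\lambda x)\exp(-\lambda x) = \id$, so each $\exp(\lambda x)$ is invertible with inverse $\exp(-\lambda x)$; in particular $\phi$ is an automorphism, and $\Exp(x)$ is closed under composition and inverses and contains $\id$, hence is a subgroup of $\Aut(L)$.

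The one genuine obstacle lies in characteristic $2$. There \cref{extr} alone does not force the Premet identities, so the homomorphism computation truly relies on \cref{P1} as a hypothesis, and I must ensure that $g_x$ is the canonical form provided by \cref{rem:gx}. The auxiliary facts $g_x(x) = 0$ and $g_x([x,y]) = 0$ are instant from \cref{extr} in odd characteristic --- e.g.\ $2 g_x(x)\,x = [x,[x,x]] = 0$ and $2 g_x([x,y])\,x = [x,[x,[x,y]]] = [x, 2 g_x(y) x] = 0$ --- but in characteristic $2$ they require a short separate argument from the Premet identities \cref{P1,P2}, which I would isolate as a preliminary observation before carrying out the composition computation.
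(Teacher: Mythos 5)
The paper contains no proof of this lemma at all: it is imported verbatim from \cite[Lemma~15]{Cohen2006}, so the only thing to compare against is the cited source, whose argument is the same direct verification you carry out. Your computations are correct. In the homomorphism step, the Jacobi identity collapses $[y,[x,z]]+[[x,y],z]$ to $[x,[y,z]]$, the identity \cref{extr} makes $g_x(y)[x,[x,z]]$ and $g_x(z)[[x,y],x]$ cancel, and substituting \cref{P1} into $[[x,y],[x,z]]$ leaves exactly $g_x([y,z])x$; the coefficient bookkeeping in $\exp(\lambda x)\exp(\mu x)$ is also right, and the subgroup claim is then formal.

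The one step you should not leave as a promissory note is the characteristic-$2$ verification of $g_x(x)=0$ and $g_x([x,y])=0$, since your composition formula genuinely depends on both. The argument is short but does require the sandwich/pure dichotomy together with the convention of \cref{rem:gx}. Putting $y=x$ in \cref{P1} gives
\[ 0 = g_x([x,z])\,x - g_x(x)\,[x,z] \quad \text{for all } z \in L . \]
If $[x,L] \leq \langle x\rangle$, then $[x,[x,L]]=0$ and $[x,[y,[x,z]]] \in [x,\langle x\rangle] = 0$, so $x$ is a sandwich and $g_x \equiv 0$ by convention, making both facts trivial. Otherwise pick $z_0$ with $[x,z_0] \notin \langle x\rangle$; then the displayed identity forces $g_x(x)=0$, and in turn $g_x([x,z])\,x = 0$, i.e.\ $g_x([x,z])=0$, for every $z$. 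Two smaller points of hygiene: in characteristic $2$ the scaling law $g_{\lambda x} = \lambda g_x$ is not ``read off from \cref{extr}'' (which only determines the form up to $2$-torsion) but follows from the uniqueness in \cref{rem:gx}, because $\lambda g_x$ visibly satisfies \cref{extr,P1,P2} for the element $\lambda x$; and the linearity of $g_x$ is likewise not a consequence of that uniqueness --- it is part of the definition in \cite{Cohen2006}, where $g_x$ is a linear functional, and that is what you should cite for it.
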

Notice that for an extremal element $x \in E$, we always have $\ad_x^3 = 0$.
In particular, if $\Char(k) \neq 2$, then $g_x(y) x = \tfrac{1}{2} [x, [x, y]]$, and we recover the usual exponential map $\exp(x) = \exp(\ad_x)$.

\medskip

In this paper, we will always assume that our Lie algebra $L$ is generated by its extremal elements.
This has some powerful consequences.
\begin{proposition}[{\cite[Proposition 20]{Cohen2006}}]\label{pr:g}
    Suppose that $L$ is generated by $E$. Then:
    \begin{enumerate}
        \item $L$ is linearly spanned by $E$.
        \item\label{pr:g:unique} There is a unique bilinear form $g \colon L \times L \to k$ such that $g_x(y) = g(x,y)$ for all $x \in E$ and all $y \in L$. The form $g$ is symmetric.
        \item The form $g$ associates with the Lie bracket, i.e., $g([x,y],z) = g(x,[y,z])$ for all $x,y,z \in L$.
    \end{enumerate}
\end{proposition}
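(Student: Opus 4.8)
The plan is to prove the three parts in order, using throughout two standard facts: any $\phi\in\Aut(L)$ maps $E$ into $E$ (the defining identities \cref{extr,P1,P2} are manifestly preserved by automorphisms, with $g_{\phi x}=g_x\circ\phi^{-1}$), and $\exp(x)\in\Aut(L)$ for $x\in E$ by \cref{le:exp}. For part~(i), let $M$ denote the linear span of $E$. For $x,y\in E$ the element $\exp(x)(y)=y+[x,y]+g_x(y)x$ is again extremal, being the image of the extremal element $y$ under the automorphism $\exp(x)$, and hence lies in $M$; therefore
\[ [x,y]=\exp(x)(y)-y-g_x(y)x\in M . \]
Thus $[E,E]\subseteq M$, and since the bracket is bilinear and $E$ spans $M$, we get $[M,M]\subseteq M$. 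So $M$ is a subalgebra containing $E$, and as $L$ is generated by $E$ this forces $M=L$.

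For part~(ii), I would first note that each $g_x$ is linear in its argument (immediate from $g_x(y)x=\tfrac12[x,[x,y]]$ and $x\neq0$ when $\Char(k)\neq2$, and in general the unique form attached to $x$ by \cref{rem:gx}); uniqueness of $g$ is then clear, since by~(i) a bilinear form is determined by its values $g(x,y)$ with $x\in E$. The crux is symmetry $g_x(y)=g_y(x)$ for $x,y\in E$. I would compute $[x,[y,[x,y]]]$ in two ways: applying \cref{P2} with both free arguments equal to $y$ gives $-2g_x(y)[x,y]$, while using $[y,[x,y]]=-[y,[y,x]]=-2g_y(x)y$ from \cref{extr} gives $-2g_y(x)[x,y]$, whence $2\big(g_x(y)-g_y(x)\big)[x,y]=0$. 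This settles $\Char(k)\neq2$ (the subcase $[x,y]=0$ being handled directly by \cref{extr}, which then forces $g_x(y)=g_y(x)=0$). Granting symmetry, I define $g(a,b):=\sum_i\lambda_i\,g_{x_i}(b)$ for $a=\sum_i\lambda_i x_i$ with $x_i\in E$. This is well defined: if $\sum_i\lambda_i x_i=0$, then for $b\in E$ symmetry and linearity give $\sum_i\lambda_i g_{x_i}(b)=g_b\big(\sum_i\lambda_i x_i\big)=0$, and the case of arbitrary $b$ follows by expressing $b$ as a combination of extremal elements via~(i) and using linearity of each $g_{x_i}$. The resulting $g$ is bilinear and symmetric by construction and satisfies $g(x,\cdot)=g_x$ for $x\in E$.

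For part~(iii), I would first establish that $g$ is invariant under every $\exp(x)$ with $x\in E$. For $a,b\in E$ and $\phi=\exp(x)$ we have $\phi a\in E$ with $g_{\phi a}=g_a\circ\phi^{-1}$, so
\[ g(\phi a,\phi b)=g_{\phi a}(\phi b)=g_a(\phi^{-1}\phi b)=g_a(b)=g(a,b), \]
and this extends to all $a,b\in L$ by bilinearity and~(i). Replacing $x$ by $\lambda x$ and viewing $\lambda\mapsto g(\exp(\lambda x)a,\exp(\lambda x)b)-g(a,b)$ as a polynomial in $\lambda$ vanishing identically (passing to an infinite extension of $k$ if $k$ is finite, which is harmless as the whole extremal structure base-changes), the coefficient of $\lambda$ yields
\[ g([x,a],b)+g(a,[x,b])=0 \]
for all $x\in E$ and $a,b\in L$; by linearity in $x$ and~(i) it holds for all $x\in L$. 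Setting $T(u,v,w):=g([u,v],w)$, this identity together with the symmetry of $g$ shows that $T$ is antisymmetric in its last two arguments, while $[u,v]=-[v,u]$ makes it antisymmetric in its first two; hence $T$ is alternating. A cyclic shift is an even permutation, so $T(u,v,w)=T(v,w,u)$, which reads $g([u,v],w)=g(u,[v,w])$ after applying symmetry on the right. This is the associativity claimed in~(iii).

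The main obstacle is characteristic $2$. There the two-way computation above degenerates to $0=0$, and even the linearity of $g_x$ is not automatic; the symmetry $g_x(y)=g_y(x)$ must instead be extracted from the Premet identities \cref{P1,P2} together with the uniqueness of the extremal form recorded in \cref{rem:gx}. This delicate point is precisely why the extremal form is axiomatized through the Premet identities in the first place, and it is the step I expect to require the most care.
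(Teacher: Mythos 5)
A preliminary remark: the paper offers no proof of this proposition at all --- it is imported wholesale from \cite[Proposition~20]{Cohen2006} --- so there is no in-paper argument to compare yours against; the proposal can only be judged on its own terms. Part~(i) is correct and complete in every characteristic: since $\exp(x)\in\Aut(L)$ by \cref{le:exp} and automorphisms preserve $E$, the element $\exp(x)(y)$ is extremal, so $[x,y]=\exp(x)(y)-y-g_x(y)x$ lies in the span of $E$, which is therefore a subalgebra and hence all of $L$. Parts~(ii) and~(iii) are also essentially correct \emph{when} $\Char(k)\neq 2$: the two-fold computation of $[x,[y,[x,y]]]$ does give symmetry, the well-definedness argument for $g$ is fine, and the scalar-extension device for small fields in~(iii) can be made rigorous (one checks that $E(L)\subseteq E(L\otimes k')$, that $L\otimes k'$ is again generated by its extremal elements, and that the canonical form over $k'$ produced by~(ii) restricts to $g$; note that this step silently invokes~(ii) over the extension field).

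However, there is a genuine gap, and you have named it yourself without closing it: in characteristic $2$ you prove neither the linearity of $g_x$ nor the symmetry $g_x(y)=g_y(x)$; you only assert that these ``must be extracted from the Premet identities'' and ``require the most care.'' That is a deferral, not a proof, and it cannot be waved through: in characteristic $2$ identity \cref{extr} reads $[x,[x,y]]=0$, so every mechanism you use to extract scalars --- the formula $g_x(y)x=\tfrac12[x,[x,y]]$ and the conclusion $2\bigl(g_x(y)-g_y(x)\bigr)[x,y]=0$ --- degenerates to $0=0$. Everything downstream (well-definedness of $g$ in~(ii), the invariance $g(\phi a,\phi b)=g(a,b)$ and the base-change step in~(iii)) rests on exactly these two facts, so the proposal establishes the proposition only for $\Char(k)\neq2$. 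Since handling characteristic $2$ is the entire reason the extremal form is axiomatized through \cref{P1,P2} (and a stated point of this paper), the omitted step is the heart of the statement. It is fillable, but it requires real work, essentially the chain of lemmas preceding Proposition~20 in \cite{Cohen2006}: for instance, linearity of $g_x$ in characteristic $2$ can be obtained by linearizing \cref{P1} in its first variable and comparing coefficients of $x$ and $[x,z]$ for some $z$ with $[x,z]\notin\langle x\rangle$ (if no such $z$ exists, then $x$ is a sandwich and $g_x\equiv 0$ by \cref{rem:gx}), and symmetry then needs a separate argument of a similar flavor.
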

\begin{proposition}[{\cite[Lemmas 21, 24, 25 and~27]{Cohen2006}}]\label{pr:2pts}
    Suppose that $L$ is generated by $E$. Let $x,y \in E$ be \emph{pure} extremal elements. Then exactly one of the following holds:
    \begin{enumerate}[\rm (a)]
        \item $kx = ky$;
        \item $kx \neq ky$, $[x,y] = 0$, and $\lambda x + \mu y \in E \cup \{ 0 \}$ for all $\lambda,\mu \in k$ (we write $x \sim y$);
        \item $kx \neq ky$, $[x,y] = 0$, and $\lambda x + \mu y \in E \cup \{ 0 \}$ if and only if $\lambda=0$ or $\mu=0$;
        \item $[x,y] \neq 0$ and $g(x,y)=0$, in which case $[x,y] \in E$ and $x \sim [x,y] \sim y$;
        \item $[x,y] \neq 0$ and $g(x,y) \neq 0$, in which case $x$ and $y$ generate an $\mathfrak{sl}_2(k)$-subalgebra.
    \end{enumerate}
    Moreover, we have $g(x,y) = 0$ in all four cases \rm{(a)}--\rm{(d)}.
\end{proposition}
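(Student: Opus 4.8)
\emph{Strategy, and the ``moreover'' clause.} The plan is to organise the whole case analysis around two invariants of the pair $(x,y)$: whether $[x,y]=0$, and whether $g(x,y)=0$. A priori this gives four combinations, but I claim the combination ``$[x,y]=0$ and $g(x,y)\neq 0$'' cannot occur; granting this, the three surviving combinations refine into the five listed cases once one also records whether $kx=ky$, and the cases are then manifestly mutually exclusive and exhaustive. So the content is to verify the internal descriptions and to establish the $(b)$/$(c)$ dichotomy. Since the forbidden combination is empty, the ``moreover'' clause reduces to showing $[x,y]=0\Rightarrow g(x,y)=0$, which I treat first. When $\Char(k)\neq 2$ this is immediate: applying $\ad_x$ to $[x,y]=0$ and using \cref{extr} gives $0=[x,[x,y]]=2g_x(y)x$, so $g(x,y)=0$. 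In characteristic $2$ the factor $2$ kills this, and here I would use \cref{P2} together with \emph{purity}: substituting $[x,y]=0$ into \cref{P2} collapses it to $g_x(y)\,[x,w]=g_x([w,y])\,x$ for all $w$, and if $g_x(y)\neq 0$ this forces $[x,L]\subseteq kx$; a short check shows $[x,L]\subseteq kx$ makes $x$ a sandwich, contradicting purity. Hence $g(x,y)=0$.

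\emph{The case $[x,y]\neq 0$.} Now $g(x,y)$ decides between $(e)$ and $(d)$. If $g(x,y)\neq 0$, then \cref{extr} directly yields $[x,[x,y]]=2g(x,y)x$ and $[y,[x,y]]=-2g(x,y)y$, exhibiting $\langle x,y,[x,y]\rangle$ as a copy of $\mathfrak{sl}_2(k)$ (a Heisenberg algebra when $\Char(k)=2$); this is case $(e)$. If instead $g(x,y)=0$, set $z=[x,y]$, and the crux of case $(d)$ is that $z\in E$. I would expand $\ad_z^2=(\ad_x\ad_y-\ad_y\ad_x)^2$ into four terms: the two products with a repeated adjacent factor collapse via \cref{extr}, while the two alternating products $\ad_x\ad_y\ad_x\ad_y$ and $\ad_y\ad_x\ad_y\ad_x$ are evaluated with \cref{P2}; every surviving coefficient either contains the factor $g(x,y)=0$ or is rewritten through the associativity of $g$ from \cref{pr:g}, and the four contributions combine to $\ad_z^2(w)=2g(z,w)z$. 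So $z$ satisfies \cref{extr} with form $g(z,\cdot)$. Finally $[x,z]=2g(x,y)x=0$ and symmetrically $[y,z]=0$, and \cref{P1,P2} give $\ad_x\ad_z(w)=\ad_z\ad_x(w)=g(z,w)x+g_x(w)z$; feeding this into $\ad_{\lambda x+\mu z}^2$ produces $\ad_{\lambda x+\mu z}^2(w)=2g(\lambda x+\mu z,w)(\lambda x+\mu z)$, so every combination is extremal, i.e.\ $x\sim z$, and symmetrically $z\sim y$.

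\emph{The case $[x,y]=0$ and the $(b)$/$(c)$ dichotomy.} If $kx=ky$ we are in $(a)$. Otherwise $kx\neq ky$, and since $\ad_x,\ad_y$ commute one has $\ad_a^2(w)=2\lambda^2 g_x(w)x+2\lambda\mu\,\phi(w)+2\mu^2 g_y(w)y$ for $a=\lambda x+\mu y$, where $\phi(w)=[x,[y,w]]$. The key observation is that as soon as a \emph{single} off-axis combination $a_0=\lambda_0 x+\mu_0 y$ (with $\lambda_0\mu_0\neq 0$) is extremal, the identity $\ad_{a_0}^2(w)=2g(a_0,w)a_0$, using $g_{a_0}=g(a_0,\cdot)$ from \cref{pr:g}, pins down $\phi(w)=g_y(w)x+g_x(w)y$ \emph{exactly}. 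Substituting this back gives $\ad_{\lambda x+\mu y}^2(w)=2g(\lambda x+\mu y,w)(\lambda x+\mu y)$ for \emph{all} $\lambda,\mu$: one off-axis extremal combination forces them all, which is case $(b)$, $x\sim y$. If no off-axis combination is extremal we are in case $(c)$. This is exactly what rules out any intermediate situation between $(b)$ and $(c)$.

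\emph{Main obstacle.} The delicate points are all in characteristic $2$. First, as noted, $[x,y]=0\Rightarrow g(x,y)=0$ must route through \cref{P2} and purity rather than through \cref{extr}. Second, and more seriously, in characteristic $2$ ``extremal'' means \cref{extr} \emph{together with} the Premet identities, so for each newly produced element — $z=[x,y]$ in case $(d)$, and $\lambda x+\mu y$ in case $(b)$ — I must additionally verify \cref{P1,P2} rather than only \cref{extr}. This is where I expect the bookkeeping to be heaviest, and where the uniqueness of the extremal form (\cref{rem:gx}) and the associativity of $g$ are needed to keep the identities consistent.
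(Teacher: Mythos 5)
First, a point of order: the paper does not prove this proposition at all --- it is imported verbatim from Cohen--Ivanyos (Lemmas 21, 24, 25 and 27 of \cite{Cohen2006}) --- so your attempt has to stand on its own as a complete proof. In characteristic $\neq 2$ your outline essentially does work: there \cref{extr} alone characterizes extremality (the Premet identities are automatic), and your computation of $\ad_{[x,y]}^2$ in case (d), the $\mathfrak{sl}_2$ verification in case (e), the ``one off-axis extremal combination forces all'' mechanism for the (b)/(c) dichotomy, and the purity argument via \cref{P2} for the ``moreover'' clause are all sound.

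The genuine gap is characteristic $2$, and it is more than the ``bookkeeping'' you defer in your final paragraph --- for the (b)/(c) dichotomy your mechanism fails outright rather than merely remaining unverified. When $\Char(k)=2$ and $[x,y]=0$, the operators $\ad_x$ and $\ad_y$ commute, so $\ad_{\lambda x+\mu y}^2=\lambda^2\ad_x^2+\mu^2\ad_y^2+2\lambda\mu\,\ad_x\ad_y=0$ identically (since $\ad_x^2=\ad_y^2=0$ by \cref{extr}), while the right-hand side $2g_a(w)a$ of \cref{extr} is also identically zero. Hence \cref{extr} holds trivially for \emph{every} combination $\lambda x+\mu y$ and every candidate form: your key step ``the identity $\ad_{a_0}^2(w)=2g(a_0,w)a_0$ pins down $\phi$'' reads $0=0$ and pins down nothing. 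In characteristic $2$ the entire distinction between collinear and symplectic pairs lives in the Premet identities \cref{P1,P2}, and you would need a genuinely new argument showing that if one off-axis combination admits a form satisfying \cref{P1,P2} then they all do; nothing in the proposal supplies this idea, and no extr-level computation can. The same defect afflicts case (d): in characteristic $2$ your identity $\ad_z^2(w)=2g(z,w)z=0$ for $z=[x,y]$ says only that $\ad_z^2=0$, which is necessary but far from sufficient for $z\in E$ (one must exhibit a form for $z$, and for each $\lambda x+\mu z$, verifying \cref{P1,P2}). Since extremality in characteristic $2$ is \emph{defined} by those identities, and since the proposition (like the whole paper) is specifically meant to cover characteristic $2$, this is precisely where the content of the statement sits; the proof is incomplete without it.
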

This gives rise to the following definitions.
\begin{definition}\label{def:Ei}
    \begin{enumerate}
        \item
            Let $\langle x \rangle, \langle y \rangle \in \E(L)$ be two distinct pure extremal points.
            Then depending on whether the corresponding elements $x,y$ are in case (b), (c), (d) or (e) of \cref{pr:2pts}, we call the pair $\langle x \rangle, \langle y \rangle$ \emph{collinear}, \emph{symplectic}, \emph{special} or \emph{hyperbolic}, respectively.
            Following the notation from \cite{Cohen2006}, we denote the five possible relations between points of $\E$ by $\E_{-2}$ (equal), $\E_{-1}$ (collinear), $\E_0$ (symplectic), $\E_1$ (special), $\E_2$ (hyperbolic), respectively.
            We also use $E_{-2},E_{-1},E_0,E_1,E_2$ for the corresponding relations between elements of $E$.
            Moreover, we write $E_i(x)$ for the set of elements of $E$ in relation $E_i$ with $x$, and similarly for $\E_i(\langle x \rangle)$.

            We refer the reader to \cref{pr:Ei(x)} for the motivation for this notation.
        \item
            The \emph{extremal geometry} associated with $L$ is the point-line geometry with point set $\E(L)$ and line set
            \[ \F(L) := \{ \langle x,y \rangle \mid \langle x \rangle \text{ and } \langle y \rangle \text{ are collinear} \} .  \]
            It is a \emph{partial linear space}, i.e., any two distinct points lie on at most one line.
    \end{enumerate}
\end{definition}

A crucial ingredient for us will be the fact that each hyperbolic pair of extremal elements gives rise to a $5$-grading of the Lie algebra;
see \cref{pr:5gr} below.

\medskip

We finish this section with a subtle but important fact about exponential maps arising from a $5$-grading.

\begin{definition}\label{prelim:def alg}
    Assume that $\Char(k) \neq 2,3$ and let $L$ be a 5-graded finite-dimensional Lie algebra over $k$.
    We say that $\LL$ is \emph{algebraic} if for any $(x,s) \in \LL_{\sigma 1} \oplus \LL_{\sigma 2}$ (with $\sigma \in \{ +,- \}$), the linear endomorphism
    \[ \exp(\ad(x+s)) = \sum_{i=0}^{4} \frac{1}{i!} \ad(x+s)^i\]
    of $\LL$ is a Lie algebra automorphism.
\end{definition}

It is not difficult to see that when $\Char(k) \neq 2,3,5$, then any $5$-graded Lie algebra is algebraic; see \cite[Lemma 3.1.7]{Boelaert2019}.
If $\Char(k) = 5$, this is much more delicate; see \cite[\S 4.2]{Boelaert2019} and \cite[Theorem 2.10]{Stavrova2017}.

One of our key tools will precisely be an extension of the existence of such automorphisms to the case where $\Char(k)$ is arbitrary, including characteristic $2$~and~$3$; see \cref{def:l-exp,th:alg,recover:root groups descent,recover:Galois descent} below.

%

\subsection{Cubic norm structures}\label{ss:cns}

Cubic norm structures have been introduced by Kevin McCrimmon in \cite{McCrimmon1969a}.
We follow the approach from \cite[Chapter 15]{Tits2002}, which is equivalent but is more practical for our purposes.

\begin{definition}
\label{prelim:def hex}
	A \textit{cubic norm structure} is a tuple
	\[ (J,k,N,\sharp,T,\times,1),\]
	where $k$ is a field, $J$ is a vector space over $k$, $N \colon J\to k$ is a map called the \textit{norm}, $\sharp \colon J\to J$ is a map called the \textit{adjoint}, $T \colon J\times J\to k$ is a symmetric bilinear form called the \textit{trace}, $\times \colon J\times J\to J$ is a symmetric bilinear map called the \emph{Freudenthal cross product}, and $1$ is a non-zero element of $J$ called the \textit{identity} such that for all $\lambda\in k$, and all $a,b,c\in J$ we have:
	\begin{enumerate}[(i)]
		\item \label{prelim:hex axiom 1}$(\lambda a)^\sharp=\lambda^2a^\sharp$;
		\item \label{prelim:hex axiom 2}$N(\lambda a)=\lambda^3 N(a)$;
		\item \label{prelim:hex axiom 3}$T(a,b\times c)=T(a\times b,c)$;
		\item \label{prelim:hex axiom 4}$(a+b)^\sharp=a^\sharp+a\times b+b^\sharp$;
		\item \label{prelim:hex axiom 5}$N(a+b)=N(a)+T(a^\sharp,b)+T(a,b^\sharp)+N(b)$;
		\item \label{prelim:hex axiom 6}$T(a,a^\sharp)=3N(a)$;
		\item \label{prelim:hex axiom 7}$(a^\sharp)^\sharp=N(a)a$;
		\item \label{prelim:hex axiom 8}$a^\sharp\times (a\times b)=N(a)b+T(a^\sharp,b)a$;
		\item \label{prelim:hex axiom 9}$a^\sharp\times b^\sharp+(a\times b)^\sharp=T(a^\sharp,b)b+T(a,b^\sharp)a$;
		\item \label{prelim:hex axiom 10}$1^\sharp=1$;
		\item \label{prelim:hex axiom 11}$a=T(a,1)1-1\times a$.
	\end{enumerate}
	We call this cubic norm structure \textit{non-degenerate} if $\{a\in J\mid N(a)=0=T(a,J)=T(a^\sharp,J)\} = 0$.
\end{definition}

\begin{definition}\label{def:cns anis}
	We call $a\in J$ \textit{invertible} if $N(a)\neq 0$.
	We denote the set of all invertible elements of $J$ by $J^\times$.
	If all non-zero elements of $J$ are invertible, we call $J$ \textit{anisotropic}, otherwise we call it \textit{isotropic}.
\end{definition}

\begin{lemma}[{\cite[(15.18)]{Tits2002}}]
\label{prelim:hex less cond}
	In \cref{prelim:def hex}, condition \cref{prelim:hex axiom 3} is a consequence of \cref{prelim:hex axiom 4,prelim:hex axiom 5}.
	If $|k|>3$, then conditions \cref{prelim:hex axiom 6,prelim:hex axiom 8,prelim:hex axiom 9} are a consequence of \cref{prelim:hex axiom 1,prelim:hex axiom 2,prelim:hex axiom 4,prelim:hex axiom 5,prelim:hex axiom 7}.
\end{lemma}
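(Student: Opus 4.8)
The first claim is a straightforward polarization identity, and the plan is to expand $N(a+b+c)$ in two different ways and compare. Grouping as $(a+b)+c$ and applying axiom (v) twice, while rewriting $(a+b)^\sharp = a^\sharp + a\times b + b^\sharp$ by axiom (iv), produces among the terms a contribution $T(a\times b,\,c)$. Grouping instead as $a+(b+c)$ and expanding $(b+c)^\sharp$ by (iv) produces instead a contribution $T(a,\,b\times c)$. Every other term of the two expansions is manifestly symmetric under the regrouping and therefore cancels, so equating the two expressions for $N(a+b+c)$ leaves exactly $T(a\times b,c)=T(a,b\times c)$, which is axiom (iii). This uses only (iv), (v) and the bilinearity of $T$ and $\times$, as asserted; note that no cancellation of integers is needed, so this part holds in every characteristic.

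For the second claim I would first extract axiom (vi) from homogeneity alone. Applying (v) to the pair $(\lambda a,\,a)$ and using (i)--(ii) to rewrite $N(\lambda a)=\lambda^3N(a)$ and $(\lambda a)^\sharp=\lambda^2a^\sharp$, then comparing with $N((\lambda+1)a)=(\lambda+1)^3N(a)$ computed directly from (ii), yields (after using the symmetry of $T$) the scalar identity $(\lambda^2+\lambda)\,\bigl(T(a,a^\sharp)-3N(a)\bigr)=0$ for all $\lambda\in k$. Since $|k|>3$ we may choose $\lambda$ with $\lambda^2+\lambda\neq 0$, and (vi) follows.

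The identities (viii) and (ix) are obtained by \emph{linearizing the adjoint identity} (vii). Starting from $\bigl((a+b)^\sharp\bigr)^\sharp = N(a+b)(a+b)$, I would expand the left-hand side by applying (iv) twice to $(a^\sharp+a\times b+b^\sharp)^\sharp$, simplify $(a^\sharp)^\sharp=N(a)a$ and $(b^\sharp)^\sharp=N(b)b$ via (vii), and expand the right-hand side via (v). After cancelling $N(a)a+N(b)b$ this gives a single identity in $J$ that mixes the terms of (viii) and (ix). To separate it, replace $a$ by $\lambda a$ and record the power of $\lambda$ each term carries (using (i), (ii) and bilinearity of $\times$ and $T$); for instance $a^\sharp\times(a\times b)$ scales as $\lambda^3$ while $(a\times b)^\sharp$ and $a^\sharp\times b^\sharp$ scale as $\lambda^2$. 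The result is a polynomial in $\lambda$ of degree at most $3$, with coefficients in $J$, that vanishes for every $\lambda\in k$.

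The crux, and the only place where the hypothesis $|k|>3$ is genuinely used, is precisely this separation: a nonzero polynomial of degree $\le 3$ has at most $3$ roots, so since it vanishes at more than $3$ points each homogeneous coefficient must vanish on its own. Reading off the components then finishes the proof: the degree-$3$ coefficient is exactly $N(a)b+T(a^\sharp,b)a = a^\sharp\times(a\times b)$, i.e. axiom (viii); the degree-$2$ coefficient is exactly $T(a^\sharp,b)b+T(a,b^\sharp)a=(a\times b)^\sharp+a^\sharp\times b^\sharp$, i.e. axiom (ix); and the degree-$1$ coefficient reproduces (viii) with the roles of $a$ and $b$ interchanged, providing a consistency check rather than new information. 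The only delicate bookkeeping is tracking these $\lambda$-weights correctly; the rest is routine expansion.
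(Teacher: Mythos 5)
Your proof is correct and follows essentially the same route as the source behind this lemma: the paper itself gives no proof, deferring entirely to \cite[(15.18)]{Tits2002}, where the argument is precisely yours — condition (iii) by expanding $N(a+b+c)$ in two ways via (iv) and (v), and conditions (vi), (viii), (ix) by substituting $\lambda a$ into the relevant identities and separating the coefficients of the resulting polynomial in $\lambda$, which is exactly where the hypothesis $|k|>3$ (at least four points, degree at most three) is needed. The only step you gloss over — that a $J$-valued polynomial vanishing identically has vanishing coefficients — is handled by applying linear functionals coordinatewise and is routine.
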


\begin{remark}
    Every cubic norm structure can be made into a (quadratic) Jordan algebra,
    where the $U$-operator of the Jordan algebra is given by
    \begin{align*}
    	U_x(y)=T(y,x)x-y\times x^\sharp
    \end{align*}
    for all $x,y\in J$.
\end{remark}

\begin{definition}\label{prelim:def isot}
	\begin{enumerate}
		\item 
			Let $(J,k,N,\sharp,T,\times,\allowbreak 1)$ and $(J',k,N',\sharp',T',\times',\allowbreak 1')$ be two cubic norm structures.
			A vector space isomorphism $\varphi:J\to J'$ is an \textit{isomorphism} from $(J,k,N,\sharp,T,\times,\allowbreak 1)$ to $(J',k,N',\sharp',T',\times',\allowbreak 1')$ if $\varphi \circ\sharp=\sharp'\circ\varphi$ and $\varphi(1)=1'$.
		\item 
			Let $(J,k,N,\sharp,T,\times,1)$ be a cubic norm structure and let $d\in J^\times$.
			We define new maps $N_d$, $\sharp_d$, $T_d$, $\times_d$ and $1_d$ by
			\begin{align*}
				N_d(a) &:= N(d)^{-1} N(a) , \\
				a^{\sharp_d} &:= N(d)^{-1} U_d(a^\sharp) , \\
				T_d(a,b) &:= T\bigl( U_{N(d)^{-1} d^\sharp}(a), b \bigr) , \\
				a \times_d b &:= N(d)^{-1} U_d(a \times b) , \\
				1_d &:= d ,
			\end{align*}
			for all $a,b \in J$.
			Then $(J,k,N_d,\sharp_d,T_d,\times_d,1_d)$ is again a cubic norm structure; see \cite[(29.36)]{Tits2002}. 
			
			Two cubic norm structures $(J,k,N,\sharp,T,\times,1)$ and $(J',k,N',\sharp',T',\times',1')$ are called \textit{isotopic} if there exists an isomorphism from $(J',k,N',\sharp',T',\times',1')$ to $(J,k,N_d,\sharp_d,T_d,\times_d,1_d)$ for a certain $d\in J^\times$.
	\end{enumerate}
\end{definition}

\subsection{Quadrangular algebras}\label{ss:qa}

Quadrangular algebras have been introduced by Richard Weiss in \cite{Weiss2006} in the anisotropic case, and have been generalized to allow isotropic quadrangular algebras in \cite{Muehlherr2019}.
Our notation follows \cite{Muehlherr2019}, except that we use different letters for some of our spaces and maps.

\begin{definition}
\label{prelim: def quadr space}
	Let $V$ be a vector space over the field $k$.
	A \textit{quadratic form} $Q$ on~$V$ is a map $Q \colon V\to k$ such that 
	\begin{itemize}
		\item $Q(\lambda v)=\lambda^2Q(v)$ for all $\lambda\in k$ and $v\in V$;
		\item the map $T \colon V\times V\to k$ defined by 
				 \[ T(u,v)=Q(u+v)-Q(u)-Q(v) \]
				 for all $u,v \in V$ is bilinear.
	\end{itemize}
	We call $Q$ 
	\begin{itemize}
		\item \textit{anisotropic} if $Q(v)=0$ implies $v=0$;
		\item \textit{non-degenerate} if $V^\perp := \{ v\in V \mid T(v,V)=0 \} = 0$;
		\item \textit{non-singular} if it is either non-degenerate or $\dim(V^\perp)=1$ and $Q(V^\perp)\neq 0$;
		\item \textit{regular} if $\{ v \in V \mid Q(v)=0=T(v,V) \} = 0$.
	\end{itemize}
	A \textit{base point} for $Q$ is an element $e\in V$ such that $Q(e)=1$.
	Using this base point, we can define an involution $\sigma \colon V\to V$ by
	\begin{align}
		\label{prelim:def sigma}
			v^\sigma = T(v,e)e - v,
	\end{align} 
	for all $v\in V$.
\end{definition}

\begin{definition}[{\cite[Definition 2.1]{Muehlherr2019}}]
\label{prelim:def quadr alg}
	A quadrangular algebra is a tuple
	\[ (k,V,Q,T,e,X,\cdot,h,\theta),\]
	where 
	\begin{itemize}[noitemsep]
		\item $k$ is a field;
		\item $V$ is a vector space over $k$;
		\item $Q \colon V \to k$ is a regular quadratic form, with associated bilinear form $T$;
		\item $e \in V$ is a base point for $Q$;
		\item $X$ is a vector space over $k$;
		\item $\cdot \colon X\times V \to X \colon (a,v)\mapsto a \cdot v$ is a bilinear map;
		\item $h \colon X\times X\to V$ is a bilinear map;
		\item $\theta \colon X\times V\to V$ is a map;
	\end{itemize}
	 such that
	 \begin{enumerate}
	 	\item \label{prelim:def quadr alg ax 1} $a \cdot e=a$ for all $a\in X$;
	 	\item \label{prelim:def quadr alg ax 2} $(a\cdot v)\cdot v^\sigma = Q(v)a$ for all $a\in X$ and all $v\in V$;
	 	\item \label{prelim:def quadr alg ax 3} $h(a, b \cdot v) = h(b, a \cdot v) + T(h(a,b), e)v$ for all $a,b \in X$ and all $v\in V$;
	 	\item \label{prelim:def quadr alg ax 4} $T(h(a\cdot v, b), e) = T(h(a,b), v)$ for all $a,b \in X$ and all $v\in V$;
	 	\item \label{prelim:def quadr alg ax 5} For each $a \in X$, the map $v \mapsto \theta(a,v)$ is linear;
	 	\item \label{prelim:def quadr alg ax 6} $\theta(\lambda a, v) = \lambda^2 \theta (a, v)$ for all $a \in X$, $v \in V$ and $\lambda\in k$;
	 	\item \label{prelim:def quadr alg ax 7} There exists a function $\gamma \colon X\times X\to k$ such that
	 			\[ \theta(a+b, v) = \theta(a,v) + \theta(b,v) + h(a, b\cdot v) - \gamma(a,b)v \]
	 			for all $a,b\in X$ and all $v\in V$;
	 	\item \label{prelim:def quadr alg ax 8} There exists a function $\phi \colon X\times V \to k$ such that 
	 		\begin{multline*}
	 			\theta(a\cdot v, w)	= \theta(a, w^\sigma)^\sigma Q(v) - T(w, v^\sigma) \theta(a,v)^\sigma
					+ T(\theta(a,v), w^\sigma) v^\sigma + \phi(a,v)w
	 		\end{multline*}
			for all $a\in X$ and all $v,w \in V$;
		\item \label{prelim:def quadr alg ax 9} $a \cdot \theta(a,v) = (a \cdot \theta(a,e)) \cdot v$ for all $a\in X$ and all $v\in V$.	
	 \end{enumerate}
\end{definition}

\begin{notation}
\label{prelim:not pi}
	We set $\pi(a)=\theta(a,e)$ for all $a\in X$. 
\end{notation}

\begin{definition}[{\cite[Definition 2.3]{Muehlherr2019}}]
	We call a quadrangular algebra \textit{anisotropic} if $Q$ is anisotropic and $\pi(a)$ is a multiple of $e$ if and only if $a=0$.
\end{definition}

\begin{remark}
	The existence of $e \in V$ implies $V\neq 0$.
	However, contrary to the common convention, we allow $X=0$.
	In other words, we view quadratic spaces as special (degenerate) examples of quadrangular algebras.
\end{remark}

\section{Gradings from extremal elements}
\label{section grading}

In this section, we describe the $5$-grading associated with any pair of hyperbolic extremal elements and we collect several properties about this grading that we will need later.
We emphasize again that we do not make any assumptions on the field~$k$, so in particular, we allow $\Char(k) = 2$ or $3$.

\begin{notation}
    We will assume from now on that $L$ is a Lie algebra over $k$ \emph{generated by its pure extremal elements}.
    As in \cref{def of extremal}, we will write $E$ for the set of extremal elements of $L$ and $\E$ for the set of extremal points of $L$.
    We let $g$ be the symmetric bilinear form from \cref{pr:g}.
    If $V$ is any subspace of $L$, we write
    \[ V^\perp := \{ u \in L \mid g(u,v) = 0 \text{ for all } v \in V \} . \]
\end{notation}
\begin{remark}\label{rem:L simple pure}
	If, in addition, $L$ is \emph{simple}, then our assumptions imply that $L$ does not contain sandwich elements.
    Indeed, the sandwich elements are contained in the radical of the form $g$, which is an ideal of $L$. Since $L$ is generated by its pure extremal elements, the form $g$ cannot be identically zero, so the radical of $g$ must be trivial.
\end{remark}
In this section, we do not assume that $L$ is simple (except in \cref{recover:T nondeg,pr:RFS,Grading is bracket if lines}), but we will make this important assumption in the next sections.

\begin{definition}
    We define the \emph{normalizer} of $x$ as
    \[ N_L(x) := \bigl\{ l \in L \mid [x,l] \in \langle x \rangle \bigr \} . \]
\end{definition}

\begin{proposition}[{\cite[Proposition 22 and Corollary 23]{Cohen2006}}]	\label{pr:5gr}
	Let $x, y\in E$ such that $g(x,y)=1$.
	Then:
	\begin{enumerate}
	    \item
        	 $\g$ has a $\Z$-grading
        		\[ \g=\g_{-2}\oplus \g_{-1}\oplus \g_0\oplus \g_1\oplus \g_2,\]
        	with $\g_{-2}=\langle x\rangle$, $\g_{-1}=[x,U]$, $\g_0=N_{\g}(x)\cap N_{\g}(y)$, $\g_1=[y,U]$ and $\g_2=\langle y\rangle$, where
        	\[ U = \bigl\langle x, y, [x,y] \bigr\rangle^\perp . \]
	    \item\label{5gr:grading der}
        	Each $\g_i$ is contained in the $i$-eigenspace of $\ad_{[x,y]}$.
	    \item\label{5gr:iso}
            $\ad_x$ defines a linear isomorphism from $\g_1$ to $\g_{-1}$ with inverse $-\ad_y$.
        \item\label{5gr:indep}
            The filtration $L_{-2} \leq L_{\leq -1} \leq L_{\leq 0} \leq L_{\leq 1} \leq L$ only depends on $x$ and not on~$y$. More precisely, we have
            \begin{align*}
                L_{\leq 1} &= x^\perp, \\
                L_{\leq 0} &= N_L(x), \\
                L_{\leq -1} &= kx + [x, x^\perp], \\
                L_{-2} &= kx.
            \end{align*}
        \item\label{5gr:g0}
            In particular, $g_x(L_{\leq 1})=0$ and $g_y(L_{\geq -1})=0$.
	\end{enumerate}
\end{proposition}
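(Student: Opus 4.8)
The plan is to realise the grading as the decomposition of $L$ under the adjoint action of $h := [x,y]$, exploiting that a hyperbolic pair generates a copy of $\mathfrak{sl}_2$ (case (e) of \cref{pr:2pts}). First I would record the $\mathfrak{sl}_2$-relations: since $g(x,y)=1$, \cref{extr} gives $[x,[x,y]]=2x$ and $[y,[y,x]]=2y$, whence $[h,x]=-2x$, $[h,y]=2y$ and $[x,y]=h$, so that $(x,h,y)$ is an $\mathfrak{sl}_2$-triple in which $x$ and $y$ have $\ad_h$-weight $-2$ and $+2$. Since $\ad_{[h,m]}=[\ad_h,\ad_m]$, the operators $\ad_x$ and $\ad_y$ lower, respectively raise, $\ad_h$-weights by $2$. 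The crucial extra input is the \emph{strengthened} nilpotency furnished by \cref{extr}, namely $\ad_x^2(L)\subseteq kx$ and $\ad_y^2(L)\subseteq ky$, which is far stronger than $\ad_x^3=\ad_y^3=0$.

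I would then use this to pin down the admissible weights. If $z$ is an $\ad_h$-eigenvector of weight $\lambda$, then $\ad_x^2 z\in kx$ has weight $\lambda-4$, forcing $\ad_x^2 z=0$ unless $\lambda=2$, and dually $\ad_y^2 z=0$ unless $\lambda=-2$; running the $\mathfrak{sl}_2$-string through $z$ then shows every such string either has length $\leq 2$ or passes through $x$ (the string $x,-h,2y$) or $y$, and that the extreme weight spaces are exactly $kx$ and $ky$. Confining the weights to $\{-2,-1,0,1,2\}$ and checking that the pieces exhaust $L$ is where one must invoke that $L$ is linearly spanned by its extremal elements (\cref{pr:g}) and analyse how these generators sit relative to $x$ and $y$. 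Setting $L_i$ equal to the weight-$i$ space, the grading property $[L_i,L_j]\subseteq L_{i+j}$ is immediate from $\ad_h$ being a derivation, and the identification $L_{-2}=\langle x\rangle$, $L_2=\langle y\rangle$, $L_0=N_L(x)\cap N_L(y)$, $L_{\pm1}=[x,U],[y,U]$ comes out of the weight-space description together with the definition of $U=\langle x,y,h\rangle^\perp$.

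For part (iii) I would verify $-\ad_y\ad_x=\id$ on $L_1=[y,U]$ directly: for $v\in U$ one has $g(x,v)=g(y,v)=g(h,v)=0$, and a short Jacobi computation gives $[x,[y,v]]=[h,v]+[y,[x,v]]$, after which \cref{P1} (applied with $y$ extremal) yields $[[y,x],[y,v]]=-[y,v]$ and hence $[y,[x,[y,v]]]=-[y,v]$; this also shows $\ad_x$ is injective on $L_1$ and maps it onto $L_{-1}$, with inverse $-\ad_y$. For parts (iv) and (v) I would use the associativity of $g$ from \cref{pr:g}: from $g([h,u],v)=g(u,[h,v])$ one gets $(i+j)\,g(L_i,L_j)=0$, so weight spaces are $g$-orthogonal unless $i+j=0$. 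Thus $L_{\leq 1}\subseteq x^\perp$, and since $g(x,y)=1$ both are hyperplanes, giving $L_{\leq 1}=x^\perp$ and $g_x(L_{\leq 1})=0$ (dually $g_y(L_{\geq -1})=0$). The equality $L_{\leq 0}=N_L(x)$ then follows from the weight shift: $[x,z]\in\langle x\rangle=L_{-2}$ forces the $L_1$- and $L_2$-components of $z$ into $\ker\ad_x$, which is trivial there by (iii) and by $[x,y]=h\neq 0$; the descriptions of $L_{\leq -1}$ and $L_{-2}$ are read off in the same way.

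The main obstacle is that essentially all of this weight-space reasoning presumes the integer weights $-2,\dots,2$ remain distinct in $k$, which fails when $\Char(k)=2$ or $3$. In those characteristics $\ad_h$ need no longer be semisimple with the expected spectrum, and the $g$-orthogonality of the ``even'' pieces breaks down (for instance $g(x,L_0)=0$ is no longer automatic, since $[x,[x,z]]=2g_x(z)x$ carries no information when $2=0$). There the graded pieces must be taken as defined by the explicit formulas and the direct-sum decomposition, together with $[L_i,L_j]\subseteq L_{i+j}$, verified by hand using the full strength of the Premet identities \cref{P1,P2} and, in characteristic $2$, the additional extremal axiom, combined with the case analysis of \cref{pr:2pts} for how each extremal generator relates to $x$ and to $y$. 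This characteristic-free bookkeeping is exactly the content imported from Cohen's work, and it is what makes the statement valid over \emph{every} field.
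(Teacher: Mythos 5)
Your proposal is not a complete proof of the statement as it stands: the statement (like the whole paper) is asserted over an arbitrary field, and your weight-space argument genuinely collapses in characteristics $2$ and $3$ --- which you acknowledge, but do not repair. Concretely, with $h:=[x,y]$ one has $[h,x]=-2x$ and $[h,y]=2y$, so in characteristic $2$ both $x$ and $y$ lie in $\ker \ad_h$ and $\ad_h$ cannot separate $\g_{-2}$, $\g_0$ and $\g_2$ at all; moreover your ``crucial extra input'' $\ad_x^2(L)\subseteq kx$ degenerates to $\ad_x^2=0$ (since $[x,[x,z]]=2g_x(z)x=0$), destroying the string-length argument. In characteristic $3$ the eigenvalues $-2\equiv 1$ and $2\equiv -1$ merge $\g_{-2}$ with $\g_1$ and $\g_2$ with $\g_{-1}$. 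This is precisely why the statement defines the pieces $\g_i$ by the explicit formulas $\langle x\rangle$, $[x,U]$, $N_\g(x)\cap N_\g(y)$, $[y,U]$, $\langle y\rangle$, and why part (ii) claims only \emph{containment} in the $\ad_{[x,y]}$-eigenspaces, never equality. Your closing paragraph, which handles the small-characteristic cases by appealing to ``the content imported from Cohen's work'', is circular as a proof: the statement being proven \emph{is} that content (\cite[Proposition 22 and Corollary 23]{Cohen2006}), and characteristics $2$ and $3$ are exactly the cases this paper cares about.

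For the record, the paper does not prove this proposition either: it is imported verbatim by citation, and the construction in \cite{Cohen2006} proceeds characteristic-freely from the explicit subspaces and the Premet identities \cref{P1,P2}, not from $\ad_h$-eigenspaces. So your first three paragraphs are a reasonable (if incompletely detailed --- note also that $L$ may be infinite-dimensional, so the $\mathfrak{sl}_2$-module decomposition you invoke needs a local-finiteness argument) reconstruction of the good-characteristic case, but the portion of your argument that would cover arbitrary characteristic reduces to the same citation the paper makes, and therefore cannot stand as a self-contained proof of the statement.
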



%

These gradings are closely related to the five different relations $E_i$ that we have introduced in \cref{def:Ei}:
\begin{proposition}\label{pr:Ei(x)}
	Let $x$, $y$ and $L_i$ be as in \cref{pr:5gr}.
	For each $i\in [-2,2]$, we have
	\[ E_i(x)=(E\cap L_{\leq i}) \setminus L_{\leq i-1} . \]
\end{proposition}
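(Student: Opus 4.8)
The plan is to prove the claimed equality $E_i(x) = (E \cap L_{\leq i}) \setminus L_{\leq i-1}$ by characterizing, for each relation type, exactly in which filtration piece an extremal element $y$ in relation $E_i$ with $x$ must lie. The key tool is \cref{pr:5gr}\cref{5gr:indep}, which gives the filtration $L_{-2} \leq L_{\leq -1} \leq L_{\leq 0} \leq L_{\leq 1} \leq L$ purely in terms of $x$, namely $L_{\leq 1} = x^\perp$, $L_{\leq 0} = N_L(x)$, $L_{\leq -1} = kx + [x, x^\perp]$, and $L_{-2} = kx$. Since this filtration depends only on $x$ and not on the auxiliary element $y$ used to define the grading, I can use it uniformly to locate any extremal element relative to $x$.

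First I would dispose of the two extreme cases. For $i = -2$: an extremal element $y$ is in relation $E_{-2}$ with $x$ precisely when $\langle x \rangle = \langle y \rangle$, i.e.\ $y \in kx = L_{-2} = L_{\leq -2}$, and since $L_{\leq -3} = 0$ this is exactly $(E \cap L_{\leq -2}) \setminus L_{\leq -3}$. The case $i = 2$ (hyperbolic) is handled by the observation that $y \in E_2(x)$ iff $g(x,y) \neq 0$; by \cref{pr:2pts}, $g(x,y) = 0$ in all the other cases, so $E_2(x)$ consists exactly of the extremal elements not lying in $x^\perp = L_{\leq 1}$, which is $(E \cap L) \setminus L_{\leq 1} = (E \cap L_{\leq 2}) \setminus L_{\leq 1}$.

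For the three intermediate relations I would translate the defining conditions from \cref{pr:2pts} into membership statements. An extremal $y$ with $g(x,y) = 0$ automatically lies in $x^\perp = L_{\leq 1}$, so all of the collinear, symplectic, and special cases land inside $L_{\leq 1}$; the task is to separate them by the next filtration steps. For $i = 0$ (symplectic): here $[x,y] = 0$, so $y \in N_L(x) = L_{\leq 0}$; I must then verify $y \notin L_{\leq -1} = kx + [x, x^\perp]$ using that $y$ is not collinear to $x$ (case (c) excludes $\lambda x + \mu y \in E$ for $\lambda,\mu$ both nonzero), distinguishing it from the collinear case. For $i = 1$ (special): \cref{pr:2pts}(d) gives $[x,y] \neq 0$ but $g(x,y)=0$, so $y \in x^\perp = L_{\leq 1}$ while $[x,y] \neq 0$ forces $y \notin N_L(x) = L_{\leq 0}$; thus $y \in L_{\leq 1} \setminus L_{\leq 0}$, as required. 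For $i = -1$ (collinear): \cref{pr:2pts}(b) gives $[x,y] = 0$ with $\lambda x + \mu y \in E \cup \{0\}$ for all $\lambda, \mu$, so $y \in N_L(x) = L_{\leq 0}$; the crux is showing $y \in L_{\leq -1} = kx + [x,x^\perp]$, which I expect to extract from the strong collinearity (the whole line $\langle x, y\rangle$ being extremal) together with the eigenspace description in \cref{pr:5gr}\cref{5gr:grading der}.

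The main obstacle will be the collinear case $i = -1$, specifically proving that a collinear partner $y$ actually lies in $L_{\leq -1} = kx + [x, x^\perp]$ rather than merely in $L_{\leq 0}$. The separating conditions for the symplectic and special cases are almost immediate from the $[x,y]$ bracket and the filtration description, but collinearity only gives $[x,y]=0$, which alone places $y$ in $L_{\leq 0}$; pinning it down to $L_{\leq -1}$ requires genuinely using that $\lambda x + \mu y$ remains extremal for all scalars. I would approach this by decomposing $y$ according to the grading and showing that the $L_0$-component must vanish: if $y$ had a nonzero $L_0$-component, the extremality of the entire pencil $\lambda x + \mu y$ would be violated, or one could derive a contradiction via the Premet identities \cref{P1,P2} applied to the collinear configuration. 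Once the collinear case is settled, the five pieces $E_i(x)$ partition $E$ in exact correspondence with the successive quotients of the $x$-filtration, giving the claimed formula simultaneously for all $i \in [-2,2]$.
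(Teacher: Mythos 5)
Your overall architecture is sound: prove the inclusion $E_i(x)\subseteq (E\cap L_{\leq i})\setminus L_{\leq i-1}$ for each $i$, then conclude equality because both families partition the (pure) extremal elements. The cases $i=\pm 2$ are indeed immediate, and $i=1$ needs only a small repair: $[x,z]\neq 0$ does not by itself give $z\notin N_L(x)=L_{\leq 0}$, since $N_L(x)$ only requires $[x,z]\in kx$; what you need is $[x,z]\notin kx$, which holds because in case (d) of \cref{pr:2pts} the point $\langle [x,z]\rangle$ is \emph{collinear} with $\langle x\rangle$, hence distinct from it. Be aware, though, that the paper does not prove this proposition internally at all: its proof is a one-line citation to the proof of \cite[Theorem~28]{Cohen2006}, and the two cases you do not actually carry out ($i=0$ and $i=-1$) are precisely where that reference does its work.

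Those two cases are genuine gaps, and the missing content is the substance of the proposition. For $i=0$ you must show that a symplectic partner cannot lie in $L_{\leq -1}$, i.e.\ that every extremal element of $kx+L_{-1}$ outside $kx$ is collinear with $x$; this is essentially \cref{co:E-1}, whose proof in the paper is deduced \emph{from} \cref{pr:Ei(x)}, so it cannot be borrowed without circularity, and your text only says it ``must be verified''. For $i=-1$ you must show a collinear partner has vanishing $L_0$-component, and you offer only the expectation that pencil extremality ``would be violated'' otherwise. In characteristic $\neq 2$ both hopes can in fact be realized with the tools you name: writing an extremal $z\in kx+L_{-1}$ commuting with $x$ as $z=\lambda x+[x,u]$, $u\in U$ (so $g_x(u)=0$), the Premet identity \cref{P2} for $x$ gives $[x,[z,l]]=g_z(l)x+g_x(l)z$ for all $l$, whence $[w,[w,l]]=2g_w(l)w$ for every $w=\alpha x+\beta z$ with $g_w=\alpha g_x+\beta g_z$; since \cref{extr} implies extremality when $\Char(k)\neq 2$, the whole pencil is extremal and $(x,z)$ is collinear, settling $i=0$. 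For $i=-1$, collinearity gives \cref{extr} for the pencil, which (again after dividing by $2$) yields the same identity; evaluating it at $l=y$ and using \cref{pr:5gr}\cref{5gr:iso} shows the $L_0$-component $c$ of $z$ equals $\mu[x,y]$ where $[c,y]=\mu y$, while $[[x,y],y]=2y$ from \cref{pr:5gr}\cref{5gr:grading der} forces $\mu=2\mu$, so $c=0$. The real obstruction is characteristic $2$, which this section of the paper explicitly includes: there, \cref{extr} for the pencil does \emph{not} imply extremality (the Premet identities \cref{P1,P2} for $\alpha x+\beta z$ must be verified separately and do not follow formally from those of $x$ and $z$), the factor $2$ destroys both computations above, and over $\mathbb{F}_2$ even scalar-variation and torus arguments degenerate. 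Handling all of this characteristic-freely is exactly the content of the lemmas behind \cite[Theorem~28]{Cohen2006}; as written, your proof is incomplete there.
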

\begin{proof}
	This is shown in the proof of \cite[Theorem 28]{Cohen2006}.
\end{proof}
\begin{corollary}\label{co:E-1}
	Let $x$, $y$ and $L_i$ be as in \cref{pr:5gr}.
	Then
	\[ E_{-1}(x) = \{ \lambda x + e \mid \lambda \in k, e \in E \cap L_{-1} \} . \]
\end{corollary}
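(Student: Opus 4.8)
The plan is to use the characterization of the collinearity relation $E_{-1}(x)$ provided by \cref{pr:Ei(x)}, together with the structural description of the filtration in \cref{pr:5gr}, to pin down exactly which extremal elements are collinear with~$x$.

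First I would apply \cref{pr:Ei(x)} with $i=-1$ to obtain
\[ E_{-1}(x) = (E \cap L_{\leq -1}) \setminus L_{\leq -2} . \]
By \cref{pr:5gr}\ref{5gr:indep} we have $L_{-2} = kx$ and $L_{\leq -1} = kx + [x, x^\perp] = L_{-2} \oplus L_{-1}$, so an element of $E \cap L_{\leq -1}$ has the form $z = \lambda x + e$ with $\lambda \in k$ and $e \in L_{-1}$, and the condition $z \notin L_{\leq -2} = kx$ is exactly the condition $e \neq 0$. Thus
\[ E_{-1}(x) = \{ \lambda x + e \mid \lambda \in k,\ e \in L_{-1},\ \lambda x + e \in E,\ e \neq 0 \} . \]
This is almost the claimed description, so the remaining work is to show that the side condition ``$\lambda x + e \in E$'' can be replaced by the cleaner condition ``$e \in E \cap L_{-1}$'', i.e. that whenever $\lambda x + e$ is extremal with $e \neq 0$, the element $e$ itself is extremal (and conversely).

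The heart of the proof, and the step I expect to be the main obstacle, is this equivalence $\lambda x + e \in E \iff e \in E$ for $e \in L_{-1}\setminus\{0\}$. The natural tool is the exponential automorphism $\exp(\lambda x) \in \Aut(L)$ from \cref{def:exp,le:exp}, which maps extremal elements to extremal elements. Since $x \in L_{-2}$ and $e \in L_{-1}$, one computes $\exp(-\lambda x)(\lambda x + e) = (\lambda x + e) + [-\lambda x, \lambda x + e] + g_x(\lambda x + e)(-\lambda x)$; here $[-\lambda x, \lambda x] = 0$ and $[-\lambda x, e] \in L_{-3} = 0$, while $g_x(e) = g(x, e) = 0$ because $e \in L_{-1} \subseteq L_{\leq 1} = x^\perp$ by \cref{pr:5gr}\ref{5gr:indep}, and $g_x(x)=0$. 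Hence $\exp(-\lambda x)(\lambda x + e) = e$, so $\lambda x + e$ is extremal if and only if its image $e$ under the automorphism $\exp(-\lambda x)$ is extremal. This gives the desired equivalence in both directions.

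Combining the two steps yields exactly
\[ E_{-1}(x) = \{ \lambda x + e \mid \lambda \in k,\ e \in E \cap L_{-1} \} , \]
where one checks that the appearance of $\lambda = 0$ and the requirement $e \neq 0$ are automatically accommodated: taking $e \in E \cap L_{-1}$ already forces $e \neq 0$ (extremal elements are non-zero by definition), and no $\lambda$ is excluded. The only subtlety worth double-checking is the characteristic~$2$ behaviour of $g_x$, but since $x$ is pure and $e \in x^\perp$, the value $g_x(e) = g(x,e) = 0$ is forced by \cref{pr:5gr}\ref{5gr:g0} independently of any non-uniqueness concerns, so the computation of $\exp(-\lambda x)$ goes through in all characteristics.
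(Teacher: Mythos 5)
Your first step coincides with the paper's: applying \cref{pr:Ei(x)} with $i=-1$ gives $E_{-1}(x) = \bigl( E \cap (kx + L_{-1}) \bigr) \setminus kx$, so everything reduces to the equivalence $\lambda x + e \in E \iff e \in E$ for $e \in L_{-1}\setminus\{0\}$. However, your proof of that equivalence contains a genuine error, and it is exactly at the step you flagged as the heart of the argument. Your own computation shows that \emph{all} correction terms in $\exp(-\lambda x)(\lambda x + e)$ vanish: the bracket term is zero because $[x,x]=0$ and $[x,e]\in L_{-3}=0$, and the $g$-term is zero because $g_x(L_{\leq 1})=0$ by \cref{pr:5gr}\cref{5gr:g0}. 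But then the conclusion is $\exp(-\lambda x)(\lambda x + e) = \lambda x + e$, i.e., the automorphism \emph{fixes} $\lambda x + e$; it does not map it to $e$. The claim ``hence $\exp(-\lambda x)(\lambda x + e) = e$'' contradicts the computation that precedes it. Moreover, the defect cannot be repaired inside $\Exp(x)$: since $\ad_x$ annihilates $L_{-2} \oplus L_{-1}$ and $g_x$ vanishes on $L_{\leq 1}$, every element of $\Exp(x)$ restricts to the identity on $L_{-2}\oplus L_{-1}$, so no automorphism of this form can strip off the $x$-component of $\lambda x + e$.

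The equivalence itself is true, but it requires a geometric argument rather than an exponential one; this is what the paper does. If either of $\lambda x + e$ or $e$ is extremal (and $e \neq 0$), then by the first step that element lies in $E_{-1}(x)$, i.e., it spans a point \emph{collinear} with $\langle x \rangle$. By case (b) of \cref{pr:2pts}, collinearity of $\langle x\rangle$ with a point $\langle z\rangle$ means that every linear combination $\mu x + \nu z$ lies in $E \cup \{0\}$; since $\langle x, \lambda x + e\rangle = \langle x, e \rangle$, all non-zero elements of this plane are extremal, and in particular both $e$ and $\lambda x + e$ are. In short: the correct mechanism is that a line of the extremal geometry through $\langle x \rangle$ consists entirely of extremal points, not that some automorphism moves $\lambda x + e$ to $e$.
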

\begin{proof}
    By \cref{pr:Ei(x)}, $E_{-1}(x) = E \cap (kx + L_{-1}) \setminus kx$.
    Now observe that an element $\lambda x + e$ (with $e \in L_{-1}$) is extremal if and only if $e$ is extremal, because if one of them is extremal, it is collinear with $x$ and hence all points on the line through $x$ are extremal points.
\end{proof}

\begin{lemma}\label{le:ayb}
	Let $x$, $y$ and $L_i$ be as in \cref{pr:5gr}.
    Let $a \in L_1$ and $b \in L_{-1}$.
    Then $[a, [y,b]] = g(a,b)y$.
\end{lemma}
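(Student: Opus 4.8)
The plan is to first use the grading to pin down the degree of $[a,[y,b]]$, and then to extract the relevant scalar by pairing it against $x$ with the associative form $g$.

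First I would observe that, since $y\in L_2$, the operator $\ad_y$ raises degree by $2$, so $[y,b]\in L_{-1+2}=L_1$. Consequently $[a,[y,b]]\in[L_1,L_1]\subseteq L_2=\langle y\rangle$, and therefore $[a,[y,b]]=\lambda y$ for some scalar $\lambda\in k$. It remains to identify $\lambda$ with $g(a,b)$.

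To extract $\lambda$, I would pair both sides with $x$ using the bilinear form $g$ from \cref{pr:g}. Since $g(x,y)=g(y,x)=1$, we obtain $\lambda = g([a,[y,b]],x)$. Now $g$ associates with the Lie bracket (\cref{pr:g}), so
\[ g\bigl([a,[y,b]],x\bigr) = g\bigl(a,[[y,b],x]\bigr) = -\,g\bigl(a,[x,[y,b]]\bigr). \]
Finally, by \cref{5gr:iso} the map $\ad_x\colon L_1\to L_{-1}$ is an isomorphism with inverse $-\ad_y$; reading this off for $b\in L_{-1}$ gives $\ad_x(-\ad_y(b))=b$, i.e.\ $[x,[y,b]]=-b$. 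Substituting yields $\lambda = -\,g(a,-b)=g(a,b)$, as desired.

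There is no serious obstacle here: once the degree count and the inverse relation from \cref{5gr:iso} are in place, the argument is a short computation. The only point requiring a little care is the bookkeeping of signs when deducing $[x,[y,b]]=-b$, which must be read off correctly from the fact that it is $-\ad_y$ (and not $\ad_y$) that serves as the inverse of $\ad_x$ in \cref{5gr:iso}.
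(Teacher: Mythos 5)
Your proof is correct, and it takes a genuinely different route from the paper's. You first use the grading to force $[a,[y,b]]\in[L_1,L_1]\subseteq L_2=\langle y\rangle$, and then extract the scalar by pairing against $x$: the associativity of $g$ (\cref{pr:g}) together with $[x,[y,b]]=-b$ (the inverse relation in \cref{pr:5gr}\cref{5gr:iso}) and the normalization $g(x,y)=1$ pins down $\lambda=g(a,b)$. The paper instead computes the bracket head-on: it writes $a=[y,c]$ for some $c\in L_{-1}$ via \cref{pr:5gr}\cref{5gr:iso}, expands $[[y,c],[y,b]]$ with the Premet identity \cref{P1}, kills the terms $g_y(b)[y,c]$ and $g_y(c)[y,b]$ using $g_y(L_{\geq -1})=0$ (\cref{pr:5gr}\cref{5gr:g0}), and identifies the surviving term $g_y([c,b])y$ with $g(a,b)y$ by associativity. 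Your argument buys simplicity of ingredients --- it never touches the Premet identities, only the grading, the associative form, and the one-dimensionality of $L_2$ --- whereas the paper's computation is self-contained at the level of a single identity and does not need to introduce the auxiliary pairing with $x$. Both uses of \cref{pr:5gr} are legitimate, and your sign bookkeeping ($[x,[y,b]]=-b$, hence $\lambda=-g(a,-b)=g(a,b)$) is right.
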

\begin{proof}
    By \cref{pr:5gr}\cref{5gr:iso}, we can write $a = [y,c]$ for some $c \in L_{-1}$. By the Premet identity \cref{P1} and \cref{pr:5gr}\cref{5gr:g0}, we then get
    \begin{align*}
        [a, [y,b]]
        &= [[y,c], [y,b]] \\
        &= g_y([c,b])y + g_y(b) [y,c] - g_y(c) [y,b] \\
        &= g(y, [c,b])y = g([y,c], b)y = g(a,b)y . \qedhere
    \end{align*}
\end{proof}

The following lemma describes an automorphism that reverses the grading.

\begin{lemma}
\label{recover:switching grading}
	Let $x$, $y$ and $L_i$ be as in \cref{pr:5gr}.
	Consider the automorphism $\varphi=\exp(y)\exp(x)\exp(y)$.
	Then
	\begin{align*}
	   \varphi(x) &= y , \\
	   \varphi(l_{-1}) &= [y, l_{-1}] , \\
	   \varphi(l_0) &= l_0 + [x, [y, l_0]] , \\
	   \varphi(l_1) &= [x, l_1] , \\
	   \varphi(y) &= x ,
	\end{align*}
	for all $l_{-1} \in L_{-1}$, $l_0 \in L_0$, $l_1 \in L_1$.
\end{lemma}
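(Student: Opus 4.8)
The plan is a direct computation: I substitute the explicit formula $\exp(z)(m) = m + [z,m] + g_z(m)z$ (\cref{def:exp}) for $z \in \{x,y\}$, and apply the three factors of $\varphi = \exp(y)\exp(x)\exp(y)$ in turn to a homogeneous element. Everything reduces to a small toolbox of facts that I would collect first. Write $h := [x,y] \in L_0$. From \cref{pr:5gr}\cref{5gr:g0} the form $g_x$ vanishes on $L_{\leq 1}$ and $g_y$ vanishes on $L_{\geq -1}$; since $g(x,y)=1$, the extremal identity \cref{extr} gives $[x,h] = 2g_x(y)x = 2x$ and $[y,h] = -[y,[y,x]] = -2g_y(x)y = -2y$, while associativity of $g$ (\cref{pr:g}) yields $g(x,x)=g(y,y)=g_x(h)=g_y(h)=0$. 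In particular $\exp(x)$ fixes $\langle x\rangle$ and $\exp(y)$ fixes $\langle y\rangle$. The grading forces $[y,L_1]\subseteq L_3 = 0$ and $[x,L_{-1}]\subseteq L_{-3}=0$, and by \cref{pr:5gr}\cref{5gr:iso} the maps $\ad_x|_{L_1}$ and $-\ad_y|_{L_{-1}}$ are mutually inverse, so $[y,[x,l_1]] = -l_1$ and $[x,[y,l_{-1}]] = -l_{-1}$.

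\textbf{The ends and the degree-$\pm 1$ pieces.} For $\varphi(x)$ I would compute $\exp(y)(x) = x - h + y$, then apply $\exp(x)$ using $[x,h]=2x$ and $g_x(h)=0$ to obtain $y$, and finally note $\exp(y)$ fixes $y$; the computation of $\varphi(y)$ is symmetric, using $[y,h]=-2y$. For $l_1 \in L_1$, the first factor fixes $l_1$ (as $[y,l_1]=0$ and $g_y(l_1)=0$), the middle factor produces $l_1 + [x,l_1]$ (as $g_x(l_1)=0$), and the last factor sends $[x,l_1]\in L_{-1}$ to $[x,l_1] + [y,[x,l_1]] = [x,l_1] - l_1$ while fixing $l_1$, so the $l_1$ terms cancel and $\varphi(l_1) = [x,l_1]$. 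The case $l_{-1}\in L_{-1}$ is entirely parallel, using $[x,[y,l_{-1}]] = -l_{-1}$ and the fact that $\exp(y)$ fixes $L_1$, giving $\varphi(l_{-1}) = [y,l_{-1}]$. In each case the mechanism is the same: the inverse-isomorphism relation between $\ad_x$ and $-\ad_y$ produces exactly the cancellation that collapses the three-factor composition to a single bracket.

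\textbf{The degree-$0$ piece and the main obstacle.} This is the delicate case. Applying the three factors to $l_0 \in L_0$ produces, besides the desired $l_0 + [x,[y,l_0]]$, several terms lying in $\langle x\rangle$, $\langle y\rangle$ and $\langle h\rangle$, coming from $[x,l_0]\in L_{-2}$ and $[y,l_0]\in L_2$ and their images under the remaining factors. The point is that these unwanted terms cancel, and the key input that makes this happen is \cref{pr:5gr}\cref{5gr:grading der}: $L_0$ lies in the kernel of $\ad_h$, so $[[x,y],l_0]=0$, whence by the Jacobi identity $[x,[y,l_0]] = [y,[x,l_0]]$. Writing $[y,l_0]=\mu y$ and $[x,l_0]=\nu x$ and using $[x,y]\neq 0$, this identity forces $\nu = -\mu$, which is precisely the relation that cancels the spurious $\langle x\rangle$- and $\langle y\rangle$-components and turns the surviving $\langle h\rangle$-term into $\mu h = [x,[y,l_0]]$. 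I expect this bookkeeping for $l_0$ to be the only nontrivial step; the remaining four identities are short cancellation arguments once the toolbox above is in place.
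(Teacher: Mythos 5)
Your proposal is correct and follows essentially the same route as the paper: expand $\varphi=\exp(y)\exp(x)\exp(y)$ on each homogeneous piece using \cref{def:exp}, the vanishing of $g_x$ on $L_{\leq 1}$ and of $g_y$ on $L_{\geq -1}$, and the inverse-isomorphism relation of \cref{pr:5gr}\cref{5gr:iso}; your cancellations, including the degree-$0$ case, all check out. The only (harmless) deviation is in deriving $[x,l_0]=-\mu x$ from $[y,l_0]=\mu y$: you use the $\ad_{[x,y]}$-eigenspace property \cref{pr:5gr}\cref{5gr:grading der} plus the Jacobi identity, whereas the paper gets the same relation from the associativity of $g$ (\cref{pr:g}).
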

\begin{proof}
	Since $g_x(y)=g_y(x)=1$, we get $\exp(x)(y) = y + [x,y] + x$ and $\exp(y)(x) = x - [x,y] + y$. Since $g_x([x,y]) = 0$ and $g_y([x,y]) = 0$, it now follows that $\varphi(x)=y$ and $\varphi(y)=x$.

	Next, let $l_1\in L_1$.
	Then $\exp(y)(l_1) = l_1$ and $\exp(x)(l_1) = l_1 + [x, l_1]$ because $g_x(L_1) = 0$ (by \cref{pr:5gr}\cref{5gr:g0}).
	By the same \cref{pr:5gr}\cref{5gr:g0}, we also have $g_y(L_{-1}) = 0$, hence
	\[ \varphi(l_1)=\exp(y)([x,l_1]+l_1)=[x,l_1]+ \bigl( l_1+[y,[x,l_1]] \bigr) = [x,l_1] , \]
	by \cref{pr:5gr}\cref{5gr:iso}.
	Similarly, $\varphi(l_{-1})=[y,l_{-1}]$ for all $l_{-1}\in L_{-1}$.

	Finally, let $l_0\in L_0$.
	Let $\lambda,\mu \in k$ be such that $[x,l_0]=\lambda x$ and $[y,l_0]=\mu y$.
	Then by the fact that $g$ associates with the Lie bracket, we have
	\[ \mu = g_x(\mu y) = g_x([y,l_0]) = -g_y([x,l_0]) = -g_y(\lambda x) = -\lambda. \]
	Hence
	\begin{align*}
		\varphi(l_0)
		&= \exp(y)\exp(x)(l_0 - \lambda y) \\
		&= \exp(y) (l_0 + \lambda x) - \lambda \varphi(y) \\
		&= (l_0 - \lambda y) + \lambda (x - [x,y] + y) - \lambda x \\
		&= l_0 - \lambda [x,y] = l_0 + [x,[y,l_0]] .
		\qedhere
	\end{align*}
\end{proof}
\begin{remark}\label{rem:swap}
    Because of \cref{recover:switching grading}, many results involving the $5$-grading can be ``swapped around''. We will often do this without explicitly mentioning.
\end{remark}

The automorphism group $\Aut(L)$ contains a torus preserving the grading. (This easy fact is, of course, true for any $\Z$-graded algebra.)
\begin{lemma}
\label{recover:autom of grading}
 	Let $x$, $y$ and $L_i$ be as in \cref{pr:5gr}.
	Let $\lambda\in k^\times$ be arbitrary.
	Consider the map $\varphi_\lambda \colon L\to L$ defined by $\varphi_\lambda(l_i)=\lambda^il_i$ for all $l_i\in L_i$, with $i\in [-2, 2]$.
	Then $\varphi_\lambda\in \Aut(L)$.
\end{lemma}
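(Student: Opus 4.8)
The plan is to verify directly that the map $\varphi_\lambda$ respects the Lie bracket, using only the grading property $[L_i, L_j] \leq L_{i+j}$ and the fact that $\varphi_\lambda$ is a bijective linear map. The statement is essentially the standard observation that the one-parameter group of ``grading dilations'' acts by automorphisms on any $\mathbb{Z}$-graded Lie algebra, as the parenthetical remark in the statement already hints.

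First I would check that $\varphi_\lambda$ is a well-defined bijective linear endomorphism of $L$. Since $L = L_{-2} \oplus L_{-1} \oplus L_0 \oplus L_1 \oplus L_2$ is a vector space direct sum, any element $l \in L$ decomposes uniquely as $l = \sum_{i=-2}^{2} l_i$ with $l_i \in L_i$, so $\varphi_\lambda(l) := \sum_i \lambda^i l_i$ is a well-defined linear map. Because $\lambda \in k^\times$, each scalar $\lambda^i$ is invertible, so $\varphi_\lambda$ is invertible with inverse $\varphi_{\lambda^{-1}} = \varphi_{1/\lambda}$; indeed $\varphi_\lambda \circ \varphi_\mu = \varphi_{\lambda\mu}$ on each homogeneous component and hence on all of $L$.

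The heart of the argument is to show $\varphi_\lambda([a,b]) = [\varphi_\lambda(a), \varphi_\lambda(b)]$ for all $a, b \in L$. By bilinearity of the bracket and linearity of $\varphi_\lambda$, it suffices to verify this on homogeneous elements $a \in L_i$ and $b \in L_j$. For such elements, $[a,b] \in L_{i+j}$ by the grading condition, so on the left-hand side $\varphi_\lambda([a,b]) = \lambda^{i+j}[a,b]$, where we interpret $L_m = 0$ for $|m| > 2$ so that the identity holds trivially when $i+j$ falls outside $[-2,2]$. On the right-hand side, $[\varphi_\lambda(a), \varphi_\lambda(b)] = [\lambda^i a, \lambda^j b] = \lambda^i \lambda^j [a,b] = \lambda^{i+j}[a,b]$. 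The two sides agree, which establishes that $\varphi_\lambda$ is an algebra homomorphism.

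Combining bijectivity with the homomorphism property yields $\varphi_\lambda \in \Aut(L)$, as required. I do not anticipate any genuine obstacle here: the only mild point of care is the bookkeeping for brackets landing outside the grading range, which is handled by the convention $L_m = 0$ for $|m| > 2$, and the observation that the decisive computation $\lambda^i \lambda^j = \lambda^{i+j}$ matches the grading shift exactly. This is precisely why the result holds for any $\mathbb{Z}$-graded algebra, independent of all the extremal-element machinery developed earlier.
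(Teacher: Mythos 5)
Your proof is correct and takes essentially the same route as the paper: verify bijectivity (with inverse $\varphi_{1/\lambda}$) and then check the homomorphism property on homogeneous elements via $\varphi_\lambda([l_i,l_j])=\lambda^{i+j}[l_i,l_j]=[\varphi_\lambda(l_i),\varphi_\lambda(l_j)]$. The only difference is that you spell out the convention $L_m=0$ for $|m|>2$, which the paper leaves implicit.
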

\begin{proof}
	Clearly $\varphi_\lambda$ is bijective.
	Consider $l_i\in L_i$ and $l_j\in L_j$ arbitrary, with $i,j\in [-2, 2]$, then
	\[ \varphi_\lambda ([l_i,l_j])=\lambda^{i+j}[l_i,l_j]=[\lambda^il_i,\lambda^jl_j]=[\varphi_\lambda(l_i),\varphi_\lambda(l_j)].\qedhere\]
\end{proof}

The following lemma will ensure the non-degeneracy of certain maps later on.

\begin{lemma}
\label{recover:T nondeg}
    Assume that $L$ is simple, and let $x$, $y$ and $L_i$ be as in \cref{pr:5gr}.
    Let $Z_i = \{ z \in L_i \mid [z, L_i] = 0 \}$ for $i = -1$ and $i = 1$.
    Then $Z_{-1} = Z_1 = 0$.
\end{lemma}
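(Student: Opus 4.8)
The plan is to prove $Z_{-1}=0$ and then obtain $Z_1=0$ for free. First I would reduce the two statements to one another using the grading-reversing automorphism $\varphi=\exp(y)\exp(x)\exp(y)$ of \cref{recover:switching grading}: it satisfies $\varphi(l_{-1})=[y,l_{-1}]$, so by \cref{pr:5gr}\cref{5gr:iso} it restricts to a linear isomorphism $L_{-1}\to L_1$, and being an automorphism it carries $\{z\in L_{-1}\mid [z,L_{-1}]=0\}$ onto $\{z\in L_1\mid[z,L_1]=0\}$, i.e.\ $\varphi(Z_{-1})=Z_1$. Hence it suffices to show $Z_{-1}=0$ (cf.\ \cref{rem:swap}).

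Next I would reinterpret $Z_{-1}$ in terms of the form $g$. Since $[L_{-1},L_{-1}]\subseteq L_{-2}=\langle x\rangle$ and $g(x,y)=1$, for $z,w\in L_{-1}$ we have $[z,w]=g([z,w],y)\,x$, and $g([z,w],y)=g(z,[w,y])$ because $g$ associates with the bracket (\cref{pr:g}). As $w$ runs through $L_{-1}$, the element $[w,y]=-[y,w]$ runs through all of $L_1$ by \cref{pr:5gr}\cref{5gr:iso}. Therefore $[z,L_{-1}]=0$ if and only if $g(z,L_1)=0$, so that $Z_{-1}=\{z\in L_{-1}\mid g(z,L_1)=0\}$ is precisely the left radical of the pairing $g\colon L_{-1}\times L_1\to k$.

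It then remains to show that this pairing is non-degenerate. On the one hand, $g$ is non-degenerate on the whole of $L$: its radical is an ideal (as $g$ associates with the bracket) which is proper (as $L$ is generated by pure extremal elements, so $g\neq 0$), hence zero by simplicity, exactly as in the argument of \cref{rem:L simple pure}. On the other hand, I would show that the grading is orthogonal for $g$, i.e.\ $g(L_i,L_j)=0$ whenever $i+j\neq 0$. The components involving an end follow from \cref{pr:5gr}\cref{5gr:g0}, namely $g(L_i,L_2)=g_y(L_i)=0$ for $i\geq -1$ and $g(L_i,L_{-2})=g_x(L_i)=0$ for $i\leq 1$. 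For the middle components one uses the isomorphisms of \cref{pr:5gr}\cref{5gr:iso}: writing an element of $L_{-1}$ as $[x,a']$ with $a'\in L_1$, associativity turns $g([x,a'],b)$ into $g(x,[a',b])=g_x([a',b])$ with $[a',b]\in L_0\subseteq x^\perp=L_{\leq 1}$, which vanishes by \cref{pr:5gr}\cref{5gr:g0}; the cases $g(L_1,L_1)$ and $g(L_0,L_{\pm1})$ are handled symmetrically, writing the relevant element as $[y,\,\cdot\,]$ and invoking $g_y(L_{\geq -1})=0$. Combining this orthogonality with the non-degeneracy of $g$ on $L$ shows that any $z\in L_{-1}$ with $g(z,L_1)=0$ is $g$-orthogonal to all of $L$, hence $z=0$; this gives $Z_{-1}=0$, and $Z_1=0$ follows from the first paragraph.

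The step I expect to require the most care is the orthogonality $g(L_i,L_j)=0$ for $i+j\neq 0$ in characteristics $2$ and $3$. The naive derivation argument, which deduces $(i+j)\,g(L_i,L_j)=0$ from the fact that each $L_i$ lies in the $i$-eigenspace of $\ad_{[x,y]}$ (\cref{pr:5gr}\cref{5gr:grading der}), breaks down exactly when $i+j\equiv 0$ modulo the characteristic, most notably for $g(L_{-1},L_{-1})$ when $\Char(k)=2$. The point of the bracket computation above is that it never divides by an integer and yields $g(L_{-1},L_{-1})=0$ (and the remaining identities) uniformly in every characteristic, so that no separate small-characteristic analysis is needed.
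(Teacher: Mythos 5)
Your proposal is correct, but it takes a genuinely different route from the paper. The paper stays entirely at the level of brackets and ideals: using the Premet identity it shows $[y,Z_{-1}]\leq Z_1$ and $[x,Z_1]\leq Z_{-1}$ (hence equality), checks $[L_0,Z_{\pm 1}]\leq Z_{\pm 1}$ via Jacobi, uses the switching automorphism of \cref{recover:switching grading} to identify $[Z_{-1},L_1]=[Z_1,L_{-1}]$, and then observes that $Z_{-1}\oplus [Z_{-1},L_1]\oplus Z_1$ is a proper ideal of $L$, which simplicity kills. You instead reduce everything to the extremal form: $Z_{-1}$ is the radical of the pairing $g\colon L_{-1}\times L_1\to k$, the grading is $g$-orthogonal (proved characteristic-freely via associativity of $g$, \cref{pr:5gr}\cref{5gr:iso} and \cref{pr:5gr}\cref{5gr:g0}, correctly sidestepping the eigenvalue argument that degenerates when $\Char(k)\mid i+j$), and non-degeneracy of $g$ on $L$ --- itself a consequence of simplicity, as in \cref{rem:L simple pure} --- finishes the job. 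Both arguments funnel simplicity through an ideal (the paper's explicit graded ideal versus the radical of $g$), but yours is shorter and more conceptual, and it makes explicit the non-degeneracy of the $g$-pairing between $L_{-1}$ and $L_1$, which is really the content later exploited when this lemma is invoked to prove non-degeneracy of the trace forms; the paper's version, by contrast, is self-contained in bracket manipulations and produces the auxiliary identities $[y,Z_{-1}]=Z_1$, $[x,Z_1]=Z_{-1}$ along the way. Your reduction of $Z_1$ to $Z_{-1}$ via $\varphi$ in the first paragraph is also fine, though strictly speaking it is not even needed in your setup, since the symmetric argument (swapping the roles of $x$ and $y$, cf.\ \cref{rem:swap}) applies verbatim to $Z_1$.
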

\begin{proof}
	Let $z\in Z_{-1}$ and $a\in L_{-1}$.
	Then $[a,z] = 0$ by definition of $Z_{-1}$, and $g_y(z) = g_y(a) = 0$ by \cref{pr:5gr}\cref{5gr:g0}.
	By the Premet identities, we then have
	\[ [[y,a],[y,z]]=g_y([a,z])+g_y(z)[y,a]-g_y(a)[y,z]=0. \]
	Since $[y, L_{-1}] = L_1$ by \cref{pr:5gr}\cref{5gr:iso}, this shows that $[y,Z_{-1}]\leq Z_1$.
	Similarly, $[x, Z_1] \leq Z_{-1}$, but then $Z_{-1} = [x, [y, Z_{-1}]] \leq [x, Z_1] \leq Z_{-1}$, so in fact
	\[ [y, Z_{-1}] = Z_1 \quad \text{and} \quad [x, Z_1] = Z_{-1}. \]
	Observe now that by the Jacobi identity,
	$[[L_0,Z_{-1}],L_{-1}] \leq [L_0,[Z_{-1},L_{-1}]]+[Z_{-1},[L_0,L_{-1}]]=0$ and thus $[L_0,Z_{-1}]\leq Z_{-1}$; similarly, we have $[L_0,Z_1]\leq Z_1$.
	Moreover,
	\[ [y,[Z_{-1},L_1]]=[[y,Z_{-1}],L_1]=[Z_1,L_1]=0 , \]
	hence the map $\varphi$ from \cref{recover:switching grading} fixes $[Z_{-1}, L_1]$.
	On the other hand, $\varphi(Z_{-1}) = [y, Z_{-1}] = Z_1$ and $\varphi(L_1) = [x, L_1] = L_{-1}$, and therefore
	$[Z_{-1},L_1] = [Z_1,L_{-1}]$.
	We can now see that
	\[ Z_{-1}\oplus [Z_{-1},L_1]\oplus Z_1 \]
	is an ideal of $L$.
	Because $L$ is simple, we conclude that $Z_{-1}=0$ and $Z_1=0$.
\end{proof}

The idea of looking at two different $5$-gradings obtained from hyperbolic pairs is a key ingredient for our results.
The following lemma is a very first (easy) instance of this.
\begin{lemma}
	\label{recover quadr:prop aut fixing x and y}
	Consider $x,y,a,b\in E$ such that $g_x(y)=1=g_a(b)$.
	Let
	\begin{align*}
	   L &= L_{-2}\oplus L_{-1}\oplus L_0\oplus L_1\oplus L_2 \quad \text{and} \\
	   L &= L'_{-2}\oplus L'_{-1}\oplus L'_0\oplus L'_1\oplus L'_2
	\end{align*}
    be the $5$-gradings corresponding to the hyperbolic pairs $(x,y)$ and $(a,b)$, respectively, as in \cref{pr:5gr}. 

	If $\varphi\in\Aut(L)$ maps $L_{-2}$ to $L'_{-2}$ and $L_2$ to $L'_2$, then $\varphi(L_i)=L'_i$ for all $i\in [-2,2]$.
\end{lemma}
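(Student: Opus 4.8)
The plan is to reduce everything to the two \emph{filtrations} attached to the ends of each grading, exploiting the fact that these filtrations---unlike the individual graded pieces---each depend on only one extremal point.

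First I would record that every $\psi \in \Aut(L)$ preserves the form $g$. Indeed, $\psi$ permutes $E$, and applying $\psi$ to the defining identity \cref{extr} together with the uniqueness of the extremal form (\cref{rem:gx}) gives $g_{\psi(z)}(\psi(w)) = g_z(w)$ for all $z \in E$ and $w \in L$; since $L$ is spanned by $E$ (\cref{pr:g}), it follows that $g(\psi(u), \psi(v)) = g(u,v)$ for all $u,v \in L$. Consequently $\psi(W^\perp) = \psi(W)^\perp$ for every subspace $W \leq L$, and $\psi(N_L(z)) = N_L(\psi(z))$ for every $z \in L$.

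Next, from $\varphi(L_{-2}) = L'_{-2}$, i.e.\ $\varphi(\langle x\rangle) = \langle a\rangle$, I get $\varphi(x) = \lambda a$ for some $\lambda \in k^\times$. Now I invoke \cref{pr:5gr}\cref{5gr:indep}: the downward filtration $L_{-2} \leq L_{\leq -1} \leq L_{\leq 0} \leq L_{\leq 1}$ is expressed purely through $x$, $g$ and the bracket (as $kx$, $kx + [x,x^\perp]$, $N_L(x)$, $x^\perp$), and the same formulas in terms of $a$ describe the corresponding filtration for the $(a,b)$-grading. Combining this with the equivariances from the previous paragraph and $\varphi(x) \in \langle a\rangle$ yields $\varphi(L_{\leq i}) = L'_{\leq i}$ for every $i \in [-2,2]$; for instance $\varphi(L_{\leq 1}) = \varphi(x^\perp) = \varphi(x)^\perp = a^\perp = L'_{\leq 1}$, and the remaining levels follow identically (using $N_L(\lambda a) = N_L(a)$ and the fact that scaling by $\lambda \neq 0$ preserves the subspace $[a,a^\perp]$). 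Applying the mirror statement---by \cref{rem:swap}, \cref{pr:5gr}\cref{5gr:indep} for the pair $(y,x)$ expresses the upward filtration $L_{\geq i}$ through $y$ alone---and using $\varphi(L_2) = L'_2$, i.e.\ $\varphi(y) \in \langle b\rangle$, I likewise obtain $\varphi(L_{\geq i}) = L'_{\geq i}$ for every $i$.

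Finally, since both decompositions are $\Z$-gradings we have $L_i = L_{\leq i} \cap L_{\geq i}$ and $L'_i = L'_{\leq i} \cap L'_{\geq i}$, so bijectivity of $\varphi$ gives $\varphi(L_i) = \varphi(L_{\leq i}) \cap \varphi(L_{\geq i}) = L'_{\leq i} \cap L'_{\geq i} = L'_i$, as desired. The only real subtlety is that the graded pieces $L_i$ depend on \emph{both} ends of the grading (and in characteristic $2$ or $3$ cannot be separated as eigenspaces of $\ad_{[x,y]}$, since the eigenvalues collide); the device of intersecting the $x$-filtration with the $y$-filtration, each of which depends on a single end, circumvents this entirely, and the one fact genuinely needing verification---the $\varphi$-invariance of $g$---is immediate from the uniqueness of extremal forms.
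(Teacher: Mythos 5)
Your proof is correct, and it takes a route that is related to but genuinely distinct from the paper's. The paper argues piece by piece: writing $\varphi(x)=\lambda a$ and $\varphi(y)=\mu b$, it sends $L_0 = N_L(x)\cap N_L(y)$ to $L'_0 = N_L(a)\cap N_L(b)$ directly, and then maps $L_{-1}=[x,U]$ and $L_1=[y,U]$ (with $U=\langle x,y,[x,y]\rangle^\perp$) onto $[a,U']$ and $[b,U']$, invoking exactly the key fact you isolate and prove in your first paragraph --- that $\varphi$ preserves $g$ because the extremal form is uniquely determined (\cref{rem:gx}, \cref{pr:g}) --- but only citing it in one line. Your argument instead pushes everything through \cref{pr:5gr}\cref{5gr:indep}: each one-sided filtration is described in terms of a \emph{single} extremal point, so $\varphi(L_{\leq i})=L'_{\leq i}$ follows from $\varphi(x)\in\langle a\rangle$ alone, $\varphi(L_{\geq i})=L'_{\geq i}$ follows from $\varphi(y)\in\langle b\rangle$ alone (via \cref{pr:5gr} applied to the reversed pair $(y,x)$, whose grading is the reverse of the $(x,y)$-grading), and the graded pieces are recovered as $L_i = L_{\leq i}\cap L_{\geq i}$. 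What your version buys: it treats all five pieces uniformly, avoids any bracket computation with $U$, cleanly separates the roles of the two hypotheses, and spells out the $g$-invariance of automorphisms rather than leaving it implicit. What the paper's version buys is brevity: given the explicit formulas of \cref{pr:5gr}, three displayed identities settle the lemma in a few lines. Your closing remark is also on point --- in characteristic $2$ or $3$ the pieces cannot be separated as $\ad_{[x,y]}$-eigenspaces (\cref{pr:5gr}\cref{5gr:grading der} only gives containments, and eigenvalues collide), which is precisely why both proofs must work with intrinsic descriptions of the grading rather than with eigenvalue decompositions.
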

\begin{proof}
    By assumption, there exist $\lambda,\mu\in k^\times$ such that $\varphi(x)=\lambda a$ and $\varphi(y)=\mu b$.
	We have $L_0=N_L(x)\cap N_L(y)$ and $L'_0=N_L(a)\cap N_L(b)$ by \cref{pr:5gr}, so $L_0$ is mapped to $L'_0$ by $\varphi$.

	Next, let $U = \langle x, y, [x,y] \rangle^\perp$ and $U' = \langle a, b, [a,b] \rangle^\perp$. By \cref{rem:gx} and \cref{pr:g}\cref{pr:g:unique}, the form $g$ is uniquely determined by the Lie algebra $L$, so $\varphi(U) = U'$. (Recall that ``$\perp$'' is with respect to $g$.)
	It follows that $\varphi$ maps $L_{-1} = [x,U]$ to $L'_{-1} = [a,U']$ and $L_1 = [y,U]$ to $L'_1 = [b, U']$.
\end{proof}

In \cref{Grading is bracket if lines}, we will show that if $L$ is simple and the extremal geometry has lines, we can write $x$ and $y$ as the Lie bracket of two extremal elements in $L_{-1}$ and $L_1$, respectively.
We first recall some geometric facts from \cite{Cohen2006}.

\begin{proposition}\label{pr:RFS}
    Assume that $L$ is simple and that the extremal geometry contains lines, i.e., $\F(L) \neq \emptyset$.
    Let $x,y \in \E$.
    Then:
    \begin{enumerate}
        \item\label{pr:RFS:-1} If $(x,y) \in E_{-1}$, then $E_{-1}(x) \cap E_{1}(y) \neq \emptyset$.
        \item\label{pr:RFS:0} If $(x,y) \in E_0$, then $E_0(x) \cap E_{2}(y) \neq \emptyset$.
        \item\label{pr:RFS:1} If $(x,y) \in E_1$, then $E_{-1}(x) \cap E_{2}(y) \neq \emptyset$.
        \item\label{pr:RFS:2} If $(x,y) \in E_2$, then $E_{-1}(x) \cap E_{1}(y) \neq \emptyset$, while $E_{-1}(x) \cap E_{\leq 0}(y) = \emptyset$.
    \end{enumerate}
\end{proposition}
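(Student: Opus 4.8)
The plan is to read all four assertions through the $5$\dash gradings of \cref{pr:5gr} together with the dictionary between the relations $E_i$ and the grading supplied by \cref{pr:Ei(x)}. As the sentence preceding the statement already signals, the honest viewpoint is that, by Cohen's structural theory \cite{Cohen2006}, the extremal geometry of a simple $L$ generated by its extremal elements becomes a \emph{root filtration space} as soon as $\F(L)\neq\emptyset$, and the four claims are among the characteristic ``local richness/connectedness'' properties of such a space; in principle each can be quoted from \cite{Cohen2006}. I would nonetheless carry out a direct reduction, because it isolates exactly the one nontrivial point.

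I start with the hyperbolic case (iv), which is the model for the rest. After rescaling we may assume $g(x,y)=1$, so $(x,y)$ gives a grading $L=L_{-2}\oplus\cdots\oplus L_2$ with $L_{-2}=\langle x\rangle$ and $L_2=\langle y\rangle$. By \cref{pr:Ei(x)}, $E_i(x)=(E\cap L_{\le i})\setminus L_{\le i-1}$; applying the same proposition to the pair $(y,x)$, whose grading is $\tilde L_j=L_{-j}$ (as one reads off from \cref{recover:switching grading}, cf.\ \cref{rem:swap}), gives $E_j(y)=(E\cap L_{\ge -j})\setminus L_{\ge -j+1}$. Substituting these into the two intersections of (iv) yields
\[ E_{-1}(x)\cap E_{\le 0}(y)\subseteq L_{\le -1}\cap L_{\ge 0}=0 \quad\text{and}\quad E_{-1}(x)\cap E_1(y)=E\cap L_{-1}. \]
The first inclusion gives the asserted emptiness at once (since $0\notin E$); the second shows that the nonemptiness in (iv) is \emph{equivalent} to the existence of an extremal element in the single component $L_{-1}$. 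For (i)--(iii), where $(x,y)$ is not hyperbolic and hence provides no grading of its own, I would first produce an auxiliary hyperbolic partner (say a point in $E_2(x)$), place $x$, $y$ and the sought witness inside the resulting grading, and translate each target intersection into the nonemptiness of a prescribed nonzero graded shell in exactly the same manner.

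The main obstacle, and the only substantive content, is therefore the \emph{existence of the witnessing extremal element} in the prescribed component — concretely, that $L_{-1}$ meets $E$ in case (iv), and likewise the existence of the auxiliary hyperbolic partners needed for (i)--(iii). This is where both hypotheses are indispensable: $\F(L)\neq\emptyset$ guarantees that collinear pairs exist at all, while the simplicity of $L$ forces the extremal geometry to be connected and non-degenerate, so that each $E_i(x)$ is nonempty for every admissible $i$ (ruling out the line-free ``$\mathfrak{sl}_2$-like'' degenerate situation of \cref{pr:2pts}(e)). Both facts are precisely the output of the root-filtration-space machinery of \cite{Cohen2006}, so at this point I would invoke that structure directly — either quoting the relevant nonemptiness and connectedness statements, or bootstrapping them from a single given line via the transitivity of the group generated by the automorphisms $\exp(\lambda e)$ of \cref{le:exp}. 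Once the witnesses are available, all four assertions drop out of the grading reduction above.
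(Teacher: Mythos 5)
There is a genuine gap, and it sits exactly where the substance of the proposition lies. Your grading reduction for (iv) is correct as far as it goes: the emptiness statement follows since $E_{-1}(x)\cap E_{\leq 0}(y)\subseteq L_{\leq -1}\cap L_{\geq 0}=0$, and the identification $E_{-1}(x)\cap E_1(y)=E\cap L_{-1}$ is precisely \cref{a in g_{-1}} of the paper. But at that point nothing has been gained toward nonemptiness: $E\cap L_{-1}\neq\emptyset$ is not an ``individual'' nonemptiness statement about $x$ alone; it \emph{is} the joint intersection statement you are trying to prove, restated. Your fallback --- simplicity and $\F(L)\neq\emptyset$ give non-degeneracy and connectedness, hence ``each $E_i(x)$ is nonempty for every admissible $i$'', hence the assertions ``drop out of the grading reduction'' --- is circular: knowing $E_{-1}(x)\neq\emptyset$ and $E_1(y)\neq\emptyset$ separately does not make their intersection nonempty. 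The actual argument (which the paper gives for (iv)) is that non-degeneracy (condition (H) of a root filtration space) yields a line $\ell$ through $\langle x\rangle$, and then the \emph{filtration axioms} (D) and (F) force $\E_1(y)$ to meet $\ell$ --- a projection-type property of the line relative to $y$, not a consequence of nonemptiness plus connectedness, and not obtainable by $\exp$-transitivity without substantial extra work. That step is absent from your proposal.

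The situation is worse for (i)--(iii), where your plan (``produce an auxiliary hyperbolic partner and translate into a graded shell'') is never carried out and does not go through as stated: $y$ is not one of the elements defining the auxiliary grading, so the joint condition ``in relation $E_i$ to $x$ and $E_j$ to $y$'' does not become ``lies in a prescribed graded component''. The paper instead quotes the specific intersection lemmas for non-degenerate root filtration spaces: claim (i) is \cite[Lemma~4(ii)]{Cohen2006}, claim (ii) is \cite[Lemma~8(ii)]{Cohen2007} (a statement about symplecta, nowhere reachable by a grading reduction), and claim (iii) follows from (i) applied to the collinear pair $(z,x)$ with $z:=[x,y]$ together with \cite[Lemma~1(v)]{Cohen2006}. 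Your instinct that the proposition is ``root-filtration-space machinery'' is right, but a proof requires invoking (or reproving) these precise intersection results; deferring to unspecified ``nonemptiness and connectedness statements'' leaves all four nonemptiness claims unproven.
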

\begin{proof}
    By \cref{rem:L simple pure}, $L$ does not contain sandwich elements.
    By \cite[Theorem~28]{Cohen2006}, $(\E, \F)$ is a so-called \emph{root filtration space}, which is either \emph{non-degenerate} or has an empty set of lines. By our assumption, the latter case does not occur.

    In particular, the statements of \cite[Lemmas 1 and 4]{Cohen2006} and \cite[Lemma 8]{Cohen2007} hold.
    Now claim (i) is \cite[Lemma 4(ii)]{Cohen2006} and claim (ii) is \cite[Lemma 8(ii)]{Cohen2007}.
    To show claim (iii), let $z := [x,y]$, so $(z,x) \in E_{-1}$. By (i), we can find some $u \in E_{-1}(x) \cap E_1(u)$.
    We can now invoke \cite[Lemma 1(v)]{Cohen2006} on the pairs $(y,x)$ and $(z,u)$ to conclude that $u \in E_{-1}(x) \cap E_2(y)$.

    Finally, to show (iv), we first observe that the non-degeneracy of the root filtration space implies that $E_{-1}(x) \neq \emptyset$ (this is condition (H) on \cite[p.~439]{Cohen2006}), i.e., there exists a line $\ell$ through $\langle x \rangle$.
    We now use the defining properties (D) and (F) of a root filtration space (see \cite[p.~435]{Cohen2006}) to see that $\E_1(y)$ has non-empty intersection with $\ell$, as required.
    The final statement $E_{-1}(x) \cap E_{\leq 0}(y) = \emptyset$ is just the defining property (D) itself.
\end{proof}

\begin{lemma}
\label{a in g_{-1}}
	Let $x$, $y$ and $L_i$ be as in \cref{pr:5gr}.
   	Then $E_{-1}(x) \cap E_1(y) = E \cap L_{-1}$.
  \end{lemma}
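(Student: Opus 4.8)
The plan is to prove the set equality $E_{-1}(x) \cap E_1(y) = E \cap L_{-1}$ by a double inclusion, using \cref{pr:Ei(x)} to translate the relational conditions into statements about the grading filtration. Recall from \cref{pr:Ei(x)} that $E_i(x) = (E \cap L_{\leq i}) \setminus L_{\leq i-1}$, where the filtration $L_{\leq i}$ is the one attached to $x$ as in \cref{pr:5gr}\cref{5gr:indep}. The subtle point is that $E_1(y)$ must be interpreted relative to the filtration attached to $y$, which is the \emph{reverse} filtration; I would handle this by applying \cref{pr:Ei(x)} with the roles of $x$ and $y$ interchanged and using \cref{rem:swap}, noting that for the pair $(y,x)$ the grading is reversed so that $L_{\geq i}$ plays the role of the filtration for $y$.

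For the inclusion $E \cap L_{-1} \subseteq E_{-1}(x) \cap E_1(y)$: take $e \in E \cap L_{-1}$. Since $e \in L_{-1} \subseteq L_{\leq -1}$ but $e \notin L_{\leq -2} = kx$ (as $e$ lies in the complementary graded piece $L_{-1}$ and is nonzero), \cref{pr:Ei(x)} gives $e \in E_{-1}(x)$. Symmetrically, viewing the grading from $y$'s perspective, $L_{-1}$ sits at relative position $+1$ with respect to $y$ (since $L_2 = \langle y \rangle$ is the bottom of the reversed filtration), so $e \in (E \cap L_{\geq 1}) \setminus L_{\geq 2}$, which by the swapped \cref{pr:Ei(x)} yields $e \in E_1(y)$.

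For the reverse inclusion $E_{-1}(x) \cap E_1(y) \subseteq E \cap L_{-1}$: let $u \in E_{-1}(x) \cap E_1(y)$. By \cref{pr:Ei(x)}, $u \in E_{-1}(x)$ gives $u \in L_{\leq -1} = kx + L_{-1}$, so we may write $u = \lambda x + e$ with $e \in L_{-1}$ and $u \notin kx$. Applying the swapped \cref{pr:Ei(x)} to the condition $u \in E_1(y)$ gives $u \in L_{\geq 1}$. Intersecting, $u \in L_{\leq -1} \cap L_{\geq 1} = L_{-1}$, since the only graded component common to both filtration pieces is $L_{-1}$ (indeed $L_{\leq -1} \cap L_{\geq 1} = (L_{-2} \oplus L_{-1}) \cap (L_1 \oplus L_2) = L_{-1}$ because these are direct summands). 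Hence $\lambda = 0$ and $u = e \in E \cap L_{-1}$.

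The main obstacle is purely bookkeeping: one must be careful that the two relations $E_{-1}(x)$ and $E_1(y)$ are governed by two \emph{different} filtrations (the one attached to $x$ and the reversed one attached to $y$), and that the indices match up correctly under the reversal. Once \cref{pr:Ei(x)} is invoked on both pairs $(x,y)$ and $(y,x)$ — the latter justified by \cref{rem:swap} — the result falls out by intersecting the two filtration conditions. I would take care to confirm that an element $\lambda x + e$ with $e \in L_{-1}$ nonzero is automatically extremal (via \cref{co:E-1}, which identifies $E_{-1}(x)$ explicitly), so that no extra extremality verification is needed; this also cross-checks the first inclusion.
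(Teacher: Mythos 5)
Your overall strategy is exactly the paper's (apply \cref{pr:Ei(x)} to both hyperbolic pairs $(x,y)$ and $(y,x)$, the latter justified by the grading-reversing automorphism of \cref{recover:switching grading} / \cref{rem:swap}, then intersect the two filtration conditions), but the index bookkeeping --- which you yourself identified as the only real content of the proof --- goes wrong, and the argument as written is invalid. Under the reversal $L_i \leftrightarrow L_{-i}$, the swapped version of \cref{pr:Ei(x)} reads $E_1(y) = (E \cap L_{\geq -1}) \setminus L_{\geq 0}$: the filtration piece ``$\leq 1$ relative to $y$'' is $L_{-1} \oplus L_0 \oplus L_1 \oplus L_2 = L_{\geq -1}$, and ``$\leq 0$ relative to $y$'' is $L_{\geq 0}$. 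You instead wrote $E_1(y) = (E \cap L_{\geq 1}) \setminus L_{\geq 2}$; by the swapped \cref{pr:Ei(x)} that set is $E_{-1}(y)$, the elements \emph{collinear} with $y$, not the elements special with $y$. With your indices the first inclusion already fails: an element $e \in E \cap L_{-1}$ does not lie in $L_{\geq 1} = L_1 \oplus L_2$ at all, so the claimed membership $e \in (E \cap L_{\geq 1}) \setminus L_{\geq 2}$ is false.

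This mistake then forces a second, compensating error in the reverse inclusion: you compute $L_{\leq -1} \cap L_{\geq 1} = (L_{-2} \oplus L_{-1}) \cap (L_1 \oplus L_2)$ and assert this equals $L_{-1}$ ``because these are direct summands.'' Precisely because they are complementary direct summands, this intersection is $0$, not $L_{-1}$; if your description of $E_1(y)$ were correct, the lemma's left-hand side would be empty. The repair is mechanical: replace $L_{\geq 1}$ by $L_{\geq -1}$ and $L_{\geq 2}$ by $L_{\geq 0}$ throughout. Then $e \in E \cap L_{-1}$ visibly lies in $(E \cap L_{\leq -1}) \setminus L_{-2}$ and in $(E \cap L_{\geq -1}) \setminus L_{\geq 0}$, giving one inclusion, and conversely any $u \in E_{-1}(x) \cap E_1(y)$ lies in $L_{\leq -1} \cap L_{\geq -1} = L_{-1}$, which is exactly the intersection the paper's proof performs. (Your appeal to \cref{co:E-1} to verify extremality in the first inclusion is also unnecessary once the indices are correct: $e$ is extremal by hypothesis.)
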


\begin{proof}
		By \cref{pr:Ei(x)}, we have $E_{-1}(x) = (E \cap L_{\leq -1}) \setminus L_{-2}$.
		Similarly (e.g., by applying \cref{recover:switching grading}, see \cref{rem:swap}), we have $E_1(y) = (E \cap L_{\geq {-1}}) \setminus L_{\geq 0}$.
		We conclude that $E_{-1}(x) \cap E_1(y) = E \cap L_{-1}$.
\end{proof}

\begin{lemma}
\label{recover:extr in 1 prop}
	Let $x$, $y$ and $L_i$ be as in \cref{pr:5gr}.
	Consider $e\in E\cap L_{1}$.
	Then $g_e(x)=0$ for all $x\in L_{i}$ with $i\neq -1$.
\end{lemma}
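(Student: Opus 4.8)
The plan is to reduce the statement about the extremal form $g_e$ to a statement about $g_y$, where \cref{pr:5gr}\cref{5gr:g0} already provides strong vanishing information. (To avoid the clash with the $x$ coming from the hyperbolic pair, I will write $z \in L_i$ for the element on which I evaluate $g_e$; the assertion to prove is $g_e(z) = 0$ for all $z \in L_i$ with $i \neq -1$.)

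First I would use \cref{pr:5gr}\cref{5gr:iso}: since $\ad_y$ restricts to an isomorphism $L_{-1} \to L_1$ (being, up to sign, the inverse of $\ad_x \colon L_1 \to L_{-1}$), I can write $e = [y,c]$ for some $c \in L_{-1}$. Then, recalling that $g_e(z) = g(e,z)$ by \cref{pr:g}\cref{pr:g:unique} and that $g$ associates with the bracket by \cref{pr:g}(iii), I compute
\[ g_e(z) = g\bigl([y,c],z\bigr) = g\bigl(y,[c,z]\bigr) = g_y\bigl([c,z]\bigr) . \]
Since $c \in L_{-1}$ and $z \in L_i$, the bracket satisfies $[c,z] \in L_{i-1}$, so the whole problem becomes one of bookkeeping the grading degree.

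The crux is precisely this degree count. For $i \neq -1$ we have $i-1 \neq -2$, so $L_{i-1}$ is either zero (when $i = -2$, since $L_{-3} = 0$) or contained in $L_{\geq -1}$ (when $i \in \{0,1,2\}$, as then $i-1 \in \{-1,0,1\}$). In either case \cref{pr:5gr}\cref{5gr:g0}, which asserts $g_y(L_{\geq -1}) = 0$, forces $g_y([c,z]) = 0$, and hence $g_e(z) = 0$. This handles all four values $i \in \{-2,0,1,2\}$ at once.

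I expect essentially no obstacle here. The subtle point — and the reason the case $i=-1$ is genuinely excluded — is that it is exactly for $i=-1$ that the shifted degree lands in $L_{i-1} = L_{-2} = \langle x\rangle$, where $g_y$ does not vanish (indeed $g_y(x) = g(x,y) = 1$). It is worth stressing that the argument is uniform in the characteristic: because \cref{pr:5gr}\cref{5gr:g0} already gives the vanishing of $g_y$ on the whole of $L_{\geq -1}$ (in particular on $L_0$, $L_1$ and $L_2$, not merely on $L_{-1}$), there is no need for a separate treatment of characteristic $2$ or $3$, and in particular I never divide by the eigenvalues of $\ad_{[x,y]}$.
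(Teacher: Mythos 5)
Your proof is correct and rests on the same ingredients as the paper's: writing $e = [y,c]$ with $c \in L_{-1}$ via \cref{pr:5gr}\cref{5gr:iso}, associativity of $g$, and the vanishing $g_y(L_{\geq -1}) = 0$ from \cref{pr:5gr}\cref{5gr:g0}. The only difference is organizational: the paper treats $L_{\pm 2}$, $L_1$ and $L_0$ by separate (slightly different) bracket manipulations, whereas your single degree count $[c,z] \in L_{i-1}$, which lands in $0$ or in $L_{\geq -1}$ whenever $i \neq -1$, handles all cases uniformly --- a modest streamlining of the same method rather than a different one.
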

\begin{proof}
	By \cref{pr:5gr}\cref{5gr:g0}, we have $g_e(x)=0$ and $g_e(y)=0$.
	Moreover, for any $l\in L_1$, there exists $l'\in L_{-1}$ such that $l=[y,l']$ by \cref{pr:5gr}.
	Since $g$ associates with the Lie bracket, we have
	\[ g_e(l) = g_e([y,l']) = g([e,y],l') = 0 . \]
	Finally, let $l \in L_0$.
	By \cref{pr:5gr} again, there exist $e'\in L_{-1}$ such that $e=[y,e']$.
	Let $\lambda\in k$ such that $[y,l]=\lambda y$.
	Then
	\[ g_e(l) = g([y,e'],l)=-g(e',[y,l])=\lambda g_y(e')=0 . \qedhere \]
\end{proof}

\begin{proposition}
\label{Grading is bracket if lines}
    Assume that $L$ is simple and that the extremal geometry of $\g$ contains lines.
	Let $x, y$ and $\g_i$ be as in \cref{pr:5gr}.
	Then there exist $c,d\in E\cap L_{-1}$ such that $[c,d]=x$.
	Moreover,
	\[  [x,y] = [c+d, -[c,y]+[d,y]] \in [I_{-1}, I_1] , \]
	where $I_i$ is the subspace of $L_i$ spanned by $E \cap L_i$, for $i = \pm 1$.
\end{proposition}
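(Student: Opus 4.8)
The plan is to separate the statement into two parts: the existence of extremal elements $c,d \in E\cap L_{-1}$ with $[c,d]=x$, and the bracket formula, which will then follow almost formally. I would first record a reduction: since $c,d\in L_{-1}$ force $[c,d]\in[L_{-1},L_{-1}]\subseteq L_{-2}=\langle x\rangle$, it suffices to produce $c,d\in E\cap L_{-1}$ with $[c,d]\neq 0$; then automatically $[c,d]=\lambda x$ with $\lambda\neq 0$, and after rescaling $d$ (a nonzero scalar multiple of an extremal element of $L_{-1}$ is again one) we may assume $[c,d]=x$. Note also that all extremal elements here are pure, since $L$ is simple and generated by pure extremal elements, hence has no sandwiches (\cref{rem:L simple pure}), so \cref{pr:2pts} is available throughout.

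The heart of the matter, and the step I expect to be the main obstacle, is producing a \emph{special} pair inside $L_{-1}$, i.e.\ two extremal elements of $L_{-1}$ with nonzero bracket. Here I would chain the geometric statements of \cref{pr:RFS}, which apply because $L$ is simple, has no sandwiches, and has lines. Since $(x,y)\in E_2$, \cref{pr:RFS}\cref{pr:RFS:2} together with \cref{a in g_{-1}} supplies a first element $c\in E_{-1}(x)\cap E_1(y)=E\cap L_{-1}$. The pair $(x,c)$ is collinear, so applying \cref{pr:RFS}\cref{pr:RFS:-1} to it yields a point $d_0\in E_{-1}(x)\cap E_1(c)$, that is, $d_0$ is collinear with $x$ but \emph{special} with $c$. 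By \cref{co:E-1} we may write $d_0=\alpha x+e$ with $\alpha\in k$ and $e\in E\cap L_{-1}$; since $[c,x]\in L_{-3}=0$, this gives $[c,e]=[c,d_0]\neq 0$, the bracket being nonzero precisely because $d_0\in E_1(c)$. Thus $(c,e)$ is the desired special pair in $L_{-1}$, and rescaling $e$ produces $c,d\in E\cap L_{-1}$ with $[c,d]=x$.

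For the bracket formula, I would first observe that $[c,y],[d,y]\in L_1$ are nonzero, because $\ad_y$ restricts to an isomorphism $L_{-1}\to L_1$ by \cref{pr:5gr}\cref{5gr:iso}, and that $g(c,y)=g(d,y)=0$ by \cref{pr:5gr}\cref{5gr:g0}; hence by case (d) of \cref{pr:2pts} both $[c,y]$ and $[d,y]$ are extremal and lie in $I_1$, while $c+d\in I_{-1}$. Expanding $[c+d,-[c,y]+[d,y]]$ and using the extremal identity \cref{extr} in the form $[c,[c,y]]=2g_c(y)c=0$ and $[d,[d,y]]=2g_d(y)d=0$, the only surviving terms combine through the Jacobi identity into $[c,[d,y]]-[d,[c,y]]=[[c,d],y]=[x,y]$. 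This yields $[x,y]=[c+d,-[c,y]+[d,y]]\in[I_{-1},I_1]$, as claimed. I expect only the special-pair construction to require genuine input; the reduction and the final computation are routine.
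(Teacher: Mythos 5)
Your proof is correct and follows essentially the same route as the paper: you produce a special pair inside $L_{-1}$ by applying \cref{pr:RFS}\cref{pr:RFS:2} (with \cref{a in g_{-1}}) to $(x,y)$ and then \cref{pr:RFS}\cref{pr:RFS:-1} to the resulting collinear pair, extract the $L_{-1}$-part via \cref{co:E-1}, rescale, and conclude with the same expansion using \cref{extr} and the Jacobi identity. The only cosmetic differences are that the paper rescales the first element rather than the second, and that you spell out (via \cref{pr:2pts}(d)) why $[c,y],[d,y]\in E\cap L_1$, which the paper leaves implicit in the remark that these pairs are special.
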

\begin{proof}
    By \cref{pr:RFS}\cref{pr:RFS:2}, there exists an $a \in E_{-1}(x) \cap E_{1}(y)$.
    By \cref{a in g_{-1}}, $a\in E \cap \g_{-1}$.

	Now, by \cref{pr:RFS}\cref{pr:RFS:-1}, we can find some $b \in E_{-1}(x) \cap E_1(a)$.
	By \cref{co:E-1}, we can write $b = \lambda x + d$ with $d \in E \cap L_{-1}$.
	Moreover, $[a, d] = [a, b]$, and since the pair $(a,b)$ is special and both $a$ and $b$ are collinear with $x$, we must have $[a,b] = \lambda x$ for some $\lambda \in k^\times$.
	Now write $c = \lambda^{-1} a$, and we get $[c, d] = x$ as required.

	Next, observe that $g(c, y) = 0$ and $g(d, y) = 0$ because these pairs are special. Using the Jacobi identity, we get
	\begin{align*}
		[c+d,-[c,y]+[d,y]]&=-2g_{c}(y)c+2g_{d}(y)d+[c,[d,y]]-[d,[c,y]] \\
			  					  &=	[c,[d,y]]+[d,[y,c]]=-[y,[c,d]]\\
			  					  &=-[y,x]=[x,y],
	\end{align*}
	as claimed.
\end{proof}

\section{$l$-exponential automorphisms}
\label{se:l-exp}

In this section, we consider a simple Lie algebra over the field $k\neq\mathbb F_2$ which is generated by its pure extremal elements.
Such a Lie algebra admits hyperbolic pairs of extremal elements and by \cref{pr:5gr}, each such pair gives rise to a $5$-grading $L = L_{-2} \oplus L_{-1} \oplus L_0 \oplus L_1 \oplus L_2$.
The main goal if this section is to define automorphisms that behave like exponential automorphisms, with respect to elements of $L_1$ (or $L_{-1}$). If $l \in L_1$, then we call the corresponding automorphisms \emph{$l$-exponential}.
We introduce these in \cref{def:l-exp} and prove their existence and uniqueness (up to a one-parameter choice) in \cref{th:alg,recover:Galois descent}.

We first need an auxiliary result, namely, we show in \cref{recover:sum of extr} that if the extremal geometry of~$L$ contains lines, then $L_1$ is spanned by the extremal elements contained in~$L_1$ (and similarly for $L_{-1}$).
We will need this condition on the existence of lines in \cref{th:alg}, but in \cref{recover:Galois descent}, we will use a Galois descent technique that will allow us to get the same existence and uniqueness results under a much weaker condition, namely the existence of lines after extending the scalars by a Galois extension.

The existence and uniqueness of these automorphisms will play a key role in \cref{se:lines,se:symplectic}.

\begin{assumption}\label{ass:simple pure}
	We assume in this section that $L$ is a \emph{simple} Lie algebra \emph{generated by its set $E$ of pure extremal elements}.
	We also assume that $k \neq \mathbb F_2$.
    Notice that this implies that $L$ does not contain sandwich elements, and by \cite[Theorem~28]{Cohen2006}, the graph $(\E, \E_2)$ is connected, so $L$ contains many hyperbolic pairs.
\end{assumption}

\begin{notation}
    Let $(x,y) \in E_2$ be a hyperbolic pair and scale $x$ and $y$ so that $g(x,y)=1$.
    Let $L=L_{-2}\oplus L_{-1}\oplus L_0\oplus L_1\oplus L_2$ be the corresponding $5$-grading as in \cref{pr:5gr}, with $L_{-2} = \langle x \rangle$ and $L_2 = \langle y \rangle$.

    Let $I_{-1}$ be the subspace of $L_{-1}$ spanned by $E\cap L_{-1}$ and
    let $I_1$ be the subspace of $L_{1}$ spanned by $E\cap L_{1}$.
    Notice that the extremal geometry has no lines if and only if $E_{-1}$ and $E_1$ are empty, or equivalently, if and only if $I_{-1} = I_1 = 0$.
\end{notation}
We begin with a lemma that provides us with a rather ``small'' generating set for~$L$ whenever the extremal geometry has lines.

\begin{lemma}
\label{recover: y and I1}
	If $I_{-1}\neq \emptyset$, then $L$ is generated by $y$ and $I_{-1}$.
\end{lemma}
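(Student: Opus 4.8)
The plan is to set $M := \langle y, I_{-1}\rangle$, the Lie subalgebra generated by $y$ together with $I_{-1}$, and to prove $M = L$, with simplicity of $L$ doing the work at the very last step. The first move is to pull the whole hyperbolic pair into $M$. Since $I_{-1} \neq 0$, \cref{Grading is bracket if lines} yields $c,d \in E \cap L_{-1}$ with $[c,d] = x$; as $c,d \in I_{-1}$ this gives $x \in [I_{-1}, I_{-1}] \subseteq M$. Hence $M$ contains the $\mathfrak{sl}_2$-triple $x, y, [x,y]$, and in particular $L_{-2} = \langle x\rangle$ and $L_2 = \langle y\rangle$ lie in $M$.

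The essential step is to upgrade $I_{-1}$ to all of $L_{-1}$. Here I would invoke the companion fact that, because the extremal geometry contains lines, the extremal elements of $L_{-1}$ span $L_{-1}$, i.e.\ $I_{-1} = L_{-1}$ (and likewise $I_1 = L_1$). This is exactly where the hypothesis on the existence of lines is genuinely used, and I expect it to be the main obstacle: it is the point at which the geometric input (lines in the root filtration space, cf.\ \cref{pr:RFS}) must be converted into the purely linear statement that the $\pm 1$-pieces are spanned by extremal elements — every earlier attempt to fill $L_{-1}$ using only brackets of $I_{-1}$, $I_1$ and the torus $[x,y]$ stalls, because those brackets need never leave the span of the extremal elements already present. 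Granting $I_{-1} = L_{-1}$, we get $L_{-1} \subseteq M$, and then $L_1 = [y, L_{-1}] \subseteq M$ since $y \in M$ and $\ad_y \colon L_{-1} \to L_1$ is an isomorphism by \cref{pr:5gr}\cref{5gr:iso}.

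It remains to capture $L_0$, and simplicity finishes the argument. At this point $M \supseteq L_{-2} \oplus L_{-1} \oplus L_1 \oplus L_2$ and $M \cap L_0 \supseteq [L_{-1},L_1] + k[x,y]$. Consider
\[ A := L_{-2} \oplus L_{-1} \oplus \bigl( [L_{-1},L_1] + k[x,y] \bigr) \oplus L_1 \oplus L_2 . \]
I would verify that $A$ is an ideal of $L$: any bracket $[L_i, L_j]$ with $i+j \neq 0$ lands in $L_{i+j} \subseteq A$, the cancelling brackets satisfy $[L_{1},L_{-1}] \subseteq [L_{-1},L_1]$ and $[L_{2},L_{-2}] = k[x,y]$, and a one-line Jacobi computation gives $[L_0, [L_{-1},L_1]] \subseteq [L_{-1},L_1]$ together with $[L_0,[x,y]] \subseteq k[x,y]$ (using $[L_0, L_{\pm 1}] \subseteq L_{\pm 1}$, $[L_0,\langle x\rangle]\subseteq\langle x\rangle$ and $[L_0,\langle y\rangle]\subseteq\langle y\rangle$), so that $[L_0, A \cap L_0] \subseteq A \cap L_0$. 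Thus $A \trianglelefteq L$; since $A \supseteq L_2 \neq 0$ and $L$ is simple, $A = L$, which forces $L_0 = [L_{-1},L_1] + k[x,y] \subseteq M$. Combining with the previous paragraph, $M = L$, as claimed.
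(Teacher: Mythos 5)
There is a genuine gap, and it is a circularity. Your entire argument hinges on the "companion fact" that $I_{-1} = L_{-1}$ (equivalently, that $L_{\pm 1}$ are spanned by the extremal elements they contain). In this paper that fact is \cref{recover:sum of extr}, and it is proved \emph{after} the present lemma: its proof runs through \cref{recover: ideal}, whose one-line proof is "By \cref{recover: y and I1} and the Jacobi identity, $I$ is an ideal of $L$." So the statement you invoke is downstream of the statement you are asked to prove, and you cannot use it here. You correctly flag this step as "the main obstacle," but flagging it and then assuming it is exactly the move that fails: the lemma at hand is precisely the tool the paper builds in order to make that obstacle tractable. Note also that, unlike \cref{recover:sum of extr}, the present lemma asks only that $y$ and $I_{-1}$ \emph{generate} $L$ as a Lie algebra, which is a much weaker (and logically prior) statement than $I_{-1}$ spanning $L_{-1}$ linearly.

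The paper's actual proof is of a completely different nature: it never touches the linear span question. Instead it works inside the extremal geometry, showing step by step that every \emph{extremal element} of $L$ lies in the subalgebra $I$ generated by $y$ and $I_{-1}$ --- first $x$ and the points collinear with $x$ (via \cref{Grading is bracket if lines} and \cref{co:E-1}), then, using the grading-reversing automorphism of \cref{recover:switching grading} and replacement of $y$ by hyperbolic neighbours, all of $E_2(x)$ "connected" to $y$; the cases $\E_0 \neq \emptyset$ and $\E_0 = \emptyset$ (generalized hexagon) are then handled by connectivity of $(\E_2(x))$, projection arguments, $\Exp(x)$-transitivity, and the hypothesis $|k| \geq 3$ to pick up the remaining points on lines. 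The conclusion follows because $E$ generates $L$ by assumption. For what it is worth, the final portion of your proposal (the ideal $A = L_{-2} \oplus L_{-1} \oplus ([L_{-1},L_1]+k[x,y]) \oplus L_1 \oplus L_2$ and the appeal to simplicity to capture $L_0$) is correct linear algebra, but it only becomes relevant once $L_{\pm 1} \subseteq M$ is established, which is the very thing left unproved.
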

\begin{proof}
	Denote the subalgebra generated by $y$ and $I_{-1}$ by $I$.
	Since $I_{-1}\neq \emptyset$, there are lines in the extremal geometry.
	By \cref{Grading is bracket if lines}, $x \in [I_{-1},I_{-1}] \leq I$.
	By \cref{co:E-1}, all elements in $E_{-1}(x)$ are contained in $I$.

	Now observe that if $a,b \in I$, then also $\exp(a)(b) = b + [a,b] + g(a,b)a \in I$.
	In particular, the automorphism $\varphi$ introduced in \cref{recover:switching grading} maps $I$ to itself.
	This implies that also $I_1 \in I$ and all elements of $E_{-1}(y)$ belong to $I$.

	We can now replace $y$ by any $y' \in E_2(x) \cap E_{-1}(y)$; observe that both $y'$ and the ``new'' $I_{-1}$ w.r.t. $y'$ are contained in $I$.
	By repeating this procedure, we see that
	\begin{equation}\label{eq:conn-y}
	   \text{all elements of $E_2(x)$ \emph{connected} to $y$ are contained in $I$.}
	\end{equation}

	If the relation $E_0$ is non-empty, then we can invoke \cite[Lemma 2.20]{Cuypers2021}. It tells us that $\E_2(x)$ is connected, and we conclude that $E_2(x) \subseteq I$.
	If $z \in E_1(x)$, then by \cref{pr:RFS}\cref{pr:RFS:1}, we can find an element $y' \in E_{-1}(z) \cap E_2(x)$. Again, $y' \in I$ and we can replace $y$ by $y'$ to see that also the collinear point $z$ is contained in~$I$. Thus $E_1(x) \subseteq I$.
	Applying $\varphi$, we also get $E_1(y) \subseteq I$.
	The only extremal elements not considered yet are those in $C := E_0(x) \cap E_0(y)$; notice that $C = E \cap L_0$ by \cref{pr:Ei(x)}.
	So let $z \in C$ and let $\ell$ be any line through $z$ not completely contained in $C$.
	Now observe that $C$ is a subspace of the extremal geometry, i.e., if two collinear elements are contained in $C$, then the whole line through these elements lies in $C$. (This follows, for instance, from the fact that $L_0 = N_L(x) \cap N_L(y)$ by \cref{pr:5gr}).
	Hence $z$ is the unique point of $\ell$ in $C$. Since we assume $|k| \geq 3$ (see \cref{ass:simple pure}), $\ell$ contains at least $2$ other points, which we already know are contained in $I$, and we conclude that also $z \in I$.
	Since $L$ is generated by $E$, this shows that indeed $I=L$ in this case.

	Assume from now on $\E_0=\emptyset$. In this case, the extremal geometry is a generalized hexagon, and the relations $\E_{-1}$, $\E_1$ and $\E_2$ coincide with the relations ``distance $1$'', ``distance $2$'', and ``distance $3$'', respectively, in the collinearity graph.
	(Indeed, we have observed in the proof of \cref{pr:RFS} that $(\E, \F)$ is a non-degenerate root filtration space, and it has been observed in \cite[Example 3.5]{Cohen2012} that a non-degenerate root filtration space with $\E_0 = \emptyset$ is necessarily a generalized hexagon.)
	Recall that in a generalized hexagon, for any point $p$ and any line $\ell$, there is a \emph{unique} point on $\ell$ closest to $p$, called the \emph{projection} of $p$ on $\ell$.

	Let $z\in E_2(x)$ be arbitrary; our goal is to show that $z\in I$.
	If $(y,z) \in E_{\leq -1}$, then it follows from \cref{eq:conn-y} that $z \in I$.
	Next, assume that $(y,z) \in E_1$, so $\langle y\rangle$ and $\langle z\rangle$ are at distance $2$.
	Recall that the common neighbor of $\langle y\rangle$ and $\langle z\rangle$ is $\langle [y,z]\rangle$.
	By \cref{eq:conn-y}, we are done if $[y,z]\in E_2(x)$,
	so we may assume that $[y,z] \in E_1(x)$.
	Now consider any line through $\langle z \rangle$ not containing $\langle [y,z] \rangle$, and let $\langle p \rangle$ be the projection of $\langle x \rangle$ on that line, so $\dist(\langle x \rangle, \langle p \rangle) = 2$; let $q := [p,x]$.
	Observe now that
	\[\langle x\rangle,\langle [x,[y,z]]\rangle,\langle [y,z]\rangle,\langle z\rangle, \langle p \rangle, \langle q \rangle,\langle x\rangle\]
	forms an ordinary hexagon.
	Next, let $\langle b \rangle$ the projection of $\langle y \rangle$ on the line $\langle p, q \rangle$.
	Notice that $\langle b \rangle$ is different from $\langle p \rangle$ and $\dist(\langle y \rangle, \langle b \rangle) = 2$.
	If $\langle b \rangle = \langle q \rangle$, we replace $y$ by another $y'$ collinear with $y$ and $[y,z]$, which will then have a different projection on the line $\langle p,q \rangle$, so we may assume without loss of generality that $\langle b \rangle \neq \langle q \rangle$.
	Let $a := [b,y]$.
	\[
	\begin{tikzpicture}
        \draw (-1,0) -- (0,1) -- (1.5,1) -- (2.5,0) -- (1.5,-1) -- (0,-1) -- (-1,0);
        \draw (1.5,1) -- (1.8,.3) -- (1.7,-.3) -- (.8,-1);
        \foreach \x/\y/\l/\p in
        { -1/0/{$\langle x \rangle$}/left,
          0/1/{$\langle [x,[y,z]] \rangle$}/left,
          1.5/1/{$\langle [y,z] \rangle$}/right,
          2.5/0/{$\langle z \rangle$}/right,
          1.5/-1/{$\langle p \rangle$}/right,
          0/-1/{$\langle q \rangle$}/left,
          1.8/.3/{$\langle y \rangle$}/left,
          1.7/-.3/{$\langle a \rangle$}/left,
          .8/-1/{$\langle b \rangle$}/above
        } \node[inner sep=1pt,circle,draw,fill,label={\p:\l}] at (\x,\y) {};
    \end{tikzpicture}
	\]
	Since all neighbors of $\langle x \rangle$ and $\langle y \rangle$ are contained in~$I$, we see that $q \in I$ and $a \in I$, and hence also their common neighbor $\langle b \rangle$ belongs to~$I$.
	Now the line $\langle p, q \rangle$ already contains two points in $I$ (namely $\langle b \rangle$ and~$\langle q \rangle$), hence also $p \in I$.
	Finally, since $\langle p \rangle$ and $\langle [y,z] \rangle$ belong to $I$, their common neighbor $\langle z \rangle$ belongs to $I$.

	Finally, assume that $(y,z) \in E_2$, so $z\in E_2(x)\cap E_2(y)$.
	Let $\langle y\rangle,\langle a\rangle,\langle b\rangle,\langle z\rangle$ be a path of length $3$.
	If $\langle a,b\rangle$ is not completely contained in $E_{\leq 1}(x)$ we can apply the previous paragraph twice to obtain $z\in I$.
	So assume%
	\footnote{In fact, we may assume that this is true for all possible paths of length $3$ from $\langle y \rangle$ to $\langle z \rangle$, but this does not seem to lead to a simpler argument.}
	$a,b\in E_1(x)$, i.e., both $a$ and $b$ are at distance $2$ from $x$.
	Since a generalized hexagon does not contain cycles of length $\leq 5$, this can only happen if the line $\langle a,b\rangle$ contains a point $\langle c\rangle$ collinear with~$\langle x\rangle$.
	\[
	\begin{tikzpicture}
        \draw (-1,0) -- (.5,0);
        \draw (1.5,-1) -- (.5,-.6) -- (.5,.6) -- (1.5,1);
        \foreach \x/\y/\l/\p in
        { -1/0/{$\langle x \rangle$}/left,
          .5/0/{$\langle c \rangle$}/right,
          .5/-.6/{$\langle b \rangle$}/below,
          .5/.6/{$\langle a \rangle$}/above,
          1.5/-1/{$\langle z \rangle$}/right,
          1.5/1/{$\langle y \rangle$}/right
        } \node[inner sep=1pt,circle,draw,fill,label={\p:\l}] at (\x,\y) {};
    \end{tikzpicture}
	\]
	Since $\Exp(x)$ acts sharply transitively on the set of points of the line $\langle a, b\rangle$ without~$\langle c\rangle$ (see \cref{def:exp}), we find a unique $\psi \in \Exp(x)$ such that $\psi(a) \in \langle b \rangle$.
	Let $z' := \psi(y)$, and notice that $\psi$ fixes $x$. Since $x,y \in I$, also $z' \in I$.
	Since $\psi$ fixes $x$, we get $z'\in E_2(x)$.
	Now $\langle b\rangle = \psi(\langle a\rangle)$ is a neighbor of both $\langle z\rangle$ and of $\langle z'\rangle$, and thus $z'\in E_{\leq 1}(z)$.
	Since $z'\in I$, the previous paragraph now implies $z\in I$.

	We have now shown that $E_2(x) \subseteq I$, and just as in the case where $\E_0 \neq \emptyset$, we conclude that $I = L$.
\end{proof}

\begin{corollary}
\label{recover: ideal}
	Suppose $I_{-1}\neq\emptyset$.
	If a non-trivial subspace $I \leq L$ satisfies $[I,y]\leq I$ and $[I,I_{-1}]\leq I$, then $I = L$.
\end{corollary}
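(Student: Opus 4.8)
The plan is to deduce that the hypotheses force $I$ to be an \emph{ideal} of $L$; once this is known, simplicity of $L$ (\cref{ass:simple pure}) together with $I \neq 0$ immediately gives $I = L$. To carry this out, I would introduce the idealizer
\[ S := \{ s \in L \mid [I, s] \leq I \} . \]
By the two hypotheses on $I$ we have $y \in S$ and $I_{-1} \leq S$. The crucial point is then that $S$ is a \emph{subalgebra} of $L$: granting this, $S$ contains the subalgebra generated by $y$ and $I_{-1}$, which is all of $L$ by \cref{recover: y and I1} (whose hypothesis $I_{-1} \neq \emptyset$ is exactly our standing assumption). Hence $S = L$, i.e.\ $[I, L] \leq I$, so $I$ is an ideal.

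That $S$ is closed under the bracket is a routine application of the Jacobi identity. For $s, t \in S$ and $u \in I$, I would expand
\[ [u, [s,t]] = [[u,s], t] - [[u,t], s] . \]
Since $s, t \in S$, both $[u,s]$ and $[u,t]$ lie in $I$; bracketing these elements of $I$ with $t$ and $s$ (again using $t, s \in S$) keeps them in $I$. Therefore $[u, [s,t]] \in I$ for every $u \in I$, which means $[s,t] \in S$. Thus $S$ is a subalgebra, as needed, and the reduction above applies.

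With $S = L$ in hand, $I$ is a nonzero ideal of the simple Lie algebra $L$, whence $I = L$. I expect no genuine obstacle in this corollary itself: the entire combinatorial and geometric difficulty is absorbed into \cref{recover: y and I1}, and the step here is the short formal observation that the idealizer is a subalgebra containing the generating set $\{y\} \cup I_{-1}$. The only thing to be careful about is invoking the generation result with its correct hypothesis and matching the (slightly loose) convention $I_{-1} \neq \emptyset$ meaning that the extremal geometry has lines.
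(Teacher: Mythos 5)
Your proposal is correct and is exactly the paper's argument: the paper's proof reads ``By \cref{recover: y and I1} and the Jacobi identity, $I$ is an ideal of $L$. Since $L$ is simple, this implies $I=L$,'' which is precisely your idealizer computation compressed into one sentence. You have merely made explicit the routine step (that the idealizer is a subalgebra containing $y$ and $I_{-1}$) that the paper leaves implicit.
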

\begin{proof}
	By \cref{recover: y and I1} and the Jacobi identity, $I$ is an ideal of $L$.
	Since $L$ is simple, this implies $I=L$.
\end{proof}

Another ingredient that we will need is the fact from \cite{Cohen2007} that if $\E_0 \neq \emptyset$, then each symplectic pair of points induces a subgeometry of the extremal geometry, known as a \emph{symplecton}.
We state the version from \cite[Proposition 2.12]{Cuypers2021}.
Recall that a \emph{polar space} is a partial linear space such that for each point~$p$ and each line~$\ell$, the point $p$ is collinear with either one or all points of $\ell$.
A polar space is called \emph{non-degenerate} if none if its points is collinear with all other points.
\begin{proposition}[{\cite[Proposition 2.12 and Lemma 2.18]{Cuypers2021}}]\label{pr:symplecta}
  	Assume that the extremal geometry contains lines and that $\E_0 \neq \emptyset$.
	Then the extremal geometry contains a collection  $\mathcal{S}$ of subspaces such that every pair of points $x,y$ with $(x,y)\in \E_0$ is contained in a unique element $S\in \mathcal{S}$.

	Moreover, for each $S\in \mathcal{S}$ we have:
	\begin{enumerate}
		\item\label{pr:symplecta:a} $S$ is a non-degenerate polar space (of rank $\geq 2$, i.e., containing lines).
		\item For all points $x,y\in S$, we have $(x,y)\in \E_{\leq 0}$.
		\item If a point $x$ is collinear to two non-collinear points of $S$, then $x \in S$.
		\item For each point $x$, the set of points in $\E_{\leq -1}(x)\cap S$ is either empty or contains a line.
		\item\label{pr:symplecta:e} If $x$ is a point with $\E_2(x) \cap S \neq \emptyset$, then $S \cap \E_0(x)$ contains a unique point.
	\end{enumerate}
	The elements of $\mathcal{S}$ are called symplecta.
\end{proposition}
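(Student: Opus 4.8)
The plan is to forget the Lie algebra for a moment and argue entirely inside the point-line geometry $(\E,\F)$, using only the fact established in the proof of \cref{pr:RFS} that this is a \emph{non-degenerate} root filtration space in the sense of \cite{Cohen2006}. Since there are lines and $\E_0\neq\emptyset$, the standard theory (e.g.\ \cite[Example 3.5]{Cohen2012}) places us in a parapolar space, and the symplecta should be exactly the convex closures of symplectic pairs. So I would first fix a symplectic pair $(x,y)\in\E_0$ and define the candidate symplecton
\[ S = S(x,y) := \text{the convex closure of } \{\langle x\rangle,\langle y\rangle\} \]
in the collinearity graph, i.e.\ the smallest subspace containing all geodesics between the two points. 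That $S$ is genuinely a subspace (closed under joining collinear points by the whole line) I would read off directly from the root filtration space axioms controlling how the relations $\E_i$ behave along a line, in particular (D) and (F) of \cite[p.~435]{Cohen2006}.

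The heart of the matter, and the step I expect to be the main obstacle, is property~\cref{pr:symplecta:a}: that $S$ is a non-degenerate polar space of rank $\geq 2$. Concretely I would verify the Buekenhout--Shult ``one-or-all'' axiom: for every point $p\in S$ and every line $\ell\subseteq S$, the point $p$ is collinear with exactly one, or with all, points of $\ell$. The existence of at least one collinear point is forced by the axiom guaranteeing a non-empty intersection $\E_1(p)\cap\ell$ whenever some point of $\ell$ is special with $p$, by the same kind of argument used in \cref{pr:RFS}\cref{pr:RFS:2}; the delicate part is ruling out the ``all but one'' configuration, which requires a case analysis on the relations between $p$ and the points of $\ell$ using (D) and (F). The rank bound and non-degeneracy then come for free: by \cref{pr:2pts} a symplectic pair (as opposed to a special pair) has its common neighbours forming an entire line, so $S$ contains lines and contains the two non-collinear points $\langle x\rangle,\langle y\rangle$, whence no point of $S$ is collinear with all others.

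With the polar space structure of $S$ in hand, the remaining assertions are comparatively formal. Property~(ii) is immediate since any two points of a polar space are collinear or lie in a common polar relation, forcing $(u,v)\in\E_{\leq 0}$; property~(iii) is exactly convexity of $S$ together with the one-or-all axiom; and properties~(iv) and~(v) describe how an external point $p$ sees $S$, which I would extract from the polar geometry of $S$ combined with the root filtration axioms governing $\E_2(p)\cap S$ — property~(v) being the usual ``gate'' property that a point far from $S$ projects to a unique point of $S$. For uniqueness I would show that $S(x,y)$ depends only on $S$ itself: any symplectic pair $(u,v)$ lying in a symplecton $S'$ has the same convex closure, so $S'=S(u,v)$, and $(x,y)$ therefore lies in no other member of the collection, which we then define as the set $\mathcal{S}$ of all such convex closures. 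In the paper this entire structure is of course available off the shelf from \cite[Proposition 2.12 and Lemma 2.18]{Cuypers2021}, and may simply be cited; the sketch above is how one would reconstruct it from the root filtration space axioms.
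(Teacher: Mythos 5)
The paper gives no proof of this proposition at all: it is imported verbatim from \cite[Proposition 2.12 and Lemma 2.18]{Cuypers2021}, exactly as you note in your final sentence, so the citation is the intended (and sufficient) argument, and your proposal agrees with the paper on this point. Your reconstruction sketch (convex closures of symplectic pairs, verifying the Buekenhout--Shult one-or-all axiom, the gate property for \cref{pr:symplecta:e}) is a reasonable outline of how the result is actually established in the literature, but since its hardest steps are described rather than carried out, it is the citation and not the sketch that does the work here --- which is precisely the paper's approach.
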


We will need the following property of polar spaces.
\begin{lemma}
\label{recover: lemma polar}
	Let $S$ be a non-degenerate polar space.
	Let $x$ and $y$ be non-collinear points of $S$.
	Then $S$ is generated (as a point-line geometry) by $x$, $y$ and $x^\perp\cap y^\perp$, where $x^\perp$ denotes the set of points collinear with $x$.
\end{lemma}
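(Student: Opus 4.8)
The plan is to let $T$ be the subspace of $S$ generated (in the point-line sense) by $x$, $y$ and $x^\perp\cap y^\perp$, and to show that $T$ contains every point of $S$; since $T$ is a subspace and $S$ is generated by its points, this yields $T=S$. Throughout I use the defining one-or-all axiom of a polar space (each point is collinear with one or all points of each line) together with non-degeneracy.

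First I would show $x^\perp\subseteq T$ (and, symmetrically, $y^\perp\subseteq T$). Let $u\in x^\perp$ with $u\neq x$. If $u\sim y$ then $u\in x^\perp\cap y^\perp\subseteq T$, so assume $u\not\sim y$. Applying the one-or-all axiom to the point $y$ and the line $\langle x,u\rangle$, and using $y\not\sim x$, we find that $y$ is collinear with a unique point $p$ of $\langle x,u\rangle$, with $p\neq x,u$. Since $p$ lies on $\langle x,u\rangle$ we have $p\sim x$, hence $p\in x^\perp\cap y^\perp\subseteq T$. Now $x$ and $p$ are two distinct points of $T$ on the line $\langle x,u\rangle$, so $\langle x,u\rangle\subseteq T$ and in particular $u\in T$. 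Thus $x^\perp\subseteq T$, and by symmetry $y^\perp\subseteq T$; together with $x,y\in T$ this already places every point collinear with $x$ or with $y$ inside $T$.

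Next I would treat an arbitrary point $z$ with $z\not\sim x$ and $z\not\sim y$, where the goal is $z\in T$. The key reduction is that it suffices to exhibit a line through $z$ meeting $T$ in two distinct points. Since $z\not\sim x$, there is a common neighbour $a$ of $x$ and $z$: take any line through $z$ (such a line exists as $S$ contains lines) and let $a$ be its unique point collinear with $x$, which exists and satisfies $a\neq z$, $a\sim x$, $a\sim z$ by one-or-all. Then $a\in x^\perp\subseteq T$. On the line $\langle a,z\rangle$ the point $x$ is collinear only with $a$ (as $x\not\sim z$), so any second $T$-point must come from $y$: if $a$ can be chosen with $a\not\sim y$, then one-or-all produces a point $b\neq a$ of $\langle a,z\rangle$ with $b\sim y$, whence $b\in y^\perp\subseteq T$ and the line $\langle a,z\rangle$ carries two distinct $T$-points $a,b$, forcing $z\in T$. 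By symmetry the same conclusion holds if some common neighbour of $y$ and $z$ fails to be collinear with $x$.

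The main obstacle is the remaining degenerate configuration, in which every common neighbour of $x$ and $z$ is collinear with $y$ and vice versa; equivalently $x^\perp\cap z^\perp=y^\perp\cap z^\perp\subseteq x^\perp\cap y^\perp$, so that $z$ is collinear with all of the common perp $P:=x^\perp\cap y^\perp$ (this is exactly the situation where $x$, $y$, $z$ behave like three points of one hyperbolic line, as occurs in symplectic polar spaces). I propose to resolve it by induction on the rank $r$ of $S$. For $r\geq 3$ the space $P$ is a non-degenerate polar space of rank $r-1\geq 2$, hence contains two non-collinear points $s,t$; then $Q:=s^\perp\cap t^\perp$ is a non-degenerate polar space of rank $r-1$ containing $x$ and $y$ (as $s,t\in P\subseteq x^\perp\cap y^\perp$) and also $z$ (since $z\sim s$ and $z\sim t$ by the degeneracy hypothesis), with $x\not\sim y$ inside $Q$. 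The induction hypothesis applied to $(Q;x,y)$ expresses every point of $Q$, in particular $z$, through $x$, $y$ and $x^\perp\cap y^\perp\cap Q\subseteq P\subseteq T$ using lines of $Q$, which are lines of $S$; hence $z\in T$. The genuinely delicate case is therefore the base $r=2$, a non-degenerate generalized quadrangle, where $z$ ranges over the hyperbolic line $\{x,y\}^{\perp\perp}$ of a regular pair. If the lines are thin this case is vacuous, since then $\{x,y\}^{\perp\perp}=\{x,y\}$ leaves no such $z$; if the lines are thick, then for $w\in P$ and an interior point $u$ of $\langle z,w\rangle$ one checks $u\not\sim x$ and $u\not\sim y$, and one realises $u$ on a polar line joining a neighbour of $x$ to a neighbour of $y$, both already in $T$, so that $u\in T$ and the line $\langle z,w\rangle$ contains the two distinct $T$-points $u$ and $w$, giving $z\in T$. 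Alternatively, this base case may be invoked from the established generation theory of non-degenerate polar spaces.
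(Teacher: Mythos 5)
Your opening reduction (first $x^\perp\subseteq T$ and $y^\perp\subseteq T$, then membership of any $z$ that lies on a line whose unique $x^\perp$-point and unique $y^\perp$-point differ) is correct and coincides with the first steps of the paper's proof. After that there are gaps. The first is your claim that the remaining degenerate configuration forces $z$ to be collinear with \emph{all} of $P:=x^\perp\cap y^\perp$: the hypothesis only gives $x^\perp\cap z^\perp=y^\perp\cap z^\perp=P\cap z^\perp$, which says nothing about points of $P$ outside $z^\perp$, and you offer no argument. In the inductive step ($r\geq 3$) this is repairable: choose the non-collinear pair $s,t$ inside $\{x,z\}^\perp$ (non-degenerate of rank $r-1\geq 2$, by the same standard fact you invoke for $P$); the degeneracy hypothesis then puts $s,t\in P$, and $z\sim s,t$ holds by construction, so the passage to $Q=\{s,t\}^\perp$ goes through.

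The serious gap is the rank-$2$ base case, which is where your induction deposits all of the actual content. The step ``one realises $u$ on a polar line joining a neighbour of $x$ to a neighbour of $y$'' is precisely the assertion that the interior point $u$ of $\langle z,w\rangle$ is \emph{not} itself in the degenerate configuration --- the same difficulty you face for $z$, now for $u$ --- and nothing you say excludes that every point of $\langle z,w\rangle\setminus\{w\}$ is degenerate. Closing this hole requires exactly the idea your proposal never produces and on which the paper's proof is built: by non-degeneracy of $P$, pick $b\in P$ with $b\not\sim w$ and play $\langle z,w\rangle$ against the line $by$. Every point of $\langle z,w\rangle$ is collinear with exactly one point of $by$ (a point collinear with all of $by$ would be collinear with $y$, hence equal $w$, while $w\not\sim b$); the point $w$ corresponds to $y$, and at most one point of $\langle z,w\rangle$ can correspond to $b$; so thickness of $\langle z,w\rangle$ yields $c\neq w$ on it collinear with some $q\in by\setminus\{b,y\}$. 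The line $cq$ meets $y^\perp$ in $q$ and meets $x^\perp$ in a different point (only $b\in by$ lies in $x^\perp$), so $c\in T$, and then $\langle z,w\rangle$ carries the two $T$-points $w,c$, giving $z\in T$. This argument is uniform in the rank, which is why the paper needs no induction; conversely, your induction buys nothing, since its base case already demands the full idea, and your fallback appeal to ``established generation theory'' names no citable statement. Two side remarks: your dismissal of the thin-line case is false --- in a dual grid (a non-degenerate rank-$2$ polar space with two points per line) one has $\{x,y\}^{\perp\perp}\neq\{x,y\}$ and the lemma itself fails, so thickness is tacitly needed (by the paper too; its application is safe because lines of symplecta are projective lines over $k$); and your induction on rank silently assumes finite rank, whereas the paper's section allows infinite-dimensional $L$, so the lemma must also cover polar spaces of infinite rank, which the paper's direct argument does.
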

\begin{proof}
	Let $X$ be the subspace generated by $x$, $y$ and $x^\perp\cap y^\perp$.
	Since every line through $x$ contains a point collinear to $y$, we have $x^\perp\subseteq X$, and similarly $y^\perp\subseteq X$.
	Consider $z\in X$ arbitrary.
	Let $\ell$ be any line through $x$.
	If $x^\perp\cap \ell\neq y^\perp\cap \ell$, then $\ell\subseteq X$.
	So we may assume that $\ell$ contains a unique point $a\in x^\perp\cap y^\perp$.
	By the non-degeneracy, we find $b\in x^\perp\cap y^\perp$ such that $a$ and $b$ are not collinear.
	Now $\ell$ and $by$ are opposite lines, and hence there exists a point $c$ on $\ell$ distinct from $a$ which is collinear to a point of $by$ distinct from $y$ and $b$.
	Hence $c$ lies on a line $m$ such that $m\cap y^\perp\in by\setminus \{b,y\}$ and thus $m\cap y^\perp\neq m\cap x^\perp$.
	As before, $m\subseteq X$.
	In particular, $c\in X$, so together with $a\in X$ we obtain $z\in X$.
\end{proof}

The next lemma gives a more precise description of the subspace $L_S$ of $L$ spanned by the elements in $S$.

\begin{lemma}
\label{recover: symp decomp Lie}
	Let $S$ be a symplecton containing $x$. Then
	\[ L_S=\langle x\rangle \oplus \langle \{\langle z\rangle\in S\mid z\in L_{-1}\}\rangle  \oplus \langle \{\langle z\rangle\in S\mid z\in L_{0}\}\rangle. \]
	Moreover, $\langle \{\langle z\rangle\in S\mid z\in L_{0}\}\rangle$ is $1$-dimensional.
\end{lemma}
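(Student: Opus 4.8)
The plan is to exploit that, by \cref{pr:symplecta}, $S$ is a non-degenerate polar space (\cref{pr:symplecta}\cref{pr:symplecta:a}) all of whose points stand in relation $\E_{\leq 0}$ with the point $\langle x\rangle\in S$. Hence every extremal $z$ with $\langle z\rangle\in S$ lies in $E_{\leq 0}(x)$, so by \cref{pr:Ei(x)} it lies in $L_{\leq 0}=\langle x\rangle\oplus L_{-1}\oplus L_0$; in particular $L_S\subseteq L_{\leq 0}$. Write $A$ and $B$ for the two spans appearing on the right-hand side of the asserted decomposition, so that $A\subseteq L_{-1}$ by construction and (once $B$ is located) $B\subseteq L_0$, while $\langle x\rangle=L_{-2}$. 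The directness of the claimed sum is then immediate from the directness of the grading, and the inclusions $\langle x\rangle,A,B\subseteq L_S$ hold by the very definitions. Everything therefore reduces to two claims: that $B$ is one-dimensional, and that $L_S\subseteq\langle x\rangle+A+B$.

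\textbf{One-dimensionality of $B$.} I would apply \cref{pr:symplecta}\cref{pr:symplecta:e} to the hyperbolic partner $y$: since $\langle x\rangle\in S$ and $(x,y)$ is hyperbolic, we have $\langle x\rangle\in\E_2(\langle y\rangle)\cap S$, so $S\cap\E_0(\langle y\rangle)$ consists of a single point $\langle p\rangle$. The key observation is that the points $\langle z\rangle\in S$ possessing a representative in $L_0$ are \emph{exactly} the points of $S$ symplectic with $y$. Indeed, if $z\in L_0$ then $z\in L_{\geq 0}\setminus L_{\geq 1}$, so $z\in E_0(y)$ by the grading-reversed form of \cref{pr:Ei(x)} (see \cref{rem:swap}); conversely, $z\in E_0(y)$ forces $z\in L_{\geq 0}$, which together with $z\in L_{\leq 0}$ (coming from $\langle z\rangle\in S$) yields $z\in L_0$. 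Thus $\{\langle z\rangle\in S\mid z\in L_0\}=\{\langle p\rangle\}$ and $B=\langle p\rangle$ is one-dimensional. The same computation shows $p\in L_0$, and hence $\langle p\rangle\in\E_0(\langle x\rangle)$; so $p$ is a point of $S$ \emph{not} collinear with $x$.

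\textbf{The inclusion $L_S\subseteq\langle x\rangle+A+B$.} Here I would invoke \cref{recover: lemma polar} for the non-collinear pair $x,p\in S$: it shows that $S$ is generated, as a point-line geometry, by $x$, $p$ and $x^\perp\cap p^\perp$ (the points of $S$ collinear with both). The crucial reduction is that a spanning set for $L_S$ can be read off from any geometric generating set of $S$: whenever two points $\langle u\rangle,\langle v\rangle$ of $S$ are collinear, every point on the line joining them has a representative of the form $\mu u+\nu v$ (\cref{pr:2pts}), which lies in the span of $u$ and $v$. An induction along the generation process therefore gives that the span of $\{z\mid\langle z\rangle\in S\}$ equals the span of $\{x,p\}\cup(x^\perp\cap p^\perp)$. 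Finally, each point of $x^\perp\cap p^\perp$ is collinear with $x$, so by \cref{co:E-1} it has a representative $\lambda x+e$ with $e\in E\cap L_{-1}$; the point $\langle e\rangle$ lies on the line through $\langle x\rangle$ and the given point and is thus in $S$, whence $e\in A$ and the point lies in $\langle x\rangle+A$. Combining this with the contributions of $x$ and $p$ gives $L_S\subseteq\langle x\rangle+A+B$, as required.

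\textbf{Main obstacle.} The genuine difficulty lies with the points of $S$ that are symplectic with $x$: their extremal representatives are inherently inhomogeneous, carrying components in all of $L_{-2}$, $L_{-1}$ and $L_0$, so one cannot argue component-by-component on individual points. The device that dissolves this obstacle is to stop tracking the symplectic points individually and instead replace $S$ by a geometric generating set built from $x$, the \emph{single} distinguished point $p$, and points that are merely collinear with $x$. The polar-space generation lemma is precisely what legitimizes this move, converting the awkward inhomogeneous symplectic representatives into linear combinations of the well-understood homogeneous pieces $\langle x\rangle$, $A$ and $B$.
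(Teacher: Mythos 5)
Your proof is correct and follows essentially the same route as the paper's: both identify the unique point of $S\cap\E_0(\langle y\rangle)$ via \cref{pr:symplecta}\cref{pr:symplecta:e} as the sole contribution to the $L_0$-part, and both then apply \cref{recover: lemma polar} to that point and $\langle x\rangle$ to reduce to the common neighbours. The only (harmless) divergence is in the last step: the paper shows each common neighbour lies in $L_{-1}$ outright using \cref{pr:RFS}\cref{pr:RFS:2} and \cref{a in g_{-1}}, whereas you split such a point as $\lambda x+e$ via \cref{co:E-1} and use the subspace property of $S$ to place $\langle e\rangle$ in $S\cap L_{-1}$ — both arguments are valid.
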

\begin{proof}
	By \cref{pr:symplecta}\cref{pr:symplecta:e}, $S\cap \E_0(y)$ consists of a single point $\langle z\rangle$, and since $S \subseteq \E_{\leq 0}(x)$, this implies $z \in L_0$.
	By \cref{recover: lemma polar}, $L_S$ is spanned by $x$, $z$, and all points $\langle a\rangle$ collinear to both $\langle x\rangle $ and $\langle z\rangle$.
	For such a point $\langle a \rangle$, we have of course $a \in E_{-1}(x)$.
	Now $z \in E_{-1}(a) \cap E_0(y)$, so by \cref{pr:RFS}\cref{pr:RFS:2}, we have $(a,y) \not\in E_2$.
	Hence $a \in E_{-1}(x) \cap E_{\leq 1}(y) = L_{-1}$ by \cref{a in g_{-1}}.
\end{proof}

We are now ready to prove the first theorem of this section.

\begin{theorem}
\label{recover:sum of extr}
	Assume that $L$ is a simple Lie algebra over $k\neq\mathbb F_2$ generated by its pure extremal elements, and assume that the set of lines $\F$ is non-empty. Consider $x,y\in E$ with $g(x,y)=1$ and let $L=L_{-2}\oplus L_{-1}\oplus L_0\oplus L_1\oplus L_2$ be the associated $5$-grading.
	Then both $L_{-1}$ and $L_1$ are linearly spanned by the extremal elements contained in it.
\end{theorem}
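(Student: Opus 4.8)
The plan is to manufacture a single graded subspace of $L$ that contains every homogeneous extremal element and then force it to be all of $L$ via \cref{recover: ideal}. First observe that it suffices to prove $I_{-1}=L_{-1}$: the automorphism $\varphi=\exp(y)\exp(x)\exp(y)$ of \cref{recover:switching grading} interchanges $L_{-1}$ and $L_1$ (through $l_{-1}\mapsto[y,l_{-1}]$ and $l_1\mapsto[x,l_1]$) and, being an automorphism, sends extremal elements to extremal elements; hence $\varphi(I_{-1})=I_1$, and $I_{-1}=L_{-1}$ immediately yields $I_1=L_1$. Since $\F\neq\emptyset$, \cref{Grading is bracket if lines} guarantees $I_{-1}\neq 0$, so there are genuinely lines to exploit. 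I would then set
\[ M:=\langle x\rangle\oplus I_{-1}\oplus M_0\oplus I_1\oplus\langle y\rangle, \qquad M_0:=k[x,y]+[I_{-1},I_1]\subseteq L_0, \]
a non-zero graded subspace containing $I_{-1}$. By \cref{recover: ideal} it is enough to verify $[M,y]\subseteq M$ and $[M,I_{-1}]\subseteq M$; then $M=L$, and comparing homogeneous components forces $I_{-1}=L_{-1}$ and $I_1=L_1$.

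Most of the required inclusions are formal consequences of the grading together with \cref{pr:5gr,pr:2pts}. For $[M,y]$ one has $[x,y]\in M_0$ by definition, $[I_{-1},y]\subseteq I_1$ (because for extremal $e\in E\cap L_{-1}$ we have $g(e,y)=0$, so by \cref{pr:2pts} the element $[y,e]$ is extremal and lies in $L_1$), $[M_0,y]\subseteq[y,L_0]\subseteq\langle y\rangle$ since $L_0=N_L(x)\cap N_L(y)$, and $[I_1,y]\subseteq L_3=0$. For $[M,I_{-1}]$ one has $[x,I_{-1}]\subseteq L_{-3}=0$, $[I_{-1},I_{-1}]\subseteq L_{-2}=\langle x\rangle$ (indeed $x\in[I_{-1},I_{-1}]$ by \cref{Grading is bracket if lines}), $[I_1,I_{-1}]\subseteq M_0$ by definition, and $[y,I_{-1}]\subseteq I_1$. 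The only inclusion that is not purely formal is $[M_0,I_{-1}]\subseteq I_{-1}$, and here the piece $[\,k[x,y],I_{-1}]$ is harmless: by \cref{pr:5gr}\cref{5gr:grading der} the operator $\ad_{[x,y]}$ acts on $L_{-1}$ as multiplication by $-1$, so it preserves $I_{-1}$.

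The crux, and the step I expect to be the main obstacle, is therefore to show that for extremal $a\in E\cap L_1$ and $b\in E\cap L_{-1}$ the degree-zero operator $\ad_{[a,b]}$ stabilizes $I_{-1}$, i.e.\ $[[a,b],c]\in I_{-1}$ for all $c\in E\cap L_{-1}$. A Jacobi expansion $[[a,b],c]=[[a,c],b]+[a,[b,c]]$ disposes of $[a,[b,c]]$ (since $[b,c]\in\langle x\rangle$ and $[x,a]\in E\cap L_{-1}$, as $(x,a)$ is special), but it merely permutes the troublesome $L_0$-on-$L_{-1}$ term, so Jacobi alone cannot close the computation and genuine geometric input is required. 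This is exactly where the symplecton apparatus developed above should be used, in parallel with the case split of \cref{recover: y and I1}: when $\E_0=\emptyset$ the extremal geometry is a generalized hexagon and one controls $\ad_{[a,b]}$ by projecting onto lines, while when $\E_0\neq\emptyset$ one places $[a,b]$ inside the unique degree-$0$ line of a symplecton $S$ through $x$ and invokes \cref{pr:symplecta} together with the decomposition of \cref{recover: symp decomp Lie}, which exhibits the relevant part of $L_{-1}$ as the span of the extremal points of $S$ lying in $L_{-1}$, i.e.\ inside $I_{-1}$. Once $[M_0,I_{-1}]\subseteq I_{-1}$ is secured, \cref{recover: ideal} gives $M=L$ and the theorem follows.
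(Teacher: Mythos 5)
Your overall strategy is the same as the paper's: form the graded subspace $\langle x\rangle\oplus I_{-1}\oplus[I_{-1},I_1]\oplus I_1\oplus\langle y\rangle$ (your extra summand $k[x,y]$ is redundant, since $[x,y]\in[I_{-1},I_1]$ by \cref{Grading is bracket if lines}), verify the two closure conditions of \cref{recover: ideal}, and conclude. Your formal verifications are correct, and you correctly isolate the crux: showing $[[a,c],b]\in I_{-1}$ for $a,b\in E\cap L_{-1}$ and $c\in E\cap L_1$.

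However, your treatment of the crux has a genuine gap. You propose a dichotomy on $\E_0=\emptyset$ versus $\E_0\neq\emptyset$, handling the first via hexagon projections and the second via symplecta. But the right case analysis is on the mutual relation of the pair $(a,b)$, and the two tools you invoke only cover part of it: the inner ideal $\langle a,b\rangle$ handles $(a,b)\in E_{-1}$, and the symplecton $L_S$ (an inner ideal, combined with \cref{recover: symp decomp Lie}) handles $(a,b)\in E_0$ --- note that a symplecton contains only pairs in $\E_{\leq 0}$, so "placing $[a,b]$ inside a symplecton" is not available when $(a,b)$ is a special pair, and $[a,b]\in L_0$ need not even be extremal when $a\in E\cap L_1$, $b\in E\cap L_{-1}$. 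The genuinely hard case is $(a,b)\in E_1$ with $c\in E_2(a)\cap E_2(b)$, which occurs whether or not $\E_0$ is empty, and which no amount of incidence-geometric projection settles, because the required statement is a linear-algebraic one about the element $[[a,c],b]$ of the Lie algebra. The paper resolves it by an explicit computation: choosing nonzero $\lambda,\mu\in k$ with $\mu(\lambda-1)=1$, one checks that $\exp(\mu c)\exp(\lambda b)\exp(c)(a)$ equals $\lambda[b,a]+z$ with $z=a+\lambda^2 b+\lambda[b,[c,a]]+\lambda\mu[c,[b,a]]$, whence $z$ is extremal (or zero) and $[b,[c,a]]$ is exhibited as a sum of at most four extremal elements of $L_{-1}$. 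This is also precisely where the hypothesis $k\neq\mathbb F_2$ enters (such $\lambda,\mu$ exist only for $|k|>2$); your proposal never uses this hypothesis, which is a reliable sign that the hard case has not actually been addressed.
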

\begin{proof}
	As before, let $I_{-1}$ and $I_1$ be the subspaces of $L$ linearly spanned by the extremal elements contained in $L_{-1}$ and $L_1$, respectively.
	Let
	\[ I = \langle x\rangle \oplus I_{-1}\oplus [I_{-1},I_1]\oplus I_1\oplus \langle y \rangle . \]
	We will use \cref{recover: ideal} to show that $I = L$.

    We first show that $[I, y] \leq I$.
	By \cref{Grading is bracket if lines}, we know that $[x,y]\in [I_{-1},I_1]$.
	Next, if $a\in E\cap L_{-1}$, then $a\in E_1(y)$ and hence $[a,y]\in I_{1}$.
	Since $[L_{\geq 0}, y] \leq L_2 = \langle y \rangle$, the claim $[I,y]\leq I$ is now clear.

    It remains to show that $[I,I_{-1}]\leq I$.
    Obviously $[\langle x\rangle \oplus I_{-1}\oplus I_1,I_{-1}]\leq I$.
    Similarly as before, $[\langle y\rangle,I_{-1}]\leq I_1\leq I$, so the only case left to prove is
    \[ [[I_{-1},I_1],I_{-1}]\leq I_{-1} . \]
    So consider arbitrary extremal elements $a,b\in E\cap L_{-1}$ and $c\in E\cap L_1$; our goal is to show that
    \begin{equation}\label{eq:acb}
        [[a,c],b]\in I_{-1}.
    \end{equation}
    Observe that $[a,b] \in L_{-2} = \langle x \rangle$, so by the Jacobi identity and the fact that $[x,c]$ is contained in $E\cap L_{-1}$, we have
    \[ [[a,c], b] \in I_{-1} \iff [[b,c], a] \in I_{-1} . \]
    Also observe that $c\in E_{\leq 0}(a)$ implies $[a,c]=0$ and that $c\in E_{\leq 0}(b)$ implies $[b,c]=0$.
    Hence we may assume from now on that $c\in E_{\geq 1}(a)\cap E_{\geq 1}(b)$.

    Since $a$ is extremal, $[[a,c], a] \in \langle a \rangle$ is again extremal, so \eqref{eq:acb} obviously holds if $(a,b) \in E_{-2}$.
    If $(a,b) \in E_{-1}$, then by \cite[Lemma 6.3]{Cuypers2021}, $J := \langle a,b\rangle$ is an \emph{inner ideal}, i.e., $[J, [J, L]] \leq J$; in particular, $[b,[a,c]] \in J \leq I_{-1}$.
    Next, if $(a,b) \in E_{0}$, then $a$ and $b$ are contained in a symplecton~$S$.
    The subspace $L_S$ spanned by all elements of $S$ is an inner ideal by \cite[Lemma 6.5]{Cuypers2021}, and hence $[b,[a,c]]\in L_S$.
    Since $[b,[a,c]]\in L_{-1}$, \cref{recover: symp decomp Lie} now implies that $[b,[a,c]] \in \langle \{ \langle z\rangle\in S\mid z\in L_{-1}\} \rangle \leq I_{-1}$.

    Now notice that $(a,b) \in E_2$ cannot occur, because both $a$ and $b$ are collinear to~$x$, so we can assume from now on that $(a,b) \in E_1$.
    In particular, $[a,b] \in \langle x \rangle \setminus \{ 0 \}$.

    If $c\in E_1(a)$, then $[a,c]\in E$, and since the extremal form $g$ is associative, we have $g(b,[a,c])=g([b,a],c)=0$, since $c\in E_1(x)$.
    Hence $[a,c]\in E_{\leq 1}(b)$, so either $[[a,c],b]\in E$ or $[[a,c],b]=0$.
    In either case, we obtain $[[a,c],b]\in I_{-1}$.
    The case $c\in E_1(b)$ is similar.

    The only case left to consider is $a\in E_1(b)$ and $c\in E_2(a)\cap E_2(b)$, and this case requires some more effort.
    We have $g(a,b)=0$, and by rescaling, we may assume that $g(a,c) = g(b,c) = 1$, and we also have $g(b, [a,c]) = 0$ as before.

    Let $\lambda,\mu\in k$ be arbitrary non-zero elements such that
    \begin{equation}\label{eq:lambda mu}
        \mu (\lambda - 1) = 1 ,
    \end{equation}
    which exist because $k \neq \mathbb F_2$, and consider the extremal element
    \[ d := \exp(\mu c)\exp(\lambda b)\exp(c)(a) .\]
    Then
    \begin{align*}
        d &= \exp(\mu c)\exp(\lambda b) \bigl(a + [c,a] + c \bigr) \\
        &= \exp(\mu c) \bigl( \underbrace{\lambda [b,a]}_{\in L_{-2}}
            + \underbrace{a + \lambda [b, [c,a]] + \lambda^2 b}_{\in L_{-1}}
            + \underbrace{[c,a] + \lambda [b,c]}_{\in L_0}
            + \underbrace{c\vphantom{[]}}_{\in L_1} \bigr) .
    \end{align*}
    We claim that $d \in L_{\leq -1}$.
    This could be checked by a direct computation, but it is easier to reverse the process, by starting from an arbitrary extremal element of the form
    \[ e := \lambda [b,a] + z \in \langle x \rangle + L_{-1} , \]
    compute $\exp(-\mu c)(e)$, and verify that with the correct choice of $z$, this is equal to $\exp(-\mu c)(d)$.
    We get
    \[ \exp(-\mu c)(e) = \underbrace{\lambda [b,a]}_{\in L_{-2}}
            + \underbrace{z - \lambda \mu [c,[b,a]]}_{\in L_{-1}}
            - \underbrace{\mu [c,z]}_{\in L_0}
            + \underbrace{\mu^2 g(c,z)c\vphantom{[]}}_{\in L_1} . \]
    Comparing the $L_{-1}$-components, we have no other choice than setting
    \begin{equation}\label{eq:z}
        z := a + \lambda^2 b + \lambda [b,[c,a]] + \lambda \mu [c,[b,a]] .
    \end{equation}
    Now, using the Premet identity \cref{P2} (see \cref{def of extremal}) and the fact that $g(c, [b,a]) = 0$, we get
    \begin{align*}
        [c,z]
        &= [c,a] + \lambda^2 [c,b] + \lambda [c,[b,[c,a]]] + \lambda \mu [c,[c,[b,a]]] \\
        &= [c,a] + \lambda^2 [c,b] - \lambda ([c,b] + [c,a]) \\
        &= (1 - \lambda) [c,a] + \lambda (\lambda - 1) [c,b],
    \end{align*}
    hence by \cref{eq:lambda mu}, $-\mu [c,z] = [c,a] - \lambda [c,b]$, so the $L_0$-components coincide. Finally,
    \begin{align*}
        g(c,z)
        &= g(c,a) + \lambda^2 g(c,b) + \lambda g(c,[b,[c,a]]) + \lambda \mu g(c,[c,[b,a]]) \\
        &= 1 + \lambda^2 - \lambda g([c, [c, a]], b) \\
        &= 1 + \lambda^2 - 2 \lambda g(c,a) g(c, b) = 1 + \lambda^2 - 2 \lambda = (1 - \lambda)^2 ,
    \end{align*}
    so again by \cref{eq:lambda mu}, $\mu^2 g(c,z)= 1$, and hence also the $L_1$-components coincide.
    Therefore, $e=d$, and we conclude that the element $e = \lambda [b,a] + z \in \langle x \rangle + L_{-1}$ is an extremal element.

    If $z = 0$, then by \cref{eq:z}, $[b,[c,a]]$ can be written as the sum of $3$ extremal elements contained in $L_{-1}$.
    Indeed, obviously $a, b\in E$, but also $[c,[b,a]]\in E$ since $\langle c\rangle$ and $\langle x\rangle=\langle [b,a]\rangle$ form a special pair.

    If $z \neq 0$, then $\langle e,x \rangle$ is a line of the extremal geometry.
    Since $z \in \langle e,x\rangle$, it is contained in $E$.
    So by \cref{eq:z} again, the element $[b,[c,a]]$ can be written as the sum of $4$ extremal elements contained in $L_{-1}$.

    This shows that $[b,[c,a]] \in I_{-1}$ in all possible cases.
\end{proof}

Now we will work towards showing the so-called \emph{algebraicity} of the Lie algebra, using the previous theorem.
We will give a precise definition of this property in \cref{recover:root groups descent} below, but loosely speaking it means that for any element in $L_{-1}$ there exist automorphisms depending on this element which behave nicely with respect to the $5$-grading.
Actually, if the characteristic is not $2$, $3$, or $5$, this property is easily shown (recall the discussion following \cref{prelim:def alg}), so it is not surprising that we sometimes have to handle the small characteristic cases a bit more carefully.

In the next two lemmas, we show that as soon as an extremal element has a certain form, it is contained in the image of a specific element under $\Exp(x)$.
We will use this later to show the uniqueness of certain automorphisms.

\begin{lemma}
\label{recover:char 2 uniqueness}
	Assume $\Char(k)\neq 3$.
	Let $l=l_{-2}+l_{-1}+e$ be an extremal element, with $l_i \in L_i$ and $e\in E\cap L_1$.
	Then $l_{-2} = 0$ and $l_{-1} \in \langle [x,e] \rangle$.
	In particular, $l=\exp(\lambda x)(e) = e + \lambda [x,e]$ for some $\lambda\in k$.
\end{lemma}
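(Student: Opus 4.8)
The plan is to extract everything from the single extremal element $l$ by expanding the basic identity \cref{extr}, $[l,[l,m]]=2g_l(m)l$, and the Premet identity \cref{P1} for $l$, and then comparing homogeneous components of the resulting equalities with respect to the $5$-grading. Two facts keep the bookkeeping manageable: by \cref{pr:5gr}\cref{5gr:grading der} the element $[x,y]$ acts as multiplication by $i$ on $L_i$, so $g(L_i,L_j)=0$ whenever $i+j\neq0$; and by \cref{pr:5gr}\cref{5gr:g0}, since $l_{-2},l_{-1},e\in L_{\leq1}$, we get $g_l(x)=0$, whereas $g_l(y)=g(l_{-2},y)=c$ where I write $l_{-2}=cx$.

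First I would prove $l_{-2}=0$. Writing $[l,y]=c[x,y]+[l_{-1},y]$ (using $[e,y]=0$) and expanding $[l,[l,y]]=2c\,l$, the only contribution in degree $1$ on the left comes from $c[e,[x,y]]=-c\,e$, while the right-hand side contributes $2c\,e$; hence the $L_1$-parts force $3c\,e=0$. Since $e\neq0$ and $\Char(k)\neq3$, this gives $c=0$. The $L_0$- and $L_2$-parts of the same identity record the by-products $[l_{-1},[l_{-1},y]]=0$ and $g(e,l_{-1})=0$, which I would keep. (Note that $\Char(k)=3$ is exactly where this step collapses, explaining the hypothesis.)

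Now $l=l_{-1}+e$. Putting $w:=[l_{-1},y]\in L_1$, the Jacobi identity gives $[x,w]=l_{-1}$, and since $\ad_x\colon L_1\to L_{-1}$ is injective by \cref{pr:5gr}\cref{5gr:iso}, it suffices to show $w\in\langle e\rangle$, i.e. $l_{-1}\in\langle[x,e]\rangle$. Introduce the alternating form $\omega$ on $L_{-1}$ by $[a,b]=\omega(a,b)x$; its radical is the space $Z_{-1}$ of \cref{recover:T nondeg}, which vanishes, so $\omega$ is non-degenerate and $[x,e]\neq0$ is not $\omega$-orthogonal to all of $L_{-1}$. From $e=-[y,[x,e]]$ (the inverse of $\ad_x$ being $-\ad_y$) and the associativity of $g$ one records $g(e,v)=-\omega([x,e],v)$ for $v\in L_{-1}$. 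I would then feed $m=v\in L_{-1}$ into \cref{extr} and $(u,v)=(y,v)$ into \cref{P1}, compare the $L_{-1}$- and $L_1$-components, and apply $\ad_x$ to the outcomes; eliminating the auxiliary brackets $[l_{-1},[e,v]]$ and $[w,[e,v]]$ produces, for every $v\in L_{-1}$, the single clean identity
\[ 2\bigl(\omega(l_{-1},v)\,[x,e]-\omega([x,e],v)\,l_{-1}\bigr)=0 . \]
When $\Char(k)\neq2$ I cancel the $2$, choose $v$ with $\omega([x,e],v)\neq0$, and read off $l_{-1}=\lambda[x,e]$ with $\lambda=\omega(l_{-1},v)/\omega([x,e],v)$. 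Finally $g_x(e)=0$ gives $\exp(\lambda x)(e)=e+\lambda[x,e]=l$, as claimed.

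The hard part is genuinely the characteristic $2$ case — which is what the name of the lemma signals — since there the factor $2$ annihilates the displayed identity and the algebraic comparison yields no information about $l_{-1}$. To treat it I would argue geometrically instead: the pair $(l,x)$ falls in case (d) of \cref{pr:2pts} (we have $[l,x]=-[x,e]\neq0$ and $g_l(x)=0$), so $\langle l\rangle$ is collinear with $\langle[x,e]\rangle=\langle[l,x]\rangle$, while $\langle e\rangle$ is collinear with $\langle[x,e]\rangle$ as well, the line $\langle e,[x,e]\rangle$ being the orbit of $\langle e\rangle$ under $\Exp(x)$. As $\langle y\rangle$ is special with $\langle[x,e]\rangle$ but collinear with $\langle e\rangle$, the point $\langle e\rangle$ lies on two distinct lines, and locating $\langle l\rangle$ on the line $\langle e,[x,e]\rangle$ — exploiting that in characteristic $2$ one has $l+e=l_{-1}$, together with \cref{co:E-1} and the partial-linear-space axiom — forces $l_{-1}\in\langle e,[x,e]\rangle\cap L_{-1}=\langle[x,e]\rangle$. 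This geometric step, and not the routine $\Char(k)\neq2$ computation, is where the real work lies.
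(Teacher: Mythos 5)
Your first step ($l_{-2}=0$) is correct and is essentially the paper's argument: the paper expands $[l,[l,[x,y]]]$ where you expand $[l,[l,y]]$, but both isolate the $L_1$-component and use $\Char(k)\neq 3$ to kill $3l_{-2}$-type terms. Your characteristic $\neq 2$ argument for $l_{-1}\in\langle[x,e]\rangle$ is also correct and genuinely different from the paper's: I verified that comparing components in \cref{extr} and \cref{P1}, applying $\ad_x$, and eliminating $[l_{-1},[e,v]]$ does yield $2\bigl(\omega(l_{-1},v)[x,e]-\omega([x,e],v)l_{-1}\bigr)=0$, after which non-degeneracy of $\omega$ (i.e.\ \cref{recover:T nondeg}) finishes the job when $2\in k^\times$. (One caveat: your blanket claim that $g(L_i,L_j)=0$ whenever $i+j\neq 0$ does not follow from the eigenvalue argument in small characteristic --- e.g.\ $g(L_{-1},L_{-1})$ has $i+j=-2$, which vanishes in characteristic $2$; the vanishings you actually need should instead be taken from \cref{pr:5gr}\cref{5gr:g0}, from \cref{recover:extr in 1 prop}, and from writing $a=[x,a']$ and using associativity of $g$.)

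The genuine gap is the characteristic $2$ case, exactly where you say the real work lies. Your geometric sketch is circular at its key step: ``locating $\langle l\rangle$ on the line $\langle e,[x,e]\rangle$'' is literally equivalent to the conclusion, because $l\in\langle e,[x,e]\rangle$ if and only if $l_{-1}\in\langle[x,e]\rangle$ (compare graded components). The facts you assemble --- $(l,x)$ special, hence $\langle l\rangle$ collinear with $\langle[l,x]\rangle=\langle[x,e]\rangle$; $\langle e\rangle$ collinear with $\langle[x,e]\rangle$; $(y,[x,e])$ special; $\langle y\rangle$ collinear with $\langle e\rangle$ --- do not pin $\langle l\rangle$ to that particular line: a partial linear space may have many lines through $\langle[x,e]\rangle$, and nothing you list excludes $\langle l\rangle$ from lying on a different one. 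The missing idea (which is the paper's proof of this half, and which is characteristic-free, so your case split is unnecessary) is to show first that $l_{-1}$ itself is extremal: by \cref{pr:Ei(x)} (applied to $y$), $l=l_{-1}+e\in E_1(y)$, so $[l,y]=[l_{-1},y]\in E$; by \cref{pr:Ei(x)} again, $[l_{-1},y]\in E_1(x)$, so $l_{-1}=[x,[l_{-1},y]]\in E$. Then $l_{-1}$, $e$ and $l_{-1}+e=l$ all lie in $E$, so \cref{pr:2pts} forces $(l_{-1},e)\in E_{-1}$; consequently $\langle l_{-1}\rangle$ is a common neighbour of the special pair $(\langle x\rangle,\langle e\rangle)$, and the uniqueness of that neighbour gives $l_{-1}\in\langle[x,e]\rangle$. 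Without this (or an equivalent) step, your characteristic $2$ case, and hence the lemma as stated, is not proved.
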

\begin{proof}
	By \cref{pr:5gr}\cref{5gr:grading der}, we have
	\[ [l,[l,[x,y]]]=[l,2l_{-2}+l_{-1}-e]=-3[l_{-2},e]-2[l_{-1},e] \in L_{-1} + L_0.\]
	On the other hand, since $l$ is extremal, we have $[l, [l, [x,y]]] \in \langle l \rangle$. By the previous identity, this element must have trivial $L_1$-component, but since $e \neq 0$ by assumption, this implies that $[l, [l, [x,y]]] = 0$, so
	\[ 3[l_{-2}, e] = 2[l_{-1}, e] = 0 . \]
	Since $\Char(k) \neq 3$, this implies $[l_{-2},e]=0$ and hence $l_{-2}=0$ by \cref{pr:5gr}\cref{5gr:iso}.
	If $l_{-1}=0$, then there is nothing to show, so assume $l_{-1}\neq 0$. Then
	\[ l = l_{-1} + e \in L_{-1} + L_1 , \quad l_{-1} \neq 0, \quad e \neq 0 . \]
	By \cref{pr:Ei(x)}, we have $l\in E_1(y)$ and thus $[l_{-1},y] = [l,y] \in E$.
	By \cref{pr:Ei(x)} again, $[l_{-1},y]\in E_1(x)$ and thus $l_{-1}=[x,[l_{-1},y]]\in E$.
	Since $l_{-1}$, $e$ and $l=l_{-1}+e$ are contained in $E$, \cref{pr:2pts} implies that $(l_{-1},e)\in E_{-1}$.
	Hence $\langle l_{-1}\rangle$ is the common neighbor of the special pair $(\langle x\rangle, \langle e\rangle)$ and thus $l_{-1}\in \langle [x,e]\rangle$.
\end{proof}

\begin{lemma}
\label{recover:uniqueness}
	Let $l=l_{-2}+l_{-1}+l_0+y\in E$ be an extremal element, with $l_i \in L_i$.
	Then $l_{-1} = 0$ and $l=\exp(\lambda x)(y)$ for some $\lambda\in k$.
\end{lemma}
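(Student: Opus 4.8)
The plan is to feed a few well-chosen test elements into the extremal identity for $l$ and read off the homogeneous components of the $5$-grading, splitting the analysis by characteristic. First note $g_l(x) = g(l,x) = 1$: by \cref{pr:5gr}\cref{5gr:g0} the form $g$ pairs $x$ only with the $L_2$-component of $l$, which is $y$, and $g(x,y)=1$. Writing $[l_0,x]=\beta x$ (legitimate since $[L_0,L_{-2}]\leq L_{-2}=\langle x\rangle$) and using that $\ad_{[x,y]}$ acts on $L_i$ as multiplication by $i$ (\cref{pr:5gr}\cref{5gr:grading der}), I get $[l,x]=\beta x-[x,y]$ and $[l,[x,y]]=2l_{-2}+l_{-1}-2y$, hence
\[ [l,[l,x]] = \beta^2 x - \beta[x,y] - 2l_{-2} - l_{-1} + 2y . \]
Extremality gives $[l,[l,x]] = 2g_l(x)\,l = 2l$, so comparing the $L_{-1}$\dash, $L_0$\dash and $L_{-2}$-components yields, in every characteristic,
\[ 3\,l_{-1} = 0, \qquad 2\,l_0 = -\beta[x,y], \qquad 4\,l_{-2} = \beta^2 x . \]

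When $\Char(k)\neq 3$ the first relation forces $l_{-1}=0$, and when $\Char(k)\neq 2$ the other two give $l_0=-\tfrac\beta2[x,y]$ and $l_{-2}=\tfrac{\beta^2}4 x$; since $\exp(\lambda x)(y)=y+\lambda[x,y]+\lambda^2 x$ (\cref{def:exp}), this already identifies $l=\exp(-\tfrac\beta2 x)(y)$ whenever $\Char(k)\notin\{2,3\}$. In characteristic $3$ the relations for $l_0$ and $l_{-2}$ still hold (only $3l_{-1}=0$ is lost), so a suitable $\exp(\mu x)$ kills the $L_0$- and $L_{-2}$-components and reduces us to an extremal element $l=l_{-1}+y$. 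Testing extremality against an arbitrary $e_1\in L_1$ and comparing $L_{-1}$-components of $[l,[l,e_1]]=2g_l(e_1)\,l$ gives $2\,g(l_{-1},e_1)\,l_{-1}=0$; as $2\neq 0$ and $g$ restricts to a non-degenerate pairing $L_{-1}\times L_1\to k$ (use \cref{pr:5gr}\cref{5gr:g0} to discard the other components and \cref{rem:L simple pure} for non-degeneracy), we conclude $l_{-1}=0$, i.e. $l=\exp(\mu' x)(y)$.

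The characteristic $2$ case is the crux. Now $3l_{-1}=0$ reads $l_{-1}=0$ directly, but $2l_0=-\beta[x,y]$ and $4l_{-2}=\beta^2x$ collapse to $\beta=0$ and give no control on $l_0$ or $l_{-2}$. Write $l_{-2}=\alpha x$. Testing extremality against $c\in L_{-1}$ and comparing graded components of $[l,[l,c]]=0$ gives $[[y,l_0],c]=0$ for all $c$, whence $[l_0,y]=0$ (as $\ad_y$ is injective on $L_{-1}$ by \cref{pr:5gr}\cref{5gr:iso}), and $\ad_{l_0}^2=\alpha\cdot\id$ on $L_{-1}$ (using $[x,[y,c]]=-c$). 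To pin $l_0$ down I would bring in the previous lemma: for each extremal $c\in E\cap L_{-1}$ the pair $(c,l)$ is special (since $g(c,l)=0$ while $[c,l]\neq 0$), so $[c,l]=[c,l_0]+[c,y]$ is extremal with $[c,y]\in E\cap L_1$, and \cref{recover:char 2 uniqueness} forces $[l_0,c]\in\langle[x,[c,y]]\rangle=\langle c\rangle$. Thus every extremal $c\in L_{-1}$ is an eigenvector of $\ad_{l_0}$, with eigenvalue $\mu(c)\in k$ satisfying $\mu(c)^2=\alpha$; in characteristic $2$ the square root is unique, so $\mu(c)$ is a single scalar $\mu_0$ independent of $c$ (and in particular $\alpha=\mu_0^2$). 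Since $L_{-1}$ is spanned by such $c$ (\cref{recover:sum of extr}), $\ad_{l_0}=\mu_0\cdot\id$ on $L_{-1}$.

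Finally I would put $m_0:=l_0-\mu_0[x,y]\in L_0$. A direct check gives $[m_0,x]=[m_0,y]=0$ and $[m_0,L_{-1}]=0$ (using $\ad_{[x,y]}=\id$ on $L_{-1}$ in characteristic $2$). As $L$ is generated by $y$ and $L_{-1}$ (\cref{recover: y and I1}) and $m_0$ acts as a derivation, $\ad_{m_0}=0$, so $m_0$ is central and hence $0$ by simplicity. Therefore $l_0=\mu_0[x,y]$ and $l_{-2}=\mu_0^2x$, i.e. $l=\exp(\mu_0 x)(y)$. The single genuinely hard point is exactly this last case: in characteristic $2$ the defining identity $[l,[l,\cdot]]=2g_l(\cdot)l$ degenerates, and one must instead extract the \emph{quadratic} operator $\ad_{l_0}^2$, use \cref{recover:char 2 uniqueness} to diagonalize $\ad_{l_0}$ on the extremal vectors of $L_{-1}$, and invoke simplicity to remove the residual central element $m_0$.
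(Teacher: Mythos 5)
Your argument for $\Char(k)\neq 2,3$ agrees with the paper's, and your characteristic $2$ argument is correct (more on it below), but the characteristic $3$ step has a genuine gap. After reducing to $l=l_{-1}+y$, you test extremality against $e_1\in L_1$ and claim that comparing $L_{-1}$-components gives $2g(l_{-1},e_1)\,l_{-1}=0$. That is not what the comparison yields. Since $[y,e_1]\in L_3=0$, we have $[l,e_1]=[l_{-1},e_1]\in L_0$, hence
\[ [l,[l,e_1]] \;=\; [l_{-1},[l_{-1},e_1]] \;+\; [y,[l_{-1},e_1]] , \]
whose $L_{-1}$-component is $[l_{-1},[l_{-1},e_1]]$, not $0$. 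The comparison therefore gives
\[ [l_{-1},[l_{-1},e_1]] \;=\; 2g(l_{-1},e_1)\,l_{-1} , \]
and this identity carries no information in exactly the case you must exclude: if $l_{-1}\neq 0$, then (as the paper's proof shows) $l\in E_1(y)$ forces $[l_{-1},y]\in E$, whence $l_{-1}=[x,[l_{-1},y]]\in E$, and for an \emph{extremal} $l_{-1}$ the displayed identity is just \cref{extr} together with \cref{pr:g}, automatically satisfied. So no choice of $e_1$ produces a contradiction, and the non-degeneracy of the pairing $L_{-1}\times L_1\to k$ never gets any traction. The obstruction is genuinely geometric, and that is how the paper argues: if $l_{-1}\neq 0$, then $l_{-1}$, $y$ and $l_{-1}+y$ are all extremal, so \cref{pr:2pts} forces $(l_{-1},y)\in E_{-1}$ (collinear), contradicting the fact that a nonzero extremal element of $L_{-1}$ is \emph{special} with $y$ by \cref{pr:Ei(x)}. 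You need this argument, or some substitute for it; a purely computational test of \cref{extr} cannot see it.

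The characteristic $2$ case, by contrast, is sound and takes a genuinely different route from the paper's. The paper picks $c,d\in E\cap L_{-1}$ with $[c,d]=x$, applies \cref{recover:char 2 uniqueness} to get $[l_0,c]=\lambda c$ and $[l_0,d]=\mu d$, reconstructs $l=[[l,c],[l,d]]$ via the Premet identities, and forces $\lambda=\mu$ from $0=[\alpha(l),[\alpha(l),y]]=(\lambda+\mu)^2y$. You instead extract the operator identity $\ad_{l_0}^2=\alpha\,\id$ on $L_{-1}$, use \cref{recover:char 2 uniqueness} to show every element of $E\cap L_{-1}$ is an $\ad_{l_0}$-eigenvector, invoke injectivity of the Frobenius in characteristic $2$ to get a single eigenvalue $\mu_0$ with $\mu_0^2=\alpha$, extend to all of $L_{-1}$ by \cref{recover:sum of extr}, and then eliminate $m_0=l_0-\mu_0[x,y]$ by generation (\cref{recover: y and I1}) and simplicity. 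Both arguments rest on \cref{recover:char 2 uniqueness} and on the existence of extremal elements in $L_{-1}$; yours trades the Premet-identity manipulations for a diagonalization-plus-centre argument, which is a perfectly good alternative. In summary: only the characteristic $3$ case needs repair, and the repair is the paper's geometric collinearity argument rather than any refinement of your component computation.
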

\begin{proof}
	Let $\lambda,\mu \in k$ be such that $l_{-2}=\lambda x$ and $[l_0,x]=\mu x$.
	By \cref{pr:5gr}\cref{5gr:grading der}, we have
	\begin{align*}
		[l,[l,x]]
		&= [l, \mu x + [y,x]] \\
		&= \mu [l,x] + \bigl[ \lambda x+l_{-1}+l_0+y, [y,x] \bigr] \\
        &= (\mu^2 x+\mu[y,x])+(-2\lambda x-l_{-1}+2y).
	\end{align*}
	Since $l$ is extremal and $g(l, x) = g(y, x)=1$ by \cref{pr:5gr}\cref{5gr:g0}, we get
	\begin{equation}\label{eq:3eqs}
	   \mu^2 = 4\lambda, \quad 3 l_{-1} = 0, \quad \mu[y,x]=2l_0 .
	\end{equation}
	If $\Char(k)$ is not $2$ or $3$, then this already implies that $l =\exp(-\tfrac{1}{2} \mu x)(y)$.

	Assume now $\Char(k)=3$.
	We claim that also in this case, $l_{-1} = 0$; it then follows again that $l=\exp(-\tfrac{1}{2} \mu x)(y)$.
	To show the claim, suppose that $l_{-1} \neq 0$ and let $z := \exp(\tfrac{1}{2} \mu x)(l)$.
	Then $z$~is an extremal element, and we compute that $z = l_{-1} + y$.
	We apply the same technique as in the last paragraph of the proof of \cref{recover:char 2 uniqueness}:
	By \cref{pr:Ei(x)}, $z \in E_1(y)$ and thus $[l_{-1},y] = [z,y] \in E$.
	By \cref{pr:Ei(x)} again, $[l_{-1}, y]\in E_1(x)$ and thus $l_{-1} = [x, [l_{-1}, y]] \in E$.
	Since $l_{-1}$, $y$ and $z = l_{-1}+y$ are contained in $E$, \cref{pr:2pts} implies that $(l_{-1}, y) \in E_{-1}$, contradicting \cref{pr:Ei(x)}.

	Assume, finally, that $\Char(k)=2$. By \cref{eq:3eqs}, we have $l_{-1}=0$, so $l \in L_{-2} + L_0 + L_2$.
	By \cref{Grading is bracket if lines}, there exist $c,d \in E\cap L_{-1}$ such that $[c, d] = x$.
	By \cref{recover:extr in 1 prop}, we get for any $e\in E\cap L_{-1}$ that $g(l,e)=0$, and since $[y,e]\neq 0$, this implies that the pair $(l,e)$ is special; in particular, $[l,e] = [l_0,e] + [y,e] \in L_{-1} + L_1$ is extremal.
	By \cref{recover:char 2 uniqueness} applied on $[l,c]$ and $[l,d]$, we find $\lambda,\mu \in k$ such that $[l_0, c] = \lambda [x, [y, c]] = \lambda c$ and $[l_0, d] = \mu [x, [y, d]] = \mu d$.
	Let $\alpha := \exp(\lambda x)$. Then
	\[ \alpha([l,c])=[y,c], \quad \alpha([l,d])=[y,d] + (\lambda + \mu) d . \]
	By the Premet identity \cref{P1}, we have
	\begin{align*}
        &[[l,c],[l,d]] = g_l ([c,d])l + g_l (d)[l,c] + g_{l}(c)[l,d] = l \quad \text{and} \\
        &[[y,c],[y,d]] = g_y ([c,d])y + g_y (d)[l,c] + g_{y}(c)[l,d] = y .
	\end{align*}
	Hence 
	\begin{align*}
		\alpha(l)
		&= [\alpha([l,c]),\alpha([l,d])]=[[y,c], [y,d] + (\lambda + \mu)d] \\
		&= y + (\lambda + \mu) [[y, c], d] .
	\end{align*}
	Now note that by the Premet identity \cref{P2}, we have
	\[ [y, [d, [y, c]]] = g_y([d,c]) y + g_y(c) [y,d] + g_y(d) [y,c] = y \]
	and hence
	\[ [\alpha(l), y] = (\lambda + \mu) y , \]
	but then since $\Char(k) = 2$ and $\alpha(l)$ is an extremal element, we get
	\[ 0 = [\alpha(l), [\alpha(l), y]] = (\lambda + \mu)^2 y , \]
	so $\lambda = \mu$ and therefore $\alpha(l) = y$ and so $l = \alpha(y)$ (since $\alpha^2 = 1$).
\end{proof}

\begin{definition}
	Assume that the extremal geometry of $L$ has lines.
	We set
	\begin{align*}
	   E_+(x,y) &= \langle \exp(e) \mid e\in E \cap L_{\geq 1} \rangle\leq \Aut(L), \\
	   E_-(x,y) &= \langle \exp(e) \mid e\in E \cap L_{\leq -1} \rangle\leq \Aut(L).
	\end{align*}
\end{definition}
It is important to point out that these groups are \emph{generated} by elements of the form $\exp(e)$ and that an arbitrary element of $E_\pm(x,y)$ cannot be written as a single $\exp(e)$ in general.
In \cref{recover:root groups descent} below, we will be give a different definition of the groups $E_\pm(x,y)$ that also makes sense when the extremal geometry has no lines.

The following theorem is the key tool in our paper.
It allows us to deal with exponential maps even in characteristic $2$ and $3$.
%

\begin{definition}\label{def:l-exp}
    Let $\alpha \in \Aut(L)$ and $l \in L_1$.
    We say that $\alpha$ is an \emph{$l$-exponential automorphism} if there exist (necessarily unique) maps $q_{\alpha},n_{\alpha},v_{\alpha} \colon L \to L$ with
	\begin{equation}\label{eq:alg1}
    	q_{\alpha}(L_i)\subseteq L_{i+2}, \quad n_{\alpha}(L_i)\subseteq L_{i+3}, \quad v_{\alpha}(L_i)\subseteq L_{i+4},
	\end{equation}
    for all $i\in [-2,2]$, such that
	\begin{equation}\label{eq:alg2}
		\alpha(m)=m+[l,m]+q_{\alpha}(m)+n_{\alpha}(m)+v_{\alpha}(m)
	\end{equation}
    for all $m\in L$.
\end{definition}

\begin{theorem}
\label{th:alg}
	Assume that $L$ is a simple Lie algebra over $k\neq\mathbb F_2$ generated by its pure extremal elements, and assume that $\F\neq\emptyset$.
	Consider a hyperbolic pair $x,y\in E$ with $g(x,y)=1$ and let $L=L_{-2}\oplus L_{-1}\oplus L_0\oplus L_1\oplus L_2$ be the associated $5$-grading.
	Then:
	\begin{enumerate}
		\item \label{th:alg:comm} $\exp(a)\exp(b) = \exp([a,b])\exp(b)\exp(a)$ for all $a,b \in E \cap L_{\geq 1}$.
		\item \label{th:alg:a+b} $\exp(a)\exp(b) = \exp(a+b)$ for all $a \in E \cap L_{\geq 1}$ and $b \in L_2$.
		\item \label{th:alg:nilp} $[E_+(x,y), E_+(x,y)] = \Exp(y)$ and $[E_+(x,y), \Exp(y)] = 1$.
	\end{enumerate}
	Assume now that $l \in L_1$.
	\begin{enumerate}[resume]
		\item \label{th:alg:l-exp}
        	Write $l = e_1 + \dots + e_n$ with $e_i \in E \cap L_1$. Then
        	\[ \alpha := \exp(e_1) \dotsm \exp(e_n) \in E_+(x,y) \]
        	is an $l$-exponential automorphism.
     	\item \label{th:alg:unique}
        	If $\alpha$ is as in \cref{th:alg:l-exp} and $\beta$ is another $l$-exponential automorphism, then $\beta = \exp(z)\alpha$ for a unique $z\in L_{2}$. In particular, $\beta\in E_+(x,y)$.
        \item \label{th:alg:E+}
            Let $\alpha$ be any $l$-exponential automorphism.
            Then there exist $e_1,\dots,e_n \in E \cap L_1$ such that $l = e_1 + \dots + e_n$ and $\alpha = \exp(e_1) \dotsm \exp(e_n)$.
            Moreover,
            \[ E_+(x,y) = \bigl\{ \alpha\in \Aut(L)\mid \alpha \text{ is $m$-exponential for some } m \in L_{1} \bigr\} . \]
    	\item \label{th:alg:scalar}
            Let $\alpha$ be an $l$-exponential automorphism, with maps $q_{\alpha}, n_{\alpha}, v_{\alpha} \colon L \to L$.
            Then for each $\lambda\in k$, the map $\alpha_{\lambda}$ defined by
            \[ \alpha_{\lambda}(m)=m+\lambda[l,m]+\lambda^2q_{\alpha}(m)+\lambda^3n_{\alpha}(m)+\lambda^4v_{\alpha}(m) \]
            for all $m\in L$, is a $(\lambda l)$-exponential automorphism.
            In particular, $\alpha_\lambda \in E_+(x,y)$.
    	\item \label{th:alg:sum}
            If $\alpha$ is an $l$-exponential automorphism and $\beta$ is an $l'$-exponential automorphism, then $\alpha\beta$ is an $(l+l')$-exponential automorphism.
    	\item \label{th:alg:char}
            If $\Char (k)\neq 2$, then there is a unique $l$-exponential automorphism $\alpha$ with $q_{\alpha}(m) = \tfrac{1}{2} [l,[l,m]]$ for all $m \in L$.
            We denote this automorphism by $e_+(l)$. Its inverse is $e_+(-l)$.
	\end{enumerate}

\end{theorem}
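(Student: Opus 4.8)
The plan is to establish the nine assertions in the stated order, since the later parts feed on the earlier ones. For \cref{th:alg:comm,th:alg:a+b} I would argue by evaluating both sides on an arbitrary $m\in L$. The crucial simplifications are that $g(a,b)=0$ whenever $a,b\in L_{\geq 1}$ (the form $g$ pairs $L_i$ with $L_{-i}$), that $c:=[a,b]\in L_{\geq 2}=L_2$ is extremal or zero, and that $[a,c]=2g_a(b)a=0$ and $[b,c]=0$, so $c$ commutes with both $a$ and $b$. Feeding the definition $\exp(e)(m)=m+[e,m]+g_e(m)e$ into both sides and sorting by the grading, using the Premet identities \cref{P1,P2} and \cref{le:ayb} to absorb the double brackets $[a,[b,m]]$, the two sides match in every characteristic. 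For \cref{th:alg:a+b} one additionally uses that $a$ and $y$ are collinear (by \cref{a in g_{-1}} applied to the reversed grading, cf.\ \cref{rem:swap}), so the line $\langle a,y\rangle$ is extremal by \cref{pr:2pts} and the extremal form is additive along it, giving $a+b\in E$. Part \cref{th:alg:nilp} is then formal: by \cref{th:alg:comm} the commutator of two generators is $\exp([a,b])\in\Exp(y)$ since $[a,b]\in L_2=\langle y\rangle$, and it is central because $[a,cy]=0$; the reverse inclusion uses \cref{Grading is bracket if lines} (reversed) to write $y=[p,q]$ with $p,q\in E\cap L_1$, whence $[\exp(\lambda p),\exp(q)]=\exp(\lambda y)$ sweeps out $\Exp(y)$.

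For \cref{th:alg:l-exp} I would write each factor as $\exp(e_i)=\id+\ad_{e_i}+\gamma_i$, where $\gamma_i(m)=g_{e_i}(m)e_i$ raises the grading by $2$ while $\ad_{e_i}$ raises it by $1$. Expanding $\exp(e_1)\cdots\exp(e_n)$ and collecting the resulting endomorphisms by how much they raise the grading, the degree-$0$ part is $\id$, the degree-$1$ part is $\sum_i\ad_{e_i}=\ad_l$, and the parts raising by $2,3,4$ define $q_\alpha,n_\alpha,v_\alpha$; everything raising by $\geq 5$ vanishes since the grading has width $4$. This is precisely the shape of \cref{def:l-exp}, and $\alpha\in E_+(x,y)$ by construction (using \cref{recover:sum of extr} for the existence of such a decomposition of $l$).

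The heart of the matter, and the main obstacle, is the uniqueness \cref{th:alg:unique}, which must be handled without the characteristic-$0$ device $\exp(\ad)$. Given $\alpha=\exp(e_1)\cdots\exp(e_n)$ and a second $l$-exponential $\beta$, I would set $\gamma:=\beta\alpha^{-1}$; since $\alpha^{-1}=\exp(-e_n)\cdots\exp(-e_1)$ is $(-l)$-exponential by \cref{th:alg:l-exp}, the linear parts cancel and $\gamma=\id+q'+n'+v'$ raises the grading by $2,3,4$. Both $\alpha,\beta$ fix $y$ (their raising terms land in $L_{\geq 3}=0$), so $\gamma$ fixes $y$, and I then analyse $\gamma(x)$. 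Applying the grading-reversing automorphism $\varphi$ of \cref{recover:switching grading}, the element $\varphi\gamma(x)$ has the shape $l_{-2}+l_{-1}+l_0+y$ and is extremal, so \cref{recover:uniqueness} forces its $L_{-1}$-component to vanish and identifies it as $\exp(\mu x)(y)$; conjugating back yields $\gamma(x)=\exp(\mu y)(x)$. Then $\delta:=\exp(-\mu y)\gamma$ fixes $x$, is again of the form $\id+(\text{grading-raising})$, and, being an automorphism preserving $g$, stabilises the canonical filtration $L_{-2}\subseteq L_{\leq-1}\subseteq L_{\leq 0}\subseteq L_{\leq 1}\subseteq L$ of \cref{pr:5gr}\cref{5gr:indep}; a degree-by-degree comparison (a grading-raising map cannot preserve a filtration piece unless it vanishes there) forces $\delta=\id$, so $\gamma=\exp(\mu y)$ with $z=\mu y\in L_2$ unique. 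Part \cref{th:alg:E+} then follows by absorbing $\exp(z)$: with $y=[p,q]$ as above, $\exp(z)$ is a product of $\exp$'s of $L_1$-elements summing to $0$, so $\beta=\exp(z)\alpha$ rewrites as a product $\prod\exp(f_j)$ with $\sum f_j=l$, and the description of $E_+(x,y)$ drops out using \cref{th:alg:a+b,th:alg:sum} and \cref{co:E-1} to reduce an arbitrary generator $\exp(e)$ to an $L_1$-exponential. For \cref{th:alg:scalar} I would observe that the torus element $\varphi_\lambda$ of \cref{recover:autom of grading} conjugates $\alpha$ to exactly $\alpha_\lambda$ (it scales the degree-$j$ raising part by $\lambda^j$), so $\alpha_\lambda$ is genuinely an automorphism, manifestly $(\lambda l)$-exponential, and lies in $E_+(x,y)$; \cref{th:alg:sum} is immediate by collecting $\alpha\beta$ by grading degree.

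Finally, for \cref{th:alg:char} I would avoid classifying degree-$2$ derivations and instead compute directly. Writing $\alpha=\prod_i\exp(e_i)$ and using $\ad_{e_i}^2=2\gamma_i$, one gets $q_\alpha=\sum_i\gamma_i+\sum_{i<j}\ad_{e_i}\ad_{e_j}$ while $\tfrac12\ad_l^2=\sum_i\gamma_i+\tfrac12\sum_{i<j}(\ad_{e_i}\ad_{e_j}+\ad_{e_j}\ad_{e_i})$, so that
\[ q_\alpha-\tfrac12\ad_l^2=\tfrac12\sum_{i<j}[\ad_{e_i},\ad_{e_j}]=\ad_z,\qquad z:=\tfrac12\sum_{i<j}[e_i,e_j]\in L_2 . \]
Since $q_{\exp(w)\alpha}=q_\alpha+\ad_w$ for $w\in L_2$, replacing $\alpha$ by $\exp(-z)\alpha$ produces an $l$-exponential automorphism with $q=\tfrac12\ad_l^2$, and \cref{th:alg:unique} together with $\ad_y\neq 0$ gives uniqueness. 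The inverse is controlled by $q_{\alpha^{-1}}=\ad_l^2-q_\alpha$, which for $\alpha=e_+(l)$ equals $\tfrac12\ad_l^2=\tfrac12\ad_{-l}^2$, so $e_+(l)^{-1}$ is the unique $(-l)$-exponential with the prescribed quadratic part, namely $e_+(-l)$.
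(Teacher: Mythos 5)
Your proposal is correct, and its skeleton is the same as the paper's: \cref{th:alg:comm,th:alg:a+b,th:alg:nilp} by direct evaluation using the Premet identities and \cref{le:ayb}, existence \cref{th:alg:l-exp} from the decomposition supplied by \cref{recover:sum of extr}, uniqueness \cref{th:alg:unique} by analysing $\gamma(x)$ for $\gamma=\beta\alpha^{-1}$ via \cref{recover:uniqueness}, and the remaining parts derived from these. Within that skeleton you make three local choices that genuinely differ from the paper's proof, and all three work. First, in \cref{th:alg:unique} the paper concludes by noting that $\gamma$ and $\exp(z)$ agree on $\{x\}\cup L_1$, which generates $L$; you instead show that $\delta:=\exp(-\mu y)\gamma$ is of the form $\id+R$ with $R$ strictly raising the grading and that $\delta$ preserves the $x$-filtration of \cref{pr:5gr}\cref{5gr:indep} (legitimate, since $\delta$ fixes $x$ and every automorphism preserves the canonical form $g$), forcing $R=0$ degree by degree; this avoids the generation argument entirely. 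Second, for \cref{th:alg:scalar} the paper re-runs its recursive formulas with $e_i$ replaced by $\lambda e_i$, which requires first writing $\alpha$ as a product; you obtain $\alpha_\lambda=\varphi_\lambda\alpha\varphi_\lambda^{-1}$ by conjugating with the torus element of \cref{recover:autom of grading}, a one-line argument valid for an arbitrary $l$-exponential $\alpha$ (only the trivial case $\lambda=0$ needs separate mention). Third, for \cref{th:alg:char} the paper argues by induction, whereas you compute in closed form that $q_\alpha-\tfrac12\ad_l^2=\ad_z$ with $z=\tfrac12\sum_{i<j}[e_i,e_j]\in L_2$ and correct by $\exp(-z)$; likewise your identity $q_{\alpha^{-1}}=\ad_l^2-q_\alpha$ replaces the paper's computation of $q_\gamma(x)$ for $\gamma=e_+(-l)e_+(l)$. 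Two cautions, neither a genuine gap. Your justification ``$g$ pairs $L_i$ with $L_{-i}$'' must not be run through the $\ad_{[x,y]}$-eigenvalue argument, which breaks down when $\Char(k)$ divides $i+j$ (e.g.\ $g(L_1,L_1)$ in characteristic $2$); it should instead be deduced from associativity of $g$ together with \cref{pr:5gr}\cref{5gr:g0} and \cref{recover:extr in 1 prop}, which is how the paper obtains $g(a,b)=0$ in \cref{th:alg:comm}. Also, verifying \cref{th:alg:comm,th:alg:a+b} on all of $L$ forces extra bookkeeping on $L_{-2}$ (where the $g$-terms do not vanish); the paper sidesteps this by checking equality of automorphisms only on the generating set $L_{\geq -1}$, a simplification you may want to adopt.
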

\begin{proof}
	\begin{enumerate}
		\item 
            Let $a,b\in E \cap L_{\geq 1}$; notice that $g(a,b)=0$.
			Since $L$ is generated by $L_{\geq -1}$, it suffices to check that both sides coincide when applied on an element $l \in L_{\geq -1}$.
            We have
			\begin{align}
				\exp(a)\exp(b)(l)
				&= \exp(a)\bigl( l + [b,l] + g(b,l)b \bigr) \nonumber \\
                &= l + [a+b, l] + [a,[b,l]]  \nonumber \\
                &\hspace*{2.5ex} + g([a,b],l)a + g(b,l)[a,b] + g(a,l)a + g(b,l)b. \label{recover:algebraic eq 1}
			\end{align}
			Using the Premet identity \cref{P2}, we have
			\begin{align}
    			[[a,b],[a+b,l]]
    			&= [[a,b],[a,l]]-[[b,a],[b,l]] \nonumber \\
                &= g_a([b,l])a+g_a(l)[a,b]-g_a(b)[a,l]\nonumber\\
                &\hspace*{4ex} -g_b([a,l])b-g_b(l)[b,a]+g_b(a)[b,l]\nonumber\\
                &= g([a,b],l)a + g([a,b],l)b + (g(a,l)+g(b,l))[a,b]. \label{recover:algebraic eq 2}
			\end{align}
			Since $[a,b]\in L_2$, the automorphism $\exp([a,b])$ fixes all elements of $L_{\geq 1}$.
			Hence, by \cref{recover:algebraic eq 1} with the roles of $a$ and $b$ reversed, \cref{recover:algebraic eq 2} and the Jacobi identity, we see that
			\[ \exp(a)\exp(b)(l)=\exp([a,b])\exp(b)\exp(a)(l) \]
			as claimed.

		\item 
            We again check that both sides coincide when applied on an element $l \in L_{\geq -1}$.
            Comparing \cref{recover:algebraic eq 1} with the expression for $\exp(a+b)(l)$ and using the fact that $[a,b] = 0$ and that $b$ is a multiple of $y$, we see that this is equivalent with
            \[ [a, [y, l]] = g(a,l)y + g(y,l)a . \]
            Now both sides are equal to $0$ if $l \in L_{\geq 0}$.
            On the other hand, if $l \in L_{-2}$, then we can write $l = \lambda x$ with $\lambda \in k$ to see that both sides are equal to $\lambda a$, and if $l \in L_{-1}$, then the equality follows from \cref{le:ayb}.

		\item 
			By \cref{th:alg:comm}, we already know that $[E_+(x,y),E_+(x,y)]\leq \Exp(y)$.
			On the other hand, by \cref{Grading is bracket if lines}, the element $y$ can be written as $y = [c,d]$ for certain $c,d \in E \cap L_1$.
			By \cref{th:alg:comm} again, we see that $\exp([c,d]) \in [E_+(x,y),E_+(x,y)]$, and by rescaling, we get $\Exp(y) \leq [E_+(x,y),E_+(x,y)]$ as required.
			The second statement follows immediately from \cref{th:alg:comm} since $[a,b] = 0$ if $b \in L_2$.

		\item 
			Let $l\in L_1$.
			By \cref{recover:sum of extr}, we can write $l$ as the sum of a finite number of extremal elements in $L_1$:
			\[ l = e_1+\dots+e_n \quad \text{with } e_i\in E\cap L_1 . \]
			We will prove, by induction on $n$, that the automorphism
			\[ \alpha := \exp(e_1) \dotsm \exp(e_n) \in E_+(x,y) \]
			satisfies \cref{eq:alg2,eq:alg1} for the appropriate choices of maps $q_\alpha$, $n_\alpha$ and $v_\alpha$.
			(Notice that $\alpha$ depends on the choice of the elements $e_1,\dots,e_n$ as well as on their ordering.)

			If $n=1$, then we set
			\[ q_{\alpha}(m) := g_l(m)l, \quad n_{\alpha}(m) := 0, \quad v_{\alpha}(m) := 0 , \]
			for all $m \in L$.
			By \cref{def:exp}, we see that \cref{eq:alg2} is satisfied.
			By \cref{recover:extr in 1 prop}, $q_\alpha(L_i) = 0$ unless $i = -1$, so \cref{eq:alg1} holds as well.

			Now assume $n>1$, let $e := e_1$ and let $l'=e_2+\dots+e_n$, so $l = e + l'$.
			Let $\alpha' := \exp(e_2) \dotsm \exp(e_n)$, so $\alpha := \exp(e)\alpha'$.
			By the induction hypothesis, there exist maps $q_{\alpha'}, n_{\alpha'}$ and $v_{\alpha'}$ satisfying \cref{eq:alg2,eq:alg1}.
			We set
			\begin{align}
				q_{\alpha}(m) &:= q_{\alpha'}(m)+g_{e}(m)e+[e,[l',m]], \label{recover:alg induction P1} \\
				n_{\alpha}(m) &:= n_{\alpha'}(m)+g_{e}([l',m])e+[e,q_{\alpha'}(m)], \label{recover:alg induction P2} \\
				v_{\alpha}(m) &:= v_{\alpha'}(m)+[e,n_{\alpha'}(m)], \label{recover:alg induction P3}
			\end{align}
			for all $m\in L$.
			By \cref{recover:extr in 1 prop} and the fact that \cref{eq:alg1} holds for $\alpha'$, we get
			\begin{align*}
                &[e,v_{\alpha'}(m)]\in L_{\geq (1+4-2)}=L_{\geq 3}=0 \quad \text{and} \\
    			&g_{e}(q_{\alpha'}(m))=g_{e}(n_{\alpha'}(m))=g_{e}(v_{\alpha'}(m))=0
			\end{align*}
			for all $m \in L$,
			and by expanding
			\[ \alpha(m) = \exp(e)(\alpha'(m)) = \alpha'(m) + [e, \alpha'(m)] + g_e(\alpha'(m)) e, \]
			using the fact that \cref{eq:alg2} holds for $\alpha'$,
			we now see that \cref{eq:alg2} is satisfied for $\alpha$.
			By \cref{recover:extr in 1 prop} again, we see that also \cref{eq:alg1} holds for $\alpha$.

		\item 
			Let $l \in L_1$ and let $\beta\in\Aut(L)$ be any automorphism satisfying \cref{eq:alg2,eq:alg1}.
			Let $\alpha$ be the automorphism $\alpha = \exp(e_1) \dotsm \exp(e_n)$ as constructed in \cref{th:alg:l-exp} with respect to some choice $l = e_1 + \dots + e_n$, and observe that by the same construction,
			\[ \alpha^{-1} = \exp(-e_n) \dotsm \exp(-e_1) \]
			then satisfies \cref{eq:alg2,eq:alg1} for the element $-l = -e_n - \dots - e_1$.

			Now $\alpha$ and $\beta$ coincide on $L_{\geq 1}$.
			On the other hand, any element of $E_+(x,y)$ preserves the filtration $(L_{\geq i})$, so in particular, $\alpha^{-1}(L_{\geq 1}) = L_{\geq 1}$.
			Hence $\alpha \alpha^{-1}$ and $\beta \alpha^{-1}$ coincide on $L_{\geq 1}$, so $\gamma := \beta \alpha^{-1}$ fixes $L_{\geq 1}$.

			On the other hand, by \cref{eq:alg2} for $\beta$ and $\alpha^{-1}$, we have
			\[ \gamma(x) = \beta \alpha^{-1}(x) \in \beta(x - [l,x] + L_{\geq 0}) = x + L_{\geq 0}, \]
			so $\gamma(x)$ is an extremal element with trivial $L_{-1}$-component.
			By \cref{recover:uniqueness}, $\gamma(x) = \exp(z)(x)$ for some unique $z = \lambda y \in L_2$.
			Notice that also $\exp(z)$ fixes~$L_{\geq 1}$.
			Since $L$ is generated by $L_1$ and $x$, we conclude that $\gamma = \exp(z)$.

		\item 
            Let $\alpha \in E_+(x,y)$.
			We claim that $\alpha$ can be written as a product of elements of the form $\exp(a)$ with $a \in E \cap L_1$.
			Indeed, by definition, it is the product of elements $\exp(l)$ with $l \in E \cap L_{\geq 1}$, but each such $l$ can be written as $l = a+b$ with $a \in E \cap L_1$ and $b \in L_2$. By \cref{th:alg:a+b}, $\exp(l) = \exp(a) \exp(b)$, and by the proof of \cref{th:alg:nilp}, $\exp(b) = [\exp(c), \exp(d)]$ for certain $c,d \in E \cap L_1$; this proves our claim.
			Hence $\alpha = \exp(e_1) \dotsm \exp(e_n)$ for certain $e_1,\dots,e_n \in E \cap L_1$.
			Now let $m := e_1 + \dots + e_n$; then by \cref{th:alg:l-exp}, $\alpha$ is an $m$-exponential automorphism.
			This shows that any automorphism in $E_+(x,y)$ is $m$-exponential for some $m \in L_1$;
			the other inclusion ``$\supseteq$'' holds by \cref{th:alg:unique}.

			If $\alpha$ is any $l$-exponential automorphism, then in particular, $\alpha \in E_+(x,y)$, so by the previous paragraph, $\alpha$ is $m$-exponential for $m = e_1 + \dots + e_n$. Then by \cref{eq:alg2}, the $L_{-1}$-component of $\alpha(x)$ is equal to both $[m, x]$ and $[l, x]$, hence $m = l$, and we have shown the required decomposition.

		\item 
			By \cref{th:alg:E+}, we can write $\alpha = \exp(e_1) \dotsm \exp(e_n)$ and $l = e_1 + \dots + e_n$, where $e_i \in E \cap L_1$ .
			Now let $\beta := \exp(\lambda e_1) \dotsm \exp(\lambda e_n) \in E_+(x,y)$, with corresponding element
			$\lambda l = \lambda e_1 + \dots + \lambda e_n \in L_1$ and corresponding maps $q_\beta, n_\beta, v_\beta \colon L \to L$.
			Observe now that if we replace each $e_i$ by $\lambda e_i$ in the recursive formulas \cref{recover:alg induction P1,recover:alg induction P2,recover:alg induction P3} in the proof of \cref{th:alg:l-exp}, then this yields
			\[ q_\beta(m) = \lambda^2 q_\alpha(m), \quad
			   n_\beta(m) = \lambda^3 n_\alpha(m), \quad
			   v_\beta(m) = \lambda^4 v_\alpha(m), \]
			for all $m \in L$.
			We conclude that $\beta = \alpha_\lambda$ and hence $\alpha_\lambda \in E_+(x,y)$, which is then a $(\lambda l)$-exponential automorphism.

		\item 
            By \cref{th:alg:E+}, $\alpha = \exp(e_1) \dotsm \exp(e_n)$ and $\beta = \exp(f_1) \dotsm \exp(f_m)$ with $e_i, f_i \in E \cap L_1$ and with $l = e_1 + \dots + e_n$ and $l' = f_1 + \dots + f_m$.
            Hence $\alpha\beta = \exp(e_1) \dotsm \exp(e_n) \exp(f_1) \dotsm \exp(f_m)$, so by \cref{th:alg:l-exp}, $\alpha\beta$ is an $(l + l')$-exponential automorphism.

		\item 
			Assume $\Char(k)\neq 2$.
			We first show the existence of such an automorphism.
			As in the proof of part \cref{th:alg:l-exp}, we write $l = e_1 + \dots e_n$ with $e := e_1$ and $l' := e_2 + \dots + e_n$, and we proceed by induction on $n$.

			If $n=1$, then we choose $\alpha = \exp(e)$, which has $q_{\alpha}(m) = g_l(m)l = \tfrac{1}{2} [l,[l,m]]$ as required.
			Now assume $n>1$.
			By the induction hypothesis, there exists an $l'$-exponential automorphism $\alpha'$ with $q_{\alpha'}(m)=\tfrac{1}{2} [l',[l',m]]$ for all $m \in L$, and we first set $\beta := \exp(e) \alpha'$, so that we can invoke the formulas \cref{recover:alg induction P1,recover:alg induction P2,recover:alg induction P3}.
			In particular, we have
			\[ q_{\beta}(m)=\tfrac{1}{2} [l',[l',m]]+\tfrac{1}{2} [e,[e,m]]+[e,[l',m]] \]
			for all $m \in L$.
			Now set $\alpha := \exp(-\tfrac{1}{2} [e,l'])\beta$, and notice that $[e, l'] \in L_2$, so $\alpha$~is again an $l$-exponential automorphism.
			We get, using the Jacobi identity,
			\begin{align*}
				q_{\alpha}(m)	&=\tfrac{1}{2} [l',[l',m]]+\tfrac{1}{2} [e,[e,m]] + [e,[l',m]] +\tfrac{1}{2} [m,[e,l']] \\
							  	&=\tfrac{1}{2} [l'+e,[l'+e,m]]=\tfrac{1}{2} [l,[l,m]],
			\end{align*}
			so $\alpha$ satisfies the required assumptions, proving the existence.

			To prove uniqueness, assume that $\alpha$ and $\beta$ are two $l$-exponential automorphisms with $q_\alpha = q_\beta$; then in particular, $q_\alpha(x) = q_\beta(x)$.
			By \cref{th:alg:unique}, we have $\beta = \exp(z) \alpha$ for some $z = \lambda y \in L_2$.
			By expanding $\exp(z)$ ---see also \cref{rem:beta} below--- we get $q_\beta(x) = q_\alpha(x) + [z,x]$, but $[z,x] = \lambda [y,x]$, which is $0$ only for $\lambda = 0$, hence $\beta = \alpha$.

			Finally, let $\gamma := e_+(-l)e_+(l)$. By \cref{th:alg:sum}, $\gamma$ is a $0$-exponential automorphism, so by \cref{th:alg:unique}, we have $\gamma = \exp(z)$ for some $z = \lambda y \in L_2$ again.
			However,
			\begin{align*}
                q_\gamma(x)
                &= q_{e_+(-l)}(x) + q_{e_+(l)}(x) + [-l, [l, x]] \\
                &= \tfrac{1}{2}[-l, [-l, x]] + \tfrac{1}{2}[l, [l, x]] + [-l, [l, x]] = 0 ,
			\end{align*}
			so $\lambda = 0$ and hence $\gamma = \id$.
		\qedhere
	\end{enumerate}
\end{proof}

\begin{remark}\label{rem:beta}
    \begin{enumerate}
        \item\label{rem:beta:other}
            If $\alpha$ is an $l$-exponential automorphism for some $l \in L_1$, then by \cref{th:alg}\cref{th:alg:unique}, any other $l$-exponential automorphism $\beta$ is of the form $\beta = \exp(z) \alpha$ for some $z = \mu y \in L_2$. By expanding $\exp(z)$, we get the explicit formulas
            \begin{align*}
                q_\beta(m) &= q_\alpha(m) + [z, m], \\
                n_\beta(m) &= n_\alpha(m) + [z, [l, m]], \\
                v_\beta(m) &= v_\alpha(m) + [z, q_\alpha(m)] + g_z(m) z,
            \end{align*}
            for all $m \in L$.
            In particular, $\alpha = \beta$ if and only if $q_\alpha(x) = q_\beta(x)$. (See also the second to last paragraph of the proof of \cref{th:alg}\cref{th:alg:char}.)
        \item\label{rem:beta:product}
            If $\alpha$ is an $l$-exponential automorphism and $\beta$ is a $l'$-exponential automorphism, then we get the following explicit formulas for the $(l+l')$-exponential automorphism $\gamma := \alpha\beta$ from \cref{th:alg}\cref{th:alg:sum}:
            \begin{align*}
                q_\gamma(m) &= q_\alpha(m) + [l, [l', m]] + q_\beta(m), \\
                n_\gamma(m) &= n_\alpha(m) + q_\alpha([l',m]) + [l, q_\beta(m)] + n_\beta(m), \\
                v_\gamma(m) &= v_\alpha(m) + n_\alpha([l',m]) + q_\alpha(q_\beta(m)) + [l, n_\beta(m)] + v_\beta(m),
            \end{align*}
            for all $m \in L$.
    \end{enumerate}
\end{remark}

We now turn our attention to the case where the extremal geometry does not necessarily contain lines.

\begin{definition}
\label{recover:root groups descent}
	Let $L$ be a Lie algebra over $k$ with extremal elements $x$ and $y$ with $g(x,y)=1$, with corresponding $5$-grading as in \cref{pr:5gr}. Define
    \[ E_+(x,y) = \bigl\{ \alpha\in \Aut(L)\mid \alpha \text{ is $l$-exponential for some } l \in L_{1} \bigr\} . \]
	By \cref{th:alg}\cref{th:alg:unique}, this definition is consistent with our earlier definition when the extremal geometry has lines.
	We call $L$ \emph{algebraic} (with respect to this $5$-grading) if for each $l \in L_1$, there exists an $l$-exponential automorphism $\alpha \in \Aut(L)$.
\end{definition}

Now we can extend the results of \cref{th:alg} to a larger class of Lie algebras, using a Galois descent argument.

\begin{theorem}
\label{recover:Galois descent}
	Let $L$ be a simple Lie algebra.
	Assume that for some Galois extension $k'/k$ with $k'\neq\mathbb F_2$, $L_{k'} := L \otimes_k k'$ is a simple Lie algebra generated by its pure extremal elements and has $\F(L_{k'})\neq\emptyset$.

	Consider a hyperbolic pair $x,y\in E$ and let $L=L_{-2}\oplus L_{-1}\oplus L_0\oplus L_1\oplus L_2$ be the associated $5$-grading.

	Then $L$ is algebraic with respect to this grading.
\end{theorem}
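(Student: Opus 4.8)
The plan is to descend the $l$-exponential automorphisms from $L_{k'}$ to $L$ using Galois cohomology of the automorphism group, exploiting the uniqueness built into \cref{th:alg}. First I would fix $l \in L_1$; since the grading on $L$ extends to a grading on $L_{k'} = L \otimes_k k'$ (with $(L_{k'})_i = L_i \otimes_k k'$), the element $l$ lies in $(L_{k'})_1$. By \cref{th:alg}\cref{th:alg:l-exp} applied to $L_{k'}$ (whose hypotheses hold by assumption), there exists an $l$-exponential automorphism $\alpha' \in \Aut(L_{k'})$. The goal is to produce such an automorphism defined over $k$, i.e.\ one lying in the image of $\Aut(L) \hookrightarrow \Aut(L_{k'})$.

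The key idea is to use the Galois action. Let $\Gamma = \Gal(k'/k)$ act on $L_{k'}$ through the second tensor factor; this action is semilinear, fixes $L$, and preserves each graded piece $(L_{k'})_i = L_i \otimes k'$ (since $L_i$ is defined over $k$). For $\tau \in \Gamma$, I would consider the twisted map ${}^\tau\!\alpha' := \tau \circ \alpha' \circ \tau^{-1}$. A direct check using \cref{eq:alg2} and the fact that $\tau$ fixes $l$ (as $l \in L_1 \subseteq L$) shows that ${}^\tau\!\alpha'$ is again an $l$-exponential automorphism of $L_{k'}$: the defining identity $\alpha'(m) = m + [l,m] + q_{\alpha'}(m) + n_{\alpha'}(m) + v_{\alpha'}(m)$ is preserved because $\tau$ is a semilinear automorphism commuting with the grading and the bracket, and $\tau(l) = l$. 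The natural strategy is then to average or correct $\alpha'$ into a Galois-invariant automorphism. By \cref{th:alg}\cref{th:alg:unique}, any two $l$-exponential automorphisms differ by a unique factor $\exp(z)$ with $z \in (L_{k'})_2 = L_2 \otimes k'$; writing $\alpha' = \exp(z_\tau)\,{}^\tau\!\alpha'$ for each $\tau$ yields a family $(z_\tau)_{\tau \in \Gamma}$ in the one-dimensional $k'$-space $L_2 \otimes k'$, and the associativity $\exp(\lambda y)\exp(\mu y) = \exp((\lambda+\mu)y)$ from \cref{le:exp} should make $(z_\tau)$ into a $1$-cocycle valued in the additive group $(L_2 \otimes k', +) \cong (k', +)$.

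Since $H^1(\Gamma, k')$ (additive Galois cohomology) vanishes by the normal basis theorem / additive Hilbert 90, this cocycle is a coboundary: there exists $z_0 \in L_2 \otimes k'$ with $z_\tau = z_0 - {}^\tau z_0$ (in additive notation, translated through $\exp$). Setting $\alpha := \exp(z_0)^{-1}\alpha' = \exp(-z_0)\alpha'$ then yields an $l$-exponential automorphism (still by \cref{th:alg}\cref{th:alg:unique}, since $-z_0 \in L_2 \otimes k'$) that is $\Gamma$-invariant, i.e.\ ${}^\tau\!\alpha = \alpha$ for all $\tau$. A $\Gamma$-invariant element of $\Aut(L_{k'})$ is precisely one defined over $k$, so $\alpha \in \Aut(L)$, and by construction it is $l$-exponential with respect to the $k$-grading. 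As $l \in L_1$ was arbitrary, this shows $L$ is algebraic with respect to the grading, as required.

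I expect the main obstacle to be the cocycle bookkeeping: one must verify carefully that $\tau \mapsto z_\tau$ genuinely satisfies the cocycle condition $z_{\sigma\tau} = z_\sigma + {}^\sigma z_\tau$ (which relies on the uniqueness in \cref{th:alg}\cref{th:alg:unique} together with the $k'$-semilinearity of the $\Gamma$-action and the group law in $\Exp(y)$ from \cref{le:exp}), and that passing from the $\exp(z_\tau)$-factors to additive cochains is compatible with the semilinear twist. A secondary subtlety is confirming that the extremal form $g$, the grading, and hence the maps $q, n, v$ all behave equivariantly under $\Gamma$; this uses the uniqueness of $g$ (\cref{pr:g}\cref{pr:g:unique}) to ensure $g$ is $\Gamma$-invariant, so that the notion of ``$l$-exponential'' is itself Galois-stable. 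Once these compatibilities are in place, the descent is formal.
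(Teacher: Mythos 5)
Your proposal is correct, and it reaches the theorem by a genuinely different final step than the paper. Both arguments begin identically: extend scalars, use \cref{th:alg}\cref{th:alg:l-exp} to produce an $l$-exponential automorphism $\alpha'$ of $L_{k'}$, observe that being $l$-exponential is a Galois-stable notion (your appeal to the uniqueness of $g$ in \cref{pr:g}\cref{pr:g:unique} and to the $k$-rationality of the grading is exactly the right justification), and compare $\alpha'$ with its Galois twists via the uniqueness statement \cref{th:alg}\cref{th:alg:unique}. The divergence is in how Galois invariance is then achieved. You package the discrepancies $\alpha' = \exp(z_\tau)\,{}^\tau\!\alpha'$ into a $1$-cocycle valued in $(L_2 \otimes k', +) \cong (k',+)$ (your cocycle bookkeeping does work out, using that $\Exp(y)$ is abelian with $\exp(\lambda y)\exp(\mu y) = \exp((\lambda+\mu)y)$ by \cref{le:exp} and that $\exp$ is injective on $L_2 \otimes k'$), and you quote additive Hilbert~90 to correct by a coboundary. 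The paper instead trivializes this torsor explicitly and uses no cohomology at all: it fixes a $k$-basis $\mathcal{B}$ of $L_0$ containing $[x,y]$ and replaces $\alpha'$ by the unique representative $\beta = \exp(\lambda y)\alpha'$ in its $\Exp(y)$-coset for which the $[x,y]$-coefficient of $q_\beta(x)$ vanishes; since this normalization is Galois-equivariant (twisting by $\sigma$ merely applies $\sigma$ to the coefficients on $\mathcal{B}\setminus\{[x,y]\}$ and creates no $[x,y]$-component), the uniqueness argument forces ${}^\sigma\!\beta = \beta$ for every $\sigma$ directly. In your language, the paper writes down an explicit equivariant trivialization of the torsor, i.e.\ an explicit primitive for your cocycle, rather than invoking $H^1(\Gamma, k') = 0$. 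What each buys: yours is the standard, conceptual descent argument and makes visible that the only obstruction lives in $H^1$ of a vector group; the paper's is elementary, self-contained, and insensitive to topological issues.

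That last point is the one place where your write-up needs a patch. The theorem allows an arbitrary, possibly infinite, Galois extension $k'/k$, and for infinite extensions additive Hilbert~90 is a statement about \emph{continuous} cohomology of the profinite group $\Gal(k'/k)$, so you must check that $\tau \mapsto z_\tau$ is continuous. This is true but requires an argument: the automorphism $\alpha'$ from \cref{th:alg}\cref{th:alg:l-exp} is a finite product $\exp(e_1)\dotsm\exp(e_n)$ with each $e_i \in L \otimes k''$ for some finite Galois subextension $k''/k$, so $\alpha'$ stabilizes $L \otimes k''$ (here one also uses that $g$ restricts to $k''$-values on $L \otimes k''$), whence ${}^\tau\!\alpha' = \alpha'$ and $z_\tau = 0$ for all $\tau \in \Gal(k'/k'')$; the cocycle therefore factors through the finite quotient $\Gal(k''/k)$ with values in $L_2 \otimes k''$, and the finite-level Hilbert~90 applies. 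With that sentence added, your proof is complete; the paper's normalization trick sidesteps the issue entirely, since the fixed-field property $\Fix(\Gal(k'/k)) = k$ is all it uses.
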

\begin{proof}
	Consider $l\in L_1$ arbitrary.
	To simplify the notation, we will identify each $m \in L$ with $m \otimes 1 \in L \otimes k'$.

	By \cref{th:alg}\cref{th:alg:unique}, we find an $l$-exponential automorphism $\alpha$ of $L\otimes k'$.
	Since $[x,y]\neq 0$, we can find a basis $\mathcal B$ for $L_0$ containing $[x,y]$.
	Now, by \cref{eq:alg1}, we have $q_{\alpha}(x)\in L_0 \otimes k'$, so we can write
	\[ q_{\alpha}(x) = [x,y]\otimes \lambda+b_1\otimes\lambda_1+\dots +b_n\otimes\lambda_n \]
	with $b_1,\dots,b_n\in \mathcal B\setminus \{[x,y]\}$ and $\lambda,\lambda_1,\dots,\lambda_n\in k'$.

	Let $\beta := \exp(\lambda y)\alpha$ and notice that $\beta$ is again an $l$-exponential automorphism of $L \otimes k'$.
	By \cref{rem:beta}, we have $q_{\beta}=q_{\alpha}+\ad_{\lambda y}$, hence
	\[ q_{\beta}(x) = q_\alpha(x) + [y,x] \otimes \lambda =b_1\otimes\lambda_1+\dots+ b_n\otimes\lambda_n . \]

	Now each $\sigma \in$ $\Gal(k'/k)$ acts on the Lie algebra $L \otimes k'$ by sending each $m \otimes \lambda$ to $m\otimes \lambda^\sigma$.
	Since $\sigma(l) = l$, we have, for each $m'\in L\otimes k'$, that
	\begin{multline*}
	   (\sigma\circ \beta\circ \sigma^{-1})(m')
	       = m' + [l,m'] + (\sigma \circ q_{\beta} \circ \sigma^{-1})(m') \\
	       + (\sigma \circ n_{\beta_{l}} \circ \sigma^{-1})(m') + (\sigma \circ v_{\beta_{l}} \circ \sigma^{-1})(m') ,
	\end{multline*}
	so $\gamma := \sigma\circ \beta\circ \sigma^{-1}$ is again an $l$-exponential automorphism of $L \otimes k'$, with corresponding maps $q_{\gamma}=\sigma\circ q_{\beta}\circ \sigma^{-1}$, $n_{\gamma}=\sigma\circ n_{\beta}\circ \sigma^{-1}$ and $v_{\gamma}=\sigma\circ v_{\beta}\circ \sigma^{-1}$.
	Hence \cref{th:alg}\cref{th:alg:unique} implies that there exists $\mu \in k'$ such that $\gamma=\exp(\mu y)\beta$, and by \cref{rem:beta} again, we have $q_\gamma = q_\beta + \ad_{\mu y}$.
	In particular,
	\begin{align*}
		b_1 \otimes \lambda_1^\sigma + \dots + b_n \otimes \lambda_n^\sigma
		  &= q_{\gamma}(x) = [y,x] \otimes \mu +q_{\beta}(x) \\
		  &= [y,x] \otimes \mu + b_1 \otimes \lambda_1 + \dots + b_m \otimes \lambda_n .
	\end{align*}
	Since $b_1,\dots, b_n$ and $[x,y]$ are linearly independent by construction, this implies $\mu=0$ and thus $\gamma = \beta$.
	Since $\sigma \in \Gal(k'/k)$ was arbitrary, we get $\beta=\sigma\circ \beta\circ \sigma^{-1}$ for all $\sigma \in \Gal(k'/k)$, and therefore
	\[ q_{\beta}=\sigma\circ q_{\beta}\circ \sigma^{-1}, \quad n_{\beta}=\sigma\circ n_{\beta}\circ \sigma^{-1}, \quad v_{\beta}=\sigma\circ v_{\beta}\circ \sigma^{-1}, \]
	for all $\sigma \in \Gal(k'/k)$.
	Since $k'/k$ is a Galois extension and thus $\Fix(\Gal(k'/k))=k$, this implies that the maps $q_\beta$, $n_\beta$ and $v_\beta$ stabilize $L$, proving that the restriction of $\beta$ to $L$ is an automorphism of $L$. Since $\beta$ is an $l$-exponential automorphism, we are done.
%
%
%
\end{proof}

\begin{remark}
    Under the assumptions of \cref{recover:Galois descent}, many of the conclusions of \cref{th:alg} now remain valid, simply by extending the scalars to $k'$. In particular, we are allowed to use \cref{th:alg}\cref{th:alg:unique,th:alg:scalar,th:alg:sum,th:alg:char} in this more general setting.
    (On the other hand, notice that \cref{th:alg}\cref{th:alg:l-exp,th:alg:E+} do not necessarily make sense over $k$ because $E \cap L_1$ might be empty.)
\end{remark}

\begin{assumption}\label{ass:ext}
    For the rest of this section, we assume that $L$ is a simple Lie algebra over $k$ with extremal elements $x$ and $y$ with $g(x,y)=1$ and that $k'/k$ is a Galois extension such that $L\otimes k'$ is a simple Lie algebra generated by its pure extremal elements, with $\mathcal F(L\otimes k')\neq\emptyset$ and $|k'|\geq 3$.
\end{assumption}

In the next two lemmas we determine how $l$-exponential automorphisms commute, extending \cref{th:alg}\cref{th:alg:comm}.

\begin{lemma}
\label{recover:commutator}
	Let $l,l' \in L_1$ and let $\alpha$ and $\beta$ be an $l$-exponential and $l'$-exponential automorphism of $L$, respectively.
	Then $\alpha\beta = \exp([l,l'])\beta\alpha$.
\end{lemma}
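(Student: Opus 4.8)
The plan is to use that both products $\alpha\beta$ and $\beta\alpha$ are $(l+l')$-exponential automorphisms, invoke the one-parameter uniqueness to write one as an $\exp(z)$-translate of the other, and then pin down the scalar $z$ by a short computation with the explicit formulas for the quadratic parts collected in \cref{rem:beta}. First I would record that $[l,l'] \in [L_1,L_1] \subseteq L_2 = \langle y\rangle$, so $\exp([l,l'])$ is a well-defined element of $\Exp(y)$. By \cref{th:alg}\cref{th:alg:sum} (available here thanks to the remark following \cref{recover:Galois descent}), $\alpha\beta$ and $\beta\alpha$ are both $(l+l')$-exponential automorphisms of $L$, so by the uniqueness statement \cref{th:alg}\cref{th:alg:unique} there is a unique $z \in L_2$ with
\[ \alpha\beta = \exp(z)\,\beta\alpha . \]
It then remains only to show $z = [l,l']$.

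To identify $z$, I would compare the quadratic parts of both sides evaluated at $x$. On the one hand, applying \cref{rem:beta}\cref{rem:beta:other} to the pair $\beta\alpha$ and $\alpha\beta = \exp(z)(\beta\alpha)$ gives
\[ q_{\alpha\beta}(x) = q_{\beta\alpha}(x) + [z,x] . \]
On the other hand, the product formula \cref{rem:beta}\cref{rem:beta:product} yields, for all $m \in L$,
\[ q_{\alpha\beta}(m) = q_\alpha(m) + [l,[l',m]] + q_\beta(m), \qquad q_{\beta\alpha}(m) = q_\beta(m) + [l',[l,m]] + q_\alpha(m) , \]
where the second identity is the first one with the roles of $(\alpha,l)$ and $(\beta,l')$ interchanged. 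Subtracting and applying the Jacobi identity $[l,[l',m]] = [[l,l'],m] + [l',[l,m]]$ collapses the difference to
\[ q_{\alpha\beta}(m) - q_{\beta\alpha}(m) = [l,[l',m]] - [l',[l,m]] = [[l,l'],m] . \]
Combining the two expressions for $q_{\alpha\beta}(x) - q_{\beta\alpha}(x)$ gives $[z,x] = [[l,l'],x]$.

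Finally, since both $z$ and $[l,l']$ lie in $L_2 = \langle y\rangle$, I would write $z = \mu y$ and $[l,l'] = \nu y$ with $\mu,\nu \in k$, so that $[z,x] = [[l,l'],x]$ becomes $\mu[y,x] = \nu[y,x]$. As $g(x,y)=1$ forces $[x,y]\neq 0$, hence $[y,x]\neq 0$, we conclude $\mu = \nu$, i.e.\ $z = [l,l']$, and therefore $\alpha\beta = \exp([l,l'])\,\beta\alpha$. I do not anticipate a genuine obstacle: the only point requiring care is that the formulas of \cref{rem:beta} and the uniqueness in \cref{th:alg}\cref{th:alg:unique} remain valid in the present (Galois descent) setting, which is precisely what the remark after \cref{recover:Galois descent} guarantees after extending scalars to $k'$.
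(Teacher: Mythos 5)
Your proposal is correct and follows essentially the same route as the paper's proof: both rely on \cref{th:alg}\cref{th:alg:sum}, the uniqueness statement and explicit formulas of \cref{rem:beta}, and the Jacobi identity. The only cosmetic difference is that the paper directly verifies $q_{\alpha\beta}(x) = q_{\exp([l,l'])\beta\alpha}(x)$, whereas you first write $\alpha\beta = \exp(z)\beta\alpha$ and then solve for $z$, which amounts to the same computation.
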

\begin{proof}
    Notice that by \cref{th:alg}\cref{th:alg:sum}, both $\alpha\beta$ and $\exp([l,l'])\beta\alpha$ are $(l+l')$-exponential automorphisms.
    By the uniqueness statement of \cref{rem:beta}, it remains to show that $q_{\alpha\beta}(x) = q_{\exp([l,l'])\beta\alpha}(x)$.
    By \cref{rem:beta}\cref{rem:beta:product}, we have
    \[ q_{\alpha\beta}(x) = q_\alpha(x) + [l, [l',x]] + q_\beta(x) , \]
    and by \cref{rem:beta}\cref{rem:beta:other}, we have
    \[ q_{\exp([l,l'])\beta\alpha}(x) = q_\beta(x) + [l', [l,x]] + q_\alpha(x) + [[l,l'], x] , \]
    so by the Jacobi identity, these expressions are indeed equal.
\end{proof}

If $\Char(k)\neq 2$ we can be more precise.
\begin{lemma}
\label{th:alg group operation}
	Assume $\Char(k)\neq 2$ and let $l,l' \in L_1$.	Then
	\[ e_+(l) e_+(l') = \exp(\tfrac{1}{2} [l,l']) \, e_+(l+l') .\]
\end{lemma}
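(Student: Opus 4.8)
The plan is to show that both sides of the identity are $(l+l')$-exponential automorphisms and then invoke the uniqueness criterion recorded in \cref{rem:beta}\cref{rem:beta:other}, namely that two $(l+l')$-exponential automorphisms coincide as soon as their quadratic parts agree at $x$. Since $\Char(k)\neq 2$, the automorphisms $e_+(l)$, $e_+(l')$ and $e_+(l+l')$ are all defined by \cref{th:alg}\cref{th:alg:char} (which remains available in the present setting by the remark following \cref{recover:Galois descent}).

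First I would observe that $[l,l'] \in [L_1,L_1] \subseteq L_2$, so $z := \tfrac{1}{2}[l,l']$ lies in $L_2$ and $\exp(z)$ is well-defined. By \cref{th:alg}\cref{th:alg:sum}, the left-hand side $e_+(l)e_+(l')$ is an $(l+l')$-exponential automorphism; and since $z \in L_2$, \cref{th:alg}\cref{th:alg:unique} (together with \cref{rem:beta}\cref{rem:beta:other}) shows that the product $\exp(z)\,e_+(l+l')$ is again an $(l+l')$-exponential automorphism. Thus both sides are $(l+l')$-exponential, and it remains to compare their $q$-maps at $x$.

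The main computation will be to evaluate the two quadratic parts explicitly. For the left-hand side, \cref{rem:beta}\cref{rem:beta:product} applied with $\alpha = e_+(l)$ and $\beta = e_+(l')$, using $q_{e_+(l)}(m) = \tfrac{1}{2}[l,[l,m]]$ and $q_{e_+(l')}(m) = \tfrac{1}{2}[l',[l',m]]$, gives
\[ q_{e_+(l)e_+(l')}(m) = \tfrac{1}{2}[l,[l,m]] + [l,[l',m]] + \tfrac{1}{2}[l',[l',m]] . \]
For the right-hand side, \cref{rem:beta}\cref{rem:beta:other} gives $q_{\exp(z)e_+(l+l')}(m) = q_{e_+(l+l')}(m) + [z,m]$, and expanding $q_{e_+(l+l')}(m) = \tfrac{1}{2}[l+l',[l+l',m]]$ yields
\[ q_{\exp(z)e_+(l+l')}(m) = \tfrac{1}{2}[l,[l,m]] + \tfrac{1}{2}[l,[l',m]] + \tfrac{1}{2}[l',[l,m]] + \tfrac{1}{2}[l',[l',m]] + \tfrac{1}{2}[[l,l'],m] . \]

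After cancelling the common terms $\tfrac{1}{2}[l,[l,m]]$ and $\tfrac{1}{2}[l',[l',m]]$ and multiplying by $2$, the desired equality of the two $q$-maps reduces to
\[ [l,[l',m]] = [l',[l,m]] + [[l,l'],m] , \]
which is exactly the Jacobi identity. Specializing to $m = x$ then gives $q_{e_+(l)e_+(l')}(x) = q_{\exp(z)e_+(l+l')}(x)$, so the uniqueness criterion forces the two automorphisms to be equal, as claimed. I do not expect any genuine obstacle here: the only care required is the bookkeeping of the coefficients in the formulas of \cref{rem:beta}, after which the whole identity collapses to a single application of Jacobi.
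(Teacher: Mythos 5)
Your proposal is correct and follows essentially the same route as the paper: both arguments note that each side is an $(l+l')$-exponential automorphism, compute the two quadratic parts via the formulas of \cref{rem:beta}, and reduce their equality (at $x$) to the Jacobi identity. The only cosmetic difference is that you carry out the computation for general $m$ before specializing to $x$, whereas the paper evaluates at $x$ directly.
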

\begin{proof}
    Let $\alpha := e_+(l)$, $\beta := e_+(l')$, $\gamma := e_+(l+l')$ and $z := \tfrac{1}{2} [l,l']$.
	Exactly as in the proof of \cref{recover:commutator}, we only have to show that $q_{\alpha\beta}(x) = q_{\exp(z) \gamma}(x)$.
	Using \cref{rem:beta}, we get
    \[ q_{\alpha\beta}(x) = q_\alpha(x) + q_\beta(x) + [l, [l',x]] = \tfrac{1}{2}[l, [l, x]] + \tfrac{1}{2}[l', [l', x]] + [l, [l', x]] , \]
    and
    \[ q_{\exp(z) \gamma}(x) = q_\gamma(x) + \tfrac{1}{2} [[l,l'], x] = \tfrac{1}{2}[l+l', [l+l', x]] + \tfrac{1}{2}[[l,l'], x] , \]
    and again, the equality of these two expressions follows from the Jacobi identity.
\end{proof}

If $\Char(k)\neq 2,3$, we now get an explicit description of the maps $n_{e_+(l)}$ and $v_{e_+(l)}$.
In particular, we recover in this case (and under our running \cref{ass:ext}) the fact that the Lie algebra is algebraic with respect to the given $5$-grading, as defined in \cref{prelim:def alg}.

\begin{corollary}
\label{recover:char not 2 3}
    Assume that $\Char(k) \neq 2$.
    Let $l \in L_1$ and let $\alpha := e_+(l)$.
    Then
	\[ 6 n_\alpha(m) = [l,[l,[l,m]]] \quad \text{and} \quad 24 v_\alpha(m) = [l,[l,[l,[l,m]]]] \]
    for all $m \in L$.
\end{corollary}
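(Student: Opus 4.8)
The plan is to avoid a head-on computation and instead exploit two structural tools already at hand: the group law \cref{th:alg group operation} and the scaling behaviour \cref{th:alg}\cref{th:alg:scalar}. Write $D := \ad_l$, so that the two claims read $6 n_\alpha(m) = D^3 m$ and $24 v_\alpha(m) = D^4 m$. Recall that $\alpha = e_+(l)$ is by definition (\cref{th:alg}\cref{th:alg:char}) the unique $l$-exponential automorphism with $q_\alpha(m) = \tfrac12 D^2 m$, and that the component maps $q_\alpha, n_\alpha, v_\alpha$ are $k$-linear, being graded components of the linear map $m \mapsto \alpha(m) - m - [l,m]$. The first observation I would record is that $e_+(2l) = (e_+(l))_2$ and $e_+(3l) = (e_+(l))_3$: the automorphism $(e_+(l))_\lambda$ furnished by \cref{th:alg}\cref{th:alg:scalar} is $(\lambda l)$-exponential with $q$-map $\lambda^2 q_\alpha = \tfrac12[\lambda l, [\lambda l, \cdot]]$, so by the uniqueness in \cref{th:alg}\cref{th:alg:char} it coincides with $e_+(\lambda l)$. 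Consequently $n_{e_+(\lambda l)} = \lambda^3 n_\alpha$ and $v_{e_+(\lambda l)} = \lambda^4 v_\alpha$.

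For $n_\alpha$ I would use the square $\alpha^2 = e_+(2l)$, which holds by \cref{th:alg group operation} since $[l,l] = 0$. Reading off the $n$-component of this product through \cref{rem:beta}\cref{rem:beta:product} (with $l' = l$) gives
\[ n_{e_+(2l)}(m) = 2 n_\alpha(m) + q_\alpha(Dm) + D q_\alpha(m) = 2 n_\alpha(m) + D^3 m , \]
whereas the scaling observation gives $n_{e_+(2l)} = 8 n_\alpha$. Subtracting yields $6 n_\alpha(m) = D^3 m$, valid in every characteristic $\neq 2$.

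For $v_\alpha$ I would run the same idea, but with the asymmetric product $e_+(l)\, e_+(2l) = e_+(3l)$ (again legitimate by \cref{th:alg group operation}, as $[l, 2l] = 0$). The $v$-component of \cref{rem:beta}\cref{rem:beta:product}, now with $l' = 2l$ and with the scaled maps $q_{e_+(2l)} = 4 q_\alpha$, $n_{e_+(2l)} = 8 n_\alpha$, $v_{e_+(2l)} = 16 v_\alpha$ inserted, reads
\[ v_{e_+(3l)}(m) = 17 v_\alpha(m) + 2 n_\alpha(Dm) + 8 D n_\alpha(m) + 4 q_\alpha(q_\alpha(m)) , \]
while scaling gives $v_{e_+(3l)} = 81 v_\alpha$, so that $64 v_\alpha(m) = 2 n_\alpha(Dm) + 8 D n_\alpha(m) + 4 q_\alpha(q_\alpha(m))$. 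Substituting the first step in the forms $6 n_\alpha(Dm) = D^4 m$ and $6 D n_\alpha(m) = D^4 m$, together with $4 q_\alpha(q_\alpha(m)) = D^4 m$ (immediate from $q_\alpha = \tfrac12 D^2$), and multiplying the displayed identity by $3$, the right-hand side becomes $6 n_\alpha(Dm) + 4\,(6 D n_\alpha(m)) + 3\,(4 q_\alpha(q_\alpha(m))) = D^4 m + 4 D^4 m + 3 D^4 m = 8 D^4 m$. Hence $192 v_\alpha(m) = 8 D^4 m$, and dividing by $8$ (permissible since $\Char(k) \neq 2$) gives $24 v_\alpha(m) = D^4 m$.

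The one genuine subtlety, and the reason I would route the $v$-computation through $e_+(l)\,e_+(2l) = e_+(3l)$ rather than simply squaring, is small characteristic. The naive doubling yields $14 v_\alpha(m) = n_\alpha(Dm) + D n_\alpha(m) + q_\alpha(q_\alpha(m))$ and hence $168 v_\alpha(m) = 7 D^4 m$, whose leading factor $7$ obstructs characteristic $7$; the asymmetric product instead isolates the factor $8$, a power of $2$ that is invertible exactly under our hypothesis. No separate treatment of characteristic $3$ is needed: there the two identities simply assert $\ad_l^3 = \ad_l^4 = 0$, which the argument produces automatically and which are mutually consistent. Finally, since $\alpha$ and its component maps live over $k$ under the running \cref{ass:ext} and all identities above are between $k$-linear maps, no field enlargement is required.
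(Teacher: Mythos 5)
Your proof is correct. The first identity is established exactly as in the paper: both arguments identify $e_+(2l)$ with $(e_+(l))_2$ via the uniqueness in \cref{th:alg}\cref{th:alg:char}, equate $e_+(2l) = e_+(l)^2$ via \cref{th:alg group operation}, and compare the $n$-components using \cref{rem:beta}\cref{rem:beta:product} to get $6 n_\alpha(m) = [l,[l,[l,m]]]$. For the second identity your route differs: you compare the $v$-components of $e_+(l)\,e_+(2l) = e_+(3l)$ against the scaling $v_{e_+(3l)} = 81\, v_\alpha$, which leaves the identity $192\, v_\alpha(m) = 8\,[l,[l,[l,[l,m]]]]$ and requires a final division by $8$. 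The paper instead uses the inverse relation: it sets $\delta := e_+(-l)\,e_+(l) = \id$ (so $v_\delta = 0$), expands $v_\delta$ via \cref{rem:beta}\cref{rem:beta:product} using $e_+(-l) = \alpha_{-1}$, and multiplies the resulting identity $2 v_\alpha(m) = n_\alpha([l,m]) + [l,n_\alpha(m)] - q_\alpha(q_\alpha(m))$ by $12$, which produces $24\, v_\alpha(m) = [l,[l,[l,[l,m]]]]$ with no division at all. Both arguments are valid in every characteristic $\neq 2$ (including $3$ and $7$), and your diagnosis of why the naive squaring route fails in characteristic $7$ --- the obstruction $168\, v_\alpha = 7\,\ad_l^4$ --- is accurate and is presumably exactly the reason the paper reaches for the inverse identity; your asymmetric product is an equally workable way around it, at the modest cost of invoking invertibility of $8$ one more time.
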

\begin{proof}
	Let $\beta := \alpha_{2}$ be as in \cref{th:alg}\cref{th:alg:scalar} and let $\gamma := e_+(2l)$ as in \cref{th:alg}\cref{th:alg:char}.
	Then $q_\beta = 2^2 q_\alpha$ while $q_\gamma(m) = \tfrac{1}{2}[2l, [2l, m]] = 4 q_\alpha(m)$ for all $m \in L$, so by the uniqueness part of \cref{th:alg}\cref{th:alg:char}, we have $\beta = \gamma$.
	On the other hand, we have $e_+(2l) = e_+(l + l) = e_+(l)^2$ by \cref{th:alg group operation}, so $\alpha_2 = \alpha^2$.

	By \cref{th:alg}\cref{th:alg:scalar}, we have $n_{\alpha_2} = 8 n_\alpha$, while by \cref{rem:beta}\cref{rem:beta:product}, we have
	\[ n_{\alpha^2}(m) = 2 n_\alpha(m) + q_\alpha([l,m]) + [l, q_\alpha(m)] \]
	for all $m \in L$.
	Since $q_\alpha(m) = \tfrac{1}{2}[l, [l, m]]$, we conclude that $6 n_\alpha(m) = [l, [l, [l, m]]]$ for all $m \in L$.

    Similarly, we let $\delta := e_+(-l) e_+(l)$ and we use the fact that $\alpha_{-1} = \alpha^{-1}$.
    On the one hand, $\delta = \id$, so $v_\delta = 0$, while on the other hand, by \cref{rem:beta}\cref{rem:beta:product} again,
    \[ v_{\delta}(m) = 2 v_\alpha(m) - n_\alpha([l,m]) + q_\alpha(q_\alpha(m)) - [l, n_\alpha(m)] \]
    for all $m \in L$.
    Multiplying by $12$, we get $24 v_\alpha(m) = [l,[l,[l,[l,m]]]]$ as required.
\end{proof}

Our final result of this section shows that $E_+(x,y)$ acts sharply transitively on the set of all extremal points with non-zero $L_{-2}$-component.

\begin{proposition}
\label{recover:sharp trans opp}
	Let $L$ be a simple Lie algebra.
	Assume that for some Galois extension $k'/k$ with $k'\neq\mathbb F_2$, $L_{k'} := L \otimes_k k'$ is a simple Lie algebra generated by its pure extremal elements and has $\F(L_{k'})\neq\emptyset$.

	Consider a hyperbolic pair $x,y\in E$ with $g(x,y)=1$ and let $L=L_{-2}\oplus L_{-1}\oplus L_0\oplus L_1\oplus L_2$ be the associated $5$-grading.

	Then every extremal element with $L_{-2}$-component equal to $x$ can be written as $\varphi (x)$ for a unique $\varphi\in E_+(x,y)$.
\end{proposition}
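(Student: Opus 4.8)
The plan is to prove sharp transitivity in the usual two halves --- existence (transitivity) and uniqueness (freeness) --- after first recording two preliminary facts. First I would check that $E_+(x,y)$ is a \emph{group}: it is closed under products by \cref{th:alg}\cref{th:alg:sum}, it contains $\id$ and every $\exp(z)$ with $z\in L_2$ (each of these is $0$-exponential, with $q(m)=[z,m]$, $n=0$ and $v(m)=g_z(m)z$), and it is closed under inverses, since for an $l$-exponential $\alpha$ algebraicity (\cref{recover:Galois descent}) supplies a $(-l)$-exponential $\beta$, whence $\alpha\beta$ is $0$-exponential and equals $\exp(z)$ by \cref{rem:beta}, so $\alpha^{-1}=\beta\exp(-z)\in E_+(x,y)$. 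Second, for any $l$-exponential $\varphi$ we have $\varphi(x)=x+[l,x]+q_\varphi(x)+n_\varphi(x)+v_\varphi(x)$ with the summands lying in $L_{-2},\dots,L_2$, so the $L_{-2}$-component of $\varphi(x)$ is always $x$; thus $E_+(x,y)$ really does act on the set of extremal elements with $L_{-2}$-component $x$.

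For existence, I would take such an extremal element $e=x+e_{-1}+e_0+e_1+e_2$. Using that $\ad_x\colon L_1\to L_{-1}$ is an isomorphism with inverse $-\ad_y$ (\cref{pr:5gr}\cref{5gr:iso}), I set $l:=[y,e_{-1}]\in L_1$, so that $[l,x]=e_{-1}$, and by algebraicity I choose an $l$-exponential $\varphi_0\in E_+(x,y)$. Then I consider the extremal element $e':=\varphi_0^{-1}(e)$. Comparing the $L_{-2}$- and $L_{-1}$-components of the identity $\varphi_0(e')=e$ (the $[l,\cdot]$-term contributes exactly $[l,x]=e_{-1}$ in degree $-1$, and the higher maps $q,n,v$ raise the degree further) forces $e'=x+e'_0+e'_1+e'_2$ with trivial $L_{-1}$-component. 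At this point $e'$ is precisely an element to which the analogue of \cref{recover:uniqueness} obtained by swapping $x\leftrightarrow y$ and $L_i\leftrightarrow L_{-i}$ (\cref{rem:swap}) applies, yielding $e'_1=0$ and $e'=\exp(\lambda y)(x)=x+\lambda[y,x]+\lambda^2 y$ for a scalar $\lambda$. Hence $e=\varphi_0\exp(\lambda y)(x)$ with $\varphi_0\exp(\lambda y)\in E_+(x,y)$.

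The hard part is this last invocation, because the swapped form of \cref{recover:uniqueness} uses, in characteristic $2$, the existence of lines (via \cref{Grading is bracket if lines}), which need not hold over $k$. I would get around this by running that step inside $L\otimes k'$, where $\mathcal F(L\otimes k')\neq\emptyset$ so that \cref{recover:uniqueness} is available; this produces $\lambda\in k'$ a priori. Since $e'\in L$ and $e'_0=\lambda[y,x]$ with $[y,x]\in L_0\setminus\{0\}$ defined over $k$, comparing coordinates in a $k$-basis of $L_0$ containing $[y,x]$ forces $\lambda\in k$, so $\exp(\lambda y)$ is genuinely defined over $k$ and lies in $E_+(x,y)$.

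Finally, for uniqueness I would take $\varphi_1,\varphi_2\in E_+(x,y)$ with $\varphi_1(x)=\varphi_2(x)$ and set $\psi:=\varphi_2^{-1}\varphi_1\in E_+(x,y)$, which fixes $x$. Writing $\psi$ as $l$-exponential, the $L_{-1}$-component $[l,x]$ of $\psi(x)$ must vanish, so $l=0$ by \cref{pr:5gr}\cref{5gr:iso}; then $\psi$ is $0$-exponential, hence $\psi=\exp(\lambda y)$ for some $\lambda$ by \cref{rem:beta}, and $\psi(x)=x+\lambda[y,x]+\lambda^2 y=x$ forces $\lambda=0$ because $[y,x]\neq 0$. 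Therefore $\psi=\id$ and $\varphi_1=\varphi_2$, which together with the existence step gives the asserted sharp transitivity.
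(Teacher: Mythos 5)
Your proof is correct. The existence half is essentially the paper's own argument: the same choice of $l=[y,e_{-1}]$, the same reduction of $e':=\varphi_0^{-1}(e)$ to an extremal element with vanishing $L_{-1}$-component, and the same appeal to the swapped form of \cref{recover:uniqueness} to produce $\exp(\lambda y)$. You are in fact more careful than the paper on one point: you note explicitly that \cref{recover:uniqueness} (whose characteristic-$2$ case needs lines) must be run inside $L\otimes k'$, and you descend $\lambda$ back to $k$ by comparing $L_0$-coordinates; the paper leaves this implicit.

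The uniqueness half is where you genuinely diverge. The paper observes that any $\varphi\in E_+(x,y)$ with $\varphi(x)=e$ is $l$-exponential for the $l$ determined by the $L_{-1}$-component of $e$ (via \cref{pr:5gr}\cref{5gr:iso}), so two such automorphisms agree on $\{x\}\cup L_1$, and then invokes the generation fact that $L$ is generated by $\{x\}\cup L_1$ (the swapped \cref{recover: y and I1}, again valid after scalar extension). You instead first establish that $E_+(x,y)$ is a group --- closure under inverses coming from \cref{recover:Galois descent}, \cref{th:alg}\cref{th:alg:sum} and the classification of $0$-exponential automorphisms as $\exp(z)$, $z\in L_2$, via \cref{th:alg}\cref{th:alg:unique}/\cref{rem:beta} --- and then reduce to showing that an element $\psi\in E_+(x,y)$ fixing $x$ is the identity, which you do by pure computation ($[l,x]=0$ forces $l=0$, then $\lambda=0$). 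Your route avoids the generation lemma entirely and records the group structure of $E_+(x,y)$ as a useful by-product; its cost is that the inverse-closure and $0$-exponential steps lean on \cref{th:alg}\cref{th:alg:unique,th:alg:sum}, whose validity over $k$ (rather than $k'$) rests on the same scalar-extension remark following \cref{recover:Galois descent}, so the two approaches ultimately consume comparable resources. Both arguments are sound.
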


\begin{proof}
	Let $e$ be an extremal element with $L_{-2}$-component equal to $x$.
	We first show the existence of an automorphism $\varphi \in E_+(x,y)$ with $\varphi(x) = e$.

	Let $e_{-1}$ be the $L_{-1}$-component of $e$.
	By \cref{pr:5gr}\cref{5gr:iso}, there is a (unique) $a \in L_1$ such that $[a,x] = e_{-1}$.
	By \cref{recover:Galois descent}, there exists an $a$-exponential automorphism $\alpha\in E_+(x,y)$.
	Then $\alpha^{-1}(e) = x + l_0 + l_1 + l_2$, with $l_i\in L_i$.
	By \cref{recover:uniqueness}, there exists $\lambda\in k$ such that $\exp(\lambda y)(x) = \alpha^{-1}(e)$.
	This shows that $\varphi := \alpha \exp(\lambda y) \in E_+(x,y)$ maps $x$ to $e$, as required.

	To show uniqueness, assume that also $\varphi'\in E_+(x,y)$ satisfies $\varphi'(x)=e=\varphi(x)$.
	In particular, $\varphi$ and $\varphi'$ are $l$-exponential automorphisms for the same $l \in L_1$.
	This implies that also $\varphi(m) = m + [l,m] = \varphi'(m)$ for all $m\in L_{1}$.
	Since $L$ is generated by $\{ x \} \cup L_1$, we conclude that $\varphi = \varphi'$.
\end{proof}


\begin{remark}
	Note that we did not make any assumptions on the dimension of the Lie algebra $L$ in this section.
	In particular, the results also hold for infinite-dimensional Lie algebras.
\end{remark}

\section{Extremal geometry with lines --- recovering a cubic norm structure}
\label{se:lines}

In this section, we prove that if $L$ is a simple Lie algebra over a field $k$ with $|k| \geq 4$, which is generated by its pure extremal elements and such that its extremal geometry contains lines, then $L_1$ can be decomposed as $k\oplus J\oplus J' \oplus k$, for a certain ``twin cubic norm structure'' $(J, J')$.
If the norm of this twin cubic norm structure is not the zero map, then $J' \cong J$ and $J$ gets the structure of a genuine cubic norm structure (depending on the choice of a ``base point'').
At the end of this section, we also sketch how the Lie algebra $L$ can be completely reconstructed from this cubic norm structure.


\begin{assumption}\label{ass:lines}
    Throughout the whole section, we assume that $L$ is a simple Lie algebra defined over a field $k$ with $|k| \geq 4$, such that $L$ is generated by its pure extremal elements and such that the extremal geometry contains lines.

    We fix a pair of extremal elements $x,y \in E$ with $g(x,y)=1$ and we consider the corresponding $5$-grading $L=L_{-2}\oplus L_{-1}\oplus L_0\oplus L_1\oplus L_2$ as in \cref{pr:5gr}.
\end{assumption}

\begin{notation}\label{recover:notation e12}
	Since the extremal geometry contains lines, \cref{Grading is bracket if lines} implies that there exist extremal elements $c$ and $d$ contained in $L_{-1}$ such that $[c,d]=x$.
	Set $p := [y,c]$ and $q := -[y,d]$, so $p,q \in L_1$.
	By \cref{Grading is bracket if lines} again, we see that $[x,y]=[c+d,p+q] = [c,q] + [d,p]$. (Notice that $[c,p]=0$ and $[d,q]=0$.)
	Observe that both $(c,d)$ and $(p,q)$ are special pairs, with $[c,d] = x$ and $[q,p] = y$.
\end{notation}

We start with the following observation, needed to obtain a second grading on the Lie algebra which we will use often in this section.

\begin{lemma}
\label{recover:e1 e2 hyperbolic}
	We have $g(p, d) = 1$ and $g(q, c) = 1$.
\end{lemma}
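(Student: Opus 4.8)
The statement to prove is \cref{recover:e1 e2 hyperbolic}: that $g(p,d) = 1$ and $g(q,c) = 1$, where $p = [y,c]$, $q = -[y,d]$, and $c,d \in E \cap L_{-1}$ satisfy $[c,d] = x$. My plan is to compute $g(p,d)$ directly using the associativity of the form $g$ with respect to the Lie bracket (\cref{pr:g}(iii)), together with the defining relation $[c,d] = x$ and the normalization $g(x,y) = 1$.

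First I would write out $g(p,d) = g([y,c], d)$. Using the fact that $g$ associates with the Lie bracket, I can move the bracket across: $g([y,c],d) = g(y,[c,d])$. Here I need to be slightly careful about signs and about which associativity identity I invoke; the cleanest route is to use $g([y,c],d) = -g([c,y],d) = -g(c,[y,d])$, or alternatively to directly apply $g([y,c],d)=g(y,[c,d])$ as allowed by the symmetric associating property. Since $[c,d] = x$, this gives $g(p,d) = g(y,x) = g(x,y) = 1$ (using symmetry of $g$ from \cref{pr:g}(ii)). The computation for $g(q,c)$ is entirely analogous: $g(q,c) = g(-[y,d], c) = -g([y,d],c) = -g(y,[d,c]) = g(y,[c,d]) = g(y,x) = 1$, where I have used the antisymmetry of the Lie bracket $[d,c] = -[c,d]$ to flip the sign and land on $[c,d] = x$ again.

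The main (and only real) subtlety will be bookkeeping the signs correctly when shuttling the bracket through the form, since there are two natural ways to apply associativity and they differ by how one orders the arguments. I expect no genuine obstacle here: the result follows from a two- or three-line symbolic manipulation once the associating property and the relation $[c,d]=x$ are in hand. Concretely, I would present it as
\begin{align*}
    g(p,d) &= g([y,c],d) = g(y,[c,d]) = g(y,x) = 1, \\
    g(q,c) &= -g([y,d],c) = -g(y,[d,c]) = g(y,[c,d]) = g(y,x) = 1,
\end{align*}
each line invoking \cref{pr:g}(iii) for the middle equality and \cref{recover:notation e12} for $[c,d] = x$, with the final equality being the normalization $g(x,y)=1$ from \cref{ass:lines} together with symmetry of $g$.
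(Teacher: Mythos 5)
Your proof is correct and is essentially identical to the paper's own proof: both compute $g(p,d)=g([y,c],d)=g(y,[c,d])=g(y,x)=1$ and $g(q,c)=-g(y,[d,c])=g(y,x)=1$ using the associativity of $g$ with the Lie bracket and $[c,d]=x$. No issues.
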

\begin{proof}
	We have $g(p, d) = g([y, c], d) = g(y, [c, d]) = g(y,x) = 1$ and $g(q, c) = g(-[y, d], c) = -g(y, [d, c]) = g(y, x) = 1$.
\end{proof}

We are now ready to define a subspace $J$ which will turn out to have the structure of a cubic norm structure.
Using the grading related to $(c,q)\in E_2$, we are able to describe a decomposition of $L_{-1}$ into $4$ parts.

\begin{notation}
\label{recover:notation J}
	We denote the $5$-grading of $L$ obtained by considering the pair $(c,q)\in E_2$ in \cref{pr:5gr} by
	\begin{equation}\label{recover:other grading}
		L=L'_{-2}\oplus L'_{-1}\oplus L'_{0}\oplus L'_{1}\oplus L'_{2},
	\end{equation}
	where $L'_{-2}=\langle c\rangle$ and $L'_2=\langle q\rangle$.
	We will occasionally also need a third grading arising from the hyperbolic pair $(d,p) \in E_2$, denoted by
	\begin{equation}\label{recover:other grading 2}
		L=L''_{-2}\oplus L''_{-1}\oplus L''_{0}\oplus L''_{1}\oplus L''_{2},
	\end{equation}
	where $L''_{-2}=\langle d\rangle$ and $L''_2=\langle p\rangle$.

	We set
	\begin{equation*}
		J = L_{-1}\cap L'_{-1},\ J'=L_{-1}\cap L'_0.
	\end{equation*}
\end{notation}



\begin{lemma}
    We have
    \begin{alignat*}{2}
        &x \in L_{-2} \cap L'_{-1} \cap L''_{-1}, \qquad
        &&y \in L_2 \cap L'_{1} \cap L''_{1}, \\
        &c \in L_{-1} \cap L'_{-2} \cap L''_{1}, \qquad
        &&d \in L_{-1} \cap L'_{1} \cap L''_{-2}, \\
        &p \in L_1 \cap L'_{-1} \cap L''_{2}, \qquad
        &&q \in L_1 \cap L'_{2} \cap L''_{-1}.
    \end{alignat*}
\end{lemma}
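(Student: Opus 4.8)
The plan is to separate the eighteen memberships into three columns. The first column (the $L_\bullet$-grading) is immediate from \cref{recover:notation e12}; the second column (the $(c,q)$-grading $L'_\bullet$) I will establish by trapping each element between a lower and an upper step of the filtration, using $L'_{\leq j}\cap L'_{\geq j}=L'_j$; and the third column ($L''_\bullet$) I will obtain from the second by a symmetry of the configuration rather than by recomputing. For the first grading nothing is needed beyond the definitions: $L_{-2}=\langle x\rangle$ and $L_2=\langle y\rangle$ by \cref{pr:5gr}, $c,d\in E\cap L_{-1}$ by choice, and $p=[y,c]$, $q=-[y,d]$ lie in $[L_2,L_{-1}]\subseteq L_1$.

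For the grading $L'_\bullet$ attached to the hyperbolic pair $(c,q)$ I will use two kinds of input. First, the filtration descriptions of \cref{pr:5gr}\cref{5gr:indep} and their mirror images via the grading-reversing automorphism of \cref{recover:switching grading} (see \cref{rem:swap}): $L'_{\leq 1}=c^\perp$ and dually $L'_{\geq -1}=q^\perp$. Second, for \emph{extremal} elements, the relational description of \cref{pr:Ei(x)}, namely $E_{-1}(c)\subseteq L'_{\leq -1}$ and, in its swapped form, $E_{-1}(q)\subseteq L'_{\geq 1}$. The relations I need all come from the special-pair structure of \cref{recover:notation e12} through \cref{pr:2pts}\,(d): since $(c,d)$ is special with $[c,d]=x$, the point $\langle x\rangle$ is collinear with $\langle c\rangle$; since $(q,p)$ is special with $[q,p]=y$, the point $\langle y\rangle$ is collinear with $\langle q\rangle$; and since $g(y,c)=g(d,y)=0$ while $[y,c]=p\neq 0$ and $[d,y]=q\neq 0$ (injectivity of $\ad_y$ on $L_{-1}$ by \cref{pr:5gr}\cref{5gr:iso}), the pairs $(y,c)$ and $(d,y)$ are special, so $\langle p\rangle$ is collinear with $\langle c\rangle$ and $\langle q\rangle$ with $\langle d\rangle$. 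The orthogonalities $g(x,q)=g(y,c)=0$ follow from \cref{pr:5gr}\cref{5gr:g0} ($g_x(L_{\leq 1})=0$, $g_y(L_{\geq -1})=0$), $g(d,c)=0$ and $g(p,q)=0$ from the fact that special pairs are $g$-orthogonal (last line of \cref{pr:2pts}).

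Combining these gives each second-column membership by a two-sided squeeze: $x\in L'_{\leq -1}$ (as $(c,x)\in E_{-1}$) and $x\in q^\perp=L'_{\geq -1}$, so $x\in L'_{-1}$; likewise $p\in L'_{-1}$ from $(c,p)\in E_{-1}$ and $g(p,q)=0$; $y\in L'_{\geq 1}$ (as $(q,y)\in E_{-1}$) and $y\in c^\perp=L'_{\leq 1}$, so $y\in L'_1$; and $d\in L'_1$ from $(q,d)\in E_{-1}$ and $g(d,c)=0$; finally $c\in L'_{-2}$, $q\in L'_2$ hold by construction. For the third column I will invoke the symmetry $(c,d,p,q)\mapsto(d,c,q,p)$, $(x,y)\mapsto(-x,-y)$: one checks $[d,c]=-x$, $[-y,d]=-p$ and $-[-y,c]=-q$, so this substitution preserves every defining relation of \cref{recover:notation e12} and fixes the pair $(x,y)$ up to sign (hence fixes $L_\bullet$), while interchanging the hyperbolic pairs $(c,q)$ and $(d,p)$, i.e. interchanging $L'_\bullet$ and $L''_\bullet$. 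Applying it to the six statements just proved for $L'_\bullet$ yields exactly the six statements for $L''_\bullet$ (for instance $x\in L'_{-1}$ becomes $-x\in L''_{-1}$, i.e. $x\in L''_{-1}$).

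The one genuine obstacle is characteristic $2$ and $3$: there one cannot read off the $L'$-degree of an element from its $\ad_{[c,q]}$-eigenvalue, because the eigenvalues $-2,-1,0,1,2$ collapse. This is precisely why I avoid the eigenvalue computation and instead pin each element down from both sides via the two filtrations, and why I draw the orthogonalities from \cref{pr:5gr}\cref{5gr:g0} and \cref{pr:2pts} rather than from the characteristic-sensitive vanishing of $g$ on $L_i\times L_j$ for $i+j\neq 0$.
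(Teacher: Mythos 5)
Your proposal is correct, and it reaches the statement by a route that differs in execution from the paper's, though it rests on the same underlying facts. The paper's proof is two sentences: it observes that $\langle x\rangle,\langle c\rangle,\langle p\rangle,\langle y\rangle,\langle q\rangle,\langle d\rangle$ form an ordinary hexagon in the extremal geometry (consecutive pairs collinear, distance-two pairs special), reads off the $\pm 2$-components directly from the gradings, and obtains every $\pm 1$-component from \cref{a in g_{-1}}, i.e.\ from $E_{-1}(c)\cap E_1(q)=E\cap L'_{-1}$ and its analogues for the pairs $(q,c)$, $(d,p)$, $(p,d)$. You establish the same collinearity and special relations from \cref{pr:2pts}\,(d), but then replace the ``special with the opposite end'' half of the squeeze by the weaker input of $g$-orthogonality, trapping each element between $E_{-1}$ of one end (via \cref{pr:Ei(x)}) and the perp of the other end (via \cref{pr:5gr}\cref{5gr:indep} and \cref{rem:swap}); and you obtain the entire $L''$-column from the relabeling symmetry $(x,y,c,d,p,q)\mapsto(-x,-y,d,c,q,p)$ instead of rerunning the argument for the pair $(d,p)$, which is an economical touch the paper does not use. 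Both proofs are characteristic-free for the same reason, which you correctly identify: neither ever identifies the graded pieces with $\ad_{[c,q]}$-eigenspaces.

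One correction is needed in your verification of the symmetry. With the paper's definitions $p=[y,c]$ and $q=-[y,d]$, the checks should read $[d,c]=-x$, $[-y,d]=-[y,d]=q$ and $-[-y,c]=[y,c]=p$. Your displayed identities $[-y,d]=-p$ and $-[-y,c]=-q$ are false as stated (each would force $q=-p$), and appear to come from momentarily swapping the roles of $c$ and $d$ in the definitions of $p$ and $q$. The corrected identities are exactly what is required for the substitution to send $p\mapsto q$ and $q\mapsto p$ while preserving every relation of \cref{recover:notation e12}, so the symmetry argument, and everything you deduce from it, stands unchanged once the slip is fixed.
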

\begin{proof}
    By our setup in \cref{recover:notation e12}, the extremal points $\langle x\rangle,\langle c \rangle,\langle p \rangle,\langle y \rangle,\langle q \rangle,\langle d \rangle,\langle x \rangle$ form an ordinary hexagon in the extremal geometry, where all pairs at distance two are special. Now all containments follow either directly from the grading (for the $-2$- and $2$-components), or they follow from \cref{a in g_{-1}} (for the $-1$- and $1$-components).
\end{proof}

We now obtain a decomposition of $L_{-1}$ into $4$ parts, by intersecting with the $L'_i$-grading \cref{recover:other grading} arising from the hyperbolic pair $(c,q)$.

\begin{proposition}
\label{recover:L1 decomposition}
	We have a decomposition
	\begin{align*}
		L_{-1}=\langle c\rangle \oplus J\oplus J'\oplus \langle d\rangle.
	\end{align*}
	Moreover, $L_{-1}\cap L'_1=\langle d\rangle$.
\end{proposition}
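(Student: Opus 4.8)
The plan is to take an arbitrary $v\in L_{-1}$, decompose it along the $L'_\bullet$\dash grading \cref{recover:other grading} as $v=\sum_{j=-2}^{2}v'_j$ with $v'_j\in L'_j$, and show that every homogeneous component $v'_j$ again lies in $L_{-1}$. Once this is done the result follows: $\langle c\rangle,J,J',\langle d\rangle$ live in the distinct graded pieces $L'_{-2},L'_{-1},L'_0,L'_1$, so their sum is automatically direct, and the moreover part is obtained by intersecting the established decomposition with $L'_1$. I would exploit three operators that all stabilise $L_{-1}$: the maps $\ad_c$ and $\ad_{h'}$ with $h':=[c,q]$, together with the grading data of \cref{pr:5gr}. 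Note that $h'\in L_0$ (since $c\in L_{-1}$, $q\in L_1$), whence $[h',L_{-1}]\subseteq L_{-1}$; and that for the $(c,q)$\dash grading we have $c\in L'_{-2}$ and $q\in L'_2$, so $\ad_c$ lowers the $L'$\dash degree by $2$, restricts by \cref{pr:5gr}\cref{5gr:iso} to an isomorphism $L'_1\to L'_{-1}$, and is injective on $L'_2=\langle q\rangle$ (as $[c,q]=h'\neq0$).

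The first step handles the two extreme components. Clearly $v'_{-2}\in L'_{-2}=\langle c\rangle\subseteq L_{-1}$. For the rest I would compute $[c,v]$ in two ways. On one hand $[c,v]\in[L_{-1},L_{-1}]\subseteq L_{-2}=\langle x\rangle$, and $x\in L'_{-1}$ by the preceding lemma, so $[c,v]\in L'_{-1}$. On the other hand $[c,v]=\sum_j\ad_c(v'_j)$ with $\ad_c(v'_j)\in L'_{j-2}$, so $\ad_c(v'_{-2})=\ad_c(v'_{-1})=0$, $\ad_c(v'_0)\in L'_{-2}$, $\ad_c(v'_1)\in L'_{-1}$ and $\ad_c(v'_2)\in L'_0$. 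Comparing $L'$\dash homogeneous components forces $\ad_c(v'_0)=0$, $\ad_c(v'_2)=0$, and $\ad_c(v'_1)=[c,v]$. Injectivity of $\ad_c$ on $L'_2$ then yields $v'_2=0$, and writing $[c,v]=\psi x=\psi[c,d]=\ad_c(\psi d)$ (using $[c,d]=x$ from \cref{recover:notation e12}) together with injectivity on $L'_1$ yields $v'_1=\psi d\in\langle d\rangle\subseteq L_{-1}$.

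For the remaining two components, set $w:=v-v'_{-2}-v'_1=v'_{-1}+v'_0\in L_{-1}$ (here we already know $v'_2=0$). Since $\ad_{h'}$ stabilises $L_{-1}$ and, by \cref{pr:5gr}\cref{5gr:grading der} applied to the $(c,q)$\dash grading, acts as the scalar $-1$ on $L'_{-1}$ and as $0$ on $L'_0$, we obtain $v'_{-1}=-[h',w]\in L_{-1}$ and hence $v'_0=w-v'_{-1}\in L_{-1}$. Therefore $v'_{-1}\in J$, $v'_0\in J'$, and all components of $v$ lie in $L_{-1}$, which proves $L_{-1}=\langle c\rangle\oplus J\oplus J'\oplus\langle d\rangle$; intersecting with $L'_1$ isolates the final summand and gives $L_{-1}\cap L'_1=\langle d\rangle$.

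The hard part, and the reason a naive simultaneous-eigenspace argument fails, is that the two gradings need not refine to a common eigenspace decomposition in characteristics $2$ and $3$: on $L_{-1}$ the $\ad_{h'}$\dash eigenvalues $-2,-1,0,1$ collide modulo $p$ (e.g. $-2\equiv0$ and $-1\equiv1$ when $p=2$). The point of the argument above is precisely to pin down the two colliding extreme components \emph{independently} of $\ad_{h'}$ — the component $v'_{-2}$ because $L'_{-2}$ is spanned by $c\in L_{-1}$, and $v'_1$ via injectivity of $\ad_c$ on $L'_1$ and the relation $[c,d]=x$ — so that $\ad_{h'}$ is only ever asked to separate $v'_{-1}$ from $v'_0$, whose eigenvalues $-1$ and $0$ remain distinct in every characteristic.
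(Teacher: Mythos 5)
Your proof is correct, but it takes a genuinely different route from the paper's in the key step. The paper first kills the $L'_2$-component via $g_c(L_{-1})=0$ and the filtration description of \cref{pr:5gr}\cref{5gr:indep}, and then separates $l'_{-1}$, $l'_0$, $l'_1$ by combining the eigenvalue relation $[[c,q],l]=-l'_{-1}+l'_1\in L_{-1}$ with the torus automorphism $\varphi_\lambda$ of \cref{recover:autom of grading} (whose stabilization of $L_{-1}$ rests on \cref{recover quadr:prop aut fixing x and y}); this forces $(\lambda^{-1}+\lambda-2)l'_{-1}\in L_{-1}$ and requires choosing $\lambda$ with $\lambda+\lambda^{-1}\neq 2$, hence a characteristic-$2$ case distinction and $|k|>2$. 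You avoid the automorphism machinery entirely: you first pin down the extreme components by purely bracket-theoretic means --- $v'_2=0$ and $v'_1\in\langle d\rangle$ follow from $[c,v]\in\langle x\rangle\subseteq L'_{-1}$, injectivity of $\ad_c$ on $L'_1\oplus L'_2$ (via \cref{pr:5gr}\cref{5gr:iso} and $[c,q]\neq 0$), and $[c,d]=x$ --- and only then invoke $\ad_{[c,q]}$, which now merely has to distinguish the eigenvalues $-1$ and $0$, distinct in every characteristic. (Your treatment of $v'_1$ is essentially the computation the paper defers to the very end for the ``moreover'' statement, promoted to handle the component of an arbitrary element.) What each approach buys: yours is characteristic-free, needs no hypothesis on $|k|$ for this proposition, and uses no automorphisms; the paper's torus-plus-eigenvalue technique is a uniform template that it reuses verbatim for the decomposition of $L_0$ in \cref{recover:decomp L0}, where your trick does not transfer as directly, since $[c,L_0]$ lands in $L_{-1}$ rather than in the one-dimensional space $\langle x\rangle$.
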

\begin{proof}
	Since $c \in L_{-1}$, we have $g_{c}(L_{-1})=0$ by \cref{recover:extr in 1 prop}.
	On the other hand, $c\in L'_{-2}$, so we get $L_{-1} \leq L'_{\leq 1}$ by \cref{pr:5gr}\cref{5gr:indep}.
	
	Now let $l\in L_{-1}$ be arbitrary, and write $l = l'_{-2}+l'_{-1}+l'_0+l'_1$ with each $l'_i \in L'_i$.
	Since $L'_{-2}=\langle c\rangle\leq L_{-1}$, we may assume $l'_{-2}=0$.
	
	Now observe that by \cref{pr:5gr}\cref{5gr:grading der}, $[[c,q], l] = -l'_{-1} + l'_1$.
	On the other hand, $[c,q]\in L_0$, so $[[c,q],l]\in L_{-1}$, and we deduce that
	\begin{equation}\label{eq:cql}
	   -l'_{-1} + l'_1 \in L_{-1}.
	\end{equation}

	Next, let $\lambda \in k$ be any non-zero scalar and consider the automorphism $\varphi_\lambda \in \Aut(L)$ defined in \cref{recover:autom of grading}, but with respect to the $5$-grading \cref{recover:other grading}.
	Since $x\in L'_{-1}$ and $y\in L'_1$, we have $\varphi_\lambda(x)=\lambda^{-1} x$ and $\varphi_\lambda(y)=\lambda y$.
	Then \cref{recover quadr:prop aut fixing x and y} implies $\varphi_\lambda(L_{-1})=L_{-1}$.
	In particular, $\varphi_\lambda(l)=\lambda^{-1}l'_{-1}+l'_0+\lambda l'_1$ is contained in $L_{-1}$, so subtracting $l \in L_{-1}$, we see that
	\[ (\lambda^{-1}-1)l'_{-1}+(\lambda-1)l'_1\in L_{-1} \]
	for all non-zero $\lambda \in k$. Combining this with \cref{eq:cql}, we get
	$(\lambda^{-1} + \lambda - 2)l'_{-1}\in L_{-1}$ for all non-zero $\lambda\in k$.
	If $\Char (k)\neq 2$, we can take $\lambda=-1$ to get $l'_{-1}\in L_{-1}$, while
	if $\Char(k)=2$, then $|k|>2$ implies that we can find $\lambda$ such that $\lambda+\lambda^{-1}\neq 0$.
	In both cases, we conclude that $l'_{-1}\in L_{-1}$, and then by \eqref{eq:cql} again, also $l'_1 \in L_{-1}$.
	Since $l = l'_{-1}+l'_0+l'_1$, also $l'_0 \in L_{-1}$.
	Hence
	\[
        L_{-1}
        = \underbrace{(L_{-1} \cap L'_{-2})}_{\langle c \rangle} \, \oplus \, \underbrace{(L_{-1} \cap L'_{-1})}_{J} \,
            \oplus \, \underbrace{(L_{-1} \cap L'_0)}_{J'} \, \oplus \; {(L_{-1} \cap L'_1)} .
	\]
	Finally, let $l'_1\in L_{-1} \cap L'_1$ be arbitrary.
	By \cref{pr:5gr}\cref{5gr:iso}, we get $[q,[c,l'_1]] = -l'_1$.
	Since $[c,l'_1]\in L_{-2} = \langle x\rangle$, this implies that $l'_1$ is a multiple of $[q,x]=-d$.
	Hence $L_{-1} \cap L'_1 = \langle d\rangle$.
\end{proof}

Using this decomposition we obtain some more information on the Lie bracket.

\begin{corollary}
\label{recover:L1 other decomposition}
	We have a decomposition
	\begin{align*}
		L_1=\langle p\rangle \oplus [y,J]\oplus [y,J']\oplus \langle q\rangle.
	\end{align*}
	Moreover,
	\begin{align}
		&\langle p\rangle =L_1\cap L'_{-1},&& [y,J]=L_1\cap L'_{0}, 	&& [y,J']=L_1\cap L'_1;	\label{recover:L1 identity 1}	\\
		&[J,J]=0, 							 && [J,c]=[J,d]=0, 		&&[J,p]=0;			\label{recover:L1 identity 2}	\\
		&[J',J']=0, 						 && [J',c]=[J',d]=0, 	&&[J',q]=0;			\label{recover:L1 identity 3} 	\\
		&[J,[J,q]]\leq J',  				 && [J',[J',p]]\leq J.								\label{recover:L1 identity 4}
	\end{align}
\end{corollary}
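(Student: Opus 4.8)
The plan is to read off every assertion purely from bidegree bookkeeping, tracking for each element its degree with respect to both gradings $L=\bigoplus_i L_i$ and $L=\bigoplus_i L'_i$ simultaneously. The only inputs are the decomposition of \cref{recover:L1 decomposition}, the component memberships recorded in the lemma preceding it (in particular $x\in L'_{-1}$, $c\in L'_{-2}$, $d\in L'_1$, $p\in L'_{-1}$, $q\in L'_2$), and the fact that $\ad_y$ induces an isomorphism $L_{-1}\to L_1$.

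First I would obtain the decomposition of $L_1$. By \cref{pr:5gr}\cref{5gr:iso}, $\ad_y\colon L_{-1}\to L_1$ is a linear isomorphism, so applying it to $L_{-1}=\langle c\rangle\oplus J\oplus J'\oplus\langle d\rangle$ yields a direct sum $L_1=[y,\langle c\rangle]\oplus[y,J]\oplus[y,J']\oplus[y,\langle d\rangle]$. Since $[y,c]=p$ and $[y,d]=-q$ by \cref{recover:notation e12}, this is exactly $L_1=\langle p\rangle\oplus[y,J]\oplus[y,J']\oplus\langle q\rangle$. For the intersection identities \eqref{recover:L1 identity 1} I would then locate each summand in the $L'$-grading: we have $p\in L'_{-1}$ and $q\in L'_2$ from the memberships above, while $y\in L'_1$ together with $J\subseteq L'_{-1}$ and $J'\subseteq L'_0$ gives $[y,J]\subseteq L'_0$ and $[y,J']\subseteq L'_1$. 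Thus the four summands of $L_1$ lie in the four \emph{distinct} components $L'_{-1},L'_0,L'_1,L'_2$, so intersecting $L_1$ with each $L'_j$ picks out precisely the corresponding summand, yielding $\langle p\rangle=L_1\cap L'_{-1}$, $[y,J]=L_1\cap L'_0$ and $[y,J']=L_1\cap L'_1$.

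For the vanishing identities \eqref{recover:L1 identity 2} and \eqref{recover:L1 identity 3}, every bracket lands in an intersection $L_i\cap L'_j$ that is forced to be zero. For instance $[J,J]\subseteq L_{-2}\cap L'_{-2}=\langle x\rangle\cap\langle c\rangle=0$ (since $x$ and $c$ lie in different $L'$-components), $[J,c]\subseteq L_{-2}\cap L'_{-3}=0$, $[J,d]\subseteq L_{-2}\cap L'_0=\langle x\rangle\cap L'_0=0$ (as $x\in L'_{-1}$), and $[J,p]\subseteq L_0\cap L'_{-2}=L_0\cap\langle c\rangle=0$ (as $c\in L_{-1}$). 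The identities \eqref{recover:L1 identity 3} follow by the same degree collapse, replacing $J$ by $J'$ and $p$ by $q$. For \eqref{recover:L1 identity 4} I would track bidegrees through the iterated bracket: from $J\subseteq L_{-1}\cap L'_{-1}$ and $q\in L_1\cap L'_2$ we get $[J,q]\subseteq L_0\cap L'_1$, hence $[J,[J,q]]\subseteq L_{-1}\cap L'_0=J'$; dually, $[J',[J',p]]\subseteq L_{-1}\cap L'_{-1}=J$.

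The whole argument is bidegree bookkeeping, so there is no genuine obstacle. The only point requiring care is to keep the non-obvious component memberships at hand (especially $x\in L'_{-1}$, $p\in L'_{-1}$ and $q\in L'_2$, which place $x,p$ and $q$ in $L'$-components one step away from where one might naively expect them), since it is precisely these memberships that make the relevant intersections collapse to $0$, to $J$, or to $J'$.
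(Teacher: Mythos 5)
Your proposal is correct and follows essentially the same route as the paper: the decomposition of $L_1$ is obtained by pushing the decomposition of $L_{-1}$ from \cref{recover:L1 decomposition} through the isomorphism $\ad_y$ of \cref{pr:5gr}\cref{5gr:iso}, and all the remaining identities are read off by exactly the same bidegree bookkeeping (including the key observations $L_{-2}=\langle x\rangle\leq L'_{-1}$ and $L'_{-2}=\langle c\rangle\leq L_{-1}$ that collapse the intersections in \cref{recover:L1 identity 2,recover:L1 identity 3}). The only difference is that you spell out in more detail why \cref{recover:L1 identity 1} follows from the summands landing in distinct $L'$-components, a step the paper leaves implicit.
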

\begin{proof}
    By \cref{pr:5gr}\cref{5gr:iso}, the map $\ad_y \colon L_{-1} \to L_1$ is an isomorphism of vector spaces, so the decomposition is clear.
	The identities \cref{recover:L1 identity 1} now follow from this new decomposition.

    Next, by combining both gradings, we have
    \begin{alignat*}{2}
        & [J,J] \leq L_{-2} \cap L'_{-2}, \quad
        && [J,c] \leq L_{-2} \cap L'_{-3}, \\
        & [J,d] \leq L_{-2} \cap L'_{0}, \quad
        && [J,p] \leq L_0 \cap L'_{-2},
    \end{alignat*}
    but since $L_{-2} = \langle x \rangle \leq L'_{-1}$ and $L'_{-2} = \langle c \rangle \leq L_{-1}$, all four intersections are trivial, proving \cref{recover:L1 identity 2}. The proof of \cref{recover:L1 identity 3} is similar.

	Finally, $[J, [J, q]] \leq L_{-1-1+1} \cap L'_{-1-1+2} = L_{-1} \cap L'_{0} = J'$, and similarly, we have $[J', [J', p]] \leq L_{-1} \cap L'_{-1} = J$, showing \cref{recover:L1 identity 4}.
\end{proof}

\begin{lemma}\label{recover:alternative}
We have
	\begin{align}
		J &=\{ l\in L_{-1} \mid [c,l] = [d,l] = [p,l] = 0 \}, \label{recover:alternative for J} \\
	 	J' &=\{ l\in L_{-1} \mid [c,l] = [d,l] = [q,l] = 0 \}. \label{recover:alternative for J'}
	\end{align}
\end{lemma}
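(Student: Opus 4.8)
The plan is to prove both equalities by double inclusion, dispatching the ``$\subseteq$'' directions immediately and concentrating the work on the reverse inclusions. For $J\subseteq\{l\in L_{-1}\mid [c,l]=[d,l]=[p,l]=0\}$ and $J'\subseteq\{l\in L_{-1}\mid [c,l]=[d,l]=[q,l]=0\}$ I would simply quote the bracket relations \cref{recover:L1 identity 2,recover:L1 identity 3} from \cref{recover:L1 other decomposition}, which give $[J,c]=[J,d]=[J,p]=0$ and $[J',c]=[J',d]=[J',q]=0$ respectively; nothing further is needed.

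For the reverse inclusions I would use the decomposition $L_{-1}=\langle c\rangle\oplus J\oplus J'\oplus\langle d\rangle$ from \cref{recover:L1 decomposition}. Given $l\in L_{-1}$ lying in the relevant right-hand side, write $l=\lambda c+j+j'+\mu d$ with $\lambda,\mu\in k$, $j\in J$, $j'\in J'$. Using $[c,c]=0$, $[c,d]=x$, and the vanishing of $[c,J]$, $[c,J']$, $[d,J]$, $[d,J']$ from \cref{recover:L1 identity 2,recover:L1 identity 3}, one computes $[c,l]=\mu x$ and $[d,l]=-\lambda x$. Since $x\neq 0$, the conditions $[c,l]=[d,l]=0$ force $\lambda=\mu=0$, so in both cases $l=j+j'$, and it only remains to kill the extra component using the third defining relation.

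The two remaining arguments are not symmetric, and this asymmetry is where the only genuine subtlety lies. For $J'$ the relation reduces to $[q,l]=[q,j]$ (since $[q,j']=0$ by \cref{recover:L1 identity 3}) and can be handled directly: as $J=L_{-1}\cap L'_{-1}\subseteq L'_{-1}$ and $q\in L'_2$, \cref{pr:5gr}\cref{5gr:iso} applied to the grading associated with $(c,q)$ shows that $\ad_q$ restricts to an injective map on $L'_{-1}$, so $[q,j]=0$ forces $j=0$ and hence $l=j'\in J'$. For $J$, however, the relation reduces to $[p,l]=[p,j']$ (since $[p,j]=0$ by \cref{recover:L1 identity 2}), and this cannot be treated the same way, because $\ad_p$ sends $L'_0$ to $L'_{-1}$ and is not one of the canonical grading isomorphisms. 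The hard part is therefore to show that $\ad_p$ is nonetheless injective on $J'$. I would do this by rewriting $p=[y,c]$ and using the Jacobi identity together with $[c,j']=0$ to obtain $[p,j']=\pm[c,[y,j']]$; since $y\in L'_1$ and $j'\in J'\subseteq L'_0$ we have $[y,j']\in L'_1$, and $\ad_c$ is injective on $L'_1$ by \cref{pr:5gr}\cref{5gr:iso} for the $(c,q)$-grading. Hence $[p,j']=0$ gives $[y,j']=0$, and since $\ad_y$ is injective on $L_{-1}$ (again \cref{pr:5gr}\cref{5gr:iso}, now for the $(x,y)$-grading), we conclude $j'=0$ and $l=j\in J$, completing the proof.
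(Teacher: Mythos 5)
Your proof is correct, and its skeleton matches the paper's: the inclusions ``$\subseteq$'' via \cref{recover:L1 identity 2,recover:L1 identity 3}, the decomposition $l=\lambda c+j+j'+\mu d$ from \cref{recover:L1 decomposition}, and forcing $\lambda=\mu=0$ from $[c,l]=[d,l]=0$ are exactly the paper's first steps. Where you genuinely diverge is the crux, namely killing the leftover component. The paper treats both equalities symmetrically by combining the identity $[x,y]=[c,q]+[d,p]$ from \cref{recover:notation e12} with the eigenvalue property \cref{pr:5gr}\cref{5gr:grading der}: since $j'\in L_{-1}\cap L'_0$, one gets $-j'=[[x,y],j']=[[c,q],j']+[[d,p],j']=[d,[p,j']]=0$, using $[d,j']=0$ and the Jacobi identity (and the $J'$ case is then ``completely similar''). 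You instead rely on the grading isomorphisms \cref{pr:5gr}\cref{5gr:iso}: for $J'$ you use directly that $\ad_q$ is injective on $L'_{-1}$, and for $J$ you substitute $p=[y,c]$, apply Jacobi with $[c,j']=0$ to get $[p,j']=-[c,[y,j']]$, and then chain two injectivity facts ($\ad_c$ on $L'_1$, then $\ad_y$ on $L_{-1}$). Both routes are short and draw on the same double-grading data, so this is a matter of mechanism rather than of substance. The paper's version has the aesthetic advantage of symmetry between the two cases; the asymmetry you point out is an artifact of your method, not of the statement. In exchange, your argument for $J'$ is about as economical as possible (a single injectivity application), and your treatment of $J$ avoids the identity $[x,y]=[c,q]+[d,p]$ altogether, making explicit that the injectivity of the maps in \cref{pr:5gr}\cref{5gr:iso} alone suffices.
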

\begin{proof}
	By \cref{recover:L1 identity 2}, it is clear that $J$ is contained in the set on the right hand side of~\cref{recover:alternative for J}.

	Conversely, let $l\in L_{-1}$ such that $[c,l] = [d,l] = [p,l] = 0$ and use \cref{recover:L1 decomposition} to write $l = \lambda c + j + j' + \mu d$, with $\lambda,\mu \in k$, $j \in J$ and $j' \in J'$.
	Since $[c,d]\neq 0$, it already follows from $[c,l] = 0$ that $\mu = 0$ and from $[d,l] = 0$ that $\lambda = 0$, so we already have $l = j + j'$ and it remains to show that $j' = 0$.
	
	Since $[p,l] = 0$ and $[p,j] = 0$, we have $[p, j'] = 0$.
	Recall from \cref{recover:notation e12} that $[x,y] = [c,q] + [d,p]$ and $[d, j'] = 0$.
	Since $j'\in L_{-1} \cap L'_0$, we can use \cref{pr:5gr}\cref{5gr:grading der} twice to get
	\[ -j' = [[x,y],j'] = [[c,q],j'] + [[d,p],j'] = [d,[p,j']] = 0, \]
	as claimed.
	The proof of \cref{recover:alternative for J'} is completely similar.
\end{proof}

The subspace $L_0$ also has a decomposition, but this time it is a decomposition into $3$ parts.

\begin{proposition}
	We have a decomposition
	\begin{equation}\label{recover:decomp L0}
		L_0=(L_0\cap L'_{-1})\oplus (L_0\cap L'_0)\oplus (L_0\cap L'_1).
	\end{equation}
	Moreover,
	\begin{equation}\label{recover:decomp L0 eq}
		L_0\cap L'_1 = [q,J] \quad \text{and} \quad
		L_0\cap L'_{-1}	= [p,J'] . 
	\end{equation}
\end{proposition}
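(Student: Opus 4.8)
The plan is to first determine where $L_0$ sits inside the second grading \eqref{recover:other grading}, and then to mimic the torus argument from \cref{recover:L1 decomposition}. Since $c\in E\cap L_{-1}$ and $q\in E\cap L_1$, \cref{recover:extr in 1 prop} applied to $q$ (and to $c$ via \cref{rem:swap}) gives $g_c(L_0)=g_q(L_0)=0$. As $L'_{\leq 1}=c^\perp$ and $L'_{\geq -1}=q^\perp$ by \cref{pr:5gr}\cref{5gr:indep} (and its swap), both outer components of the second grading are killed at once, and we obtain $L_0\subseteq L'_{-1}\oplus L'_0\oplus L'_1$.

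To promote this containment to the direct sum \eqref{recover:decomp L0}, I would show that each $L'$-homogeneous component of an $l\in L_0$ again lies in $L_0$. Writing $l=l'_{-1}+l'_0+l'_1$ and using that $[c,q]\in L_0\cap L'_0$ acts on the second grading with eigenvalue $i$ on $L'_i$ (\cref{pr:5gr}\cref{5gr:grading der}), the element $[[c,q],l]=-l'_{-1}+l'_1$ lies in $L_0$. Applying the grading torus $\varphi_\lambda$ of \cref{recover:autom of grading} for the second grading — which preserves $L_0$ by \cref{recover quadr:prop aut fixing x and y}, since $\varphi_\lambda$ fixes $\langle x\rangle$ and $\langle y\rangle$ as $x\in L'_{-1}$ and $y\in L'_1$ — gives $(\lambda^{-1}-1)l'_{-1}+(\lambda-1)l'_1\in L_0$. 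Combining the two relations yields $(\lambda-1)^2\lambda^{-1}\,l'_{-1}\in L_0$; choosing $\lambda\in k^\times$ with $\lambda\neq 1$ (possible since $|k|\geq 4$) forces $l'_{-1}\in L_0$, then $l'_1\in L_0$ and finally $l'_0\in L_0$. This is exactly the mechanism of \cref{recover:L1 decomposition}, the only difference being that here both ends of the second grading are removed beforehand rather than absorbed.

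For \eqref{recover:decomp L0 eq}, the inclusions $[q,J]\subseteq L_0\cap L'_1$ and $[p,J']\subseteq L_0\cap L'_{-1}$ are immediate by computing bidegrees. The reverse inclusion $L_0\cap L'_1\subseteq[q,J]$ is easy: since $\ad_c\colon L'_1\to L'_{-1}$ is an isomorphism with inverse $-\ad_q$ (\cref{pr:5gr}\cref{5gr:iso} for the second grading), any $l\in L_0\cap L'_1$ satisfies $l=[q,[l,c]]$, and $j:=[l,c]$ lies in $L_{-1}\cap L'_{-1}=J$ by its bidegree.

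The step I expect to be the main obstacle is the reverse inclusion $L_0\cap L'_{-1}\subseteq[p,J']$, because $p$ lies in $L'_{-1}$ rather than in an end of the second grading, so no inversion isomorphism is directly available. Given $l\in L_0\cap L'_{-1}$, I would set $j':=[l,d]$ and check by bidegree that $j'\in L_{-1}\cap L'_0=J'$. The crucial point is the Jacobi computation $[p,[d,l]]=[[p,d],l]$, which uses $[p,l]\in L_1\cap L'_{-2}=0$; then, from $[x,y]=[c,q]+[d,p]$ together with the facts that $\ad_{[x,y]}$ annihilates $L_0$ and that $[c,q]$ acts as $-1$ on $L'_{-1}$ (both by \cref{pr:5gr}\cref{5gr:grading der}), one gets $[[d,p],l]=l$ and hence $l=[p,j']$. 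This gives $L_0\cap L'_{-1}=[p,J']$ and, together with the decomposition above, completes the proof.
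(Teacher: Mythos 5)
Your proposal is correct and follows essentially the same route as the paper's proof: the confinement $L_0\leq L'_{-1}\oplus L'_0\oplus L'_1$ via $g_c(L_0)=g_q(L_0)=0$, the splitting via $\ad_{[c,q]}$ together with the torus $\varphi_\lambda$ exactly as in \cref{recover:L1 decomposition}, and the identification $L_0\cap L'_1=[q,J]$ via the inversion isomorphism of \cref{pr:5gr}\cref{5gr:iso}. The only local difference is the final inclusion $L_0\cap L'_{-1}\subseteq [p,J']$: the paper disposes of it by invoking \cref{pr:5gr}\cref{5gr:iso} once more (implicitly for the pair $(d,p)$), whereas you justify it by the explicit Jacobi computation using $[p,l]\in L_1\cap L'_{-2}=0$ and $[x,y]=[c,q]+[d,p]$ --- a valid and, if anything, more self-contained way to fill in that step.
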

\begin{proof}
	By \cref{recover:extr in 1 prop} applied to $c\in L_{-1}$ we get $g_{c}(L_0)=0$.
	By \cref{pr:5gr}\cref{5gr:indep}, this implies $L_{0}\leq L'_{-2}\oplus L'_{-1}\oplus L'_0\oplus L'_1$.
	Similarly, $g_q(L_0) = 0$ implies $L_0\leq L'_{-1}\oplus L'_0\oplus L'_1\oplus L'_2$.
	Hence $L_0\leq L'_{-1}\oplus L'_0\oplus L'_1$.
	
	We can now apply the same technique as in the proof of \cref{recover:L1 decomposition}. So let $l \in L_0$ be arbitrary and write $l = l'_{-1} + l'_0 + l'_1$ with each $l'_i \in L'_i$.
	Again, $[[c,q],l] = - l'_{-1} + l'_1 \in L_0$, and using the automorphisms $\varphi_\lambda$, we get $\lambda_{-1} l'_{-1} + l'_0 + \lambda l'_1 \in L_0$ for all $\lambda \in k^\times$, yielding $l'_i \in L_0$ for all three values of $i$, exactly as in the proof of \cref{recover:L1 decomposition}. This proves \cref{recover:decomp L0}.

    To show \cref{recover:decomp L0 eq}, notice that the inclusions $[q, J] \leq L_0 \cap L'_1$ and $[p,J'] \leq L_0 \cap L'_{-1}$ are clear from the gradings.
	Conversely, let $l_0\in L_0\cap L'_1$ be arbitrary and let $l := - [c, l_0]$. Then $l \in L_{-1} \cap L'_{-1} = J$.
	By \cref{pr:5gr}\cref{5gr:iso}, we have $l_0 = [q,l]$, so indeed $l_0 \in [q, J]$.
	Similarly, let $l_0 \in L_0 \cap L'_{-1}$ be arbitrary and let $l := -[d, l_0]$. Then $l \in L_{-1} \cap L'_0 = J'$.
	By \cref{pr:5gr}\cref{5gr:iso} again, we have $l_0 = [p,l]$, so indeed $l_0 \in [p, J']$.
\end{proof}

In the next lemma we deduce some connections between the two gradings \cref{recover:other grading,recover:other grading 2}.

\begin{lemma}
	We have
	\begin{align}
		L_{-1}\cap L''_{-1}&=J', 			& L_{-1}\cap L''_0&=J, 			&L_{-1}\cap L''_{1}&=\langle c\rangle,
																\label{recover:third gr eq 1}\\
		L_0\cap L''_{-1}&=[q,J], 		&L_0\cap L''_0&=L_0\cap L'_0, 	&L_0\cap L''_{1}&=[p,J'].
																\label{recover:third gr eq 2}
	\end{align}
\end{lemma}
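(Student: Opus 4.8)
The plan is to read off the $L''$-grading of each summand of the already-established decompositions of $L_{-1}$ and $L_0$ by diagonalizing $\ad_{[d,p]}$. First I would record the underlying semisimplicity fact: for each of the three hyperbolic pairs, \cref{pr:5gr}\cref{5gr:grading der} places the $i$-th graded component inside the $i$-eigenspace of the corresponding $\ad_{[\cdot,\cdot]}$; since the components are independent and sum to all of $L$, each component is in fact \emph{equal} to the full eigenspace. Thus $\ad_{[x,y]}$, $\ad_{[c,q]}$ and $\ad_{[d,p]}$ are semisimple, acting as the scalar $i$ on $L_i$, $L'_i$ and $L''_i$ respectively. (Here $g(c,q)=g(d,p)=1$ by \cref{recover:e1 e2 hyperbolic} and the symmetry of $g$, so these pairs are correctly normalized and the eigenvalues are the integers claimed.)

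The crucial input is the identity $[x,y]=[c,q]+[d,p]$ from \cref{recover:notation e12}, i.e.\ $\ad_{[d,p]}=\ad_{[x,y]}-\ad_{[c,q]}$. On $L_i\cap L'_j$ the first operator acts as $i$ and the second as $j$, so $\ad_{[d,p]}$ acts as the scalar $i-j$. Applying this to the four summands of the decomposition $L_{-1}=\langle c\rangle\oplus J\oplus J'\oplus\langle d\rangle$ from \cref{recover:L1 decomposition} (which lie in $L'_{-2}$, $L'_{-1}$, $L'_0$, $L'_1$, respectively) yields the eigenvalues $1,0,-1,-2$; applying it to the three summands of the decomposition $L_0=[p,J']\oplus(L_0\cap L'_0)\oplus[q,J]$ recorded in \cref{recover:decomp L0,recover:decomp L0 eq} (lying in $L'_{-1}$, $L'_0$, $L'_1$) yields the eigenvalues $1,0,-1$.

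Since $[d,p]\in L_0$, both $L_{-1}$ and $L_0$ are $\ad_{[d,p]}$-invariant, so the two displayed direct sums are precisely the eigenspace decompositions of $\ad_{[d,p]}$ restricted to $L_{-1}$ and to $L_0$. Because $L''_m$ is exactly the $m$-eigenspace of $\ad_{[d,p]}$ on all of $L$, intersecting with $L_{-1}$ (resp.\ $L_0$) and matching eigenvalues collapses each intersection onto the single summand with the matching eigenvalue, giving the asserted equalities; the distinctness of the eigenvalues $1,0,-1,-2$ (resp.\ $1,0,-1$) is what forces each intersection to be one summand. The memberships $c\in L''_1$, $d\in L''_{-2}$, $p\in L''_2$, $q\in L''_{-1}$ from the preceding lemma provide a consistency check. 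I do not anticipate a genuine obstacle: the only point requiring care is the upgrade from ``$L''_m\subseteq$ eigenspace'' to ``$L''_m=$ eigenspace'', which is the elementary linear-algebra observation established in the first step.
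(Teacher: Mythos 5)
Your argument has a genuine gap: it relies on eigenvalue separation inside $k$, which fails precisely in the characteristics this paper is at pains to include. This section makes no characteristic assumption (\cref{ass:lines} only requires $|k|\geq 4$, so $\mathbb{F}_4$ and $\mathbb{F}_9$ are allowed, and the paper stresses throughout that $\Char(k)=2,3$ are permitted). Your opening ``semisimplicity fact'' is already false there: the eigenvalues of $\ad_{[x,y]}$ are elements of $k$, so in characteristic $2$ the $0$-eigenspace contains $L_{-2}\oplus L_0\oplus L_2$ and the $1$-eigenspace contains $L_{-1}\oplus L_1$; each graded piece is \emph{not} equal to an eigenspace, and one can only say that the $\lambda$-eigenspace is the sum of the $L_i$ with $i\equiv\lambda \pmod{\Char(k)}$. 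Consequently your key step collapses: on the four summands $\langle c\rangle, J, J', \langle d\rangle$ of $L_{-1}$ the scalars $1,0,-1,-2$ become $1,0,1,0$ in characteristic $2$ (so eigenvalues cannot separate $J'$ from $\langle c\rangle$, nor $J$ from $\langle d\rangle$) and become $1,0,2,1$ in characteristic $3$ (so $\langle c\rangle$ and $\langle d\rangle$ are indistinguishable); likewise $1,0,-1$ on the three summands of $L_0$ fails in characteristic $2$. Your proof is fine when $\Char(k)\neq 2,3$, but that is not the statement being proved.

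The paper avoids eigenvalues altogether by a symmetry argument: replacing $(c,d,p,q)$ by $(d,-c,-q,p)$ leaves the configuration of \cref{recover:notation e12} intact but interchanges the gradings \cref{recover:other grading,recover:other grading 2}. One then reads off $L_{-1}\cap L''_{-1}=J'$ and $L_{-1}\cap L''_0=J$ from \cref{recover:alternative}, and gets the remaining outer pieces by applying \cref{recover:L1 decomposition} and \cref{recover:decomp L0 eq} to the relabelled setup. Only the middle identity $L_0\cap L''_0=L_0\cap L'_0$ uses the bracket relation $[x,y]=[c,q]+[d,p]$, and even there the conclusion is drawn from the already-established $L'$-decomposition \cref{recover:decomp L0} of $L_0$ (an element of $L_0$ killed by $\ad_{[c,q]}$ has vanishing $L'_{\pm1}$-components), not from bare distinctness of eigenvalues. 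If you want to keep your approach, you would need to supplement it in characteristics $2$ and $3$ by exactly this kind of decomposition argument, at which point the relabelling proof is both shorter and characteristic-free.
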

\begin{proof}
    Notice that in the setup in \cref{recover:notation e12}, we can replace $c$ with $d$, $d$ with $-c$, $p$~with $-q$ and $q$ with $p$.
    In the resulting decompositions introduced in \cref{recover:notation J}, this interchanges the two gradings \cref{recover:other grading,recover:other grading 2}.
    If we subsequently set $K := L_{-1} \cap L''_{-1}$ and $K' := L_{-1} \cap L''_0$, then it follows from \cref{recover:alternative} that $K = J'$ and $K' = J$.
    By \cref{recover:L1 decomposition} applied on this new setup, we get
	\[ L_{-1} = \langle d\rangle \oplus K \oplus K' \oplus \langle c\rangle \]
	with $\langle c\rangle=L_{-1}\cap L''_{1}$.
	This shows \cref{recover:third gr eq 1}.

	By \cref{recover:decomp L0 eq} applied on our new setup, we get $L_0\cap L''_1 = [p, J']$ and $L_0\cap L''_{-1}=[q,J]$.
	Finally, consider $l\in L_0\cap L''_0$.
	By \cref{pr:5gr}\cref{5gr:grading der}, we have $[[x,y], l] = 0$ and $[[d,p], l] = 0$.
	Recall from \cref{recover:notation e12} that $[x,y] = [c,q] + [d,p]$, so we get
	\[ 0 = [[x,y],l] = [[c,q],l]+[[d,p],l]=[[c,q],l].\]
	Hence $l$ is contained in the $0$-eigenspace of $\ad_{[c,q]}$, which is $L'_0$ by \cref{pr:5gr}\cref{5gr:grading der} again.
	Hence $L_0\cap L''_0\leq L_0\cap L'_0$, and $L_0\cap L'_0\leq L_0\cap L''_0$ follows similarly.
	This shows \cref{recover:third gr eq 2}.
\end{proof}

We have summarized all the information about the intersections of the different gradings in \cref{fig:cns}.
\begin{figure}[ht!]
\[
\scalebox{.85}{%
	\begin{tikzpicture}[x=28mm, y=28mm, 	label distance=-3pt]
		\node[myblob=8mm] at (0,3) (N03) {$\langle x\rangle$};
		\node[myblob=8mm] at (1,1) (N11) {$\langle d \rangle$};
		\node[myblob=16mm] at (1,2) (N12) {$J'$};
		\node[myblob=16mm] at (1,3) (N13) {$J$};
		\node[myblob=8mm] at (1,4) (N14) {$\langle c \rangle$};
		\node[myblob=16mm] at (2,1) (N21) {$[q, J]$};
		\node[myblob=24mm] at (2,2) (N22) {$L_0\cap L'_0$};
		\node[myblob=16mm] at (2,3) (N23) {$[p, J']$};
		\node[myblob=8mm] at (3,0) (N30) {$\langle q \rangle$};
		\node[myblob=16mm] at (3,1) (N31) {$[y,J']$};
		\node[myblob=16mm] at (3,2) (N32) {$[y,J]$};
		\node[myblob=8mm] at (3,3) (N33) {$\langle p \rangle$};
		\node[myblob=8mm] at (4,1) (N41) {$\langle y\rangle$};
		\path[ugentred]
			(0,4.5) node (C0) {\Large $L_{-2}$}
			(1,4.5) node (C1) {\Large $L_{-1}$}
			(2,4.5) node (C2) {\Large $L_{0}$}
			(3,4.5) node (C3) {\Large $L_{1}$}
			(4,4.5) node (C4) {\Large $L_{2}$};
		\path[ugentblue]
			(4.5,0) node (R0) {\Large $L'_{2}$}
			(4.5,1) node (R1) {\Large $L'_{1}$}
			(4.5,2) node (R2) {\Large $L'_{0}$}
			(4.5,3) node (R3) {\Large $L'_{-1}$}
			(4.5,4) node (R4) {\Large $L'_{-2}$};
		\path[ugentyellow]
			(2.8,3.2) node (H0) {$L''_{2}$}
			(1.5,3.65) node (H1) {$L''_{1}$}
			(1.5,2.65) node (H2) {$L''_{0}$}
			(.53,2.65) node (H3) {$L''_{-1}$}
			(.8,1.2) node (H4) {$L''_{-2}$};
		\draw[myedge,ugentblue]
			(N11) -- (N21) -- (N31) -- (N41)
			(N12) -- (N22) -- (N32)
			(N03) -- (N13) -- (N23) -- (N33);
		\draw[myedge,ugentblue,opacity=.15]
			(-.25,0) -- (N30) -- (R0)
			(-.25,1) -- (N11) (N41) -- (R1)
			(-.25,2) -- (N12) (N32) -- (R2)
			(-.25,3) -- (N03) (N33) -- (R3)
			(-.25,4) -- (N14) -- (R4);
		\draw[myedge,ugentred]
			(N11) -- (N12) -- (N13) -- (N14)
			(N21) -- (N22) -- (N23)
			(N30) -- (N31) -- (N32) -- (N33);
		\draw[myedge,ugentred,opacity=.15]
			(0,-.25) -- (N03) -- (C0)
			(1,-.25) -- (N11) (N14) -- (C1)
			(2,-.25) -- (N21) (N23) -- (C2)
			(3,-.25) -- (N30) (N33) -- (C3)
			(4,-.25) -- (N41) -- (C4);
		\draw[myedge,ugentyellow]
			(N03) --  (N12) -- (N21) -- (N30)	
			(N13) -- (N22) -- (N31) 
			(N14) --  (N23) -- (N32) -- (N41);
	\end{tikzpicture}
}
\]
\caption{Intersecting gradings for cubic norm structures}\label{fig:cns}
\end{figure}
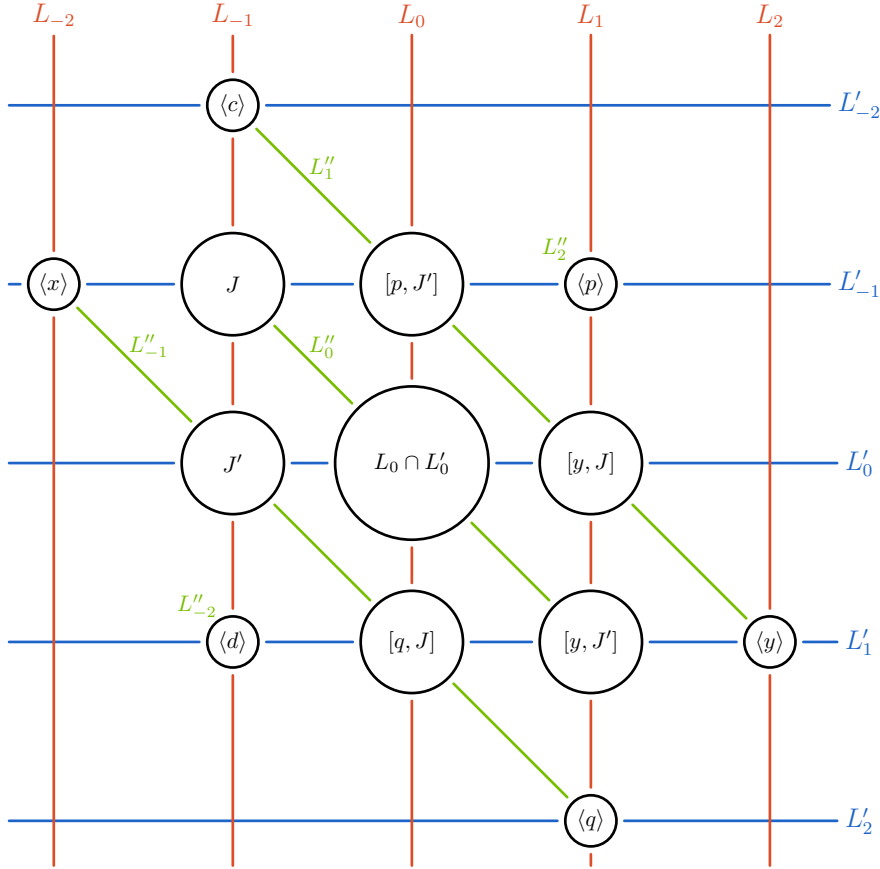

Now that we have a complete understanding of how the different gradings intersect, we are ready to invoke our results from \cref{se:l-exp}.
The next result on the uniqueness of certain extremal elements will play a key role in defining the maps involved in the cubic norm structure.

\begin{proposition}\label{recover:determine J - unique}
    \begin{enumerate}
        \item \label{J1}
        	For every $a\in J$, there is a unique extremal element of the form
        	\[ a_{-2}+a_{-1}+[a,q]+q,\]
        	with $a_{-2}\in L_{-2}$ and $a_{-1}\in J'$.
        	This element is equal to $\alpha(q)$ for a unique $a$\dash exponential automorphism $\alpha \in E_-(x,y)$.
        \item \label{J2}
        	For every $b\in J'$, there is a unique extremal element of the form
        	\[ b_{-2}+b_{-1}+[b,p]+p,\]
        	with $b_{-2}\in L_{-2}$ and $b_{-1}\in J'$.
        	This element is equal to $\beta(q)$ for a unique $b$\dash exponential automorphism $\beta \in E_-(x,y)$.
    \end{enumerate}
\end{proposition}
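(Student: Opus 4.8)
The two statements are mirror images under the symmetry $c\leftrightarrow d$, $p\leftrightarrow q$, $J\leftrightarrow J'$ exploited in the proof of \cref{recover:third gr eq 1}, so the plan is to prove \cref{J1} and deduce \cref{J2} by this symmetry. Throughout I work with the ``lowered'' grading and the group $E_-(x,y)$, i.e. the versions of \cref{th:alg,recover:Galois descent} obtained through \cref{rem:swap}; for $a\in L_{-1}$ an $a$-exponential automorphism satisfies $\alpha(m)=m+[a,m]+q_\alpha(m)+n_\alpha(m)+v_\alpha(m)$ with $q_\alpha,n_\alpha,v_\alpha$ lowering the grading by $2,3,4$. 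For $a\in J$ and the top element $q\in L_1$ we have $v_\alpha(q)\in L_{-3}=0$, so $\alpha(q)=q+[a,q]+q_\alpha(q)+n_\alpha(q)$ with $q_\alpha(q)\in L_{-1}$ and $n_\alpha(q)\in L_{-2}$. Since $[a,q]\in L_0$ is fixed and $n_\alpha(q)$ automatically lands in $L_{-2}$, producing an extremal element of the stated shape is the same as producing an $a$-exponential $\alpha$ whose $L_{-1}$-component $q_\alpha(q)$ lies in $J'$.

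The key structural fact, valid in every characteristic, is that for \emph{any} $a$-exponential $\alpha$ one has $q_\alpha(q)\in J'\oplus\langle d\rangle$. I would prove this by conjugating with the torus element $\varphi_\lambda$ attached to the $L'$-grading (as in the proof of \cref{recover:L1 decomposition}), which preserves both gradings by \cref{recover quadr:prop aut fixing x and y}. As $a\in L'_{-1}$ and $q\in L'_2$, the conjugate $\varphi_\lambda\alpha\varphi_\lambda^{-1}$ is a $(\lambda^{-1}a)$-exponential automorphism; comparing it, via \cref{th:alg}\cref{th:alg:unique} and \cref{rem:beta}, with the rescaling $\alpha_{\lambda^{-1}}$ of \cref{th:alg}\cref{th:alg:scalar} gives $\lambda^{-2}\varphi_\lambda(q_\alpha(q))=\lambda^{-2}q_\alpha(q)+\nu d$ for some $\nu\in k$. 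Decomposing $q_\alpha(q)$ along $\langle c\rangle\oplus J\oplus J'\oplus\langle d\rangle$ (the $L'$-degrees $-2,-1,0,1$) and reading off the $\langle c\rangle$- and $J$-components forces the factors $\lambda^{-4}-\lambda^{-2}$ and $\lambda^{-3}-\lambda^{-2}$ to annihilate them; choosing $\lambda$ with $\lambda^2\neq1$ and $\lambda\neq1$, possible since $|k|\geq4$, kills both. Existence is then immediate: take any $a$-exponential $\alpha_0$ (it exists by \cref{recover:Galois descent}, or by \cref{th:alg}\cref{th:alg:l-exp} after writing $a$ as a sum of extremal elements of $L_{-1}$ via \cref{recover:sum of extr}), write $q_{\alpha_0}(q)=j'+\delta d$, and put $\alpha:=\exp(-\delta x)\alpha_0$; since $[x,q]=d$, \cref{rem:beta} gives $q_\alpha(q)=j'\in J'$, so $\alpha(q)$ is extremal of the desired form. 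Uniqueness of the \emph{automorphism} given the element is the same coset computation: two such $a$-exponentials differ by $\exp(\mu x)$, which changes $q_\alpha(q)$ by $\mu d$, and $d\notin J'$ forces $\mu=0$.

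It remains to prove uniqueness of the extremal \emph{element}. Given an arbitrary extremal $e=a_{-2}+a_{-1}+[a,q]+q$ of the stated form, I would set $w:=\alpha_0^{-1}(e)$, where $\alpha_0$ is the automorphism just constructed; since $\alpha_0^{-1}$ is $(-a)$-exponential, a short bookkeeping of graded components shows that $w$ is extremal with $L_1$-component $q$, \emph{vanishing} $L_0$-component, $L_{-1}$-component in $J'\oplus\langle d\rangle$ (by the structural fact together with $[a,[a,q]]\in J'$), and $L_{-2}$-component a multiple of $x$; write $w=q+j'+\delta d+\nu x$. If $\Char(k)\neq3$ this is exactly the setting of \cref{recover:char 2 uniqueness}, which yields $j'=\nu=0$ and $w=\exp(\delta x)(q)$. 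Hence $e=(\alpha_0\exp(\delta x))(q)$, and since the $L_{-1}$-component $q_{\alpha_0}(q)+\delta d$ of $e$ must lie in $J'$ we get $\delta=0$ and $e=\alpha_0(q)$, proving uniqueness of both the element and the automorphism.

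The delicate point, which I expect to be the main obstacle, is $\Char(k)=3$, where \cref{recover:char 2 uniqueness} is unavailable; here I would extract the two vanishings directly from extremality of $w$. Testing $[w,[w,c]]=2g_w(c)w$, using $g(q,c)=1$ and that $\ad_{[c,q]}$ is the grading derivation of the $L'$-grading, gives $g_w(c)=1$ and $2j'=0$, hence $j'=0$. Testing $[w,[w,p]]=2g_w(p)w$ and comparing $L_0$-components gives $\nu\bigl([x,y]-[q,c]\bigr)=0$ with $[x,y]-[q,c]=2[c,q]+[d,p]$. The crux is that $[c,q]\neq[d,p]$: these are the grading derivations of the two distinct gradings \cref{recover:other grading,recover:other grading 2}, acting on $J$ by $-1$ resp.\ $0$ and on $J'$ by $0$ resp.\ $-1$ (using \cref{recover:third gr eq 1}), so they can coincide in characteristic $3$ only when $J=J'=0$. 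In that degenerate case $a\in J$ forces $a=0$ and the statement collapses to ``$q+\nu x$ extremal $\Rightarrow\nu=0$'', which holds because $(q,x)$ is a special pair; in all other cases $2[c,q]+[d,p]\neq0$ gives $\nu=0$, and the argument concludes as before.
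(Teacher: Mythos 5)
Your skeleton is the same as the paper's: construct one $a$\dash exponential automorphism whose value at $q$ has $L_{-1}$-component in $\langle d\rangle\oplus J'$, correct it by a factor in $\Exp(x)$ using $[x,q]=d$, and prove uniqueness by transporting an arbitrary extremal element of the prescribed shape through the inverse automorphism and applying a normal-form lemma. Two of your choices differ from the paper's. First, for the containment $q_\alpha(q)\in J'\oplus\langle d\rangle$ your torus-conjugation argument (conjugate by $\varphi_\lambda$ for the $L'$-grading, compare with $\alpha_{\lambda^{-1}}$ via \cref{th:alg}\cref{th:alg:unique}, \cref{th:alg}\cref{th:alg:scalar} and \cref{rem:beta}, then choose $\lambda\notin\{0,\pm1\}$) is correct, but the paper gets it in one line: $\alpha(d)=d+[a,d]=d$ since $[J,d]=0$, so by \cref{pr:5gr}\cref{5gr:indep} $\alpha$ preserves the filtration $L''_{\leq i}$, whence $q_\alpha(q)\in L_{-1}\cap L''_{\leq-1}=\langle d\rangle\oplus J'$. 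Second, and crucially, for uniqueness the paper arranges the reduced element as $f=\mu x+c_{-1}+q$ with $c_{-1}\in J'\leq L'_0$ and reads it in the $(c,q)$-grading, where it has exactly the shape $l'_{-1}+l'_0+(\text{end})$ covered by \cref{recover:uniqueness}; since that lemma is characteristic-free, no case distinction ever arises. You instead read your reduced element $w=\nu x+j'+\delta d+q$ in the $(x,y)$-grading, where the top term $q$ lies in $L_1$ rather than in the end $L_2$, so the only applicable lemma is \cref{recover:char 2 uniqueness}, which excludes characteristic $3$ --- and this is what forces your hand-made characteristic-$3$ analysis. That analysis is correct as far as it goes: $[w,[w,c]]$ does give $j'=0$, and $[w,[w,p]]$ gives $\nu\,(2[c,q]+[d,p])=0$, which kills $\nu$ whenever $J\oplus J'\neq 0$, since $2[c,q]+[d,p]$ acts on $J$ by $-2=1$ and on $J'$ by $-1$.

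The genuine gap is the remaining case $\Char(k)=3$, $J=J'=0$. There you assert that $q+\nu x\in E$ forces $\nu=0$ ``because $(q,x)$ is a special pair''. Nothing available supports this: \cref{pr:2pts} controls linear combinations only for collinear and symplectic pairs and is silent about sums over special pairs, and the paper's two results in this direction, \cref{recover:char 2 uniqueness,recover:uniqueness}, respectively exclude characteristic $3$ and require the top component to lie in the end $L_2$ --- precisely because this inference is delicate in characteristic $3$. In fact the assertion fails there. Take $L=\mathfrak{psl}_3(k)$ with $\Char(k)=3$ and $|k|\geq 4$: it is simple, generated by its pure extremal elements (the classes of rank-one square-zero matrices), its extremal geometry contains lines (e.g.\ $\langle e_{21},e_{31}\rangle$, with $e_{ij}$ the matrix units), and taking $x=e_{31}$, $y=-e_{13}$, $c=e_{21}$, $d=-e_{32}$, hence $p=e_{23}$ and $q=-e_{12}$, one lands exactly in your degenerate case, since $L_{-1}=\langle c\rangle\oplus\langle d\rangle$. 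But for every $\nu\in k$ the class of $A=e_{12}+\nu e_{31}$ is extremal: on a basis one checks $[A,[A,B]]=2g_A(B)A$ modulo the centre $\langle I\rangle$, the decisive computations being $[A,[A,e_{21}]]=-2e_{12}+\nu e_{31}$ and $[A,[A,e_{23}]]=\nu(e_{22}+e_{33}-2e_{11})\equiv\nu I$, which land in $\langle A\rangle+\langle I\rangle$ exactly because $\Char(k)=3$. Hence $q+\nu x$ is extremal for every $\nu$, and the uniqueness you want cannot be extracted this way. (The same example is in tension with the characteristic-$3$ step in the proof of \cref{recover:uniqueness}, which makes the same inference from \cref{pr:2pts}; but within the paper's framework that lemma is available as a black box and the proposition's proof reduces cleanly to it, whereas your write-up makes the unproved claim the load-bearing step.)
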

\begin{proof}
    We show \cref{J1}; the proof of \cref{J2} is completely similar.
	By \cref{th:alg}\cref{th:alg:l-exp}, there exists an $a$-exponential automorphism $\alpha$, so we have a corresponding extremal element
	\[ \alpha(q)=a_{-2}+a_{-1}+[a,q]+q , \]
	with $a_{-1}\in L_{-1}$ and $a_{-2}\in L_{-2}$.
	Notice that $\alpha(d) = d + [a,d] = d$ because $[J,d] = 0$ by \cref{recover:L1 identity 2}.
	By \cref{pr:5gr}\cref{5gr:indep}, this implies that $\alpha$ preserves the filtration $L''_{\leq i}$, so in particular, $\alpha(q) \in L''_{\leq -1}$.	
	Hence $a_{-1}\in L_{-1} \cap L''_{\leq -1} = \langle d\rangle\oplus J'$ (see \cref{fig:cns}).
	
	Now, since $[x,q]=[x,-[y,d]]=-[[x,y],d]=d$, there is a (unique) $\lambda\in k$ such that $\exp(\lambda x)(\alpha(q))$ has its $L_{-1}$-component in $J'$.
	By replacing $\alpha$ with $\exp(\lambda x)\alpha$, we find the required $a$-exponential automorphism and the required extremal element.

    We now show uniqueness, so suppose that $e = b_{-2} + b_{-1} + [a,q] + q$ is an extremal element such that $b_{-2}\in L_{-2}$ and $b_{-1}\in J'$.
    Let $\beta := \alpha_{-1}$ as defined in \cref{th:alg}\cref{th:alg:scalar}, so $\beta$ is a $(-a)$-exponential automorphism.
    In particular, $f := \beta(e)$ has trivial $L_0$-component.
	Moreover, the $L_{-1}$-component of $f$ is equal to $b_{-1} + a_{-1} - [a,[a,q]]$.
	By assumption, $a_{-1}, b_{-1} \in J'$ and by \cref{recover:L1 identity 4}, also $[a,[a,q]]\in J'$, so the $L_{-1}$-component of $f$ is contained in $J'\leq L'_0$. Hence
	\[ f = \mu x + c_{-1} + q, \quad \text{with } \mu \in k \text{ and } c_{-1} \in J' , \]
	is an extremal element.
	By \cref{recover:uniqueness} applied to the grading \cref{recover:other grading}, we get $\mu = 0$ and $f = \exp(\lambda c)(q)$ for certain $\lambda\in k$.
	Considering the $L_0$-components of these extremal elements yields $0=\lambda [c,q]$ and thus $\lambda=0$ and $q = f = \beta(e)$.
	
	Next, since both $\beta^{-1}$ and $\alpha$ are $a$-exponential automorphisms, \cref{th:alg}\cref{th:alg:unique} implies $\beta^{-1} = \exp(\nu x)\alpha$ for some $\nu \in k$, so in particular, $e = \exp(\nu x)\alpha(q)$.
	However, $[x,q] = d \not\in J'$, so because the $L_{-1}$-components of both $e$ and $\alpha(q)$ are contained in $J'$, this can only happen if $\lambda=0$, and therefore $e = \alpha(q)$, proving the required uniqueness of the extremal element.

    Notice that the uniqueness of the corresponding automorphism $\alpha \in E_-(x,y)$ is now clear, because any other $\alpha' \in E_-(x,y)$ with $\alpha'(q) = \alpha(q)$ would again be $a$\dash exponential (for the same $a$) and therefore differ from $\alpha$ by an element of the form $\exp(\lambda x)$, forcing $\lambda = 0$ as before.
\end{proof}

%
%

As promised, we can now use \cref{recover:determine J - unique} to define some maps.

\begin{definition}\label{recover:define norm cross etc}
	We define maps $N \colon J\to k$, $\sharp \colon J\to J'$, $T \colon J\times J'\to k$ and $\times \colon J\times J\to J'$ as follows:
	\begin{align*}
        N(a)x &= a_{-2}, \text{ for any } a\in J \text{ with } a_{-2} \text{ as in \cref{recover:determine J - unique}\cref{J1}};\\
        a^\sharp &= a_{-1}, \text{ for any } a\in J \text{ with } a_{-1} \text{ as in \cref{recover:determine J - unique}\cref{J1}};\\
        T(a,b)x &= [a,b], \text{ for any } a\in J, b\in J';\\
        a\times b &= [a,[b,q]], \text{ for any } a, b\in J.
    \end{align*}
	Note that $T$ is a bilinear map and that $\times$ is a symmetric bilinear map because $[J,J]=0$.
	Similarly, we define maps $N' \colon J'\to k$, $\sharp' \colon J'\to J$, $T' \colon J'\times J \to k$ and $\times' \colon J'\times J'\to J$:
	\begin{align*}
        N'(b)x &= b_{-2}, \text{ for any } b\in J' \text{ with } b_{-2} \text{ as in \cref{recover:determine J - unique}\cref{J2}}; \\
        b^{\sharp'} &= -b_{-1}, \text{ for any } b\in J' \text{ with } b_{-1} \text{ as in \cref{recover:determine J - unique}\cref{J2}}; \\
        T'(b,a)x &= [a,b], \text{ for any } a\in J, b\in J';\\
        a\times' b &= -[a,[b,p]], \text{ for any } a, b\in J'.
    \end{align*}
    (Notice the two minus signs. Observe that $T'(b,a) = T(a,b)$ for all $a \in J$, $b \in J'$.)
\end{definition}

Using \cref{recover:determine J - unique}, we can deduce some identities.
\begin{lemma}
	Let $a,b\in J$ and $\lambda\in k$. Then
	\begin{align}
		(\lambda a)^\sharp &=\lambda^2a^\sharp ; \label{recover:cubic axioms 1} \\
		N(\lambda a)	   &=\lambda^3 N(a) ;\label{recover:cubic axioms 2}\\
		(a+b)^\sharp 	   &=a^\sharp+a\times b+b^\sharp ;\label{recover:cubic axioms 3}\\
		N(a+b)			   &=N(a)+T(b,a^\sharp)+T(a,b^\sharp)+N(b);\label{recover:cubic axioms 4}\\
		(a^\sharp)^{\sharp'}&=N(a)a;\label{recover:cubic axioms 5}\\
		N'(a^\sharp) &= N(a)^2\label{recover:cubic axioms 6}.
	\end{align}
	Similar identities hold for $a,b \in J'$.
\end{lemma}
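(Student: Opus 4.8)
The plan is to read every one of \cref{recover:cubic axioms 1,recover:cubic axioms 2,recover:cubic axioms 3,recover:cubic axioms 4,recover:cubic axioms 5,recover:cubic axioms 6} off the components of the canonical extremal elements furnished by \cref{recover:determine J - unique}, using the functorial behaviour of $l$\dash exponential automorphisms from \cref{th:alg}. Throughout, fix $a\in J$ and let $\alpha\in E_-(x,y)$ be the $a$\dash exponential automorphism with $\alpha(q)=N(a)x+a^\sharp+[a,q]+q$; comparing with \cref{eq:alg2} (and using $v_\alpha(q)\in L_{-3}=0$) identifies $q_\alpha(q)=a^\sharp$ and $n_\alpha(q)=N(a)x$, and similarly $q_\beta(q)=b^\sharp$, $n_\beta(q)=N(b)x$ for $b\in J$.

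For the scaling identities \cref{recover:cubic axioms 1,recover:cubic axioms 2} I would invoke \cref{th:alg}\cref{th:alg:scalar}: the automorphism $\alpha_\lambda$ is $(\lambda a)$\dash exponential with $q_{\alpha_\lambda}=\lambda^2 q_\alpha$ and $n_{\alpha_\lambda}=\lambda^3 n_\alpha$. Since $\alpha_\lambda(q)$ has exactly the shape required in \cref{recover:determine J - unique}\cref{J1} for $\lambda a$, the uniqueness there forces it to be the canonical element, and comparing its $L_{-1}$\dash and $L_{-2}$\dash components gives $(\lambda a)^\sharp=\lambda^2 a^\sharp$ and $N(\lambda a)=\lambda^3 N(a)$.

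The additive identities \cref{recover:cubic axioms 3,recover:cubic axioms 4} come from the product rule. By \cref{th:alg}\cref{th:alg:sum} the composite $\gamma:=\alpha\beta$ is $(a+b)$\dash exponential, and \cref{rem:beta}\cref{rem:beta:product} gives $q_\gamma(q)=q_\alpha(q)+[a,[b,q]]+q_\beta(q)=a^\sharp+a\times b+b^\sharp$, which lies in $J'$ by \cref{recover:L1 identity 4}; uniqueness again identifies $\gamma(q)$ with the canonical element for $a+b$, yielding \cref{recover:cubic axioms 3}. For \cref{recover:cubic axioms 4} I expand $n_\gamma(q)=n_\alpha(q)+q_\alpha([b,q])+[a,q_\beta(q)]+n_\beta(q)$; here $n_\alpha(q)=N(a)x$, $n_\beta(q)=N(b)x$ and $[a,q_\beta(q)]=[a,b^\sharp]=T(a,b^\sharp)x$. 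The only genuinely non\dash formal term is $q_\alpha([b,q])$, which I would handle by an automorphism trick: since $[a,b]=0$ by \cref{recover:L1 identity 2} and the higher maps vanish on $L_{-1}$ for degree reasons, $\alpha$ fixes $b$, so $\alpha([b,q])=[b,\alpha(q)]$; comparing $L_{-2}$\dash components gives $q_\alpha([b,q])=[b,a^\sharp]=T(b,a^\sharp)x$, and summing the four contributions produces \cref{recover:cubic axioms 4}.

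The \textbf{main obstacle} is \cref{recover:cubic axioms 5,recover:cubic axioms 6}, because these quantities are defined through the \emph{twin} data: applying \cref{recover:determine J - unique}\cref{J2} to $a^\sharp\in J'$ produces the $a^\sharp$\dash exponential automorphism $\beta$ with $\beta(p)=N'(a^\sharp)x-(a^\sharp)^{\sharp'}+[a^\sharp,p]+p$, so that $(a^\sharp)^{\sharp'}=-q_\beta(p)$ and $N'(a^\sharp)x=n_\beta(p)$. The plan is to produce this $\beta$ out of $\alpha$ via the Lie structure, exploiting $p=[y,c]$ and $q=-[y,d]$: from $e_a=\alpha(q)=-[\alpha(y),d]$ one extracts $[q_\alpha(y),d]=-a^\sharp$ and $[n_\alpha(y),d]=-N(a)x$, while the extremal element $\alpha(p)=[\alpha(y),c]=p+[q_\alpha(y),c]+[n_\alpha(y),c]$ (whose $L_0$\dash component vanishes because $[a,p]=0$ by \cref{recover:L1 identity 2}) carries the twin components after the substitution $d\rightsquigarrow c$. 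Matching these against the canonical data of $a^\sharp$ and invoking the uniqueness of \cref{recover:determine J - unique}\cref{J2} should give $(a^\sharp)^{\sharp'}=N(a)a$ and $N'(a^\sharp)=N(a)^2$. I expect the real difficulty to be carrying out this component bookkeeping \emph{characteristic\dash freely}, i.e. without the explicit formulas $6n_\alpha(m)=\ad_a^3(m)$ and $24v_\alpha(m)=\ad_a^4(m)$ of \cref{recover:char not 2 3}, which are unavailable in characteristics $2$ and $3$; one must instead lean only on extremality of $e_a$ together with the uniqueness statements. Finally, the asserted ``similar identities'' for $a,b\in J'$ follow formally from the symmetry $c\leftrightarrow d$, $p\leftrightarrow -q$, $q\leftrightarrow p$ that interchanges the two gradings and swaps $(J,N,\sharp)$ with $(J',N',\sharp')$.
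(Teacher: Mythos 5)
Your handling of \cref{recover:cubic axioms 1,recover:cubic axioms 2,recover:cubic axioms 3,recover:cubic axioms 4} is correct, and your evaluation of the cross term $q_\alpha([b,q])$ is in fact cleaner than the paper's: you use that $\alpha$ fixes $b$ (since $[a,b]\in[J,J]=0$ and $q_\alpha,n_\alpha,v_\alpha$ vanish on $L_{-1}$ for degree reasons), so that $\alpha([b,q])=[b,\alpha(q)]$, and comparing $L_{-2}$-components gives $q_\alpha([b,q])=[b,a^\sharp]=T(b,a^\sharp)x$. The paper instead computes both $\alpha\beta(q)$ and $\beta\alpha(q)$, equates the two resulting expressions for $N(a+b)x$ via the uniqueness in \cref{recover:determine J - unique}\cref{J1}, and then separates the terms by a homogeneity argument in $\lambda$ (replacing $b$ by $\lambda b$; quadratic versus linear, using $|k|\geq 3$). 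Your route makes that detour unnecessary.

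The proposal breaks down at \cref{recover:cubic axioms 5,recover:cubic axioms 6}: the element $\alpha(p)=[\alpha(y),c]=p+[q_\alpha(y),c]+[n_\alpha(y),c]$ does not carry the twin data, and it can never be matched against \cref{recover:determine J - unique}\cref{J2}. There is a structural obstruction: the canonical extremal element attached to $a^\sharp\in J'$ by \cref{recover:determine J - unique}\cref{J2} has $L_0$-component $[a^\sharp,p]$, and $\ad_p$ is injective on $J'\leq L''_{-1}$ (up to sign it is the inverse of the isomorphism $\ad_d\colon L''_1\to L''_{-1}$ from \cref{pr:5gr}\cref{5gr:iso} applied to the pair $(d,p)$), so this component is non-zero whenever $a^\sharp\neq 0$; your element has zero $L_0$-component (as you note yourself), so the uniqueness statement cannot identify the two. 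Moreover, knowing $[q_\alpha(y),d]=-a^\sharp$ gives no control over $[q_\alpha(y),c]$: in characteristic $0$, where $q_\alpha(y)=\tfrac{1}{2}[a,[a,y]]$ and $n_\alpha(y)=\tfrac{1}{6}[a,[a,[a,y]]]$, both brackets with $c$ vanish, because $\ad_a$ commutes with $\ad_c$ (as $[a,c]=0$) and $\ad_a$ kills $p=[y,c]$; hence $\alpha(p)=p$ and the quantities $(a^\sharp)^{\sharp'}$ and $N'(a^\sharp)$ are simply invisible there. What works instead (and is what the paper does) is to bracket the extremal element $e=\alpha(q)$ itself with $p$: since $(e,p)\in E_1$ via the third grading \cref{recover:other grading 2}, the element $[e,p]=-N(a)c+[a^\sharp,p]+[a,y]+y$ is extremal; applying the grading-reversing automorphism $\varphi$ of \cref{recover:switching grading} yields $x+a+[a^\sharp,p]-N(a)p\in E$, and when $N(a)\neq 0$ the torus automorphism $\varphi_\lambda$ of \cref{recover:autom of grading} with $\lambda=-N(a)^{-1}$ brings this into the normal form of \cref{recover:determine J - unique}\cref{J2}, giving $(a^\sharp)^{\sharp'}=N(a)a$ and $N'(a^\sharp)=N(a)^2$. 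The case $N(a)=0$, $a^\sharp\neq 0$ then needs a genuinely separate argument ($a^\sharp$ is itself extremal and $\exp(a^\sharp)(p)=[a^\sharp,p]+p\in E$, forcing both quantities to vanish); this case distinction is unavoidable along these lines, since the rescaling step uses $N(a)^{-1}$.
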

\begin{proof}
	Equations \cref{recover:cubic axioms 1,recover:cubic axioms 2} follow immediately from \cref{recover:determine J - unique} and \cref{th:alg}\cref{th:alg:scalar}.

	Now let $a,b \in J$ and consider the corresponding automorphisms $\alpha$ for $a$ and $\beta$ for $b$ as in \cref{recover:determine J - unique}.
	Then $\alpha(q)=N(a)x+a^\sharp+[a,q]+q$ and $\beta(q)=N(b)x+b^\sharp+[b,q]+q$.
	The elements $\alpha\beta(q)$ and $\beta\alpha(q)$ are also extremal, and
	\begin{align*}
		 \alpha\beta(q) &= \bigl( N(b)x+T(a,b^\sharp)x+q_{\alpha}([b,q])+N(a)x \bigr) \\
		 				         &\hspace*{24ex} + (b^\sharp+a\times b+a^\sharp )+[b+a,q]+q, \\
		 \beta\alpha(q) &= \bigl( N(a)x+T(b,a^\sharp)x+q_{\beta}([a,q])+N(b)x \bigr) \\
		 				         &\hspace*{24ex} + (a^\sharp+b\times a+b^\sharp )+[a+b,q]+q.
	\end{align*}
	Since $a^\sharp + a\times b + b^\sharp \in J'$, we can use the uniqueness in \cref{recover:determine J - unique} applied on $a+b \in J$ to obtain \cref{recover:cubic axioms 3} and
	\begin{align*}
		N(a+b)x&=N(a)x+T(b,a^\sharp)x+q_{\beta}([a,q])+N(b)x\\
			  &=N(a)x+T(a,b^\sharp)x+q_{\alpha}([b,q])+N(b)x.
	\end{align*}
	In particular, we have
	\begin{align}\label{recover:N(a+b)}
	   T(a,b^\sharp)x - q_{\beta}([a,q]) = T(b,a^\sharp)x - q_{\alpha}([b,q]).
	\end{align}
	Since $b\in J$ is arbitrary, we can replace it by $\lambda b$ for arbitrary $\lambda\in k$.
	By \cref{th:alg}\cref{th:alg:scalar}, \cref{recover:cubic axioms 1}, and the linearity of $T$, the left hand side of \cref{recover:N(a+b)} is quadratic in~$\lambda$, while the right hand side is linear in $\lambda$.
	Since $|k|\geq 3$, this implies that both the left and the right hand side of \cref{recover:N(a+b)} are $0$.
	So we obtain \cref{recover:cubic axioms 4}.

	We now show \cref{recover:cubic axioms 5} and \cref{recover:cubic axioms 6} simultaneously. Let $a\in J$ be arbitrary and	consider the corresponding extremal element
	$e = N(a)x + a^\sharp + [a,q] + q$.
	Since $a^\sharp\in J' \leq L_0'$ and $p \in L'_{-1}$, we get $g(p, a^\sharp) = 0$ by \cref{recover:extr in 1 prop}.
	Notice that $e \in L''_{-1}$, so $(e,p) \in E_1$ because of the grading \cref{recover:other grading 2}.
	It follows that
    \[ [e, p] = -N(a)c + [a^\sharp,p] + [[a,q],p] + y \in E . \]
    Notice  that $[[a,q],p] = [a,y]$ by the Jacobi identity, since $[a,p] \in [J,p] = 0$.
	Applying the automorphism $\varphi$ from \cref{recover:switching grading}, we get
	\[ x + [x,[a,y]] + \varphi([a^\sharp,p]) - N(a)p \in E . \]
	Recall that $\varphi(l_0) = l_0 + [x, [y, l_0]]$ for all $l_0 \in L_0$.
	However, $[y, [a^\sharp, p]] = 0$ because of the grading (see again \cref{fig:cns}), so $\varphi([a^\sharp,p]) = [a^\sharp, p]$. Combined with $[x, [a,y]] = a$ by \cref{pr:5gr}\cref{5gr:iso}, we get
	\[ x + a + [a^\sharp,p] - N(a)p \in E . \]

	First assume $N(a)\neq 0$.
	Applying the automorphism $\varphi_\lambda$ from \cref{recover:autom of grading} with $\lambda = -N(a)^{-1}$, we get
	\[ N(a)^2 x - N(a) a + [a^\sharp,p] + p \in E . \]
	By definition of $\sharp'$, this implies $(a^\sharp)^{\sharp'}=N(a)a$ and $N'(a^\sharp) = N(a)^2$, so we obtain \cref{recover:cubic axioms 5,recover:cubic axioms 6} if $N(a)\neq 0$.

	Now assume $N(a)=0$. If  $a^\sharp=0$, then \cref{recover:cubic axioms 5,recover:cubic axioms 6} are trivially satisfied, so assume $a^\sharp\neq 0$. Then
	\[ e = a^\sharp + [a,q] + q . \]
	In particular, $e \in L_{\geq -1} \setminus L_{\geq 0}$, so by \cref{pr:Ei(x)}, we get $(e, y) \in E_1$ and hence $[a^\sharp,y]=[e,y]\in E$.
	It follows that also $a^\sharp = [x,[a^\sharp,y]] \in E$.
	Since $g(p, a^\sharp) = 0$, it follows that $\exp(a^\sharp)(p) = [a^\sharp,p] + p \in E$, and thus $(a^\sharp)^{\sharp'}=0$ and $N'(a^\sharp)=0$, proving  \cref{recover:cubic axioms 5,recover:cubic axioms 6} also in this case.
\end{proof}

\begin{lemma}
	For all $a,b,c\in J$ we have
	\begin{align}
		\label{recover:cubic P6}T(a,a^\sharp)&=3N(a);\\
		\label{recover:cubic P7}a\times a   &=2a^\sharp;\\
		\label{recover:cubic P8}T(c,a\times b)&=T(a,b\times c);\\
		\label{recover:cubic P9}a^\sharp\times'(a\times b)&=N(a)b+T(b,a^\sharp)a;\\
		\label{recover:cubic P10}a^\sharp\times'b^\sharp &=-(a\times b)^{\sharp'}+T(b,a^\sharp)b+T(a,b^\sharp)a.
	\end{align}
	Similar identities hold for $a,b,c \in J'$.
\end{lemma}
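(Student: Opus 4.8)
The plan is to derive all five identities formally from the relations \cref{recover:cubic axioms 1,recover:cubic axioms 2,recover:cubic axioms 3,recover:cubic axioms 4,recover:cubic axioms 5,recover:cubic axioms 6} already established in the previous lemma (for both $J$ and $J'$), mirroring the proof of \cref{prelim:hex less cond}, i.e.\ \cite[(15.18)]{Tits2002}, but carrying the twin bookkeeping between $J$ and $J'$. Throughout I use the ``similar identities for $J'$'': in particular $(u+v)^{\sharp'}=u^{\sharp'}+u\times'v+v^{\sharp'}$ and $(\lambda u)^{\sharp'}=\lambda^2 u^{\sharp'}$ for $u,v\in J'$. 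The identities for $a,b,c\in J'$ then follow from the $J$-case by the symmetry interchanging the two gradings \cref{recover:other grading,recover:other grading 2} (as in the proof of \cref{recover:third gr eq 1}), so I only treat $a,b,c\in J$, and I prove them in the order \cref{recover:cubic P7}, \cref{recover:cubic P8}, \cref{recover:cubic P9}, \cref{recover:cubic P10}, \cref{recover:cubic P6}.

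Two of the identities are immediate specializations, valid in every characteristic since nothing is divided. Putting $b=a$ in \cref{recover:cubic axioms 3} and using \cref{recover:cubic axioms 1} gives $4a^\sharp=(2a)^\sharp=2a^\sharp+a\times a$, which is \cref{recover:cubic P7}. For \cref{recover:cubic P8} I fully polarize the norm: expanding $N(a+b+c)$ by \cref{recover:cubic axioms 4,recover:cubic axioms 3}, first as $N((a+b)+c)$ and then as $N(a+(b+c))$, produces two expressions that agree in every term except that the first contains $T(c,a\times b)$ and the second $T(a,b\times c)$; since both compute the same scalar, these are equal.

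The heart of the matter is \cref{recover:cubic P9,recover:cubic P10}, obtained by linearizing \cref{recover:cubic axioms 5}. I substitute $a\mapsto a+tb$ (with $a,b\in J$, $t\in k$) into $(a^\sharp)^{\sharp'}=N(a)a$. By \cref{recover:cubic axioms 3,recover:cubic axioms 1} we have $(a+tb)^\sharp=a^\sharp+t(a\times b)+t^2 b^\sharp$, and applying the linearization of $\sharp'$ on $J'$ expands the left-hand side as a degree-$4$ polynomial in $t$ with coefficients in $J$; likewise $N(a+tb)(a+tb)$ expands via \cref{recover:cubic axioms 4,recover:cubic axioms 1,recover:cubic axioms 2}. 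Writing the difference as $P(t)=\sum_{i=0}^{4} c_i t^i$ with $c_i\in J$, we have $P(t)=0$ for every $t\in k$. The coefficients $c_0$ and $c_4$ are precisely \cref{recover:cubic axioms 5} for $a$ and for $b$, hence vanish, so $P(t)=t(c_1+c_2 t+c_3 t^2)$. For each nonzero $t$ this forces $c_1+c_2 t+c_3 t^2=0$; since $|k|\geq 4$ there are at least three distinct nonzero values of $t$, and applying any linear functional $J\to k$ turns $c_1+c_2 t+c_3 t^2$ into a scalar polynomial of degree $\leq 2$ vanishing at three points, hence identically zero, so $c_1=c_2=c_3=0$. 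The coefficient of $t^1$ reads $a^\sharp\times'(a\times b)=N(a)b+T(b,a^\sharp)a$, which is \cref{recover:cubic P9}, and that of $t^2$ reads $(a\times b)^{\sharp'}+a^\sharp\times'b^\sharp=T(b,a^\sharp)b+T(a,b^\sharp)a$, which is \cref{recover:cubic P10}. This small-field bookkeeping --- exploiting that the two extreme coefficients vanish a priori, so that only a quadratic must be killed --- is the only delicate point, and it is exactly what makes $|k|\geq 4$ (rather than a larger bound) sufficient; I expect this to be the main obstacle in the writeup.

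Finally, \cref{recover:cubic P6} follows from \cref{recover:cubic P9} without any division, which is what makes it work uniformly in characteristic $2$ as well. Setting $b=a$ in \cref{recover:cubic P9} gives $a^\sharp\times'(a\times a)=N(a)a+T(a,a^\sharp)a$. Using \cref{recover:cubic P7}, the identity $u\times'u=2u^{\sharp'}$ for $u\in J'$ (proved exactly as \cref{recover:cubic P7} from the $J'$-versions of \cref{recover:cubic axioms 1,recover:cubic axioms 3}), and \cref{recover:cubic axioms 5}, the left-hand side equals $a^\sharp\times'(2a^\sharp)=4(a^\sharp)^{\sharp'}=4N(a)a$. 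Hence $T(a,a^\sharp)a=3N(a)a$ for every $a\in J$, and comparing coefficients of the nonzero vector $a$ (the case $a=0$ being trivial) yields $T(a,a^\sharp)=3N(a)$.
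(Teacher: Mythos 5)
Your proposal is correct, and it is essentially the paper's own argument written out in full: the paper's proof consists of the single remark that the identities ``follow in exactly the same way as in \cite[(15.16), (15.18)]{Tits2002}'' from \cref{recover:cubic axioms 1,recover:cubic axioms 2,recover:cubic axioms 3,recover:cubic axioms 4,recover:cubic axioms 5,recover:cubic axioms 6}, using $|k|\geq 4$, and that reference's derivation is precisely your scheme (specialize \cref{recover:cubic axioms 3} for \cref{recover:cubic P7}, polarize the norm for \cref{recover:cubic P8}, linearize $(a^\sharp)^{\sharp'}=N(a)a$ in $a+tb$ and kill the quadratic in $t$ using three nonzero field elements for \cref{recover:cubic P9,recover:cubic P10}, then deduce \cref{recover:cubic P6}). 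Your twin bookkeeping between $J$ and $J'$ and the observation that only the middle coefficients $c_1,c_2,c_3$ need the $|k|\geq 4$ hypothesis match the intended adaptation of the classical proof.
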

\begin{proof}
	By \cref{recover:cubic axioms 1,recover:cubic axioms 2,recover:cubic axioms 3,recover:cubic axioms 4,recover:cubic axioms 5,recover:cubic axioms 6}, this follows in exactly the same way as in \cite[(15.16), (15.18)]{Tits2002}. (Recall from \cref{ass:lines} that we assume $|k| \geq 4$.)
\end{proof}

\begin{remark}\label{rem:twin}
	The identities \cref{recover:cubic axioms 1,recover:cubic axioms 2,recover:cubic axioms 3,recover:cubic axioms 4,recover:cubic axioms 5,recover:cubic axioms 6,recover:cubic P6,recover:cubic P7,recover:cubic P8,recover:cubic P9,recover:cubic P10} we have shown so far may serve as an axiom system for ``cubic norm pairs'' $(J, J')$.
	Although this seems to be a natural counterpart for cubic norm structures in the context of \emph{Jordan pairs} (rather than Jordan algebras), this notion seems to have appeared only once in the literature in a paper by John Faulkner \cite{Faulkner2001}, although the setup of a paired structure also appears already in \cite{Springer1962}.
	(We thank Holger Petersson for pointing this out to us.)
	A recent contribution to the theory of cubic norm pairs, also related to $G_2$-graded Lie algebras, is Michiel Smet's preprint \cite{Smet2025}.
\end{remark}

At this point, we have, in principle, gathered enough information to \emph{reconstruct} the Lie algebra from our data.
Since this is not our main focus, we will only sketch the procedure.
We will need the following concrete computations, providing us with information about the Lie bracket with the ``middle part'' $L_0 \cap L'_0$, which is the most difficult piece to control.
(Notice that $[J, [y, J']]$ and $[J', [y, J]]$ belong to this middle part.) 

\begin{lemma}
\label{recover:Lie bracket explicit}
	Let $a,b,e \in J$ and $a',b',e' \in J'$ be arbitrary. Then
	\begin{align}
		\label{recover:bracket 3 J 1} [a,[b,[y,e]]] &= T(a, b \times e)c ; \\
		\label{recover:bracket 3 J 5} [a',[b',[y,e']]] &= T(b' \times' e', a')d ; \\
		\label{recover:bracket 3 J 4} [a,[b',[y,e']]] &= - a \times (b' \times' e') ; \\
		\label{recover:bracket 3 J 6} [a',[b,[y,e]]] &= a' \times' (b \times e) ; \\
		\label{recover:bracket 3 J 3} [a,[b',[y,e]]] &= b' \times' (a \times e) - T(a, b')e ; \\
		\label{recover:bracket 3 J 8} [a',[b,[y,e']]] &= - b \times (a' \times' e') + T(b, a')e ; \\
		\label{recover:bracket 3 J 2} [a,[b,[y,e']]] &= e' \times' (a \times b) - T(a, e')b - T(b, e')a ; \\
		\label{recover:bracket 3 J 7} [a',[b',[y,e]]] &= - e \times (a' \times' b') + T(e, a')b' + T(e, b')a' .
	\end{align}
\end{lemma}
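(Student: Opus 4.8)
The plan is to unwind each of the eight brackets by repeated use of the Jacobi identity, rewriting every intermediate term --- via the grading positions recorded in \cref{fig:cns} and the vanishing relations of \cref{recover:L1 identity 2,recover:L1 identity 3} --- until only the structure maps $T$, $\times$ and $\times'$ of \cref{recover:define norm cross etc} remain. Before computing, I would halve the work with the involution from the proof of \cref{recover:third gr eq 1}: replacing $(c,d,p,q)$ by $(d,-c,-q,p)$ leaves $x=[c,d]$ and $y=[q,p]$ (hence the $L$-grading) unchanged while interchanging the gradings \cref{recover:other grading,recover:other grading 2}, so that $J\leftrightarrow J'$. The two minus signs built into \cref{recover:define norm cross etc} are exactly what is needed for the reflected structure maps of this new configuration to reproduce the primed maps inside each formula, so that this involution carries \cref{recover:bracket 3 J 1,recover:bracket 3 J 4,recover:bracket 3 J 3,recover:bracket 3 J 2} onto \cref{recover:bracket 3 J 5,recover:bracket 3 J 6,recover:bracket 3 J 8,recover:bracket 3 J 7}. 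It therefore suffices to establish the first four.

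Next I would assemble the handful of auxiliary relations that drive every computation. From $y=[q,p]$ (see \cref{recover:notation e12}), the Jacobi identity and $[J,p]=0=[J',q]$ (\cref{recover:L1 identity 2,recover:L1 identity 3}) one gets the key reductions $[y,e]=-[p,[q,e]]$ for $e\in J$ and $[y,e']=[q,[p,e']]$ for $e'\in J'$. Applying \cref{pr:5gr}\cref{5gr:iso} to $p=[y,c]$ and $q=-[y,d]$ gives $[p,x]=c$ and $[q,x]=-d$. Since $g(c,q)=1$ by \cref{recover:e1 e2 hyperbolic}, \cref{pr:5gr}\cref{5gr:grading der} shows that $\ad_{[c,q]}$ acts as $i$ on $L'_i$; in particular $[[c,q],e]=-e$ for $e\in J\subseteq L'_{-1}$. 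Finally I would use the defining identities $[a,b']=T(a,b')x$, $a\times b=[a,[b,q]]$ and $a'\times'b'=-[a',[b',p]]$, together with the vanishings $[c,J]=[d,J]=0$ from \cref{recover:L1 identity 2}.

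With these in place each identity collapses in a few steps. For \cref{recover:bracket 3 J 1}, with $a,b,e\in J$, I would first compute
\begin{align*}
    [b,[y,e]] = -[b,[p,[q,e]]] = -[p,[b,[q,e]]] = [p,\,b\times e],
\end{align*}
using $[b,p]=0$ and $[b,[q,e]]=-(b\times e)$, and then
\begin{align*}
    [a,[b,[y,e]]] = [a,[p,\,b\times e]] = [p,[a,\,b\times e]] = [p,\,T(a,b\times e)x] = T(a,b\times e)\,c,
\end{align*}
using $[a,p]=0$, $b\times e\in J'$ and $[p,x]=c$. The other three cases run identically, the only new feature being that one must occasionally push a factor $c$ (or $d$) through a bracket via the eigenvalue relation for $\ad_{[c,q]}$; for example in \cref{recover:bracket 3 J 3} one meets $[c,[q,e]]=-e$ and $[a,[b',p]]=-T(a,b')c$, which combine to give $b'\times'(a\times e)-T(a,b')e$, while in \cref{recover:bracket 3 J 2} the two extra trace terms $-T(a,e')b-T(b,e')a$ arise from $[b,[p,e']]=T(b,e')c$ and $[[b,q],c]=-b$.

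The real difficulty is purely organizational rather than conceptual: there are many nested brackets, each of which must be given the correct sign and simultaneously located in its homogeneous component for \emph{both} gradings, and one has to stay alert that no step covertly divides by $2$ or $3$. In particular the extremal square relation $[c,[c,q]]=2c$ must never be invoked; every scalar is to be produced only through $T$, $\times$, $\times'$ and the isomorphisms $\ad_x$, $\ad_y$, which keeps the argument valid in every characteristic. Anchoring all eight computations to the fixed reductions of the second paragraph is what keeps this bookkeeping under control.
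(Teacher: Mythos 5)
Your proposal is correct. The toolbox is the paper's own --- the Jacobi identity, the positions in the two gradings, and the definitional formulas $[a,b']=T(a,b')x$, $a\times b=[a,[b,q]]$, $a'\times' b'=-[a',[b',p]]$ --- and your route for \cref{recover:bracket 3 J 4} (expansion via $y=[q,p]$ and $[J',q]=0$) is literally the paper's. The organization differs in two respects. First, where the paper dismisses the primed identities \cref{recover:bracket 3 J 5,recover:bracket 3 J 6,recover:bracket 3 J 8,recover:bracket 3 J 7} as ``similar'', you formalize the symmetry as the involution $(c,d,p,q)\mapsto(d,-c,-q,p)$; I checked that this fixes $x$, $y$ and the $L_i$, swaps the primed and double-primed gradings (hence $J\leftrightarrow J'$), and that the reflected structure maps satisfy $\times_{\mathrm{new}}=-\times'$, $\times'_{\mathrm{new}}=\times$ and $T_{\mathrm{new}}(a',b)=-T(b,a')$, so the sign flips cancel in pairs and the four unprimed identities do transform into the four primed ones --- a genuine economy, though your phrase ``reproduce the primed maps'' is slightly loose: they are reproduced only up to signs that happen to cancel. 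Second, the paper economizes on computation by chaining: \cref{recover:bracket 3 J 1} is proved by locating the bracket in $L_{-1}\cap L'_{-2}=\langle c\rangle$ and extracting the scalar by bracketing with $d$; \cref{recover:bracket 3 J 3} is deduced from \cref{recover:bracket 3 J 6} by one Jacobi step together with $[x,[y,e]]=-e$; and \cref{recover:bracket 3 J 2} is deduced from \cref{recover:bracket 3 J 3} using $[a,[b,e']]\in L_{-3}=0$. Your direct expansions are longer and your sketch slightly underestimates the bookkeeping --- for instance in \cref{recover:bracket 3 J 3} one must also dispose of the term coming from $[b',p]$, e.g.\ via $[d,p]=[x,y]-[c,q]$ and the two eigenvalue relations giving $[a,[d,p]]=0$, which is more than ``pushing a factor $c$ or $d$ through'' --- but I verified that your computations for \cref{recover:bracket 3 J 1,recover:bracket 3 J 4,recover:bracket 3 J 3,recover:bracket 3 J 2} do close up to the stated right-hand sides, and, as you insist, no step divides by $2$ or $3$, so the argument is characteristic-free as required.
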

\begin{proof}
	By the grading (see \cref{fig:cns}), we have $[a,[b,[y,e]]] \in L_{-1} \cap L'_{-2} = \langle c \rangle$, so $[a,[b,[y,e]]] = \lambda c$ for some $\lambda \in k$.
	Since $[c,d]=x$ and $[J,d]=0$, we get
	\[ \lambda x = [d,[a,[b,[e,y]]]] = [a,[b,[e,[d,y]]]] = [a,[b,[e,q]]] = [a, b\times e] = T(a, b \times e)x \]
	and hence \cref{recover:bracket 3 J 1} holds.
	The proof of \cref{recover:bracket 3 J 5} is similar.
	
	Next, since $y = [q,p]$, $[q, e'] = 0$ and $[q, b'] = 0$, we get
	\begin{align*}
	   [a, [b', [y, e']]]
	   & = [a, [b', [[q, p], e']]] \\
	   & = [a, [b', [q, [p, e']]]] \\
	   & = [a, [q, [b', [p, e']]]] = - a \times (b' \times' e'),
	\end{align*}
	proving \cref{recover:bracket 3 J 4}.
	The proof of \cref{recover:bracket 3 J 6} is similar.

    Now, by the Jacobi identity and \cref{recover:bracket 3 J 6}, we have
	\[ [a, [b', [y, e]]] = [b', [a, [y,e]]] + [[a,b'], [y,e]] = b' \times' (a \times e) + [T(a,b')x, [y,e]], \]
	so we get \cref{recover:bracket 3 J 3}.
	The proof of \cref{recover:bracket 3 J 8} is similar.
    
	Finally, by applying the Jacobi identity twice and using $[a,[b,e']] \in L_{-3}=0$ and~\cref{recover:bracket 3 J 3}, we get
	\begin{align*}
        [a,[b,[y,e']]]
            &= [y,[a,[b,e']]] + [a,[[b,y],e']] + [[a,y],[b,e']] \\
            &= 0 + [a, [e', [y, b]]] + [[a,y], T(b,e')x] \\
            &= e' \times' (a \times b) - T(a, e')b - T(b, e')a,
	\end{align*}
	showing \cref{recover:bracket 3 J 2}.
	The proof of \cref{recover:bracket 3 J 7} is similar.
\end{proof}

Here is the promised sketch of the reconstruction result.
\begin{corollary}\label{co:cns reconstruct}
    The Lie bracket on $L$ can be completely recovered from the maps $T$, $\times$ and $\times'$ alone.
\end{corollary}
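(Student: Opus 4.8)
### Proof Proposal

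The plan is to show that the entire Lie bracket is determined by the three maps $T$, $\times$ and $\times'$ by systematically exploiting the $5 \times 5$-grading displayed in \cref{fig:cns}. The strategy is that every component of the grading can be reached from the pieces $J$, $J'$ and the generators $y$, $q$, $p$ via brackets, and every structure constant can be expressed through $T$, $\times$, $\times'$.

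First I would organize the thirteen pieces of the grading according to \cref{fig:cns} and observe that the whole Lie algebra is generated by a small set of elements. By \cref{recover: y and I1}, $L$ is generated by $y$ together with $I_{-1}$; since the extremal geometry has lines, $L_{-1}$ is spanned by its extremal elements (\cref{recover:sum of extr}), and by \cref{recover:L1 decomposition} we have $L_{-1} = \langle c\rangle \oplus J \oplus J' \oplus \langle d\rangle$ with $c = [c,d]$-related data already fixed. Thus it suffices to express \emph{every} bracket between basis-type elements drawn from $\{y, c, d, J, J'\}$ (and their iterated brackets that land in the various pieces) purely in terms of $T$, $\times$, $\times'$. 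The key point is that once the brackets among generators are known in these terms, bilinearity and the Jacobi identity propagate the formulas to all of $L$.

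Next I would run through the components systematically, invoking the formulas already proven. The brackets landing in the ends $L_{\pm 2}$ are governed by $T$: indeed $[a,b] = T(a,b)x$ for $a \in J$, $b \in J'$ by \cref{recover:define norm cross etc}, and similarly for the primed version. The cross products $\times$ and $\times'$ control the brackets into $J$ and $J'$: for instance $a \times b = [a,[b,q]]$ and $a \times' b = -[a,[b,p]]$. The crucial middle computations---the brackets into the hard-to-control piece $L_0 \cap L'_0$, namely $[J,[y,J']]$ and $[J',[y,J]]$---are exactly what \cref{recover:Lie bracket explicit} provides, and every right-hand side there is written in terms of $T$, $\times$, $\times'$ (equations \cref{recover:bracket 3 J 1,recover:bracket 3 J 5,recover:bracket 3 J 4,recover:bracket 3 J 6,recover:bracket 3 J 3,recover:bracket 3 J 8,recover:bracket 3 J 2,recover:bracket 3 J 7}). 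Using the isomorphisms $\ad_y \colon L_{-1} \to L_1$ and $\ad_x \colon L_1 \to L_{-1}$ from \cref{pr:5gr}\cref{5gr:iso}, every element of $L_1$ is of the form $[y, l]$ with $l \in L_{-1}$, so brackets involving $L_1$ reduce via the Jacobi identity to the already-handled cases in $L_{-1}$. The normalizer identities $L_0 = N_L(x) \cap N_L(y)$ then pin down the brackets with $x$ and $y$ up to the known scalars, and \cref{recover:third gr eq 1,recover:third gr eq 2} identify the remaining intersections so that no piece is left uncontrolled.

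I expect the main obstacle to be the bookkeeping for the central block $L_0 \cap L'_0$ and verifying that the brackets $[L_0 \cap L'_0, L_{-1}]$ and $[L_0 \cap L'_0, L_0 \cap L'_0]$ are forced by the previously derived data rather than requiring new invariants. The resolution is that $L_0 \cap L'_0$ is itself spanned by elements of the form $[J, [y,J']]$ and $[J', [y,J]]$ (plus $\langle [x,y]\rangle$), so any bracket with such an element can be re-expanded through the Jacobi identity into iterated brackets among $J$, $J'$, $y$, $q$, $p$, at which point \cref{recover:Lie bracket explicit} and the defining formulas close the computation. Since $N$ and $\sharp$ are themselves recoverable from $\times$ via $2a^\sharp = a \times a$ (\cref{recover:cubic P7}) and $3N(a) = T(a, a^\sharp)$ (\cref{recover:cubic P6})---modulo the small-characteristic subtleties already built into the cubic norm pair axioms---no additional data beyond $T$, $\times$, $\times'$ is needed, which is precisely the claim.
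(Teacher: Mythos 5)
Your outline does match the paper's proof in broad strokes: both work with the $13$-piece decomposition of \cref{fig:cns}, both rely on \cref{recover:Lie bracket explicit} for the brackets involving the middle piece, and both reduce brackets involving $L_1$ to $L_{-1}$ via $\ad_y$ and $\ad_x$. But there is a genuine gap at exactly the point where the paper does its real work: you \emph{assert}, without proof, that $L_0 \cap L'_0$ is spanned by $[J,[y,J']]$, $[J',[y,J]]$ and $[x,y]$. This spanning statement is the crux of the corollary, and it is not something that ``re-expanding through the Jacobi identity'' can deliver. The paper proves the corresponding fact by setting $K_0 := \langle [x,y], [q,c] \rangle + [J,[y,J']]$, checking by a case analysis that $K_0$ together with the other twelve pieces is closed under the bracket (hence an ideal), and then invoking the \emph{simplicity} of $L$ to conclude that this sum is all of $L$, so that $K_0 = L_0 \cap L'_0$; afterwards it identifies $K_0$ with its (faithful, again by simplicity) adjoint action on the remaining pieces. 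Simplicity is indispensable here and never enters your argument at this step. Nor does generation save you: knowing that $L$ is generated by $y$ and $L_{-1}$ (via \cref{recover: y and I1} and \cref{recover:sum of extr}) only tells you that $L_0 \cap L'_0$ is spanned by \emph{iterated} brackets of bidegree $(0,0)$; showing that all of these collapse into your candidate span is precisely the closure verification you have skipped.

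A second, related problem is that your spanning set omits $[q,c]$, which the paper includes in $K_0$. Note that $[j',[y,j]] = [j,[y,j']] + T(j,j')[x,y]$ by the Jacobi identity (since $[J,J'] \leq \langle x \rangle$), so your three-term span is really just $\langle [x,y] \rangle + [J,[y,J']]$; on the other hand, $[d,p] = [x,y] + [q,c]$ by \cref{recover:notation e12}, so brackets among the twelve outer pieces do produce $[q,c]$-components, and you would have to either include $[q,c]$ in the span or prove it redundant --- neither is done. Finally, your closing remark that $N$ and $\sharp$ are recoverable from $T$ and $\times$ via $a \times a = 2a^\sharp$ (\cref{recover:cubic P7}) and $T(a,a^\sharp) = 3N(a)$ (\cref{recover:cubic P6}) fails in characteristics $2$ and $3$, where these identities carry no information; fortunately this is also irrelevant, since the bracket formulas of \cref{recover:Lie bracket explicit} never involve $N$ or $\sharp$, which is exactly why the corollary can be stated for $T$, $\times$ and $\times'$ alone.
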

\begin{proof}
    Notice that, by \cref{fig:cns}, the algebra $L$ has a decomposition into $13$ pieces.
    Now let
    \[ K_0 := \langle [x,y], [q,c] \rangle \oplus [J, [y, J']] , \]
    and let $K$ be the subspace of $L$ spanned by $K_0$ together with the $12$ remaining pieces of $L$.
    It is not difficult to check (but it requires some case analysis) that $K$ is an ideal of $L$.
    Since $L$ is simple, it follows that $K = L$, so in particular, $K_0 = L_0 \cap L'_0$.
    Now observe that we can identify $K_0$ with the Lie algebra $\tilde K_0$ consisting of the corresponding inner derivations $\ad_l$ (for each $l \in K_0$) restricted to the sum of the $12$ remaining pieces; see, e.g., \cite[Construction 4.1.2]{Boelaert2019}. (Since $L$ is simple, the corresponding Lie algebra homomorphism $K_0 \to \tilde K_0$ is injective and hence an isomorphism.)
    
	The Lie algebra $L$ can now be reconstructed from six one-dimensional subspaces, three copies of $J$, three copies of $J'$, and a copy of $\tilde K_0$, assembled exactly as in \cref{fig:cns}. It is now a routine (but somewhat lengthy) verification, relying on \cref{recover:Lie bracket explicit}, that that the Lie bracket between every two of the $13$ pieces is completey determined by the maps $T$, $\times$ and $\times'$.
\end{proof}

So far, we have been able to endow the pair $(J, J')$ with a ``twin cubic norm structure'' using the various maps between them.
(See also \cref{rem:twin}.)
When the norm $N$ is not identically zero, we can go one step further, by selecting a ``base point'' to identify the two parts of this twin structure, which then results in a genuine cubic norm structure.

\begin{lemma}\label{le:cns:basept}
	If $N\neq 0$, then we can re-choose $c$ and $d$ in \cref{recover:notation e12} in such a way that $N(z)=1$ for some $z \in J$.
	We call $z$ a \emph{base point} for $N$.
\end{lemma}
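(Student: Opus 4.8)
The plan is to exploit the fact that rescaling the pair $(c,d)$ rescales the norm $N$ \emph{linearly}, not cubically; this is exactly what makes the normalization possible even when $k$ has no cube roots. Note first that within a fixed configuration one only has $N(\lambda a)=\lambda^3 N(a)$ by \cref{recover:cubic axioms 2}, so normalizing $N(a)$ to $1$ by rescaling $a$ alone would require $\delta^{-1}$ to be a cube in $k$; this is why the re-choice of $c$ and $d$ is genuinely needed.

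Since $N\neq 0$, I would fix an element $a\in J$ with $\delta:=N(a)\neq 0$ and claim that $z:=a$ becomes a base point after a suitable rescaling. Concretely, replace $c$ by $c':=\delta c$ and $d$ by $d':=\delta^{-1}d$. These are again extremal elements of $L_{-1}$ with $[c',d']=[c,d]=x$, so this is a legitimate re-choice in \cref{recover:notation e12}, and it induces $p':=[y,c']=\delta p$ and $q':=-[y,d']=\delta^{-1}q$. The key preliminary observation is that the auxiliary grading \cref{recover:other grading} attached to $(c',q')$ is literally unchanged: since $\langle c'\rangle=\langle c\rangle$, $\langle q'\rangle=\langle q\rangle$, and $\langle c',q',[c',q']\rangle^\perp=\langle c,q,[c,q]\rangle^\perp$, the data defining the $5$-grading in \cref{pr:5gr} are the same (one checks $g(c',q')=1$ as well). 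Consequently $J=L_{-1}\cap L'_{-1}$ and $J'=L_{-1}\cap L'_0$ are the \emph{same} subspaces in the new configuration.

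Next I would fix the $a$-exponential automorphism $\alpha\in E_-(x,y)$ with $\alpha(q)=N(a)x+a^\sharp+[a,q]+q$ supplied by \cref{recover:determine J - unique}\cref{J1} and \cref{recover:define norm cross etc}. As an automorphism of $L$ it is a fixed map, unaffected by the re-choice, so by linearity
\[
    \alpha(q')=\delta^{-1}\alpha(q)=\delta^{-1}N(a)\,x+\delta^{-1}a^\sharp+[a,q']+q',
\]
which is again extremal (being the image of the extremal element $q'$ under an automorphism). Its $L_{-1}$-component $\delta^{-1}a^\sharp$ still lies in the unchanged $J'$, so $\alpha(q')$ has exactly the shape required by \cref{recover:determine J - unique}\cref{J1} for the new configuration. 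By the uniqueness of that extremal element, it is the one defining the new norm $N'(a)$, and reading off its $L_{-2}$-component gives $N'(a)\,x=\delta^{-1}N(a)\,x=x$, i.e.\ $N'(a)=1$. Hence $z:=a$ is a base point for the rescaled configuration.

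I do not expect a genuine obstacle here; the one point demanding care is the scaling-invariance of the auxiliary grading and the fact that $\alpha$ survives the re-choice as the \emph{same} automorphism, since it is these facts that make the linear rescaling $N\mapsto\delta^{-1}N$ visible while keeping $J$ and $J'$ intact. Everything else is bookkeeping with the induced elements $p',q'$ and a single appeal to the uniqueness already established in \cref{recover:determine J - unique}.
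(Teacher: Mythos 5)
Your proof is correct and follows essentially the same route as the paper: rescale $(c,d)$ to $(N(z)c,\,N(z)^{-1}d)$, observe that this leaves all decompositions (hence $J$ and $J'$) untouched, and read off the new norm from the rescaled defining extremal element via the uniqueness in \cref{recover:determine J - unique}. The only cosmetic difference is that you produce that element as $\alpha(q')=\delta^{-1}\alpha(q)$ using the $a$-exponential automorphism, whereas the paper simply scales the extremal element $N(z)x+z^\sharp+[z,q]+q$ by $N(z)^{-1}$ directly --- the resulting element and the conclusion are identical.
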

\begin{proof}
	Let $z\in J$ such that $N(z)\neq 0$.
	If we replace $c$ by $\tilde c = N(z)c$ and $d$ by $\tilde d = N(z)^{-1}d$ in \cref{recover:notation e12}, and correspondingly replace $p$ by $\tilde p = N(z)p$ and $q$ by $\tilde q = N(z)^{-1}q$, then all conditions in \cref{recover:notation e12} remain satisfied.
	Since we have only replaced these elements by a scalar multiple, this does not affect any of the decompositions.
	
	Now denote the norm obtained from \cref{recover:define norm cross etc} with these new choices by~$\tilde N$.
	Since $N(z)x+z^\sharp+[z,q]+q\in E$, we get $x+N(z)^{-1}z^\sharp+[z, \tilde q] + \tilde q\in E$ and it follows that indeed $\tilde N(z)=1$.
\end{proof}

Assume for the rest of this section that $N\neq 0$ and that $z \in J$ is a base point for~$N$.
\begin{definition}
\label{recover:def sigma 12}
	We define maps $\sigma \colon J\to J'$ and $\sigma' \colon J'\to J$ by
	\begin{align*}
	   \sigma(a) &= T(a,z^\sharp)z^\sharp - z\times a, \\
	   \sigma'(b) &= T(z,b)z - b \times' z^\sharp,
	\end{align*}
	for all $a \in J$ and all $b \in J'$.
\end{definition}

\begin{lemma}
\label{recover:sigma are inverse}
	We have $\sigma'\circ\sigma=\id_J$ and $\sigma\circ\sigma'=\id_{J'}$.
\end{lemma}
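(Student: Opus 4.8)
The plan is to verify both identities by a single direct computation in each direction, exploiting the normalization $N(z)=1$. From $N(z)=1$ I would first record the base-point consequences used repeatedly: $(z^\sharp)^{\sharp'}=z$ and $N'(z^\sharp)=1$ (from \cref{recover:cubic axioms 5,recover:cubic axioms 6}), $T(z,z^\sharp)=3$ (from \cref{recover:cubic P6}), $z\times z=2z^\sharp$ (from \cref{recover:cubic P7}), and dually $z^\sharp\times'z^\sharp=2z$. I will also need the linearization of \cref{recover:cubic P6}: replacing $a$ by $a+tb$, expanding $(a+tb)^\sharp$ via \cref{recover:cubic axioms 1,recover:cubic axioms 3} and $N(a+tb)$ via \cref{recover:cubic axioms 4}, and comparing the coefficient of $t$ yields the trace identity $T(a,a\times b)=2T(b,a^\sharp)$, together with its $J'$-analogue read off through $T'(b,a)=T(a,b)$.

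For $\sigma'\circ\sigma=\id_J$, fix $a\in J$ and use \cref{recover:def sigma 12} to expand $\sigma'(\sigma(a))=T(z,\sigma(a))z-\sigma(a)\times'z^\sharp$ with $\sigma(a)=T(a,z^\sharp)z^\sharp-z\times a$. I would treat the cross-product term first: by symmetry $(z\times a)\times'z^\sharp=z^\sharp\times'(z\times a)$, so \cref{recover:cubic P9} (with the roles $a\mapsto z$, $b\mapsto a$) gives $z^\sharp\times'(z\times a)=N(z)a+T(a,z^\sharp)z=a+T(a,z^\sharp)z$, and combining with $z^\sharp\times'z^\sharp=2z$ yields $\sigma(a)\times'z^\sharp=T(a,z^\sharp)z-a$. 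For the scalar factor, $T(z,\sigma(a))=3T(a,z^\sharp)-T(z,z\times a)$, and the linearized trace identity (with $a\mapsto z$, $b\mapsto a$) gives $T(z,z\times a)=2T(a,z^\sharp)$. Substituting, the coefficient of $z$ collapses to $2T(a,z^\sharp)-T(z,z\times a)=0$, leaving $\sigma'(\sigma(a))=a$.

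For $\sigma\circ\sigma'=\id_{J'}$ I would run the mirror-image computation, taking care that the two maps are \emph{not} literally interchanged by $a\leftrightarrow a^\sharp$: in $\sigma$ the element $z^\sharp$ is the scalar factor while $z$ is the cross-multiplier, and in $\sigma'$ these roles are reversed. Fixing $b\in J'$ and setting $a:=\sigma'(b)=T(z,b)z-b\times'z^\sharp$, the $J'$-version of the linearized trace identity together with $(z^\sharp)^{\sharp'}=z$ gives $T(a,z^\sharp)=3T(z,b)-2T(z,b)=T(z,b)$; the $J'$-instance of \cref{recover:cubic P9} (with $a'\mapsto z^\sharp$, $b'\mapsto b$, using $N'(z^\sharp)=1$) yields $z\times(b\times'z^\sharp)=b+T(z,b)z^\sharp$, and with $z\times z=2z^\sharp$ one finds $z\times a=T(z,b)z^\sharp-b$. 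Feeding these into $\sigma(a)=T(a,z^\sharp)z^\sharp-z\times a$ gives exactly $b$, so $\sigma(\sigma'(b))=b$.

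The only genuine obstacle is bookkeeping rather than conceptual: each application of \cref{recover:cubic P9} and of the linearized \cref{recover:cubic P6} must be made in the correct copy ($J$ versus $J'$), with the substitutions $T'(b,a)=T(a,b)$, $(z^\sharp)^{\sharp'}=z$ and $N'(z^\sharp)=1$ inserted at the right moment. Once the first identity is checked, the symmetric structure of the twin cubic norm pair makes the second entirely parallel.
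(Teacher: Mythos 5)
Your proof is correct and follows essentially the same route as the paper's: expand $\sigma'(\sigma(a))$ (resp.\@ $\sigma(\sigma'(b))$), handle the cross-product term with \cref{recover:cubic P9} together with the $J'$\dash analogues of \cref{recover:cubic P7} and \cref{recover:cubic axioms 5,recover:cubic axioms 6}, and collapse the coefficient of $z$ (resp.\@ $z^\sharp$) via the trace identity $T(z,z\times a)=2T(a,z^\sharp)$. The only cosmetic differences are that you obtain this trace identity by linearizing \cref{recover:cubic P6} (valid here since $|k|\geq 4$), whereas the paper reads it off directly from \cref{recover:cubic P7,recover:cubic P8}, and that you write out the second identity in full where the paper declares it ``similar''.
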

\begin{proof}
	Let $a \in J$ be arbitrary.
	Then by \cref{recover:cubic axioms 5,recover:cubic P6,recover:cubic P7,recover:cubic P8,recover:cubic P9} and since $N(z)=1$,
	\begin{align*}
		\sigma'(\sigma(a))&= T(z,T(a,z^\sharp)z^\sharp)z-T(z,z\times a)z-T(a,z^\sharp)z^\sharp\times'z^\sharp+ (z\times a) \times' z^\sharp \\
							 &= T(a,z^\sharp)T(z,z^\sharp)z-T(a,z\times z)z-2T(a,z^\sharp)(z^\sharp)^{\sharp'} +N(z)a+T(a,z^\sharp)z \\
							 &= 3T(a,z^\sharp)z-2T(a,z^\sharp)z-2T(a,z^\sharp)z +a+T(a,z^\sharp)z \\
							 &= a.
	\end{align*}
	The proof of the second equality is similar.
\end{proof}

\begin{lemma}
\label{recover:sharp commutes}
	We have $\sigma'\circ\sharp=\sharp'\circ\sigma$.
\end{lemma}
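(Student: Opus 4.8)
The plan is to verify the identity $\sigma'(a^\sharp) = (\sigma(a))^{\sharp'}$ for all $a \in J$ by a direct computation, expanding the right-hand side with the quadratic and linearization laws for $\sharp'$ on $J'$ and then collapsing everything using the cubic identities already established, together with the normalization $N(z) = 1$.

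First I would abbreviate $u := T(a, z^\sharp)$, so that $\sigma(a) = u\, z^\sharp - z \times a \in J'$. Applying the $J'$-analogues of \cref{recover:cubic axioms 1,recover:cubic axioms 3} (the quadratic law and the additive law for $\sharp'$), and noting that the cross term is $(u\, z^\sharp) \times' (-z \times a)$, I obtain
\[
(\sigma(a))^{\sharp'} = u^2 (z^\sharp)^{\sharp'} - u\, \bigl(z^\sharp \times' (z \times a)\bigr) + (z \times a)^{\sharp'}.
\]

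Next I would feed in three identities, all specialized via $N(z) = 1$. By \cref{recover:cubic axioms 5} we have $(z^\sharp)^{\sharp'} = N(z)z = z$; by \cref{recover:cubic P9} with the roles $a \mapsto z$, $b \mapsto a$ we have $z^\sharp \times' (z \times a) = N(z)a + T(a, z^\sharp)z = a + u\,z$; and rearranging \cref{recover:cubic P10} (again with $a \mapsto z$, $b \mapsto a$) gives $(z \times a)^{\sharp'} = -\, z^\sharp \times' a^\sharp + T(a, z^\sharp)a + T(z, a^\sharp)z = -\, z^\sharp \times' a^\sharp + u\,a + T(z, a^\sharp)z$. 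Substituting these three expressions, the terms $u^2 z$ and $-u^2 z$ cancel, as do $-u\,a$ and $u\,a$, leaving
\[
(\sigma(a))^{\sharp'} = -\, z^\sharp \times' a^\sharp + T(z, a^\sharp)z.
\]

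Finally, since $\times'$ is symmetric, $z^\sharp \times' a^\sharp = a^\sharp \times' z^\sharp$, so the right-hand side equals $T(z, a^\sharp)z - a^\sharp \times' z^\sharp$, which is precisely $\sigma'(a^\sharp)$ by \cref{recover:def sigma 12} (applied with $b = a^\sharp$). The argument is entirely mechanical; the only points requiring care are the sign in \cref{recover:cubic P10} when solving for $(z \times a)^{\sharp'}$ and the bookkeeping of the scalar $u = T(a, z^\sharp)$, so I do not anticipate a genuine obstacle.
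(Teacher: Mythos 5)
Your proof is correct and uses essentially the same argument as the paper: both expand $\sigma(a)^{\sharp'}$ via the quadratic and additive laws for $\sharp'$ and collapse it with \cref{recover:cubic axioms 5,recover:cubic P9,recover:cubic P10} under $N(z)=1$. The only difference is organizational — the paper reduces both $\sigma'(a^\sharp)$ and $\sigma(a)^{\sharp'}$ to the common expression $(a\times z)^{\sharp'}-T(a,z^\sharp)a$, whereas you push $\sigma(a)^{\sharp'}$ in one chain all the way to $\sigma'(a^\sharp)$ by also invoking \cref{recover:cubic P10} in the rearranged form — which amounts to the same computation.
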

\begin{proof}
	Let $a \in J$ be arbitrary. Then using \cref{recover:cubic P10}, we get
	\begin{align*}
		\sigma'(a^\sharp)&=T(z,a^\sharp)z-a^\sharp\times' z^\sharp \\
						  &=T(z,a^\sharp)z+(a\times z)^{\sharp'}-T(z,a^\sharp)z-T(a,z^\sharp)a \\
						  &=(a\times z)^{\sharp'}-T(a,z^\sharp)a.
	\end{align*}
	On the other hand,
	using $N(z)=1$, \cref{recover:cubic axioms 1,recover:cubic axioms 3,recover:cubic axioms 5,recover:cubic P9}, we have
	\begin{align*}
		\sigma(a)^{\sharp'}&=(T(a,z^\sharp)z^\sharp-z\times a)^{\sharp'} \\
							 &=T(a,z^\sharp)^2(z^\sharp)^{\sharp'}-T(a,z^\sharp)z^\sharp\times'(z\times a)+(z\times a)^{\sharp'} \\
							 &=T(a,z^\sharp)^2z-T(a,z^\sharp)a-T(a,z^\sharp)^2z+(z\times a)^{\sharp'}\\
							 &=(a\times z)^{\sharp'}-T(a,z^\sharp)a.
							 \qedhere
	\end{align*}
\end{proof}

We are now ready to define the cubic norm structure and prove the required defining identities; see \cref{prelim:def hex}.

\begin{definition}
\label{recover:hex system constr}
	We define maps $T_J:J\times J\to k$, $\times_J:J\times J\to J$ and $\sharp_J:J\to J$ by setting
	\begin{align}
		T_J(a,b) &= T(a,\sigma(b)),\\
		a\times_J b &= \sigma'(a\times b) = \sigma(a) \times' \sigma(b), \label{eq:hex2} \\
		a^{\sharp_J} &= \sigma'(a^\sharp) = \sigma(a)^{\sharp'}, \label{eq:hex3}
	\end{align}
	for all $a,b \in J$. (The rightmost equalities in \cref{eq:hex2,eq:hex3} hold by \cref{recover:sharp commutes}.)
\end{definition}

\begin{theorem}
\label{recover:hex system prop}
	Let $L$ be as in \cref{recover:notation e12} and $J$ be as in \cref{recover:notation J}.
	Assume that the map $N$ defined in \cref{recover:define norm cross etc} is not identically zero.
	Then the data $(J,k,N,\sharp_J,\allowbreak T_J,\times_J,z)$ forms a non-degenerate cubic norm structure.
\end{theorem}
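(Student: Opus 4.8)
The plan is to verify that the tuple $(J,k,N,\sharp_J,T_J,\times_J,z)$ satisfies all eleven axioms of \cref{prelim:def hex}, together with non-degeneracy. The key strategy is to \emph{transport} the twin-structure identities \cref{recover:cubic axioms 1,recover:cubic axioms 2,recover:cubic axioms 3,recover:cubic axioms 4,recover:cubic axioms 5,recover:cubic axioms 6,recover:cubic P6,recover:cubic P7,recover:cubic P8,recover:cubic P9,recover:cubic P10} through the isotopy maps $\sigma,\sigma'$ of \cref{recover:def sigma 12}, using that they are mutually inverse (\cref{recover:sigma are inverse}) and intertwine the two sharp maps (\cref{recover:sharp commutes}). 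Concretely, I would first exploit \cref{prelim:hex less cond}: since $|k|\geq 4>3$, it suffices to establish axioms \cref{prelim:hex axiom 1,prelim:hex axiom 2,prelim:hex axiom 4,prelim:hex axiom 5,prelim:hex axiom 7}, together with the ``base point'' axioms \cref{prelim:hex axiom 3,prelim:hex axiom 10,prelim:hex axiom 11}; the remaining axioms \cref{prelim:hex axiom 6,prelim:hex axiom 8,prelim:hex axiom 9} then come for free.

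First I would dispose of the easy scaling axioms. Axiom \cref{prelim:hex axiom 1}, $(\lambda a)^{\sharp_J}=\lambda^2 a^{\sharp_J}$, is immediate from \cref{eq:hex3} and \cref{recover:cubic axioms 1}, and \cref{prelim:hex axiom 2} is \cref{recover:cubic axioms 2} verbatim since $N$ is unchanged. For the key adjoint axiom \cref{prelim:hex axiom 4}, I would apply $\sigma'$ to \cref{recover:cubic axioms 3} and use bilinearity of $\times$ together with the definitions \cref{eq:hex2,eq:hex3}. The trace-associativity axiom \cref{prelim:hex axiom 3}, $T_J(a,b\times_J c)=T_J(a\times_J b,c)$, should unfold via $T_J(a,b)=T(a,\sigma(b))$ into an instance of the symmetry \cref{recover:cubic P8}, after rewriting everything through $\sigma,\sigma'$. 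For the norm-additivity axiom \cref{prelim:hex axiom 5}, I would start from \cref{recover:cubic axioms 4}, which already reads $N(a+b)=N(a)+T(b,a^\sharp)+T(a,b^\sharp)+N(b)$; the task is to match $T(b,a^\sharp)$ with $T_J(a^{\sharp_J},b)=T(a^{\sharp_J},\sigma(b))=T(\sigma'(a^\sharp),\sigma(b))$, which follows once I check the compatibility $T(b,a^\sharp)=T(\sigma'(a^\sharp),\sigma(b))$ — essentially saying that $T$ on $J\times J'$ and $T_J$ on $J\times J$ correspond under $\sigma$, a short computation using \cref{recover:sigma are inverse}.

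The heart of the argument is axiom \cref{prelim:hex axiom 7}, $(a^{\sharp_J})^{\sharp_J}=N(a)a$. Here I would compute $(a^{\sharp_J})^{\sharp_J}=\sigma'\bigl((\sigma'(a^\sharp))^\sharp\bigr)$ using both equalities in \cref{eq:hex3}, then invoke the two twin identities \cref{recover:cubic axioms 5} (namely $(a^\sharp)^{\sharp'}=N(a)a$ and its $J'$-analogue $(b^{\sharp'})^{\sharp}=N'(b)b$) together with \cref{recover:cubic axioms 6} to collapse the double-sharp, the norm factor $N(z)=1$ keeping the scalars correct. The base-point axioms are where the choice $1=z$ pays off: axiom \cref{prelim:hex axiom 10}, $z^{\sharp_J}=z$, should reduce to $\sigma'(z^\sharp)=z$, which I can verify directly from \cref{recover:def sigma 12} and the normalization $N(z)=1$ via \cref{recover:cubic axioms 5,recover:cubic P6,recover:cubic P8}; and axiom \cref{prelim:hex axiom 11}, $a=T_J(a,z)z-z\times_J a$, should unwind into exactly the definition of $\sigma'\circ\sigma=\id_J$ from \cref{recover:sigma are inverse} after substituting $T_J(a,z)=T(a,\sigma(z))=T(a,z^\sharp)$ (using $\sigma(z)=z^\sharp$, itself a consequence of the normalization). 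Finally, non-degeneracy of the cubic norm structure I would deduce from the non-degeneracy of the ambient form $g$ and the triviality results \cref{recover:T nondeg}: an element of $J$ killed by $N$, $T_J(\cdot,J)$ and $T_J(\cdot^{\sharp_J},J)$ translates, through the transport maps, into an element lying in a radical forced to vanish. I expect the main obstacle to be bookkeeping the $\sigma/\sigma'$ transport cleanly in axioms \cref{prelim:hex axiom 5,prelim:hex axiom 7}, where mixing the $J$- and $J'$-versions of the twin identities risks sign and normalization errors; the conceptual content is simply that $\sigma$ is an isotopy identifying the twin pair $(J,J')$ with a single cubic norm structure having unit $z$, so everything must align once the dictionary is fixed.
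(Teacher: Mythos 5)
Your proposal is correct and takes essentially the same route as the paper's own proof: reduce to axioms \cref{prelim:hex axiom 1,prelim:hex axiom 2,prelim:hex axiom 4,prelim:hex axiom 5,prelim:hex axiom 7,prelim:hex axiom 10,prelim:hex axiom 11} via \cref{prelim:hex less cond}, transport the twin identities through $\sigma,\sigma'$ using \cref{recover:sigma are inverse,recover:sharp commutes}, handle the base-point axioms with $N(z)=1$, and get non-degeneracy from \cref{recover:T nondeg}. The only harmless deviations are that you verify \cref{prelim:hex axiom 3} directly although it already follows from \cref{prelim:hex axiom 4,prelim:hex axiom 5}, and a couple of citation slips (e.g., axiom \cref{prelim:hex axiom 10} uses \cref{recover:cubic P7} rather than \cref{recover:cubic P8}, and \cref{recover:cubic axioms 6} is not needed for \cref{prelim:hex axiom 7}).
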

\begin{proof}
	Recall that by \cref{prelim:hex less cond}, we only have to show that the defining identities \cref{prelim:hex axiom 1,prelim:hex axiom 2,prelim:hex axiom 4,prelim:hex axiom 5,prelim:hex axiom 7,prelim:hex axiom 10,prelim:hex axiom 11} of \cref{prelim:def hex} are satisfied.

	First, note that $\times_J$ is symmetric and bilinear by construction.
	For $T_J$, we have
	\begin{align*}
		T_J(a,b)&=T(a,T(b,z^\sharp)z^\sharp)-T(a,z\times b)=T(a,z^\sharp)T(b,z^\sharp)-T(z,a\times b)
	\end{align*}
	by \cref{recover:cubic P8}, and hence $T_J$ is symmetric and bilinear.

	Identity \cref{prelim:hex axiom 1} follows from the linearity of $\sigma'$ and \cref{recover:cubic axioms 1}.
	Identity \cref{prelim:hex axiom 2} is precisely~\cref{recover:cubic axioms 2}.
	Identity \cref{prelim:hex axiom 4} follows from the linearity of $\sigma'$ and \cref{recover:cubic axioms 3}.
	Identity \cref{prelim:hex axiom 5} follows from \cref{recover:sigma are inverse,recover:cubic axioms 4}.
	Identity \cref{prelim:hex axiom 7} follows from \cref{recover:sigma are inverse,recover:sharp commutes,recover:cubic axioms 5}.
	Identity \cref{prelim:hex axiom 10} follows from
	\[ \sigma'(z^\sharp)=T(z,z^\sharp)z-z^\sharp\times'z^\sharp=3N(z)z-2N(z)z=z, \]
	using \cref{recover:cubic P6,recover:cubic P7,recover:cubic axioms 5} and $N(z)=1$.
	Identity \cref{prelim:hex axiom 11} follows from
	\begin{align*}
		z \times_J a=\sigma'(z\times a)&=T(z,z\times a)z-z^\sharp \times' (z\times a) \\
						   &=T(a,z\times z)z-N(z)a-T(a,z^\sharp)z \\
						   &=T(a,z^\sharp)z-a=T_J(a,z)-a
	\end{align*}
	for all $a \in J$,
	using \cref{recover:cubic P7,recover:cubic P8,recover:cubic P9}, $\sigma(z)=z^\sharp$ and $N(z)=1$.
	Finally, the cubic norm structure is non-degenerate: if $a \in J$ is such that $T_J(a, J) = 0$, then we have $T(a, J') = 0$, i.e., $[a, J'] = 0$, but it then follows from the grading that $[a, L_{-1}] = 0$; \cref{recover:T nondeg} then implies that $a=0$.
\end{proof}


\begin{remark}
    A different choice of base point would give rise to a different cubic norm structure, but it is not hard to show (and not surprising) that the resulting cubic norm structures are \emph{isotopic}; see \cref{prelim:def isot}.
\end{remark}

\begin{remark}\label{rem:MQ}
    If the cubic norm structure is \emph{anisotropic} (see \cref{def:cns anis}), then the extremal geometry is, in fact, a generalized hexagon. (The key point here is the fact that the extremal geometry is a generalized hexagon if and only if there are no symplectic pairs, a fact that we have exploited already in the proof of \cref{recover: y and I1}. The existence of symplecta turns out to be equivalent to the existence of \emph{extremal} elements in $L'_{-1} \cap L_{-1} = J$, and such elements always have norm $0$.)
    
    With some more effort, we can use the techniques developed in this section to show that this generalized hexagon is isomorphic to the Moufang hexagon obtained from the cubic norm structure as in \cite[(16.8)]{Tits2002} and we can determine the root groups and the commutator relations explicitly in terms of the Lie algebra.
    Since this result is not so surprising but still requires a substantial amount of work, we omit the details. See \cite[\S 4.4.2]{MeulewaeterPhD}.
\end{remark}

\begin{remark}
    If the map $N$ is identically zero, then also the maps $\sharp$, $\times$, $N'$, $\sharp'$ and~$\times'$ are identically zero.
    In this case, it can be shown that the extremal geometry is isomorphic to a geometry of the form $\Gamma(V, W^*)$ as defined in \cite[Example~2.8]{Cuypers2021}.
    Note that these examples also occur when the Lie algebra is infinite-dimensional. In the finite-dimensional case, these are precisely so-called \emph{root shadow spaces} of type $A_{n, \{1,n\}}$.
\end{remark}

\section{Extremal geometry with symplectic pairs --- recovering a quadrangular algebra}
\label{se:symplectic}

In this section, we continue to assume that $L$ is a simple Lie algebra generated by its pure extremal elements, but we make different assumptions on the existence of certain extremal elements and extremal lines.

\begin{assumption}
	\label{ass:quadr}
	We assume that $L$ is a simple Lie algebra, defined over a field $k$ with $|k|>2$, such that $L$ is generated by its set $E$ of pure extremal elements, and such that:
	\begin{enumerate}
		\item \label{ass:quadr i} there exists a Galois extension $k'/k$ of degree at most $2$ such that the extremal geometry of $L\otimes k'$ contains lines;
		\item \label{ass:quadr ii} there exist symplectic pairs of extremal elements.
	\end{enumerate}
\end{assumption}

This time, we will show that $L_{-1}$ can be decomposed as $V \oplus X \oplus V'$ into $3$ parts, where $V' \cong V$, and that $(V,X)$ can be made into a quadrangular algebra.
This will require a substantial amount of work, mostly because we are including the case $\Char(k)=2$.
Notice, however, that we will never require a case distinction, not even in the computations in our proofs.
For convenience, we have included the $\Char(k)\neq 2$ description of $\theta$ in \cref{pr:theta char not 2}, but we never use this result.

%
%
%

We first point out that $L$ remains simple after base extension to $k'$.
\begin{lemma}\label{le:simple'}
    The Lie algebra $L_{k'} := L \otimes_k k'$ is simple.
\end{lemma}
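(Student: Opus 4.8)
The plan is to argue by Galois descent, using the extremal form to characterize pure elements. We may assume $[k':k]=2$, since otherwise $k'=k$ and there is nothing to prove; write $\Gal(k'/k)=\{1,\sigma\}$. The semilinear action of $\sigma$ on $L_{k'}$ (acting on the second tensor factor) is a Lie algebra automorphism fixing $L=L\otimes 1$ pointwise, and the usual Galois descent for vector spaces identifies the $\sigma$-stable $k'$-ideals of $L_{k'}$ with the ideals of $L$: a $\sigma$-stable ideal $J$ satisfies $J=(J\cap L)\otimes_k k'$, and $J\cap L$ is an ideal of $L$. Since $L$ is simple, the only $\sigma$-stable ideals of $L_{k'}$ are $0$ and $L_{k'}$. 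Now suppose, for contradiction, that $I$ is a nonzero proper ideal of $L_{k'}$. Then $\sigma(I)$ is also an ideal, and $I\cap\sigma(I)$ and $I+\sigma(I)$ are both $\sigma$-stable; as $I\neq 0$ and $I\neq L_{k'}$, this forces $I+\sigma(I)=L_{k'}$ and $I\cap\sigma(I)=0$. Hence $L_{k'}=I\oplus\sigma(I)$ with $[I,\sigma(I)]\subseteq I\cap\sigma(I)=0$.

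Next I would feed a \emph{pure} extremal element into this decomposition; such an $x\in E$ exists because $L$ is generated by $E$. Extremality is a system of polynomial identities, so $w:=x\otimes 1$ is extremal in $L_{k'}$, and the $k'$-linear extension of $g_x$ satisfies \cref{extr,P1,P2} for $w$, hence is the (by \cref{rem:gx} unique) extremal form $g_w$ of $w$. Writing $w=x_1+x_2$ with $x_1\in I$ and $x_2\in\sigma(I)$, the identity $\sigma(w)=w$ gives $x_2=\sigma(x_1)$, so $w=0$ if and only if $x_1=0$. Applying the Premet identity \cref{P1} to $w$ with $y\in I$ and $z\in\sigma(I)$, and using $[I,\sigma(I)]=0$ (so that $[w,y]=[x_1,y]\in I$, $[w,z]=[x_2,z]\in\sigma(I)$, and $[y,z]=0$), the left-hand side lies in $[I,\sigma(I)]=0$, while the right-hand side reduces to $g_w(z)[x_1,y]-g_w(y)[x_2,z]$, whose two summands lie in $I$ and $\sigma(I)$ respectively. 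Comparing components yields $g_w(z)[x_1,y]=0$ and $g_w(y)[x_2,z]=0$ for all $y\in I$ and $z\in\sigma(I)$.

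If $g_w$ were nonzero on $I$, then choosing $y\in I$ with $g_w(y)\neq 0$ would give $[x_2,\sigma(I)]=0$; together with $[x_2,I]=0$ this makes $x_2$ central in $L_{k'}$, but $Z(L_{k'})=Z(L)\otimes_k k'=0$, forcing $x_2=0$ and hence $w=0$, a contradiction. Thus $g_w$ vanishes on $I$, and symmetrically on $\sigma(I)$, so $g_w\equiv 0$, i.e.\ $w$ is a sandwich in $L_{k'}$. On the other hand $x$ is pure, so $g_x\not\equiv 0$ and therefore its extension $g_w\not\equiv 0$, a contradiction. Hence $L_{k'}$ has no nonzero proper ideal and is simple. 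The main obstacle to keep in mind is characteristic $2$: there one cannot recover $g_x$ from $\ad_x^2$, which vanishes identically, which is exactly why the argument must be routed through \cref{P1} and the (trivial) centre rather than through \cref{extr}; for the same reason I would record explicitly that purity is preserved under base change, namely $g_x\not\equiv 0\Leftrightarrow g_{x\otimes 1}\not\equiv 0$, via the uniqueness statement in \cref{rem:gx}.
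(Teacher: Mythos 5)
Your proof is correct, and its first half coincides with the paper's: Speiser's lemma (Galois descent for vector spaces) plus simplicity of $L$ reduce everything to the configuration $L_{k'} = I \oplus I^\sigma$ with $[I, I^\sigma] = 0$. (Your route to $I + I^\sigma = L_{k'}$ is in fact a little cleaner: the paper produces an explicit nonzero $\sigma$-fixed element $a + a^\sigma$, which needs a rescaling so that $\lambda_1 + \lambda_1^\sigma \neq 0$, whereas you just observe that $I + I^\sigma$ is a nonzero $\sigma$-stable ideal.) The endgames, however, are genuinely different. The paper deduces $[I,I] = I$ from $L = [L,L]$, applies the Premet identity \cref{P1} to an arbitrary $x \in E$ with both entries $y,z \in I$ to get $g_x(a)x \in I$ for all $a \in I$, and then uses the non-degeneracy of $g$ (\cref{rem:L simple pure}) to find $a \in I$ with $g(x,a) \neq 0$; hence $E \subseteq I$ and $I = L_{k'}$. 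You instead take a single pure $x \in E$, split $x \otimes 1 = x_1 + x_2$ along $I \oplus \sigma(I)$, apply \cref{P1} with mixed entries $y \in I$, $z \in \sigma(I)$, and use $Z(L_{k'}) = 0$ to force the extremal form of $x \otimes 1$ to vanish identically, contradicting purity. Your argument needs only one pure extremal element and the vanishing of the centre, and avoids both $[I,I]=I$ and the non-degeneracy of $g$; the paper's argument avoids any discussion of how purity behaves under base change (though it, too, silently uses the $k'$-linear extension of $g_x$ when evaluating $g_x$ on elements of $I$). One streamlining of your write-up: the appeal to uniqueness of the extremal form via \cref{rem:gx} can be dropped entirely, since your computation only requires that the $k'$-linear extension of $g_x$ satisfies \cref{P1} for $x \otimes 1$, which is immediate by bilinearity; this also disposes of the worry, which you correctly flag, about whether $x \otimes 1$ could be a sandwich --- in the end your contradiction is simply that this extension vanishes identically on $L_{k'}$ while $g_x \not\equiv 0$ on $L$.
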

\begin{proof}
    Essentially, this is shown in the proof of \cite[Theorem 5.7]{Cuypers2021}.
    However, the statement in \emph{loc.\@ cit.\@} assumes $\Char(k) \neq 2$, and extending it to all characteristics requires some subtle changes in the final step, so we give a complete proof.
    
    Let $\sigma$ be the unique non-trivial element of $\Gal(k'/k)$.
    We first recall%
    \footnote{This fact is reproven in \cite[Lemma 6.9]{Cuypers2021} in this specific situation with an ad-hoc argument, but holds in a much more general setting.}
    that, by \emph{Speiser's Lemma} (see, e.g., \cite[Lemma 2.3.8]{GS17}), each $\sigma$-invariant subspace $W$ of $L_{k'}$ can be written as $W = V \otimes_k k'$ for some subspace $V$ of $L$.
    
    Now let $I$ be a non-trivial ideal of $L_{k'}$ and let $a \in I$ with $a \neq 0$.
    Then, as the extremal elements in $E$ linearly span $L$, they also span $L_{k'}$ over $k'$, so we can write
    \[  a = x_1 \otimes \lambda_1+\cdots+x_k\otimes \lambda_k \]
    where $x_j \in E$ are linearly independent and each $\lambda_j \in k'^\times$.
    After replacing $a$ with a scalar multiple, we can assume $\lambda_1+\lambda_1^\sigma\neq 0$.
    Then $a+a^\sigma$ is a non-trivial element in $I+I^\sigma$ fixed by $\sigma$.
    Therefore, the subspace spanned by the elements in $I+I^\sigma$  fixed by $\sigma$ forms a non-trivial ideal of $L$, which by simplicity of $L$ equals $L$.
    This implies that  $I+I^\sigma = L_{k'}$.
    
    Since both $I$ and $I^\sigma$ are ideals, $I\cap I^\sigma$ is an ideal which is stabilized by $\sigma$.
    By Speiser's Lemma, $I \cap I^\sigma = V \otimes_k k'$ for some subspace $V$ of $L$, but then $V$ is an ideal of $L$.
    By the simplicity of $L$, this implies that either $I = L_{k'}$, or $I\cap I^\sigma=0$.
    So we may assume the latter, and therefore $L_{k'} = I \oplus I^\sigma$.
    In particular, $[I,I^\sigma] \leq I \cap I^\sigma = 0$ and thus
    \begin{equation}\label{eq:II}
        [L_{k'}, L_{k'}] = [I,I]\oplus [I^\sigma,I^\sigma] .
    \end{equation}
    Since $L$ is simple, $L=[L,L]$ and hence $L_{k'} = [L_{k'}, L_{k'}]$, so it follows from \cref{eq:II} that $[I,I]=I$.
    
    Now let $x\in E$ be arbitrary.
    By the Premet identity \cref{P1} with $y,z \in I$, we get $g_x([y,z])x \in I$ for all $y,z \in I$.
    Since $I = [I,I]$, it follows that $g_x(a) x \in I$ for all $a \in I$.
    It thus follows that $x \in I$ if we can find an $a \in I$ with $g(x,a) \neq 0$.
    This is indeed always possible: since $g$ is non-degenerate, there exists a $y \in L$ with $g(x,y) \neq 0$, but since $L_{k'} = I + I^\sigma$, we either find $a \in I$ with $g(x,a) \neq 0$ or we find $b \in I^\sigma$ with $g(x,b) \neq 0$. In the latter case, however, we simply apply $\sigma$, and we find $g(x, b^\sigma) \neq 0$ with $b^\sigma \in I$.
    
    Since $x\in E$ was arbitrary, we have shown that $E$ is contained in $I$.
    However, $E$ generates $L_{k'}$, so we get that $I = L_{k'}$ as required.
    (In fact, this is now a contradiction to the fact that $I \cap I^\sigma = 0$.)
\end{proof}

\begin{proposition}
\label{recover quadr:existence ext elements}
	There exist extremal elements $x$, $y$, $c$ and $d$ such that $g(x,y)=g(c,d)=1$ and $(x,c),(c,y),(y,d),(d,x) \in E_0$.
\end{proposition}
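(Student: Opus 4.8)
The plan is to convert the four symplectic conditions into a single statement about one $5$-grading and then build the configuration geometrically. First I would fix a hyperbolic pair $(x,y)$ with $g(x,y)=1$ and its $5$-grading as in \cref{pr:5gr}; by \cref{pr:Ei(x)} the extremal elements of the middle piece are exactly $E\cap L_0=E_0(x)\cap E_0(y)$. Since the requested relations are precisely $c,d\in E_0(x)\cap E_0(y)$, the proposition is equivalent to finding a hyperbolic pair $(x,y)$ whose $0$-component $L_0$ contains a hyperbolic pair $(c,d)$; rescaling $c,d$ then gives $g(c,d)=1$ as well. So the whole problem reduces to producing two ``nested'' hyperbolic pairs, each lying in the other's middle component.

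To construct these I would pass to $k'$, where \cref{ass:quadr}\cref{ass:quadr i} provides lines and \cref{le:simple'} guarantees that $L_{k'}$ is still simple, so that the geometric tools \cref{pr:RFS} and \cref{pr:symplecta} become available. I would start from a symplectic pair $(x,c)$ furnished by \cref{ass:quadr}\cref{ass:quadr ii}; note that this pair remains symplectic over $k'$, because $[x,c]=0$ persists and collinearity is a Galois-stable (hence $k$-descending) condition that our $k$-symplecticity rules out. Applying \cref{pr:RFS}\cref{pr:RFS:0} to $(x,c)\in E_0$ yields a point $d\in E_0(x)\cap E_2(c)$. Then $(c,d)$ is hyperbolic with both $c,d\in E_0(x)$, i.e. $x$ already lies in the middle component $L'_0$ of the $(c,d)$-grading; it remains to find the hyperbolic partner $y$ of $x$ inside $E\cap L'_0$, that is, $y\in E_2(x)\cap E_0(c)\cap E_0(d)$.

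The hard part will be this last point, and I expect it to be the main obstacle. A first approximation is cheap: applying \cref{pr:RFS}\cref{pr:RFS:0} to the pair $(c,x)$ produces some $y_0\in E_0(c)\cap E_2(x)$, giving two of the three conditions; what is not controlled is the relation between $y_0$ and $d$. To remedy this I would use the symplecton $S_d$ through the symplectic pair $(x,d)$: since $y_0\in E_2(x)$ and $x\in S_d$, \cref{pr:symplecta}\cref{pr:symplecta:e} shows that $S_d\cap E_0(y_0)$ is a single point, and the goal becomes forcing that point to be $d$. I would sweep $y_0$ through the family $E_2(x)\cap E_0(c)$ using automorphisms fixing $x$ and $c$ — concretely the root groups $E_+(x,\cdot)$, whose sharp transitivity on extremal points with prescribed $L_{-2}$-component is \cref{recover:sharp trans opp} — until the unique symplectic partner of the moved $y_0$ in $S_d$ coincides with $d$. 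An appealing alternative, which may avoid the transitivity bookkeeping, is to show directly that $g(x,\cdot)$ is non-degenerate on the extremal locus of $L'_0$: because $[x,c]=0$, the grading element $[c,d]$ is $g$-orthogonal to $x$, so the pairing that survives comes from the extremal part, and any extremal $e\in L'_0$ with $g(x,e)\neq0$ is automatically a hyperbolic partner of $x$ by \cref{pr:2pts}(e); the gap to close in this route is that $L'_0$ is spanned by $E\cap L'_0$, which I would establish from the polar-space structure of the symplecta as in \cref{recover: symp decomp Lie}.

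Finally I would carry the four elements back down to $k$ and normalise. The point is that the constructions above all rest on \emph{uniqueness} assertions (the unique symplectic point of a symplecton in \cref{pr:symplecta}\cref{pr:symplecta:e}, and the sharp transitivity of \cref{recover:sharp trans opp}), which are Galois-equivariant; since the input data $x,c$ are already $k$-rational, the outputs $d$ and $y$ are fixed by $\Gal(k'/k)$ and therefore lie in $L$ itself. Scaling $x,y,c,d$ so that $g(x,y)=g(c,d)=1$ then gives the four extremal elements of the statement, with $(x,c),(c,y),(y,d),(d,x)\in E_0$ as required.
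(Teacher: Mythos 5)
Your opening reduction (the statement is equivalent to finding a hyperbolic pair $(c,d)$ inside $E\cap L_0$ of a hyperbolic pair $(x,y)$) is correct, and passing to $k'$ is also the paper's strategy when no lines exist over $k$, but the proposal has two genuine gaps. The first is exactly the step you flag as the hard part: producing $y\in E_2(x)\cap E_0(c)\cap E_0(d)$. Neither of your routes closes it. The ``sweeping'' argument cites \cref{recover:sharp trans opp}, but that result is sharp transitivity of $E_+(x,y)$ on extremal elements with prescribed non-zero $L_{-2}$-component; those automorphisms do not fix $x$ (only $E_-(x,y)$ does), they need not preserve $E_0(c)$, and nothing guarantees that the orbit of $y_0$ meets the locus where the unique symplectic partner inside $S_d$ is $d$. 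The alternative route hinges on the claim that $L'_0$ is spanned by $E\cap L'_0$, which is unproven and is in fact doubtful over $k$ precisely in the hardest case: when the extremal geometry of $L$ has no lines, $E\cap L_{-1}=\emptyset$, so the piece $X=L_{-1}\cap L'_0$ contains no extremal elements at all, and \cref{recover: symp decomp Lie} (a statement about the span $L_S$ of a single symplecton) does not give the spanning you need. The paper sidesteps this entirely by building the configuration in a different order: first $x$ (inside a symplecton $S$), then $y\in E_2(x)$ by connectedness of $(\E,\E_2)$ over $k$, and then $c$ and $d$ as the \emph{unique} points of the two symplecta $S$ and $T$ in relation $\E_0$ with $y$, via \cref{pr:symplecta}\cref{pr:symplecta:e}; hyperbolicity of $(c,d)$ then follows from a uniqueness contradiction (together with \cite[Lemma 2.17]{Cuypers2021} in the case where lines exist over $k$).

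The second gap is the descent. You assert that $d$ and $y$ are Galois-fixed because the construction ``rests on uniqueness assertions,'' but as you set it up it does not: $d$ comes from \cref{pr:RFS}\cref{pr:RFS:0}, a pure existence statement ($E_0(x)\cap E_2(c)\neq\emptyset$), and the sweeping producing $y$ is likewise non-canonical. A Galois conjugate of $d$ is merely another point of $E_0(x)\cap E_2(c)$, so there is no reason that $\langle d\rangle^\sigma=\langle d\rangle$. This is precisely why the paper's second case defines each new point as the unique point of a $\sigma$-stable symplecton in relation $\E'_0$ with an already $\sigma$-fixed point, making $\sigma$-invariance automatic; even then one needs Hilbert's Theorem 90 to pass from a $\sigma$-stable point $\langle d\rangle$ to a $\sigma$-fixed vector $d\in L$, a step your write-up omits as well. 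As it stands, both the existence of $y$ and the $k$-rationality of the output are unestablished, so the proof is incomplete.
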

\begin{proof}
	Assume first that the extremal geometry of $L$ contains lines.
	By \cref{ass:quadr}\cref{ass:quadr ii}, there exists an $x\in E$ and a symplecton $S$ of the extremal geometry containing $\langle x\rangle$.
	Recall from \cref{ass:simple pure} that $(\E, \E_2)$ is connected, so there exist $y\in E$ such that $g(x,y)=1$.
	By \cref{pr:symplecta}\cref{pr:symplecta:e}, there exists $c\in E$ such that $\langle c\rangle \in S$ and $(c,y)\in E_0$.
	Since $S \subseteq \E_{\leq 0}(x)$, \cref{pr:RFS}\cref{pr:RFS:2} implies that also $(x,c)\in E_0$.
	
	By \cref{pr:RFS}\cref{pr:RFS:0} applied on $(x,c)$, there exists a symplecton $T$ containing $x$ and containing an extremal point hyperbolic with $\langle c\rangle$.
	By \cref{pr:symplecta}\cref{pr:symplecta:e} again, we find $d\in E$ such that $\langle d\rangle \in T$ with $(d,y) \in E_0$, and again, \cref{pr:RFS}\cref{pr:RFS:2} implies that also $(d,x)\in E_0$.
	It remains to show that $(c,d)\in E_2$.
	Let $A := T \cap \E_{\leq 0}(c)$. By \cref{pr:RFS}\cref{pr:RFS:2}, $A = T \cap \E_{0}(c)$.
	By \cref{pr:symplecta}\cref{pr:symplecta:e} now, $A$~consists of the single point $\langle x \rangle$.
	We can now invoke \cite[Lemma 2.17]{Cuypers2021} to see that the point $\langle d \rangle$ is indeed contained in $\E_2(c)$.
	By rescaling $c$, we obtain $g(c,d)=1$.

	Assume now that the extremal geometry of $L$ does not contain lines.
	Then $\E \times \E = \E_{-2} \cup \E_0 \cup \E_2$.
	By \cref{ass:quadr}\cref{ass:quadr i}, the extremal geometry $\Gamma' = (\E', \F')$ of $L_{k'}$ contains lines.
	Recall from \cref{le:simple'} that $L_{k'}$ is again simple, so it satisfies \cref{ass:simple pure}.
	Let $\Gal(k'/k) = \langle \sigma \rangle$. Then the involution $\sigma$ also acts on $\Gamma'$ and fixes all points of $\E$.
	
	We start now with a pair of extremal elements $(a,b) \in E_0$.
	Recall from \cref{ass:simple pure} that $(\E, \E_2)$ is connected, so there exist $e \in E$ such that $(a,e) \in E_2$.
	Notice that by \cref{pr:symplecta}, there is a unique symplecton $S$ through $a$ and $b$ in $\Gamma'$, but since $a$ and $b$ are fixed by $\sigma$, also $S$ is fixed (setwise) by $\sigma$.
	By \cref{pr:symplecta}\cref{pr:symplecta:e}, there is a unique point $\langle x \rangle \in S$ in relation $\E'_0$ with $\langle e \rangle$.
	Then also $\langle x \rangle^\sigma$ has this property, so we must have $\langle x \rangle^\sigma = \langle x \rangle$.
	By Hilbert's Theorem 90, we may replace $x$ by a $k'$-multiple of $x$ to get $x^\sigma = x$ and therefore $x \in E$.
	Notice that now $(x, a) \in E_0$ and that $S$ is the unique symplecton in $\Gamma'$ through $\langle x \rangle$ and $\langle a \rangle$.
	We also have $(x, e) \in E_0$; let $T$ be the unique sympecton in $\Gamma'$ through $\langle x \rangle$ and $\langle e \rangle$.
	Notice that $S \cap T = \langle x \rangle$.
	
	We now apply the connectedness of $(\E, \E_2)$ again to find an element $y \in E_2(x)$.
	Then again, there is a unique point $\langle c \rangle \in S$ in relation $\E'_0$ with $y$, and similarly,
	there is a unique point $\langle d \rangle \in T$ in relation $\E'_0$ with $y$.
	As before, we get $\langle c \rangle^\sigma = \langle c \rangle$ and $\langle d \rangle^\sigma = \langle d \rangle$ and we may rescale $c$ and $d$ so that $c \in E$ and $d \in E$.
	Now observe that $(c,e) \not\in E_0$, since otherwise both $\langle x \rangle$ and $\langle c \rangle$ would be in relation $\E'_0$ with~$\langle e \rangle$, contradicting \cref{pr:symplecta}\cref{pr:symplecta:e} applied on $S$ and $e$. Since $\E \times \E = \E_{-2} \cup \E_0 \cup \E_2$, this implies that $(c,e) \in E_2$. Similarly, it now follows that $(c,d) \not\in E_0$, since otherwise both $\langle x \rangle$ and $\langle d \rangle$ would be in relation $\E'_0$ with~$\langle c \rangle$, again contradicting \cref{pr:symplecta}\cref{pr:symplecta:e}, now applied on $T$ and $c$. We conclude that $(c,d) \in E_2$.
	It now only remains to rescale the elements to see that $x$, $y$, $c$ and $d$ satisfy all the required assumptions.
\end{proof}

\begin{notation}
\label{recover quadr:two gradings}
	Let $x$, $y$, $c$, $d$ be as in \cref{recover quadr:existence ext elements}.
	We denote the $5$-grading on~$L$ associated with the hyperbolic pair $(x,y)$ as in \cref{pr:5gr} by
	\begin{align}
		L_{-2}\oplus L_{-1}\oplus L_0\oplus L_1\oplus L_2, \label{recover quadr:grading 1}
	\end{align}
	with $L_{-2}=\langle x\rangle$ and $L_2 = \langle y \rangle$.
	Similarly, we denote the $5$-grading on~$L$ associated with the hyperbolic pair $(c,d)$ as in \cref{pr:5gr} by
	\begin{align}
		L'_{-2}\oplus L'_{-1}\oplus L'_0\oplus L'_1\oplus L'_2, \label{recover quadr:grading 2}
	\end{align}
	with $L'_{-2}=\langle c \rangle$ and $L'_2 = \langle d \rangle$.
	
	Now set
	\[ V := L_{-1}\cap L'_{-1}, \quad X := L_{-1}\cap L'_0, \quad V':=L_{-1}\cap L'_1, \quad X' := L_0 \cap L'_{-1}. \]
\end{notation}

\begin{lemma}
    We have $c,d \in L_0$ and $x,y \in L'_0$.
\end{lemma}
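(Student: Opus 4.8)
The plan is to translate each of the four symplectic incidences produced in \cref{recover quadr:existence ext elements} into a membership statement for one of the two filtrations, using \cref{pr:Ei(x)} and its grading-reversed companion. For the $(x,y)$-grading of \cref{recover quadr:two gradings}, \cref{pr:Ei(x)} describes the symplectic neighbours of the bottom end:
\[ E_0(x) = (E \cap L_{\leq 0}) \setminus L_{\leq -1} , \]
so every extremal element symplectic with $x$ lies in $L_{\leq 0}$. Applying the grading-reversing automorphism $\varphi$ of \cref{recover:switching grading} (as licensed by \cref{rem:swap}), which interchanges $x$ and $y$ and maps $L_i$ to $L_{-i}$, gives the companion statement
\[ E_0(y) = (E \cap L_{\geq 0}) \setminus L_{\geq 1} , \]
so that every extremal element symplectic with $y$ lies in $L_{\geq 0}$.

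With these two inclusions the first claim is immediate. Since $(x,c) \in E_0$ we have $c \in E_0(x) \subseteq L_{\leq 0}$, and since $(c,y) \in E_0$ --- the relation $E_0$ being symmetric --- we have $c \in E_0(y) \subseteq L_{\geq 0}$; intersecting yields $c \in L_{\leq 0} \cap L_{\geq 0} = L_0$. The identical argument applied to $(d,x) \in E_0$ and $(y,d) \in E_0$ gives $d \in L_{\leq 0} \cap L_{\geq 0} = L_0$.

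For the second claim I would run exactly the same reasoning for the $(c,d)$-grading \cref{recover quadr:grading 2}, in which $c$ is the bottom end and $d$ is the top end, so that \cref{pr:Ei(x)} and its swapped version give $E_0(c) \subseteq L'_{\leq 0}$ and $E_0(d) \subseteq L'_{\geq 0}$. The pairs $(c,x),(x,d) \in E_0$ then place $x$ in $L'_{\leq 0} \cap L'_{\geq 0} = L'_0$, and the pairs $(c,y),(y,d) \in E_0$ place $y$ in $L'_{\leq 0} \cap L'_{\geq 0} = L'_0$.

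There is no real obstacle here: the only thing to get right is the bookkeeping of which symplectic relation controls the upper and which the lower half of each filtration, together with the symmetry of the relation $E_0$ (so that, for instance, $(c,y) \in E_0$ may be read as $c \in E_0(y)$). In particular, no assumption on $\Char(k)$ is required, which is consistent with the characteristic-free spirit of this section.
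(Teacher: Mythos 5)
Your proof is correct and is essentially the paper's own argument spelled out in full: the paper also deduces $c \in L_0$ from $(x,c),(c,y) \in E_0$ via \cref{pr:Ei(x)} (with the swap of \cref{recover:switching grading,rem:swap} implicitly handling the $y$-side) and declares the other three cases similar. Your version merely makes the filtration bookkeeping $E_0(x) \subseteq L_{\leq 0}$, $E_0(y) \subseteq L_{\geq 0}$ explicit, which is exactly what "follows immediately" compresses.
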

\begin{proof}
	Since $(x,c),(c,y) \in E_0$, it follows immediately from \cref{pr:Ei(x)} that $c\in L_0$.
	The proof of the other three statements is similar.
\end{proof}

\begin{lemma}
\label{recover quadr:prop g}
	We have $g_c(L_{-2}\oplus L_{-1}\oplus L_1\oplus L_2)=0$ and $g_d(L_{-2}\oplus L_{-1}\oplus L_1\oplus L_2) = 0$.
\end{lemma}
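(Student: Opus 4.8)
The plan is to leverage the fact, just established, that both $c$ and $d$ lie in the degree-zero piece $L_0$ of the grading \cref{recover quadr:grading 1} coming from $(x,y)$, combined with the grading-compatibility of the form $g$. The whole statement splits naturally into two regimes: the ``inner'' pieces $L_{\pm 1}$, which I would dispose of by an eigenvalue argument valid in every characteristic, and the ``outer'' pieces $L_{\pm 2}$, which I would handle directly from the symplectic hypotheses.

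First I would set $h := [x,y] \in L_0$ and recall from \cref{pr:5gr}\cref{5gr:grading der} that each $L_i$ sits inside the $i$-eigenspace of $\ad_h$; in particular $[h,c] = [h,d] = 0$ because $c,d \in L_0$. The key computation is then purely formal. For any $l_1 \in L_1$ we have $[h,l_1] = l_1$, so using that $g$ associates with the Lie bracket (\cref{pr:g}) we get
\[ g(c,l_1) = g(c,[h,l_1]) = g([c,h],l_1) = 0 , \]
since $[c,h] = -[h,c] = 0$. The identical manipulation, now with $[h,l_{-1}] = -l_{-1}$, gives $g(c,l_{-1}) = 0$ for all $l_{-1} \in L_{-1}$. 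For the outer pieces $L_{-2} = \langle x\rangle$ and $L_2 = \langle y\rangle$ I would instead invoke the symplectic relations: since $(x,c),(c,y) \in E_0$ by \cref{recover quadr:existence ext elements}, \cref{pr:2pts} yields $g(c,x) = g(c,y) = 0$. Putting these together shows $g_c$ vanishes on each of $L_{-2}, L_{-1}, L_1, L_2$, hence on their sum. The statement for $d$ then follows verbatim, using $[h,d] = 0$ for the $L_{\pm 1}$ pieces and the relations $(y,d),(d,x) \in E_0$ for the $L_{\pm 2}$ pieces.

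I do not expect a genuine obstacle here, but it is worth pinpointing why the cleanest possible argument does not suffice on its own. The identity $(i+j)\,g(L_i,L_j) = 0$, obtained from $\ad_h$ exactly as above, would instantly give that $g$ is ``supported in degree zero'' were the scalars $i+j$ all invertible; however for the pairs contributing to $L_{\pm 2}$ one meets the eigenvalue sum $\pm 2$, which vanishes when $\Char(k) = 2$. This is precisely the point where characteristic $2$ would break a naive argument, and it is exactly why the four specific symplectic relations $(x,c),(c,y),(y,d),(d,x) \in E_0$ were arranged in \cref{recover quadr:existence ext elements}: the $L_{\pm 1}$ contributions are killed by the eigenvalues $\pm 1$, which are nonzero over any field, while the $L_{\pm 2}$ contributions are killed not by eigenvalues at all but by the symplectic hypotheses, making the whole argument uniform in the characteristic.
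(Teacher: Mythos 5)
Your proof is correct and follows essentially the same route as the paper: the outer pieces $L_{\pm 2}$ are handled identically via the symplectic relations and \cref{pr:2pts}, and the inner pieces $L_{\pm 1}$ via the associativity of $g$ combined with a bracket identity from \cref{pr:5gr}. The only cosmetic difference is that the paper contracts against $[x,[y,l]]=-l$ from \cref{pr:5gr}\cref{5gr:iso} and uses $[c,x]=0$, whereas you contract against $\ad_{[x,y]}$ from \cref{pr:5gr}\cref{5gr:grading der} and use $[c,[x,y]]=0$; these are interchangeable, and your closing observation correctly pinpoints why the pure eigenvalue argument cannot reach $L_{\pm 2}$ when $\Char(k)=2$, which is exactly why both proofs fall back on the symplectic relations there.
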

\begin{proof}
    We show the claim for $g_c$.
	Since $(c,x),(c,y) \in E_0$, we get $g_c(x)=0$ and $g_c(y)=0$.
	Next, let $l\in L_{-1}$ be arbitrary and recall from \cref{pr:5gr}\cref{5gr:iso} that  $[x,[y,l]]=-l$.
	Using the associativity of $g$, we get
	\[ -g_c(l)=g(c,[x,[y,l]])=g([c,x],[y,l])=g(0,[y,l])=0,\]
	so $g_c(L_{-1}) = 0$.
	Similarly, $g_c(L_{1}) = 0$.
\end{proof}

\begin{proposition}
\label{recover quadr:decomp in 3}
	We have decompositions
	\begin{align*}
	   L_{-1} &= V \oplus X \oplus V' , \\
	   L_{1} &= [y,V] \oplus [y,X] \oplus [y,V'], \\[1ex]
	   L'_{-1} &= V \oplus X' \oplus [y,V] , \\
	   L'_{1} &= V' \oplus [d,X'] \oplus [y,V'].
	\end{align*}
\end{proposition}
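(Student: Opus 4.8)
The plan is to establish the middle decomposition $L_{-1}=V\oplus X\oplus V'$ first, and then to read off the other three by applying the grading isomorphisms $\ad_y$ and $\ad_d$ from \cref{pr:5gr}\cref{5gr:iso}. The argument for $L_{-1}$ mirrors the one in \cref{recover:L1 decomposition} almost verbatim. The starting point is that, since $c,d\in L_0$ and $x,y\in L'_0$ (by the preceding lemma), the symplectic relations $(x,c),(c,y),(y,d),(d,x)\in E_0$ force $[x,c]=[c,y]=[y,d]=[d,x]=0$ by \cref{pr:2pts}, case (c). Using \cref{recover quadr:prop g}, I first note $g_c(L_{-1})=g_d(L_{-1})=0$, which by \cref{pr:5gr}\cref{5gr:indep} places $L_{-1}$ inside $c^\perp\cap d^\perp=L'_{-1}\oplus L'_0\oplus L'_1$. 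Writing $l\in L_{-1}$ as $l=l'_{-1}+l'_0+l'_1$ with $l'_i\in L'_i$, I then exploit two facts: applying $\ad_{[c,d]}$ (here $[c,d]\in[L_0,L_0]\leq L_0$, so it preserves $L_{-1}$) together with \cref{pr:5gr}\cref{5gr:grading der} gives $-l'_{-1}+l'_1\in L_{-1}$; and applying the torus element $\varphi_\lambda$ of \cref{recover:autom of grading} for the $(c,d)$-grading---which fixes $x$ and $y$ because $x,y\in L'_0$, hence preserves $L_{-1}$ by \cref{recover quadr:prop aut fixing x and y}---gives $\lambda^{-1}l'_{-1}+l'_0+\lambda l'_1\in L_{-1}$. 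Combining these exactly as in \cref{recover:L1 decomposition} yields $(\lambda+\lambda^{-1}-2)\,l'_{-1}\in L_{-1}$; choosing $\lambda=-1$ when $\Char(k)\neq 2$, or any $\lambda\neq 0,1$ (available since $|k|>2$) when $\Char(k)=2$, forces $l'_{-1}\in L_{-1}$, whence $l'_1,l'_0\in L_{-1}$ as well. This gives $L_{-1}=V\oplus X\oplus V'$.

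The decomposition $L'_{-1}=V\oplus X'\oplus(L'_{-1}\cap L_1)$ follows by the symmetric argument with the roles of the two gradings interchanged: the analogue of \cref{recover quadr:prop g} gives $g_x(L'_{-1})=g_y(L'_{-1})=0$, the element $[x,y]\in[L'_0,L'_0]\leq L'_0$ plays the role of $[c,d]$, and the torus $\psi_\mu$ of the $(x,y)$-grading fixes $c$ and $d$ since $c,d\in L_0$. It then remains to identify the last summand with $[y,V]$. One inclusion is clear from the gradings, since $[y,V]\subseteq[L'_0,L'_{-1}]\cap[L_2,L_{-1}]\subseteq L'_{-1}\cap L_1$. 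Conversely, for $w\in L'_{-1}\cap L_1$, the inverse $-\ad_x$ of $\ad_y$ from \cref{pr:5gr}\cref{5gr:iso} gives $w=[y,v]$ with $v=-[x,w]$; since $x\in L'_0$ we have $v\in L'_{-1}$, and $v\in L_{-1}$ by the isomorphism, so $v\in V$ and $w\in[y,V]$. Hence $L'_{-1}=V\oplus X'\oplus[y,V]$.

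The two remaining decompositions are then immediate. Applying the linear isomorphism $\ad_y\colon L_{-1}\to L_1$ to $L_{-1}=V\oplus X\oplus V'$ gives $L_1=[y,V]\oplus[y,X]\oplus[y,V']$. Applying $\ad_d\colon L'_{-1}\to L'_1$ to $L'_{-1}=V\oplus X'\oplus[y,V]$ gives $L'_1=[d,V]\oplus[d,X']\oplus[d,[y,V]]$, and it remains to simplify the outer two summands. For $[d,V]$: any $[d,v]$ lies in $L_{-1}\cap L'_1=V'$ by the gradings, and conversely every $v'\in V'$ equals $[d,-[c,v']]$ with $-[c,v']\in V$, so $[d,V]=V'$. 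For $[d,[y,V]]$: the key simplification is the Jacobi identity together with $[d,y]=0$, giving $[d,[y,v]]=[y,[d,v]]$, so $[d,[y,V]]=[y,[d,V]]=[y,V']$. This yields $L'_1=V'\oplus[d,X']\oplus[y,V']$. The only genuinely delicate point is the characteristic-$2$ case of the first decomposition, handled by the choice of $\lambda$ above exactly as in \cref{recover:L1 decomposition}; everything else is bookkeeping with the two gradings and the isomorphisms $\ad_y$ and $\ad_d$.
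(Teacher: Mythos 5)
Your proof is correct and follows essentially the same route as the paper: reduce $L_{-1}$ (resp.\ $L'_{-1}$) to a sum of three intersections via $g_c,g_d$ (resp.\ $g_x,g_y$) and the eigenspace/torus argument of \cref{recover:L1 decomposition}, then transport the decompositions by the isomorphisms $\ad_y$ and $\ad_d$ from \cref{pr:5gr}\cref{5gr:iso}. The only difference is that you spell out details the paper leaves implicit, namely the identifications $L'_{-1}\cap L_1=[y,V]$, $[d,V]=V'$ and $[d,[y,V]]=[y,V']$, and these are carried out correctly.
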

\begin{proof}
	By \cref{recover quadr:prop g}, we get $g_d(L_{-1})=0$ and hence $L_{-1} \leq L'_{\geq -1}$ by \cref{pr:5gr}\cref{5gr:g0} applied to the grading \cref{recover quadr:grading 2}.
	Similarly, it follows from $g_c(L_{-1})=0$ that $L_{-1} \leq L'_{\leq 1}$.
	Hence $L_{-1} \leq L'_{-1}\oplus L'_0\oplus L'_1$.

    The rest of the proof of the decomposition of $L_{-1}$ now follows exactly the same method as in the proof of \cref{recover:L1 decomposition}, using the grading element $[c,d]\in L_0$, so we can safely omit the details.
    The second decomposition follows from the first by applying the isomorphism $\ad_y$ from $L_{-1}$ to $L_1$; see \cref{pr:5gr}\cref{5gr:iso}.
    
    The other two decompositions follow in a similar fashion by interchanging the roles of the two gradings \cref{recover quadr:grading 1,recover quadr:grading 2}.
%
%
\end{proof}

We have summarized all the information about the intersections of the two gradings in \cref{fig:qa}.
Notice that the four subspaces $V$, $V' = [d,V]$, $[y,V]$ and $[y,V'] = [y,[d,V]]$ are all isomorphic, by \cref{pr:5gr}\cref{5gr:iso}.
(See also the proof of \cref{co:easy} below.)
For the same reason, $X \cong [y,X]$ and $X' \cong [d,X']$.
We will see later that also the subspaces $X$ and $X'$ are isomorphic; see \cref{co:XX'}.
Notice that in \cref{fig:qa}, we have also identified a third grading (diagonally), but this grading is not directly necessary for our purposes.
See, however, \cref{rem:qa 3rd grading} below.

\begin{figure}[ht!]
\[
	\scalebox{.85}{%
	\begin{tikzpicture}[x=28mm, y=28mm, 	label distance=-3pt]
		\node[myblob=7mm] at (0,2) (N02) {$\langle x\rangle$};
		\node[myblob=14mm] at (1,1) (N11) {$V'$};
		\node[myblob=20mm] at (1,2) (N12) {$X$};
		\node[myblob=14mm] at (1,3) (N13) {$V$};
		\node[myblob=7mm] at (2,0) (N20) {$\langle d\rangle$};
		\node[myblob=20mm] at (2,1) (N21) {$[d, X']$};
		\node[myblob=24mm] at (2,2) (N22) {$L_0\cap L'_0$};
		\node[myblob=20mm] at (2,3) (N23) {$X'$};
		\node[myblob=7mm] at (2,4) (N24) {$\langle c\rangle$};
		\node[myblob=14mm] at (3,1) (N31) {$[y,V']$};
		\node[myblob=20mm] at (3,2) (N32) {$[y,X]$};
		\node[myblob=14mm] at (3,3) (N33) {$[y,V]$};
		\node[myblob=7mm] at (4,2) (N42) {$\langle y\rangle$};
		\path[ugentred]
			(0,4.5) node (C0) {\Large $L_{-2}$}
			(1,4.5) node (C1) {\Large $L_{-1}$}
			(2,4.5) node (C2) {\Large $L_{0}$}
			(3,4.5) node (C3) {\Large $L_{1}$}
			(4,4.5) node (C4) {\Large $L_{2}$};
		\path[ugentblue]
			(4.5,0) node (R0) {\Large $L'_{2}$}
			(4.5,1) node (R1) {\Large $L'_{1}$}
			(4.5,2) node (R2) {\Large $L'_{0}$}
			(4.5,3) node (R3) {\Large $L'_{-1}$}
			(4.5,4) node (R4) {\Large $L'_{-2}$};
		\draw[myedge,ugentblue]
			(N11) -- (N21) -- (N31)
			(N02) -- (N12) -- (N22) -- (N32) -- (N42)
			(N13) -- (N23) -- (N33);
		\draw[myedge,ugentblue,opacity=.15]
			(-.25,0) -- (N20) -- (R0)
			(-.25,1) -- (N11) (N31) -- (R1)
			(-.25,2) -- (N02) (N42) -- (R2)
			(-.25,3) -- (N13) (N33) -- (R3)
			(-.25,4) -- (N24) -- (R4);
		\draw[myedge,ugentred]
			(N11) -- (N12) -- (N13)
			(N20) -- (N21) -- (N22) -- (N23) -- (N24)
			(N31) -- (N32) -- (N33);
		\draw[myedge,ugentred,opacity=.15]
			(0,-.25) -- (N02) -- (C0)
			(1,-.25) -- (N11) (N13) -- (C1)
			(2,-.25) -- (N20) (N24) -- (C2)
			(3,-.25) -- (N31) (N33) -- (C3)
			(4,-.25) -- (N42) -- (C4);
		\draw[myedge,ugentyellow]
			(N02) --  (N13) -- (N24)	
			(N12) --  (N23) 
			(N11) --  (N22) -- (N33)	
			(N21) --  (N32) 
			(N20) --  (N31) -- (N42)	;
	\end{tikzpicture}
	}
\]
\caption{Intersecting gradings for quadrangular algebras}\label{fig:qa}
\end{figure}
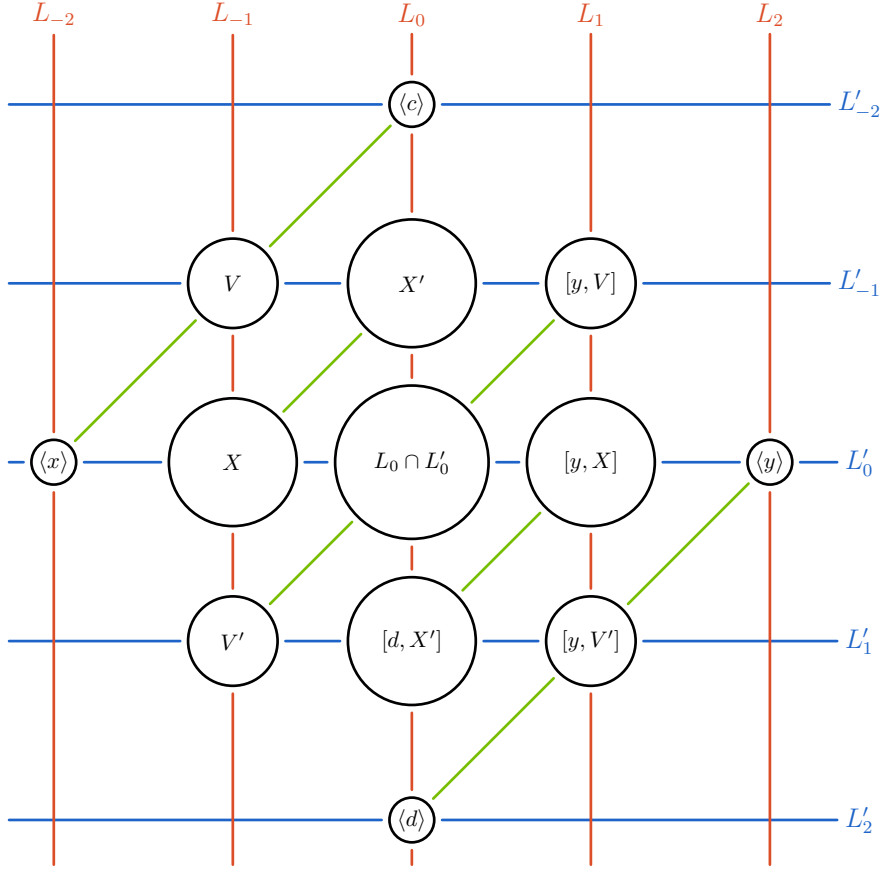

By combining both gradings, we already obtain some more information on the Lie bracket.

\begin{corollary}\label{co:easy}
	The following identities and inclusions hold:
	\begin{alignat}{2}
		\label{recover quadr:decoomp eq 1} &[c,V']=V, && [d,V]=V', \\
		\label{recover quadr:decoomp eq 5} &[c,V]=0, && [d,V']=0, \\
		\label{recover quadr:decoomp eq 6} &[c,X]=0, && [d,X]=0, \\
		\label{recover quadr:decoomp eq 2} &[V,V]=0, \quad [V',V']=0, \quad && [V,X]=0, \quad [V',X]=0, \\
		\label{recover quadr:decoomp eq 3} &[X,[X,[y,V]]\leq V, && [X,[X,[y,V']]\leq V', \\
		\label{recover quadr:decoomp eq 4} &[X,[V,[y,V']]]\leq X, && [X,[V',[y,V]]]\leq X.
	\end{alignat}
\end{corollary}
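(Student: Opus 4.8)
The plan is to read off every identity and inclusion from the \emph{bidegrees} of the subspaces in the two gradings \cref{recover quadr:grading 1,recover quadr:grading 2}, in exactly the same spirit as the proof of \cref{recover:L1 other decomposition}. The preceding lemma records that $c\in L_0\cap L'_{-2}$, $d\in L_0\cap L'_2$, $x\in L_{-2}\cap L'_0$ and $y\in L_2\cap L'_0$, while \cref{recover quadr:two gradings} gives $L_{-2}=\langle x\rangle$, $L_2=\langle y\rangle$, $L'_{-2}=\langle c\rangle$ and $L'_2=\langle d\rangle$. First I would assign to each homogeneous piece the pair $(i,j)$ recording that it lies in $L_i\cap L'_j$; then a bracket of two pieces lands in the piece of bidegree equal to the sum, which can be identified from \cref{fig:qa}. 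Here $V$, $X$, $V'$, $X'$ carry bidegrees $(-1,-1)$, $(-1,0)$, $(-1,1)$, $(0,-1)$, while $[y,V]$, $[y,X]$, $[y,V']$ carry $(1,-1)$, $(1,0)$, $(1,1)$.

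Most statements are then immediate bidegree computations. For \cref{recover quadr:decoomp eq 5}, the brackets $[c,V]$ and $[d,V']$ have bidegrees $(-1,-3)$ and $(-1,3)$, landing in $L'_{\pm 3}=0$. For \cref{recover quadr:decoomp eq 6}, $[c,X]$ and $[d,X]$ have bidegrees $(-1,-2)$ and $(-1,2)$, hence lie in $L_{-1}\cap\langle c\rangle$ and $L_{-1}\cap\langle d\rangle$; as $c,d\in L_0$ these intersections vanish. The four brackets of \cref{recover quadr:decoomp eq 2} land in $L_{-2}\cap L'_{-2}$, $L_{-2}\cap L'_2$, $L_{-2}\cap L'_{-1}$ and $L_{-2}\cap L'_1$, each trivial since $L_{-2}=\langle x\rangle$ with $x\in L'_0$ while $L'_{\pm 2}=\langle c\rangle,\langle d\rangle$ with $c,d\in L_0$. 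For the triple brackets, tracking the three successive sums gives, in \cref{recover quadr:decoomp eq 3}, bidegrees $(0,-1)$ then $(-1,-1)=V$, and $(0,1)$ then $(-1,1)=V'$; and in \cref{recover quadr:decoomp eq 4}, bidegree $(0,0)$ then $(-1,0)=X$ in both cases.

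The only point needing more than bookkeeping is the pair of \emph{equalities} in \cref{recover quadr:decoomp eq 1}. The bidegree count gives only the inclusions $[c,V']\subseteq V$ and $[d,V]\subseteq V'$. To upgrade these to equalities I would use that $c,d\in L_0$, so $\ad_c$ and $\ad_d$ preserve the primary grading $(L_i)$, together with \cref{pr:5gr}\cref{5gr:iso} applied to the grading \cref{recover quadr:grading 2}: this says that $\ad_c\colon L'_1\to L'_{-1}$ is a linear isomorphism with inverse $-\ad_d$. Restricting these mutually inverse isomorphisms to the $L_{-1}$-parts $V'=L_{-1}\cap L'_1$ and $V=L_{-1}\cap L'_{-1}$ then yields $[c,V']=V$ and $[d,V]=V'$.

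I do not anticipate any genuine obstacle: the whole argument is a transparent double-grading computation. The one subtlety is remembering to invoke the isomorphism \cref{pr:5gr}\cref{5gr:iso}, rather than a mere inclusion, precisely for \cref{recover quadr:decoomp eq 1}, and to keep track of the sign coming from the inverse $-\ad_d$ there.
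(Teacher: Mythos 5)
Your proposal is correct and follows essentially the same route as the paper: the vanishing statements and inclusions are exactly the bidegree bookkeeping the paper summarizes as ``combining the two gradings'' (you simply make the figure-reading explicit), and the equalities in \cref{recover quadr:decoomp eq 1} are obtained, as in the paper, from \cref{pr:5gr}\cref{5gr:iso} applied to the grading \cref{recover quadr:grading 2} together with the fact that $c,d\in L_0$ makes $\ad_c$ and $\ad_d$ preserve the first grading, so the mutually inverse maps restrict to isomorphisms between $V$ and $V'$.
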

\begin{proof}
    By \cref{pr:5gr}\cref{5gr:iso} applied on the grading \cref{recover quadr:grading 2}, the map $\ad_d$ is an isomorphism from $L'_{-1}$ to $L'_1$ with inverse $-\ad_c$.
    On the other hand, since $c,d \in L_0$, these maps preserve the grading \cref{recover quadr:grading 1}.
    It follows from \cref{recover quadr:decomp in 3} that $\ad_d$ induces an isomorphism from $V$ to $V'$, with inverse $-\ad_c$. This shows \cref{recover quadr:decoomp eq 1}.

    The relations \cref{recover quadr:decoomp eq 5,recover quadr:decoomp eq 6,recover quadr:decoomp eq 2,recover quadr:decoomp eq 3,recover quadr:decoomp eq 4} all follow immediately from combining the two gradings, as can be seen from \cref{fig:qa}.
%
%
%
\end{proof}

We will now exploit the uniqueness of certain extremal elements to define a quadratic form on the vector space $V$.

\begin{lemma}
\label{recover quadr:unique Q}
	For every $v \in V$, there is a unique $\lambda\in k$ such that
	\begin{align}
	\label{recover quadr:eq unique Q}
		l_v := \lambda x + [v,d] + d
	\end{align}
	is an extremal element.
	Moreover, $l_v=l_{v'}$ if and only if $v=v'$, where $v,v'\in V$.
\end{lemma}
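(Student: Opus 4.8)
The plan is to deduce both existence and uniqueness from the algebraicity of $L$. Since \cref{ass:quadr} together with \cref{le:simple'} guarantees that $L\otimes k'$ is simple, generated by its pure extremal elements, and has lines, \cref{recover:Galois descent} applies and shows that $L$ is algebraic with respect to its $5$\dash grading. Applying that result to the reversed hyperbolic pair $(y,x)$ (equivalently, using \cref{rem:swap}) produces, for each element of $L_{-1}$, an associated exponential automorphism whose maps \emph{lower} the grading. Concretely, for $v\in V\subseteq L_{-1}$ there is an automorphism $\alpha$ with $\alpha(m)=m+[v,m]+q_\alpha(m)+n_\alpha(m)+v_\alpha(m)$, where $q_\alpha(L_i)\subseteq L_{i-2}$, $n_\alpha(L_i)\subseteq L_{i-3}$ and $v_\alpha(L_i)\subseteq L_{i-4}$. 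Recalling that $d\in L_0$ (shown just before \cref{recover quadr:prop g}), we get $n_\alpha(d)\in L_{-3}=0$ and $v_\alpha(d)\in L_{-4}=0$, while $q_\alpha(d)\in L_{-2}=\langle x\rangle$, say $q_\alpha(d)=\lambda_0 x$. Hence
\[ \alpha(d)=d+[v,d]+\lambda_0 x=\lambda_0 x+[v,d]+d \]
is an extremal element of the desired shape $l_v$, which settles existence.

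For uniqueness of $\lambda$, the key point is that $\alpha$ fixes $x$: as $x\in L_{-2}$ sits at the bottom of the downward\dash shifting grading, $[v,x]\in L_{-3}=0$ and $q_\alpha(x),n_\alpha(x),v_\alpha(x)$ all vanish, so $\alpha(x)=x$ and thus $\alpha^{-1}(x)=x$. Now suppose $\lambda x+[v,d]+d$ is extremal for some $\lambda\in k$. Writing it as $(\lambda-\lambda_0)x+\alpha(d)$ and applying the automorphism $\alpha^{-1}$ yields the extremal element
\[ \alpha^{-1}\bigl(\lambda x+[v,d]+d\bigr)=d+(\lambda-\lambda_0)x . \]
Since all extremal elements of $L$ are pure (as $L$ is simple and has no sandwiches) and $(d,x)\in E_0$ is a symplectic pair, case~(c) of \cref{pr:2pts} shows that $d+\mu x$ can be extremal only when $\mu=0$; as the coefficient of $d$ here equals $1$, we must have $\lambda-\lambda_0=0$. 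Therefore $\lambda=\lambda_0$ is the unique admissible scalar.

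Finally, the \emph{moreover} statement is immediate: if $l_v=l_{v'}$, then comparing $L_{-1}$\dash components gives $[v,d]=[v',d]$, i.e.\ $[v-v',d]=0$, and by \cref{co:easy} (see \cref{recover quadr:decoomp eq 1}) the map $\ad_d$ restricts to an isomorphism $V\to V'$, hence is injective on $V$, forcing $v=v'$; the converse is trivial. I expect the only genuine subtlety to be the bookkeeping surrounding the \emph{downward} version of the exponential automorphisms from \cref{th:alg,def:l-exp}, and ensuring the grading\dash shift conventions are applied consistently after swapping $x$ and $y$; the structural core of the argument — reducing uniqueness to the single symplectic relation $(d,x)\in E_0$ — is short and does not require a characteristic distinction.
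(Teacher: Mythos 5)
Your proof is correct, and its existence step and final injectivity step coincide with the paper's: the paper likewise invokes \cref{recover:Galois descent} (via the swap of \cref{rem:swap}) to obtain a $v$-exponential automorphism $\alpha$ with the grading-lowering convention, so that $\alpha(d)=q_\alpha(d)+[v,d]+d$ with $q_\alpha(d)\in L_{-2}=\langle x\rangle$, and it likewise deduces $v=v'$ from the injectivity of $\ad_d$ on $V$ (\cref{pr:5gr}\cref{5gr:iso} for the grading \cref{recover quadr:grading 2}). Where you take a genuinely different route is the uniqueness of $\lambda$. The paper argues geometrically: if two distinct scalars both gave extremal elements, the $2$-dimensional subspace $\langle x,[v,d]+d\rangle$ would contain three distinct extremal points, hence by \cref{pr:2pts} it would be a line of the extremal geometry through $\langle x\rangle$, and this contradicts \cref{co:E-1}, since $[v,d]+d$ has non-zero $L_0$-component and so cannot have the form $\mu x+e$ with $e\in E\cap L_{-1}$. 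You instead note that $\alpha$ fixes $x$ (immediate from the grading shifts), pull a putative extremal element $\lambda x+[v,d]+d$ back by $\alpha^{-1}$ to get $(\lambda-\lambda_0)x+d\in E$, and then invoke case (c) of \cref{pr:2pts} for the symplectic pair $(d,x)\in E_0$ from \cref{recover quadr:existence ext elements} (legitimately, since all extremal elements are pure by \cref{rem:L simple pure}) to force $\lambda=\lambda_0$. Both arguments are short and ultimately rest on \cref{pr:2pts}; yours buys directness, reducing everything to the single symplectic relation $(d,x)\in E_0$ and avoiding \cref{co:E-1} altogether, at the small cost of tying the uniqueness argument to the particular automorphism $\alpha$ produced in the existence step, whereas the paper's line-counting argument establishes uniqueness independently of how existence was obtained.
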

\begin{proof}
    Since $v \in L_{-1}$, \cref{recover:Galois descent} implies that there exists a $v$-exponential automorphism $\alpha \in \Aut(L)$.
    Then $\alpha(d)$ is an extremal element, and we have $\alpha(d) = q_\alpha(d) + [v,d] + d$ with $q_\alpha(d) \in L_{-2} = \langle x \rangle$, showing the existence.
    
	Assume now that that there are two distinct scalars such that \cref{recover quadr:eq unique Q} holds.
	Then the $2$-dimensional subspace $\langle x, [v,d] + d \rangle$ contains three distinct extremal elements, so \cref{pr:2pts} implies that this is a line of the extremal geometry.
	This contradicts \cref{co:E-1}.

	Finally, if $l_v = l_{v'}$ for certain $v,v' \in V$, then $[v,d]=[v',d]$.
	\Cref{pr:5gr}\cref{5gr:iso} applied on the grading \cref{recover quadr:grading 2} then implies that $v = v'$.
\end{proof}

\begin{definition}
\label{def:Q and T}
	Define the map $Q \colon V \to k$ by setting $Q(v)$ to be the unique $\lambda\in k$ from \cref{recover quadr:unique Q}, i.e., such that the element
	\begin{equation}\label{eq:Q-extr}
		l_v = Q(v)x + [v,d] + d
	\end{equation}
	is an extremal element.
	Define the bilinear form $T \colon V\times V \to k$ by
	\[ T(u,v) x = [u,[v,d]] \]
	for all $u,v \in V$.
\end{definition}

\begin{remark}\label{rem:v-exp}
	The uniqueness aspect, although easy to prove, is somewhat subtle, because the $v$-exponential automorphism $\alpha$ that we have used in the proof of \cref{recover quadr:unique Q} is not unique.
	Notice, however, that we will be able to single out a unique $v$-exponential automorphism in \cref{def:v-exp} below.
\end{remark}

\begin{lemma}
\label{recover quadr:Q quadr sp}
	The map $Q$ is a quadratic form on $V$, with corresponding bilinear form $T$.
\end{lemma}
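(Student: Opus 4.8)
The plan is to read off both defining properties of a quadratic form (in the sense of \cref{prelim: def quadr space}) directly from the $l$-exponential machinery of \cref{th:alg}, applied in the ``lowered'' form appropriate to $L_{-1}$ via \cref{rem:swap}. The observation I would record first is a dictionary between $Q$ and these automorphisms. For $v \in V \subseteq L_{-1}$, pick a $v$-exponential automorphism $\alpha$ (which exists by \cref{recover:Galois descent}); since $d \in L_0$ and the grading has length $5$, the cubic and quartic parts lie in $n_\alpha(d) \in L_{-3} = 0$ and $v_\alpha(d) \in L_{-4} = 0$, so $\alpha(d) = d + [v,d] + q_\alpha(d)$ with $q_\alpha(d) \in L_{-2} = \langle x \rangle$. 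As $\alpha(d)$ is extremal and has exactly the shape appearing in \cref{recover quadr:unique Q} (its $L_0$-component is $d$ and its $L_{-1}$-component is $[v,d]$), the uniqueness there forces $q_\alpha(d) = Q(v)\,x$, independently of the chosen $\alpha$.

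For homogeneity I would invoke \cref{th:alg}\cref{th:alg:scalar}: the automorphism $\alpha_\lambda$ is $(\lambda v)$-exponential and satisfies $\alpha_\lambda(d) = d + \lambda[v,d] + \lambda^2 q_\alpha(d) = \lambda^2 Q(v)\,x + [\lambda v, d] + d$. This is again an extremal element of the form governed by \cref{recover quadr:unique Q}, now for $\lambda v \in V$, so uniqueness immediately yields $Q(\lambda v) = \lambda^2 Q(v)$.

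For the associated bilinear form, I first note that $T$ is well defined and bilinear straight from the grading: for $u,v \in V$ we have $[v,d] \in L_{-1}$, hence $[u,[v,d]] \in L_{-2} = \langle x \rangle$, and the Lie bracket is bilinear. (Symmetry of $T$ then comes for free once the identity below is proved, since the right-hand side is manifestly symmetric; alternatively it follows from $[V,V]=0$ in \cref{recover quadr:decoomp eq 2} and the Jacobi identity.) The heart of the argument is to identify $Q(u+v)-Q(u)-Q(v)$ with $T(u,v)$. Choosing $u$- and $v$-exponential automorphisms $\alpha,\beta$, \cref{th:alg}\cref{th:alg:sum} makes $\gamma := \alpha\beta$ a $(u+v)$-exponential automorphism, and the product formula of \cref{rem:beta}\cref{rem:beta:product} gives $q_\gamma(d) = q_\alpha(d) + [u,[v,d]] + q_\beta(d) = \bigl(Q(u)+T(u,v)+Q(v)\bigr)x$. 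On the other hand, applying the dictionary of the first paragraph to $\gamma$ and $u+v \in V$ shows $q_\gamma(d) = Q(u+v)\,x$. Comparing the two expressions yields $T(u,v) = Q(u+v) - Q(u) - Q(v)$, and since $T$ is bilinear, $Q$ is a quadratic form with associated bilinear form $T$.

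The only genuinely delicate point is the bookkeeping at the start: one must use the $L_{-1}$-version of \cref{th:alg} (lowering rather than raising the grading) and verify that $d \in L_0$ so that the higher exponential terms drop out. Once this is in place, the entire statement reduces to the uniqueness in \cref{recover quadr:unique Q} combined with the two standard formulas \cref{th:alg}\cref{th:alg:scalar} and \cref{rem:beta}\cref{rem:beta:product}.
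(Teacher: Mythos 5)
Your proof is correct and follows essentially the same route as the paper: both arguments identify $Q(v)x$ with $q_\alpha(d)$ for a $v$-exponential automorphism, obtain homogeneity from \cref{th:alg}\cref{th:alg:scalar}, and obtain $Q(u+v)=Q(u)+T(u,v)+Q(v)$ by applying the composition $\alpha\beta$ to $d$ and reading off the $L_{-2}$-component. The only (immaterial) difference is that you invoke the general product formula of \cref{rem:beta}\cref{rem:beta:product} together with \cref{th:alg}\cref{th:alg:sum}, whereas the paper expands $\alpha(\beta(d))$ by hand.
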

\begin{proof}
	Let $\lambda\in k$ and $u,v \in V$.
	Let $\alpha,\beta$ be a $u$-exponential and a $v$-exponential automorphism, respectively.
	Then $q_{\alpha}(d) + [u,d] + d \in E$ and $q_\alpha(d) = Q(u)x$.
	By \cref{th:alg}\cref{th:alg:scalar}, $\lambda^2 q_{\alpha}(d) + \lambda[u,d] + d \in E$.
	Hence $Q(\lambda u)=\lambda^2 Q(u)$.

    Next, we have
    \begin{align*}
    	\alpha(\beta(d))
    	&= \alpha( Q(v)x + [v,d] + d) \\
    	&= Q(v)x + [v,d] + [u,[v,d]] + Q(u)x + [u,d] + d \\
    	&= \bigl( Q(v) + T(u,v) + Q(u) \bigr) x + [u+v, d] + d \in E,
    \end{align*}
	hence $Q(u+v) = Q(v) + T(u,v) + Q(u)$.
	The map $T$ is bilinear by construction, hence $Q$ is a quadratic form on $V$ with corresponding bilinear form $T$.
\end{proof}

We will see in \cref{le:Q not 0} below that $Q$ is not the zero map.
The following lemma is a first step, useful in its own right.
\begin{lemma}
\label{recover quadr:Q zero}
	Let $v \in V \setminus \{ 0 \}$ be such that $Q(v) = 0$.
	Then $v$, $[v,d]$, $[y,v]$ and $[y,[v,d]]$ are extremal elements.
\end{lemma}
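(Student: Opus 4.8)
The plan is to exploit the extremal element $l_v = [v,d]+d$ (using $Q(v)=0$) from \cref{def:Q and T}, and to repeatedly apply the grading-reversing automorphism $\varphi$ from \cref{recover:switching grading} together with \cref{pr:Ei(x)} to climb between the various pieces of the two gradings in \cref{fig:qa}. Since $Q(v)=0$, the element $l_v = [v,d]+d$ lies in $L'_{-1}+L'_{-2}$ with $L'_{-2}$-component $d\neq 0$, so by \cref{pr:Ei(x)} (applied to the grading \cref{recover quadr:grading 2}) we have $(l_v, c)\in E_1$, and hence $[l_v,c]$ is extremal. Now $[d,c]=-[c,d]=-y\cdot g\text{-scaled}$; more precisely $[l_v,c] = [[v,d],c]+[d,c]$, and by \cref{recover quadr:decoomp eq 1} ($[c,V']=V$, together with $[v,d]\in V'$) the first term lands in $V$ while $[d,c]\in\langle y\rangle$ is not present since $c,d\in L_0$. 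The cleanest route, however, is to observe directly that $l_v\in L_{\geq -1}\setminus L_{\geq 0}$ in the \emph{first} grading (its $L_{-1}$-component is $v\neq 0$ and it has no $L_{\geq 0}$-part), so by \cref{pr:Ei(x)} we get $(l_v,y)\in E_1$, whence $[l_v,y]=[v,y]+[ [v,d],y]$ is extremal.

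First I would use the first grading: from $v\neq 0$, the element $l_v$ has $L_{-1}$-component $v$ and lies in $L_{-1}\oplus L_0\oplus L_{-2}$ — actually $[v,d]\in V'\subseteq L_{-1}$ and $d\in L_0$, so $l_v\in L_{-2}\oplus L_{-1}\oplus L_0$, i.e. $l_v\in L_{\leq 0}$ with nonzero $L_{-1}$-part. By \cref{pr:Ei(x)}, $l_v\in E_{-1}(y)$, hence $[l_v,y]\in E$. Expanding, $[l_v,y] = [v,y] + [[v,d],y]\in L_1$ (the $x$-term dies and $[d,y]\in\langle y\rangle$ contributes nothing outside, but since $Q(v)=0$ there is no $x$-term anyway, and $[d,y]=0$ because $(d,y)\in E_0$). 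Thus $[y,v]+[y,[v,d]]$ is extremal in $L_1$. To split this into its two summands I would apply \cref{pr:Ei(x)} with respect to the \emph{second} grading \cref{recover quadr:grading 2}: since $[y,v]\in [y,V]\subseteq L'_{-1}$ and $[y,[v,d]]\in[y,V']\subseteq L'_1$, the extremal element $[y,v]+[y,[v,d]]$ has nonzero $L'_{-1}$-component and lies in $L'_{\geq -1}\setminus L'_{\geq 0}$, so it is collinear with $c$; applying $\ad_c$ and $\ad_d$ (isomorphisms by \cref{pr:5gr}\cref{5gr:iso}) and the argument pattern of \cref{recover:char 2 uniqueness}, I would peel off each homogeneous component and conclude each is individually extremal.

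Symmetrically, I would show $v$ and $[v,d]$ are extremal. The element $l_v=Q(v)x+[v,d]+d = [v,d]+d$ is extremal with $[v,d]\in V'\subseteq L'_1$ and $d\in L'_2$; by \cref{th:alg}\cref{th:alg:a+b} (or directly since $[[v,d],d]\in[V',\langle d\rangle]=0$ by \cref{recover quadr:decoomp eq 1} reading $[d,V']=0$), the pair is special, and one extracts that $[v,d]$ is extremal. Then $v=[c,[v,d]]$ up to sign by $[c,V']=V$ in \cref{recover quadr:decoomp eq 1}, and since $\ad_c$ sends the extremal element $[v,d]\in E_1(c)$ (as $(l_v,c)\in E_1$) to an extremal element, $v$ is extremal as well. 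The main obstacle I anticipate is the characteristic-$2$ bookkeeping when separating a sum of homogeneous extremal elements into its extremal summands: the clean trick is the one used twice already in \cref{recover:char 2 uniqueness} and \cref{recover:uniqueness}, namely that if $l_{-1}+e$ is extremal with $l_{-1},e$ in different graded pieces, then applying $\ad_y$ and $\ad_x$ and invoking \cref{pr:2pts} forces $(l_{-1},e)\in E_{-1}$ and each summand extremal. I would lean on exactly this mechanism, applied to each of the two gradings in turn, so that no explicit computation with the extremal form $g$ in characteristic $2$ is needed.
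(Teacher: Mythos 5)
Your plan is built on a mis-expansion of $l_v$, and the error propagates through the whole argument. By \cref{def:Q and T}, $l_v = Q(v)x + [v,d] + d = [v,d] + d$: there is \emph{no} summand $v$. Hence $[l_v,y] = [[v,d],y] + [d,y] = -[y,[v,d]]$ (note $[d,y]=0$ since $(d,y)\in E_0$), a single homogeneous element, not $[v,y]+[[v,d],y]$. So the ``splitting'' problem that your second paragraph is devoted to does not arise, and the relation claims underpinning that paragraph are themselves wrong: since $L'_2=\langle d\rangle$ (not $L'_{-2}$), the pair $(l_v,c)$ is \emph{hyperbolic} by \cref{pr:Ei(x)}, not special, so $[l_v,c]$ is not known to be extremal; and $l_v\in E_1(y)$ (special), not $E_{-1}(y)$ --- if $l_v$ were collinear with $y$, then $[l_v,y]=0$ by \cref{pr:2pts}, contradicting the very extremality you want to use. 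Likewise $[y,v]+[y,[v,d]]$ would be special with $c$, not collinear.

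The second, independent gap is your last paragraph, which is where $[v,d]$ and $v$ --- half of the statement --- are supposed to be handled. The pair $([v,d],d)$ commutes (indeed $[d,V']=0$, which is \cref{recover quadr:decoomp eq 5}, not \cref{recover quadr:decoomp eq 1}), and a commuting pair can never be special, since special pairs have nonzero bracket; moreover \cref{pr:2pts} cannot be applied to this pair before you know that $[v,d]$ is extremal, and \cref{th:alg}\cref{th:alg:a+b} presupposes extremality of the summand rather than producing it. So nothing in your text yields $[v,d]\in E$. The missing idea --- the one the paper uses --- is that $Q(\lambda v)=\lambda^2 Q(v)=0$ for every $\lambda$ (\cref{recover quadr:Q quadr sp}), so the whole family $\lambda[v,d]+d$, $\lambda\in k$, is extremal; since $|k|\geq 3$, this excludes case (c) of \cref{pr:2pts} for the commuting pair $(d,\,[v,d]+d)$, forcing collinearity, whence $[v,d]=([v,d]+d)-d\in E$. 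The remaining three elements then follow by transporting with the switching automorphisms of \cref{recover:switching grading}: $v=[c,[v,d]]\in E$, and $[y,v],[y,[v,d]]\in E$. (Your special-pair idea is in fact repairable without the family argument: $(l_v,y)\in E_1$ gives $[y,[v,d]]\in E$, then $([y,[v,d]],x)\in E_1$ by \cref{pr:Ei(x)} gives $[v,d]=-[x,[y,[v,d]]]\in E$, and the switching automorphisms finish --- but that is not the argument you wrote.)
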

\begin{proof}
	By definition of $Q$ and \cref{recover quadr:Q quadr sp}, we have $\lambda [v,d] + d \in E$ for all $\lambda\in k$.
	Note that $[v,d] \neq 0$ since $\ad_d$ induces an isomorphism from $V$ to $V'$.
	By \cref{pr:2pts}, this implies that $[v,d]\in E$.
	Since $\ad_c$ maps extremal elements in $L'_1$ to extremal elements in $L'_{-1}$ (by \cref{recover:switching grading}),
	it follows that also $v = [c, [v,d]] \in E$.
	Similarly, $\ad_y$ maps extremal elements in $L_{-1}$ to extremal elements in $L_1$, and thus also $[y,v]$ and $[y, [v,d]]$ are extremal elements.
\end{proof}

\begin{lemma}\label{recover quadr:Q with x}
	Let $v \in V$.
	Then
	\begin{alignat}{2}
		&Q(v)x + v + c \in E, \quad && x + v + Q(v)c \in E, \label{eq:Q-xa} \\
		&Q(v)x + [v,d] + d \in E, \quad && x + [v,d] + Q(v)d \in E, \label{eq:Q-xb} \\
		&Q(v)y + [y,v] + c \in E, \quad && y + [y,v] + Q(v)c \in E, \label{eq:Q-ya} \\
		&Q(v)y + [y,[v,d]] + d \in E, \quad && y + [y,[v,d]] + Q(v)d \in E. \label{eq:Q-yb}
	\end{alignat}
	Moreover, in each case, $Q(v)$ is the unique scalar that makes the statement true.
\end{lemma}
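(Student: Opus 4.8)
The plan is to take the single defining property of $Q$ as input and propagate it to all eight elements using the symmetries of the configuration, then treat uniqueness uniformly. The starting point is that $l_v = Q(v)x + [v,d] + d$ is extremal by the very definition of $Q$ in \cref{def:Q and T}/\cref{recover quadr:unique Q}; this is already the second element of \eqref{eq:Q-xb}. First I would record the symplectic vanishings $[x,c]=[c,y]=[y,d]=[d,x]=0$ coming from the relations $(x,c),(c,y),(y,d),(d,x)\in E_0$ of \cref{recover quadr:existence ext elements}, together with $[c,[v,d]]=v$ (from \cref{recover quadr:decoomp eq 1}, where $\ad_c$ inverts $\ad_d$ up to sign). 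Applying the grading-reversing automorphism $\varphi'=\exp(d)\exp(c)\exp(d)$ of the $(c,d)$-grading (the analogue of \cref{recover:switching grading}) to $l_v$, the vanishing $[d,x]=0$ gives $\varphi'(x)=x$, while $\varphi'([v,d])=[c,[v,d]]=v$ and $\varphi'(d)=c$; hence $\varphi'(l_v)=Q(v)x+v+c$ is extremal, which is the first element of \eqref{eq:Q-xa}. Applying instead $\varphi=\exp(y)\exp(x)\exp(y)$, the vanishings $[y,c]=[y,d]=0$ give $\varphi(c)=c$ and $\varphi(d)=d$, so $\varphi$ sends the first elements of \eqref{eq:Q-xb} and \eqref{eq:Q-xa} to $Q(v)y+[y,[v,d]]+d$ and $Q(v)y+[y,v]+c$, i.e. the first elements of \eqref{eq:Q-yb} and \eqref{eq:Q-ya}. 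This establishes the first (left-hand) assertion in each of the four lines.

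For the second element in each line I would pass from the corresponding first element by a torus automorphism $\varphi_\lambda$ (or its $(c,d)$-analogue) from \cref{recover:autom of grading}, which rescales the three graded components by powers of $\lambda$. A direct coefficient comparison shows that, when $Q(v)\neq 0$, a single choice of scaling parameter turns e.g. $Q(v)x+[v,d]+d$ into a scalar multiple of $x+[v,d]+Q(v)d$, and likewise for the other three lines. When $Q(v)=0$ the scaling degenerates, but then \cref{recover quadr:Q zero} tells us that $v$, $[v,d]$, $[y,v]$ and $[y,[v,d]]$ are themselves extremal; since each lies in $L_{-1}$ or $L_1$, \cref{a in g_{-1}} (and its swap, \cref{rem:swap}) shows it is collinear with $x$ or with $y$, so the relevant sum is extremal by case (b) of \cref{pr:2pts}. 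This covers all four right-hand elements.

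Uniqueness in every case follows the template of \cref{recover quadr:unique Q}. If two distinct scalars made a given statement hold, then the $2$-dimensional space spanned by the varied end and the (fixed) remainder would contain three distinct extremal points and hence be a line of the extremal geometry by \cref{pr:2pts}. But in each line the fixed remainder has a nonzero component in the degree-$0$ piece of precisely the grading in which the varied end is an extreme component: $c$ and $d$ lie in $L_0$, while $x$ and $y$ lie in $L'_0$. This is incompatible with the description of the points collinear with that end given by \cref{co:E-1} (invoking \cref{rem:swap} when the varied end is a top end $y$ or $d$), yielding the contradiction and thus uniqueness of the scalar $Q(v)$.

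The only real difficulty is bookkeeping: one must track which of the two gradings each element is ``extreme'' in and keep the signs straight in the reversal computations (so that $[c,[v,d]]=v$ rather than $-v$, etc.). The one genuinely separate branch is $Q(v)=0$ in the right-hand column, where the torus trick collapses to a scalar multiple of the middle element alone and must instead be handled by the collinearity argument via \cref{recover quadr:Q zero} and \cref{a in g_{-1}}.
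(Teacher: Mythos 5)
Your proposal is correct and follows essentially the same route as the paper's proof: the defining extremal element $Q(v)x+[v,d]+d$, the two grading-reversing automorphisms from \cref{recover:switching grading} to propagate to the other left-hand elements, a rescaling argument for the right-hand column (your torus automorphism $\varphi_\lambda$ followed by a scalar multiple is just a repackaging of the paper's substitution $v \mapsto Q(v)^{-1}v$ followed by multiplication by $Q(v)$), the collinearity argument via \cref{recover quadr:Q zero} when $Q(v)=0$, and the uniqueness template of \cref{recover quadr:unique Q}. The only organizational difference is that you handle the left-hand column uniformly by automorphisms instead of splitting off $Q(v)=0$ for all eight elements as the paper does, which is a harmless streamlining.
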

\begin{proof}
	Assume first that $Q(v) = 0$.
	Then by \cref{recover quadr:Q zero}, we have $v \in E$.
	By \cref{co:E-1}, this element is collinear with $c$ and with $x$, so in particular, $v + c \in E$ and $x + v \in E$.
	The proof of the other 6 statements is similar.
	
	Assume now that $Q(v) \neq 0$.
	By definition, $Q(v)x + [v,d] + d \in E$.
	By replacing $v$ with $Q(v)^{-1} v$, we also get $Q(v)^{-1}x + Q(v)^{-1} [v,d] + d \in E$, and multiplying by $Q(v)$ now gives
	$x + [v,d] + Q(v) d \in E$. This proves \cref{eq:Q-xb}.
	
	Next, let $\varphi \in \Aut(L)$ be the automorphism obtained from \cref{recover:switching grading} applied to the grading \cref{recover quadr:grading 2}.
	In particular, $\varphi(d) = c$ and $\varphi(x) = x + [c, [d, x]] = x$, so we get
	\[ \varphi \bigl( Q(v)x+[v,d]+d \bigr) = Q(v)x+[c,[v,d]]+c = Q(v)x+v+c \in E . \]
	We can again replace $v$ by $Q(v)^{-1} v$ to get $x + v + Q(v)c \in E$ as well.
	This proves \cref{eq:Q-xa}.
	
	We now apply the automorphism $\psi \in \Aut(L)$ obtained from \cref{recover:switching grading} applied to the grading \cref{recover quadr:grading 1}.
	Then in exactly the same way as in the previous paragraph, applying $\psi$ on \cref{eq:Q-xa} yields \cref{eq:Q-ya} and applying $\psi$ on \cref{eq:Q-xb} yields \cref{eq:Q-yb}.
	
	Finally, the uniqueness statement follows in each case in exactly the same way as in the proof of \cref{recover quadr:unique Q}.
\end{proof}
The previous lemma will, in particular, allow us to single out a specific $v$\dash exponential automorphism, as announced in \cref{rem:v-exp} above.
\begin{definition}\label{def:v-exp}
	Let $v \in V$.
	Then by \cref{recover quadr:Q with x} applied on $-v$, we have $y + [v,y] + Q(v)c \in E$.
	By \cref{recover:sharp trans opp}, there exists a \emph{unique} automorphism $\alpha_v \in E_-(x,y)$ such that
	\begin{equation}\label{eq:alpha-v y}
		\alpha_v(y) = y + [v,y] + Q(v)c .
	\end{equation}
	By \cref{th:alg}\cref{th:alg:E+}, $\alpha_v$ is an $\ell$-exponential automorphism for some $\ell \in L_{-1}$, but then $[\ell, y] = [v,y]$ and therefore, by applying $\ad_x$, we get $\ell = v$, so $\alpha_v$ is a $v$\dash exponential automorphism.
	Notice that by \cref{recover quadr:unique Q,def:Q and T}, we also have
	\begin{equation}\label{eq:alpha-v d}
		\alpha_v(d) = d + [v,d] + Q(v)x .
	\end{equation}
\end{definition}
In a similar way, we can define a unique $[y,v]$-exponential automorphism for each $v \in V$, this time with respect to the $L_i'$-grading \cref{recover quadr:grading 2}. We will need these automorphisms in \cref{le:Theta_av,pr:hardest} below.
\begin{definition}\label{def:yv-exp}
	Let $v \in V$ and consider $[y,v] \in [y,V]$.
	By \cref{recover quadr:Q with x}, we have $d + [y, [v,d]] + Q(v)y \in E$.
	By \cref{recover:sharp trans opp}, there exists a \emph{unique} automorphism $\beta_v \in E_-(c,d)$ such that
	\[ \beta_v(d) = d + [y, [v,d]] + Q(v)y = d + [[y,v], d] + Q(v)y . \]
	In particular, by \cref{th:alg}\cref{th:alg:E+}, $\beta_v$ is an $\ell$-exponential automorphism for some $\ell \in L'_{-1}$, but then $[\ell, d] = [[y,v], d]$, so by applying $\ad_c$, we get $\ell = [y,v]$, so $\beta_v$ is a $[y,v]$-exponential automorphism with respect to the $L_i'$-grading \cref{recover quadr:grading 2}.
	Notice that $\beta_v(x) = x + [[y,v], x] + \lambda c \in E$ for some $\lambda \in k$, so $x + v + \lambda c \in E$, hence by the uniqueness part of \cref{recover quadr:Q with x}, we have
	\[ \beta_v(x) = x + v + Q(v) c . \]
\end{definition}

It is a slightly subtle fact that $\alpha_v$ is not only $v$-exponential with respect to the $L_i$-grading \cref{recover quadr:grading 1}, but also with respect to the $L'_i$-grading \cref{recover quadr:grading 2}, as we now show.
\begin{proposition}\label{pr:v-exp both gradings}
	Let $v \in V = L_{-1} \cap L'_{-1}$.
	Then $\alpha_v$ is also $v$-exponential with respect to the $L'_i$-grading \cref{recover quadr:grading 2}.
\end{proposition}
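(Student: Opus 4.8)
The plan is to prove that $\alpha_v$, which is already known to be $v$-exponential with respect to the grading \cref{recover quadr:grading 1}, is \emph{also} $v$-exponential with respect to the grading \cref{recover quadr:grading 2}. The key observation is that $v$ lies in both $L_{-1}$ and $L'_{-1}$, so $\ad_v$ shifts both gradings downward by one; what must be checked is that the higher-order correction maps $q_{\alpha_v}, n_{\alpha_v}, v_{\alpha_v}$ respect the degree conditions \cref{eq:alg1} for the primed grading as well. Since $\alpha_v \in E_-(x,y)$ is a genuine automorphism of $L$ with $\alpha_v(m) = m + [v,m] + q_{\alpha_v}(m) + n_{\alpha_v}(m) + v_{\alpha_v}(m)$, the natural strategy is to show directly that $\alpha_v$ satisfies \cref{def:l-exp} relative to the primed grading.

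First I would recall that by \cref{def:v-exp}, we have the explicit values $\alpha_v(y) = y + [v,y] + Q(v)c$ and $\alpha_v(d) = d + [v,d] + Q(v)x$; note the telling fact that $c \in L'_{-2}$ and $x \in L'_{-1}$, while $y,d \in L'_{\geq 0}$ (indeed $y \in L'_0$ and $d \in L'_2$, using the earlier lemma that $x,y \in L'_0$ and the grading of \cref{fig:qa}). Since $\alpha_v \in E_-(x,y)$ means $\alpha_v$ is a product of maps $\exp(e)$ with $e \in E \cap L_{\leq -1}$, and $L_{\leq -1} \leq L'_{\geq -1}$ is false in general, the cleanest route is instead to invoke the uniqueness and characterization machinery: I would apply \cref{recover:sharp trans opp} to the primed grading. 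Specifically, $\beta_v$ from \cref{def:yv-exp} is a $[y,v]$-exponential automorphism for the primed grading with a known action, and I would relate $\alpha_v$ to an $\alpha'_v \in E_-(c,d)$ built the same way as $\alpha_v$ but using the primed grading and the extremal element $x + v + Q(v)c$ guaranteed by \cref{recover quadr:Q with x}.

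The decisive step is to show $\alpha_v = \alpha'_v$ where $\alpha'_v \in E_-(c,d)$ is the unique automorphism (by \cref{recover:sharp trans opp} applied to the primed grading) with $\alpha'_v(x) = x + v + Q(v)c$, which is $v$-exponential with respect to \cref{recover quadr:grading 2} by the argument mirroring \cref{def:v-exp}. To identify the two, I would check that both automorphisms agree on a generating set for $L$. Both $\alpha_v$ and $\alpha'_v$ act as $m \mapsto m + [v,m]$ on $L'_{\geq 1}$ and both map $x$ to $x + v + Q(v)c$: for $\alpha_v$ this last fact needs verification, which should follow by computing $\alpha_v(x)$ using that $\alpha_v$ preserves the filtration $(L_{\leq i})$ and that its $v$-exponential form forces the $L_{-1}$-component of $\alpha_v(x)$ to be $[v,x] = v$ (via $\ad_x$ on $V$), then pinning down the $L'_{-2}$-component to be $Q(v)c$ using \cref{eq:Q-xa}. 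The main obstacle I anticipate is precisely this computation of $\alpha_v(x)$: one must show the $L_0$-component vanishes and the $L_{-2}$-component equals $Q(v)c$, which requires the relation $[v,[v,x]] = 2Q(v)c$ or an equivalent identity tying the second-order term $q_{\alpha_v}(x)$ to $Q(v)$. Once $\alpha_v(x) = x + v + Q(v)c$ is established, uniqueness in \cref{recover:sharp trans opp} (for the primed grading) gives $\alpha_v = \alpha'_v$ directly, since both lie in the relevant group and agree on the extremal element $x$; as $L$ is generated by $\{x\} \cup L'_1$ analogously to the end of \cref{recover:sharp trans opp}, this forces equality and transfers the $v$-exponential property to the primed grading.
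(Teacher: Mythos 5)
Your overall strategy --- construct a $v$-exponential automorphism for the primed grading and identify it with $\alpha_v$ by agreement on a generating set --- is the same as the paper's, but your choice of anchor element is wrong, and this breaks the argument at its decisive step. You propose to pin down both automorphisms by their action on $x$, claiming $\alpha_v(x) = x + v + Q(v)c$. This is false: $\alpha_v$ is $v$-exponential with respect to the grading \cref{recover quadr:grading 1} with $v \in L_{-1}$, so its correction maps \emph{lower} that grading; since $x \in L_{-2}$ sits at the bottom, $[v,x] \in [L_{-1},L_{-2}] \leq L_{-3} = 0$ and all higher corrections vanish, so $\alpha_v(x) = x$ exactly. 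Your assertion that the $L_{-1}$-component of $\alpha_v(x)$ is ``$[v,x] = v$'' confuses $v$ with $[y,v]$: the correct identity is $[[y,v],x] = v$ (by \cref{pr:5gr}\cref{5gr:iso}), and the anticipated relation $[v,[v,x]] = 2Q(v)c$ is likewise false since $[v,x]=0$. In fact, the automorphism characterized by $x \mapsto x + v + Q(v)c$ is precisely $\beta_v$ of \cref{def:yv-exp}, which is $[y,v]$-exponential (not $v$-exponential) with respect to \cref{recover quadr:grading 2}; it genuinely differs from $\alpha_v$, as $\beta_v(x) \neq x = \alpha_v(x)$ for $v \neq 0$. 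So if your identification could be carried out, it would prove the false statement $\alpha_v = \beta_v$. Two further slips feed into this: $x$ lies in $L'_0$, not $L'_{-1}$ as you assert, and \cref{recover:sharp trans opp} only pins down automorphisms by their action on an \emph{end} of the grading ($c$ or $d$ for the primed one), so it cannot be invoked to define an automorphism by its value on $x$.

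The repair is to anchor at $d$, the end of the primed grading, which is what the paper does: let $\alpha'_v$ be the $v$-exponential automorphism for \cref{recover quadr:grading 2} normalized by $\alpha'_v(d) = d + [v,d] + Q(v)x$ (this element is extremal and the scalar $Q(v)$ is unique by \cref{recover quadr:Q with x}), which matches $\alpha_v(d)$ by \cref{eq:alpha-v d}. One then checks $\alpha'_v(y) = y + [v,y] + Q(v)c = \alpha_v(y)$, that both maps fix $V \oplus X$ pointwise and agree on $V' = [d,V]$ (because they agree on $\langle d \rangle$ and on $V$), and concludes $\alpha_v = \alpha'_v$ since $L \otimes k'$ is generated by $y$ together with $L_{-1}$ by \cref{recover: y and I1} --- which also explains a step your proposal omits entirely, namely first extending scalars to $k'$ so that the extremal geometry contains lines and this generation result is available.
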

\begin{proof}
	First apply \cref{ass:quadr}\cref{ass:quadr i} and extend $k$ to a larger field $k'$ such that the extremal geometry $E(L \otimes k')$ contains lines.
	Let $\alpha_v$ be the $v$-exponential automorphism with respect to the $L_i$-grading \cref{recover quadr:grading 1} as in \cref{def:v-exp}.
	On the other hand, let $\alpha'_v$ be the unique $v$-exponential automorphism with respect to the $L'_i$-grading~\cref{recover quadr:grading 2} such that
	\[ \alpha'_v(d) = d + [v,d] + Q(v)x , \]
	similarly to what we have done in \cref{def:yv-exp}, and notice that also
	\[ \alpha'_v(y) = y + [v,y] + Q(v)c = \alpha_v(y) . \]
	We claim that $\alpha_v$ and $\alpha'_v$ also coincide on $L_{-1} = V \oplus X \oplus V'$; it will then follow from \cref{recover: y and I1} that $\alpha_v = \alpha'_v$.
	Our claim is obvious by the grading for elements of $V \oplus X$, so it only remains to show that $\alpha_v([d,w]) = \alpha'_v([d,w])$ for all $w \in V$. This is also clear, however, because $\alpha_v$ and $\alpha'_v$ coincide on both $\langle d \rangle$ and $V$.
	
	We conclude that indeed $\alpha_v = \alpha'_v$, so in particular, $\alpha_v$ is also $v$-exponential with respect to the $L'_i$-grading \cref{recover quadr:grading 2}.
\end{proof}

\begin{lemma}\label{le:T nondeg}
	We have $V \neq 0$ and $V' \neq 0$.
	Moreover, the bilinear form $T \colon V \times V \to k$ is non-degenerate.
\end{lemma}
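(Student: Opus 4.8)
The statement has three parts: $V\neq 0$, $V'\neq 0$, and non-degeneracy of $T$. I would handle them in that order, since the first two are needed as input for the last.

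First I would establish $V\neq 0$. The whole point of \cref{ass:quadr}\cref{ass:quadr ii} is that symplectic pairs exist, and \cref{recover quadr:existence ext elements} was engineered precisely to produce $x,y,c,d$ with $(x,c),(c,y),(y,d),(d,x)\in E_0$ and $g(x,y)=g(c,d)=1$. The natural thing is to show that $V=L_{-1}\cap L'_{-1}$ contains a non-zero \emph{extremal} element coming from this symplectic configuration. Concretely, the symplecton $S$ through $\langle x\rangle$ (and $\langle c\rangle$) used in the existence proof contains lines by \cref{pr:symplecta}\cref{pr:symplecta:a}, and \cref{recover: symp decomp Lie} tells us that $L_S$ decomposes with a non-trivial piece spanned by extremal points $\langle z\rangle\in S$ with $z\in L_{-1}$. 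Such a $z$ is collinear with both $\langle x\rangle$ and $\langle c\rangle$, hence lies in $E\cap L_{-1}$ and, being in $E_{-1}(c)$ with the right grading position, lands in $L'_{-1}$ as well; thus $z\in V$ and $z\neq 0$. I would spell out the membership $z\in L_{-1}\cap L'_{-1}$ via \cref{a in g_{-1}} applied in both gradings, exactly as in \cref{recover: symp decomp Lie}. Once $V\neq 0$, we get $V'\neq 0$ for free from \cref{co:easy}, since $\ad_d\colon V\to V'$ is an isomorphism by \cref{recover quadr:decoomp eq 1}.

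The heart of the lemma is the non-degeneracy of $T$, and this is where I expect the main obstacle. Recall $T(u,v)x=[u,[v,d]]$. The natural strategy is to relate the radical of $T$ on $V$ to the subspace $Z_{-1}=\{z\in L_{-1}\mid [z,L_{-1}]=0\}$, which \cref{recover:T nondeg} shows is zero by simplicity of $L$. So suppose $v\in V$ satisfies $T(u,v)=0$ for all $u\in V$; I want to deduce $[v,L_{-1}]=0$, which forces $v=0$. Using the decomposition $L_{-1}=V\oplus X\oplus V'$ from \cref{recover quadr:decomp in 3}, I must check that $v$ brackets trivially with each of the three summands. The bracket $[v,V]=0$ by \cref{recover quadr:decoomp eq 2}, and $[v,X]=0$ by the same display. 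The remaining piece is $[v,V']$. Since $V'=[d,V]$, for $w\in V$ we have $[v,[d,w]]$, and by the Jacobi identity together with $[v,d]\in V'\subseteq L'_1$ this should reduce to a multiple of $[u,[v,d]]=T(u,v)x$-type terms after transporting through $\ad_c$ or $\ad_y$; the hypothesis $T(\cdot,v)=0$ then kills it.

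The subtlety — and the reason this is the hard step — is that $T$ as defined lives with values landing in $L_{-2}\cap L'_{-2}=\langle x\rangle$, whereas the bracket $[v,[d,w]]$ a priori lands in a different graded component, so I cannot directly equate it with $T$. The clean route is to work \emph{within the $L'_i$-grading}: relative to $(c,d)$, the elements $v,w\in V\subseteq L'_{-1}$ and $[d,w]\in V'\subseteq L'_1$, and \cref{le:ayb} applied to the second grading gives $[v,[d,w]]$ as a multiple of $c$ governed by $g(v,[d,w])$ — but this $g$-pairing is exactly what $T$ encodes, since $T(v,w)x=[v,[w,d]]$ and $g$ associates with the bracket via \cref{pr:g}. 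So I would prove that $T$ is, up to sign, the $g$-pairing $g(v,[d,w])$, and then non-degeneracy of $T$ becomes non-degeneracy of $g$ restricted to $V\times V'$, which again follows from $Z_{-1}=0$ in \cref{recover:T nondeg}. Assembling these, $T(u,v)=0$ for all $u$ forces $[v,L_{-1}]=0$, hence $v\in Z_{-1}=0$, completing the proof. The one technical point to get right is the precise identification of $T$ with the $g$-pairing and the bookkeeping of which grading each element sits in.
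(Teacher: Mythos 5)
Your skeleton for the non-degeneracy part — show that the radical of $T$ annihilates all of $L_{-1}$ and invoke \cref{recover:T nondeg} — is exactly the paper's, but the step you single out as ``the hard step'' is where your proposal breaks, and it is in fact the easiest step. There is no mismatch of graded components: by \cref{fig:qa}, the values of $T$ lie in $\langle x\rangle = L_{-2}\cap L'_0$ (not in $L_{-2}\cap L'_{-2}$, which is $\langle x\rangle\cap\langle c\rangle = 0$), and the bracket $[v,[d,w]]$ lies in the \emph{same} space $L_{-2}\cap L'_0=\langle x\rangle$, since $v\in L'_{-1}$ and $[d,w]\in L'_1$. Your proposed repair then fails on two counts. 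First, \cref{le:ayb} transported to the $(c,d)$-grading reads $[a,[d,b]]=g(a,b)d$ for $a\in L'_1$, $b\in L'_{-1}$; your first argument $v$ lies in $L'_{-1}$, not $L'_1$, and indeed $[v,[d,w]]\in L'_0$ cannot be a multiple of $c\in L'_{-2}$. Second, the identification of $T$ with the pairing $g(v,[d,w])$ is false: by the associativity of $g$ and $[V,V]=0$ (\cref{recover quadr:decoomp eq 2}), one has $g(v,[d,w])=-g([v,w],d)=0$ identically, so that pairing carries no information. What does work is the direct computation you mentioned and then abandoned: by the Jacobi identity and $[v,w]=0$,
\[ [v,[d,w]] = [[v,d],w] = -[w,[v,d]] = -T(w,v)\,x , \]
which vanishes under your hypothesis $T(\cdot,v)=0$. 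Equivalently, since $T$ is symmetric (\cref{recover quadr:Q quadr sp}) and $V'=[d,V]$, the radical condition is \emph{literally} the statement $[v,V']=0$ by \cref{def:Q and T}; this is all the paper does, after which $[v,L_{-1}]=0$ and \cref{recover:T nondeg} give $v=0$.

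For $V\neq 0$ your route is genuinely different from the paper's, and it has a gap: \cref{pr:symplecta} and \cref{recover: symp decomp Lie} require the extremal geometry to contain lines, but under \cref{ass:quadr} the geometry of $L$ over $k$ may have none — in that case the symplecton $S$ from the proof of \cref{recover quadr:existence ext elements} lives in the geometry of $L\otimes k'$, not of $L$. The argument can be saved by running it over $k'$ and observing that $V\otimes k' = (L\otimes k')_{-1}\cap(L\otimes k')'_{-1}$, so $V\otimes k'\neq 0$ forces $V\neq 0$; you would also need to spell out, as in the proof of \cref{recover: symp decomp Lie}, that a point of $S$ collinear with both $\langle x\rangle$ and $\langle c\rangle$ is \emph{special} with $y$ and with $d$ (only then does \cref{a in g_{-1}} place it in $L_{-1}\cap L'_{-1}$). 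The paper avoids all of this with a purely algebraic argument: if $V=V'=0$, then $L_{-2}\oplus L_{-1}\oplus L_1\oplus L_2 \leq L'_0$, so the derivation $\ad_{[c,d]}$ vanishes on a set that generates $L$ (generation holds by simplicity), hence vanishes on $L$, contradicting $[c,d]\neq 0$.
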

\begin{proof}
	Notice that by \cref{recover quadr:decoomp eq 1}, we have $V \neq 0$ if and only if $V' \neq 0$.
	Suppose now that $V = V' = 0$; then $L_{-1} = X$ and $L_1 = [y, X]$ (see \cref{fig:qa}), so
	\[ L_{-2} \oplus L_{-1} \oplus L_1 \oplus L_2 \leq L'_0 . \]
	In particular, by \cref{pr:5gr}\cref{5gr:grading der}, $\ad_{[c,d]}$ maps $L_{-2} \oplus L_{-1} \oplus L_1 \oplus L_2$ to $0$.
	Since $L$ is generated by $L_{-2} \oplus L_{-1} \oplus L_1 \oplus L_2$, this implies that the map $\ad_{[c,d]}$ is zero on all of $L$, a contradiction. Hence $V \neq 0$ and $V' \neq 0$.
	
	Now let $v \in V \setminus \{ 0 \}$ be arbitrary, and assume that $T(v,u) = 0$ for all $u \in V$.
	Then by definition of $T$, this implies that $[v, V'] = 0$, but since also $[v, X] = 0$ and $[v, V] = 0$ by the grading, we then have $[v, L_{-1}] = 0$. This contradicts \cref{recover:T nondeg}.
\end{proof}

\begin{lemma}\label{le:Q not 0}
	There exists $v\in V$ such that $Q(v)\neq 0$.
\end{lemma}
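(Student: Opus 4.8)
The plan is to argue by contradiction, leveraging the two facts already established about $Q$ and its polarization $T$: namely that $T$ is exactly the bilinear form associated to $Q$ (\cref{recover quadr:Q quadr sp}) and that $T$ is non-degenerate on the nonzero space $V$ (\cref{le:T nondeg}).

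First I would suppose, for contradiction, that $Q$ is identically zero on $V$. Since $Q$ is a quadratic form with associated bilinear form $T$ by \cref{recover quadr:Q quadr sp}, the very definition of the polarization (see \cref{prelim: def quadr space}) gives
\[ T(u,v) = Q(u+v) - Q(u) - Q(v) = 0 \]
for all $u,v \in V$, so that $T$ would be identically zero as well. The crucial point is that this implication is completely characteristic-free — it holds verbatim in characteristic $2$, which is exactly where one must be careful, since there a nonzero quadratic form can perfectly well have vanishing polarization. Here we only need the easy direction $Q \equiv 0 \Rightarrow T \equiv 0$, which is unconditional.

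Next I would invoke \cref{le:T nondeg}, which guarantees both that $V \neq 0$ and that $T$ is non-degenerate. If $T$ were identically zero, then any nonzero $v \in V$ would satisfy $T(v, V) = 0$ and hence lie in the radical of $T$, contradicting non-degeneracy. This contradiction forces $Q$ to be not identically zero, which is the claim. I do not expect any genuine obstacle in this argument: all the real work has already been done in \cref{le:T nondeg} (whose proof, in turn, rests on \cref{recover:T nondeg} and the decomposition of $L_{-1}$), so the present lemma is a short consequence of the non-degeneracy of $T$ together with the polarization identity. Its role is simply to certify that the quadratic form is non-trivial, which is what will later allow us to rescale $c$ and $d$ to produce a base point $e \in V$ with $Q(e) = 1$, in analogy with \cref{le:cns:basept} from the cubic norm setting.
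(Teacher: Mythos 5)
Your proof is correct, but it takes a genuinely different and considerably shorter route than the paper. The paper argues geometrically: assuming $Q \equiv 0$, every nonzero $v \in V$ becomes an extremal element (via \cref{recover quadr:Q zero}) collinear with both $\langle x \rangle$ and $\langle c \rangle$; the symplecton through $\langle x \rangle$ and $\langle c \rangle$ is a non-degenerate polar space of rank $\geq 2$ (\cref{pr:symplecta}), so it contains two non-collinear points $\langle u \rangle, \langle v \rangle$ with $u, v \in V \setminus \{0\}$, and then $u+v \in V \setminus \{0\}$ would also have to be extremal, contradicting \cref{pr:2pts} for a non-collinear pair. Your argument instead bypasses all of this geometry: since \cref{recover quadr:Q quadr sp} establishes $Q(u+v) = Q(u) + T(u,v) + Q(v)$ with $T$ being exactly the bracket-defined form of \cref{def:Q and T}, the hypothesis $Q \equiv 0$ forces $T \equiv 0$, which contradicts \cref{le:T nondeg} (note that $T$ there is proved non-degenerate purely from the Lie-algebra structure, via \cref{recover:T nondeg}, with no input from $Q$, and it is stated before the present lemma, so there is no circularity). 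You are also right that the only characteristic-$2$ pitfall — a nonzero quadratic form with zero polarization — is irrelevant here, since you only need the unconditional direction $Q \equiv 0 \Rightarrow T \equiv 0$. What each approach buys: yours is a two-line formal consequence of already-proved facts and is arguably the "right" proof of the bare statement; the paper's longer argument has the side benefit of exhibiting the geometric meaning of isotropic vectors of $Q$ (they are precisely extremal elements of $L$ lying in $V$), a viewpoint that recurs elsewhere in the section, but it is not needed to establish the lemma itself.
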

\begin{proof}
	Assume that $Q(v) = 0$ for all $v \in V$.
	By \cref{recover quadr:Q zero}, there exist elements in $E \cap L_{-1}$, so by \cref{co:E-1}, the extremal geometry contains lines.
	In fact, each non-zero $v \in V$ is an extremal element collinear with $x$ and, by \cref{recover quadr:Q with x}, also collinear with $c$.
	Moreover, it now follows from the grading and \cref{co:E-1} that $E_{-1}(x) \cap E_{-1}(c) = V \setminus \{ 0 \}$.
	
	By \cref{pr:symplecta}, on the other hand, there is a (unique) symplecton $S$ containing $\langle x \rangle$ and $\langle c \rangle$.
	By \cref{pr:symplecta}\cref{pr:symplecta:a}, $S$ is a non-degenerate polar space of rank $\geq 2$, so we can find two distinct non-collinear points $\langle u \rangle$ and $\langle v \rangle$ both collinear to both $\langle x \rangle$ and $\langle c \rangle$.
	By the previous paragraph, $u$ and $v$ belong to $V \setminus \{ 0 \}$.
	However, this implies that also $u+v \in V \setminus \{ 0 \}$, so by assumption, $Q(u+v) = 0$, so also $u+v$ is an extremal element.
	This contradicts \cref{pr:2pts} because $u$ and $v$ are not collinear.
\end{proof}

In a similar way as in \cref{le:cns:basept}, we can now ensure that $Q$ has a base point, i.e., $Q(v) = 1$ for some $v \in V$.

\begin{lemma}\label{recover quadr:rescaling}
	We can re-choose $x$ in $y$ in \cref{recover quadr:two gradings} in such a way that $Q(e) = 1$ for some $e \in V$.
	We call $e$ a \emph{base point} for $Q$.
\end{lemma}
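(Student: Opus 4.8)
The plan is to imitate the proof of \cref{le:cns:basept}: I would rescale the hyperbolic pair $(x,y)$ so that $Q$ takes the value $1$ on a suitable element of $V$, while keeping $c$ and $d$ fixed. By \cref{le:Q not 0} I may fix $e\in V$ with $Q(e)\neq 0$, and then replace $x$ by $\tilde x:=Q(e)\,x$ and $y$ by $\tilde y:=Q(e)^{-1}y$.

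The first thing to check is that this is a legitimate re-choice in the sense of \cref{recover quadr:existence ext elements}. This is immediate: rescaling does not alter the extremal points $\langle x\rangle$ and $\langle y\rangle$, so the relations $(x,c),(c,y),(y,d),(d,x)\in E_0$ are preserved, and $g(\tilde x,\tilde y)=g(x,y)=1$ while $g(c,d)=1$ is untouched.

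The crucial (and only slightly delicate) point is that the rescaling leaves both gradings unchanged, and hence leaves $V$ and the element $e$ unchanged. Indeed $[\tilde x,\tilde y]=[Q(e)x,Q(e)^{-1}y]=[x,y]$, and $\langle\tilde x\rangle=\langle x\rangle$, $\langle\tilde y\rangle=\langle y\rangle$, so the subspace $U=\langle x,y,[x,y]\rangle^\perp$ of \cref{pr:5gr} is unchanged; consequently $L_{-2}=\langle x\rangle$, $L_{-1}=[x,U]$, $L_0=N_L(x)\cap N_L(y)$, $L_1=[y,U]$, $L_2=\langle y\rangle$ are all literally the same subspaces, and the grading \cref{recover quadr:grading 2} does not involve $x$ or $y$ at all. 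Thus $V=L_{-1}\cap L'_{-1}$ is the same subspace as before and $e$ is still an element of it.

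It then only remains to read off the transformation law for $Q$. By \cref{def:Q and T}, the new value $\tilde Q(e)$ is the unique scalar $\lambda$ with $\lambda\,\tilde x+[e,d]+d\in E$; since $\tilde x=Q(e)\,x$, this element equals $\bigl(\lambda Q(e)\bigr)x+[e,d]+d$. As $Q(e)\,x+[e,d]+d\in E$ by the defining property of $Q(e)$, the uniqueness of the scalar in \cref{recover quadr:unique Q} forces $\tilde Q(e)Q(e)=Q(e)$, i.e.\ $\tilde Q(e)=1$. Hence $e$ is a base point for the rescaled form $Q$, completing the argument. The whole proof is routine; the only step requiring attention is verifying that $V$ and $e$ survive the rescaling intact, which is exactly what makes the clean transformation law above applicable.
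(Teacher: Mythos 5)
Your proposal is correct and follows essentially the same route as the paper's own proof: fix $e\in V$ with $Q(e)\neq 0$ via \cref{le:Q not 0}, rescale $\tilde x:=Q(e)x$, $\tilde y:=Q(e)^{-1}y$, note that all decompositions are unaffected, and read off $\tilde Q(e)=1$ from the extremality of $Q(e)x+[e,d]+d$ together with the uniqueness in \cref{recover quadr:unique Q}. Your additional verification that both gradings (and hence $V$ and $e$) are literally unchanged is exactly the point the paper dispatches with the sentence ``since we have only replaced these elements by a scalar multiple, this does not affect any of the decompositions.''
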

\begin{proof}
	Consider $e \in V$ such that $Q(e) \neq 0$.
	Set $\tilde x := Q(e)x$ and $\tilde y := Q(e)^{-1}y$.
	Then $\tilde x$, $\tilde y$, $c$ and $d$ still satisfy the conclusions from \cref{recover quadr:existence ext elements}.
	Since we have only replaced these elements by a scalar multiple, this does not affect any of the decompositions.

	Now denote the map obtained as in \cref{def:Q and T} from these new choices by~$\tilde Q$.
	Since $\tilde x + [e,d] + d = Q(e)x + [e,d] + d \in E$, it follows that indeed $\tilde Q(e) = 1$.
\end{proof}

\begin{definition}\label{def:ef}
	Assume from now on that $e \in V$ is a base point for $Q$.
	%
	%
	\begin{enumerate}
		\item 
			We set
			\[ e' := [d,e] \in V', \quad f := [y,e] \in [y,V], \quad f' := [y, e'] \in [y, V'] . \]
			Notice that $f' = [d,f]$ because $[d,y] = 0$.
		\item
			Since $T$ is non-degenerate by \cref{le:T nondeg}, we can fix an element $\delta \in V$ such that $T(e, \delta) = 1$.
			If $\Char(k) \neq 2$, we will assume that $\delta = \tfrac{1}{2} e$.
			
			We may assume, in addition, that $Q(\delta) \neq 0$.
			Indeed, this holds automatically if $\Char(k) \neq 2$, and if $\Char(k) = 2$ and $Q(\delta) = 0$, then we choose some $\lambda \in k$ with $\lambda^2 \neq \lambda$ (which exist because we assume $|k| > 2$, see \cref{ass:quadr}) and we replace $\delta$ by $\lambda e + \delta$.
	\end{enumerate}
\end{definition}
\begin{remark}
	This definition of $\delta$ corresponds to the definition of $\delta$ for quadrangular algebras in \cite[Definition 4.1]{Weiss2006} and \cite[Definition 7.1]{Muehlherr2019}.
	In fact, we will use this $\delta$ in \cref{pr:delta-std} below to ensure that our quadrangular algebra will be \emph{$\delta$-standard}; see \cref{def:theta}\cref{def:pi}.
\end{remark}

\begin{lemma}
	For any $v\in V$, we have
	\begin{align}
		Q(v) \bigl( [c,d] - [x,y] \bigr) &= [[v,d],[v,y]]; \label{eq:vd vy} \\		
		Q(v) \bigl( [c,d] + [x,y] \bigr) &= [v,[y,[d,v]]]. \label{eq:v y dv}
	\end{align}
	In particular,
	\begin{equation}\label{eq:ef}
		[e', f] = [c,d] - [x,y] \quad \text{and} \quad [e, f'] = [c,d] + [x,y] .
	\end{equation}
\end{lemma}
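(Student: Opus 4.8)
The plan is to extract both identities from a single automorphism relation and then specialize to the base point. The key object is the $v$-exponential automorphism $\alpha_v$ from \cref{def:v-exp}, for which we know the explicit images $\alpha_v(d) = d + [v,d] + Q(v)x$ and $\alpha_v(y) = y + [v,y] + Q(v)c$ by \cref{eq:alpha-v d,eq:alpha-v y}. Since $(y,d) \in E_0$ by \cref{recover quadr:existence ext elements}, we have $[d,y]=0$, so applying $\alpha_v$ to this relation gives $0 = \alpha_v([d,y]) = [\alpha_v(d), \alpha_v(y)]$, and I would expand this bracket to obtain \cref{eq:vd vy}.

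In the expansion of $[\,d+[v,d]+Q(v)x,\ y+[v,y]+Q(v)c\,]$, the term $[d,y]$ vanishes, the term $[Q(v)x,Q(v)c]$ vanishes because $(x,c)\in E_0$, and two pairs of the remaining terms cancel. First, $[d,[v,y]] + [[v,d],y] = 0$ by the Jacobi identity together with $[d,y]=0$. Second, the two ``mixed'' terms cancel: using \cref{pr:5gr}\cref{5gr:iso} for grading \cref{recover quadr:grading 1} one gets $[x,[v,y]] = v$, and for grading \cref{recover quadr:grading 2} one gets $[c,[v,d]] = v$, so that $[Q(v)x,[v,y]] = Q(v)v$ while $[[v,d],Q(v)c] = -Q(v)v$. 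What survives is exactly $[[v,d],[v,y]] - Q(v)[c,d] + Q(v)[x,y] = 0$, i.e. \cref{eq:vd vy}.

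For \cref{eq:v y dv} I would start from the Jacobi identity $[v,[y,[d,v]]] = [[v,y],[d,v]] + [y,[v,[d,v]]]$. After reordering and a sign flip the first summand equals $[[v,d],[v,y]]$, hence equals $Q(v)([c,d]-[x,y])$ by the identity just proved. For the second summand, \cref{def:Q and T} gives $[v,[v,d]] = T(v,v)x$, and the polarization $T(v,v)=2Q(v)$ coming from \cref{recover quadr:Q quadr sp} yields $[v,[d,v]] = -2Q(v)x$, so $[y,[v,[d,v]]] = 2Q(v)[x,y]$. Adding the two summands gives $Q(v)([c,d]-[x,y]) + 2Q(v)[x,y] = Q(v)([c,d]+[x,y])$, which is \cref{eq:v y dv}.

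Finally, the displayed consequences follow by setting $v=e$, where $e$ is the base point with $Q(e)=1$ (\cref{recover quadr:rescaling}), and recalling $e'=[d,e]$, $f=[y,e]$, $f'=[y,e']$ from \cref{def:ef}. Indeed $[e',f] = [[d,e],[y,e]] = [[e,d],[e,y]]$, which by \cref{eq:vd vy} at $v=e$ equals $[c,d]-[x,y]$; and $[e,f'] = [e,[y,[d,e]]]$ is precisely the left-hand side of \cref{eq:v y dv} at $v=e$, giving $[c,d]+[x,y]$. The main obstacle I anticipate is purely bookkeeping: tracking the nine brackets in the expansion together with the sign conventions $[v,d]=-[d,v]$ and $[v,y]=-[y,v]$, and making sure that both cancellations (the Jacobi one and the mixed one) are applied with the correct signs; everything else is a direct substitution.
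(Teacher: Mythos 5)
Your proposal is correct and follows essentially the same route as the paper: apply the $v$-exponential automorphism $\alpha_v$ to the relation $[d,y]=0$ and expand (the paper merely notes that extracting the $(L_0\cap L'_0)$-component suffices, whereas you verify all cancellations explicitly), then deduce \cref{eq:v y dv} from \cref{eq:vd vy} via the Jacobi identity and $[v,[v,d]]=T(v,v)x=2Q(v)x$, and finally specialize to $v=e$. All your sign computations and uses of \cref{pr:5gr}\cref{5gr:iso} check out.
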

\begin{proof}
	Let $v \in V$ and let $\alpha_v \in \Aut(L)$ be as in \cref{def:v-exp}.
	Since $[d,y] = 0$ and $\alpha_v \in \Aut(L)$, we also get $[\alpha_v(d), \alpha_v(y)] = 0$, so by \cref{eq:alpha-v d,eq:alpha-v y}, we get
	\[ \bigl[ d + [v,d] + Q(v)x, \ y + [v,y] + Q(v)c \bigr] = 0 . \]
	Expanding this ---or in fact, only expanding the $(L_0 \cap L'_0)$-component suffices--- yields \cref{eq:vd vy}.
	
	Next, observe that $[v,[v,d]]=T(v,v)x=2Q(v)x$ by \cref{recover quadr:Q quadr sp}.
	Hence
	\[ [[v,d],[v,y]] = [v,[[v,d],y]] - 2Q(v)[x,y] , \]
	so together with \cref{eq:vd vy}, this yields \cref{eq:v y dv}.
	
	Finally, \cref{eq:ef} now follows by plugging in $v=e$ and using $Q(e)=1$.
\end{proof}

\begin{corollary}\label{co:XX'}
	The restriction of $\ad_f$ to $X$ defines a linear isomorphism from $X$ to $X'$ with inverse $\ad_{e'}$.
\end{corollary}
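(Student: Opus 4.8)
The plan is to verify directly that $\ad_f|_X$ and $\ad_{e'}|_{X'}$ are mutually inverse linear maps, so that both are isomorphisms. First I would record, purely from the intersecting gradings displayed in \cref{fig:qa}, that the two maps have the claimed source and target. Since $f \in L_1 \cap L'_{-1}$ and $X = L_{-1} \cap L'_0$, the bracket lands in $[f,X] \subseteq L_0 \cap L'_{-1} = X'$; and since $e' \in L_{-1} \cap L'_1$ and $X' = L_0 \cap L'_{-1}$, we get $[e',X'] \subseteq L_{-1} \cap L'_0 = X$. It then only remains to prove the two composition identities $\ad_{e'}\circ\ad_f = \id_X$ and $\ad_f\circ\ad_{e'} = \id_{X'}$.

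For the first identity I would take $a \in X$ and expand $[e',[f,a]] = [[e',f],a] + [f,[e',a]]$ by the Jacobi identity. The second term vanishes: as $e',a \in L_{-1}$ and $e' \in L'_1$, $a \in L'_0$, the bracket $[e',a]$ lies in $L_{-2} \cap L'_1$, which is zero because $L_{-2} = \langle x\rangle \subseteq L'_0$. For the first term I would invoke the key identity \cref{eq:ef}, namely $[e',f] = [c,d] - [x,y]$, together with \cref{pr:5gr}\cref{5gr:grading der} applied to \emph{both} gradings: since $a \in L_{-1} \cap L'_0$ we have $[[x,y],a] = -a$ and $[[c,d],a] = 0$, whence $[[e',f],a] = [[c,d],a] - [[x,y],a] = a$. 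This gives $[e',[f,a]] = a$. The second identity is entirely symmetric: for $a' \in X'$ the Jacobi identity gives $[f,[e',a']] = [[f,e'],a'] + [e',[f,a']]$, where now $[f,a'] \in L_1 \cap L'_{-2} = 0$ because $L'_{-2} = \langle c\rangle \subseteq L_0$, and $[[f,e'],a'] = -[[e',f],a'] = a'$ using $a' \in L_0 \cap L'_{-1}$ and the same two eigenvalue facts.

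The computation is short, so I do not expect a serious obstacle. The only point that genuinely requires care is the simultaneous bookkeeping of the two gradings. Specifically, the argument hinges on the inclusions $L_{-2} = \langle x\rangle \subseteq L'_0$ and $L'_{-2} = \langle c\rangle \subseteq L_0$ (both read off directly from the positions of $\langle x\rangle$ and $\langle c\rangle$ in \cref{fig:qa}), since these are exactly what force the two ``cross'' terms $[e',a]$ and $[f,a']$ to vanish and thereby collapse each Jacobi expansion to a single eigenvalue evaluation. Once these inclusions are in place, the result follows immediately, and I would conclude that $\ad_f$ restricts to an isomorphism $X \to X'$ with inverse $\ad_{e'}$.
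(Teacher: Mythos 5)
Your proof is correct and follows essentially the same route as the paper: both reduce the claim to the identity $[e',f]=[c,d]-[x,y]$ from \cref{eq:ef} combined with the eigenvalue property \cref{pr:5gr}\cref{5gr:grading der} applied to both gradings, after killing the cross terms $[e',a]$ and $[f,a']$ via the intersecting gradings. The only difference is cosmetic: the paper leaves the Jacobi step and the vanishing of those cross terms implicit, whereas you justify them explicitly, which is if anything a more complete write-up.
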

\begin{proof}
	Let $a \in X$ and $a' \in X'$.
	Then using \cref{eq:ef} and \cref{pr:5gr}\cref{5gr:grading der}, we get
	\begin{align*}
		[e', [f,a]] &= [a, [f, e']] = [[c,d] - [x,y], a] = a, \\
		[f, [e',a']] &= [a', [e', f]] = [-[c,d] + [x,y], a'] = a'. \qedhere
	\end{align*}
\end{proof}

We can now also determine the Lie bracket between $V$ and $[y,V]$ explicitly.
\begin{proposition}\label{pr:yvw}
	Let $v,w \in V$. Then $[[y,v], w] = T(v,w)c$.
\end{proposition}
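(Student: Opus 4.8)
The plan is to pin down $[[y,v],w]$ first by its bidegree with respect to the two gradings \cref{recover quadr:grading 1,recover quadr:grading 2}, and then to compute the single scalar coefficient by pairing against $d$ via the form $g$.

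First I would locate the bracket. Since $y \in L_2 \cap L'_0$ and $v \in V = L_{-1}\cap L'_{-1}$, we have $[y,v] \in L_1 \cap L'_{-1} = [y,V]$. With $w \in L_{-1}\cap L'_{-1}$, it follows that $[[y,v],w] \in L_0 \cap L'_{-2}$. As $L'_{-2} = \langle c\rangle$ and $c \in L_0$, this intersection is exactly $\langle c\rangle$, so $[[y,v],w] = \mu c$ for a unique scalar $\mu = \mu(v,w)$, which is evidently bilinear in $(v,w)$. The point of this bidegree bookkeeping is precisely that it forces the bracket into the one-dimensional space $\langle c\rangle$, thereby avoiding any direct structural computation inside the complicated middle piece $L_0\cap L'_0$.

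Next I would extract $\mu$. Since $g(c,d) = 1$ by the normalization in \cref{recover quadr:existence ext elements}, we have $\mu = g(\mu c, d) = g([[y,v],w], d)$. Now I would apply the associativity of $g$ with the Lie bracket (\cref{pr:g}) twice, orienting each application so that a bracket is moved across the pairing:
\[
	g\bigl([[y,v],w], d\bigr) = g\bigl([y,v], [w,d]\bigr) = g\bigl(y, [v,[w,d]]\bigr).
\]
Here $[w,d] \in V'$ by the gradings, and the definition of $T$ in \cref{def:Q and T} gives $[v,[w,d]] = T(v,w)\,x$; since $g(x,y) = 1$, this yields $\mu = T(v,w)\,g(y,x) = T(v,w)$, as claimed.

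This argument has no genuine obstacle: the only things to be careful about are the bookkeeping of the bidegrees that places the bracket in $\langle c\rangle$, and correctly orienting the two applications of the associativity of $g$. All of the substantive content is carried by the defining relation $T(v,w)\,x = [v,[w,d]]$ together with the two normalizations $g(c,d) = g(x,y) = 1$.
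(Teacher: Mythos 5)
Your proof is correct, and the second half is genuinely different from the paper's. You and the paper agree on the first step: the bidegree bookkeeping that forces $[[y,v],w]$ into $L_0 \cap L'_{-2} = \langle c\rangle$. Where you diverge is in computing the scalar $\mu$. The paper starts from its identity \cref{eq:vd vy}, $Q(v)\bigl([c,d]-[x,y]\bigr) = [[v,d],[v,y]]$ (itself obtained from the $v$-exponential automorphism $\alpha_v$), brackets it against $w$, applies the Jacobi identity, and then uses the $\ad$-isomorphisms of \cref{pr:5gr}\cref{5gr:iso} to reduce to $\mu v = T(v,w)v$ --- which incidentally requires the harmless side remark that $v\neq 0$ may be assumed. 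You instead pair against $d$ and move brackets across the invariant form $g$ twice, landing directly on the defining relation $[v,[w,d]] = T(v,w)x$ and the normalizations $g(c,d)=g(x,y)=1$. Your route is more elementary and more self-contained: it needs only \cref{pr:g} (associativity of $g$) and \cref{def:Q and T}, bypassing the exponential-automorphism machinery entirely, and it handles $v=0$ uniformly. What the paper's route buys is continuity with its surrounding development --- \cref{eq:vd vy} had just been established and is reused elsewhere --- but as a standalone argument yours is arguably cleaner.
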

\begin{proof}
	Since $V = L_{-1} \cap L'_{-1}$, it follows from \cref{pr:5gr}\cref{5gr:grading der} that
	\[ [[c,d] - [x,y], w] = 0 . \]
	By \cref{eq:vd vy}, therefore, we have
	$\bigl[ [[v,d], [v,y]], \, w \bigr] = 0$,
	and hence
	\[ \bigl[ [v,d], \, [[v,y], w] \bigr] = \bigl[ [v,y], \, [[v,d], w] \bigr] . \]
	By the grading, we know that $[[y,v], w] = \mu c$ for some $\mu \in k$. Hence, by \cref{def:Q and T}, the previous equality reduces to
	\[ [[v,d], -\mu c] = [[v,y], -T(v,w)x] . \]
	By \cref{pr:5gr}\cref{5gr:iso}, this can be rewritten as $\mu v = T(v,w)v$, so $\mu = T(v,w)$.
\end{proof}

\begin{definition}
\label{def:h and dot}
	We define maps $h \colon X\times X\to V$ and $\cdot \colon X\times V \to X \colon (a,v) \mapsto a\cdot v$ by setting
	\begin{align}
		h(a, b) &:= [a, [b, f]], \label{eq:def h} \\
		a \cdot v &:= [a, [e', [v,y]]] = [[a, [y, v]], e'] , \label{eq:def .}
	\end{align}
	for all $a,b \in X$ and all $v \in V$.
	Note that the image of these maps is indeed contained in $V$ and $X$, respectively, by \cref{recover quadr:decoomp eq 3,recover quadr:decoomp eq 4} (see again \cref{fig:qa}).
	Also notice that the second equality in \cref{eq:def .} holds because $[V',X] = 0$.
\end{definition}
\begin{lemma}\label{le:.}
	Let $a,b \in X$ and $v \in V$. Then
	\begin{enumerate}
		\item\label{le:.:e} $a \cdot e = a$,
		\item\label{le:.:f} $[a \cdot v, f] = [a, [y,v]] = [[a,y], v]$,
		\item\label{le:.:h} $h(a, b \cdot v) = [a, [b, [y,v]]]$,
		\item\label{le:.:dh} $[d, h(a, b)] = [a, [b, f']]$.
	\end{enumerate}
\end{lemma}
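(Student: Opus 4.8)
The plan is to prove the four items in the order stated, reducing everything to the grading relations collected in \cref{co:easy}, the isomorphism $\ad_f\colon X\to X'$ with inverse $\ad_{e'}$ from \cref{co:XX'}, and the bracket identities $[e',f]=[c,d]-[x,y]$ and $[e,f']=[c,d]+[x,y]$ from \cref{eq:ef}. Since \cref{le:.:h} is an immediate consequence of \cref{le:.:f}, the only genuine content lies in \cref{le:.:e} and \cref{le:.:f}. Throughout, the main bookkeeping is simply tracking which piece of the $5\times 5$-grading (\cref{fig:qa}) each intermediate bracket lands in; no case distinction will be needed.

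For \cref{le:.:e} I would first rewrite $a\cdot e$ using the defining expression $a\cdot v=[a,[e',[v,y]]]$ together with $[e,y]=-f$, so that $a\cdot e=-[a,[e',f]]$, and then substitute $[e',f]=[c,d]-[x,y]$ from \cref{eq:ef}. Since $a\in X=L_{-1}\cap L'_0$, applying \cref{pr:5gr}\cref{5gr:grading der} to both gradings gives $[[c,d],a]=0$ (as $a\in L'_0$) and $[[x,y],a]=-a$ (as $a\in L_{-1}$), so $[a,[c,d]]=0$ and $[a,[x,y]]=a$; plugging these in yields $a\cdot e=-(0-a)=a$.

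Statement \cref{le:.:f} is the crux. The auxiliary equality $[a,[y,v]]=[[a,y],v]$ follows from the Jacobi identity together with $[a,v]=0$, which holds because $[V,X]=0$ by \cref{recover quadr:decoomp eq 2}. For the main equality I would use the second form $a\cdot v=[[a,[y,v]],e']$ and set $w:=[a,[y,v]]$; tracing both gradings shows $w\in L_0\cap L'_{-1}=X'$ and $[e',w]\in L_{-1}\cap L'_0=X$, so \cref{co:XX'} applies and gives $[f,[e',w]]=w$. Writing $a\cdot v=[w,e']=-[e',w]$, we then compute $[a\cdot v,f]=-[[e',w],f]=[f,[e',w]]=w=[a,[y,v]]$. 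The only delicate point here is the sign bookkeeping produced by the antisymmetry of the bracket when passing between $\ad_f$ and $\ad_{e'}$, but this is the real mechanism of the lemma.

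Finally, \cref{le:.:h} is immediate: by definition $h(a,b\cdot v)=[a,[b\cdot v,f]]$, and \cref{le:.:f} rewrites $[b\cdot v,f]=[b,[y,v]]$, giving $h(a,b\cdot v)=[a,[b,[y,v]]]$. For \cref{le:.:dh} I would exploit that $\ad_d$ is a derivation together with $[d,a]=[d,b]=0$, which holds since $[d,X]=0$ by \cref{recover quadr:decoomp eq 6}. Applying $\ad_d$ to $h(a,b)=[a,[b,f]]$ and moving it through the two outer brackets annihilates the $a$- and $b$-terms and leaves $[d,h(a,b)]=[a,[b,[d,f]]]=[a,[b,f']]$, using $f'=[d,f]$ from \cref{def:ef}. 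I expect no obstacle beyond \cref{le:.:f}, which is entirely a matter of correctly placing the intermediate elements in the grid of \cref{fig:qa}.
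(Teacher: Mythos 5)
Your proposal is correct and follows essentially the same route as the paper's own proof: item (i) via $[e',f]=[c,d]-[x,y]$ and the eigenvalue facts for $a\in L_{-1}\cap L'_0$, item (ii) via the second form of the dot product together with \cref{co:XX'} and the Jacobi identity with $[X,V]=0$, item (iii) as an immediate consequence of (ii), and item (iv) by pushing $\ad_d$ through the brackets using $[d,X]=0$. The sign bookkeeping and grading placements you carry out are exactly those in the paper.
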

\begin{proof}
	\begin{enumerate}
		\item 
			Using \cref{def:ef,eq:ef}, we have
			\[ a \cdot e = [a, [e', [e, y]]] = - [a, [e', f]] = [[c,d] - [x,y], a] = a \]
			since $a \in X = L_{-1}\cap L'_0$.
		\item
			The first equality follows from \cref{eq:def .,co:XX'} and the second equality follows from the fact that $[X, V] = 0$.
		\item
			This follows from \cref{eq:def h,le:.:f}.
		\item
			Since $[d,X] = 0$, we have
			\[ [d, h(a,b)] = [d, [a, [b, f]]] = [a, [b, [d, f]]] = [a, [b, f']] . \qedhere \]
	\end{enumerate}
\end{proof}

\begin{proposition}\label{pr:delta-std}
	Let $a \in X \leq L_{-1}$.
	Then there exists a unique $a$-exponential automorphism $\alpha \in \Aut(L)$ such that
	\begin{equation}\label{eq:delta-std}
		[q_\alpha(f), [d, \delta]] = 0 .
	\end{equation}
	Moreover, $\alpha$ preserves the $L'_i$-grading \cref{recover quadr:grading 2} of $L$, so in particular, $q_\alpha([y,v]) \in V$ for all $v \in V$.
\end{proposition}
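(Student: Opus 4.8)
The plan is to reduce the statement to a single linear condition on a one-parameter family, solve that condition, and then verify that the resulting automorphism automatically respects the second grading.

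First I would invoke \cref{recover:Galois descent} (in its swapped form, cf.\ \cref{rem:swap}, since $a\in X\leq L_{-1}$) to produce \emph{some} $a$-exponential automorphism $\alpha_0\in E_-(x,y)$. By the uniqueness statement \cref{th:alg}\cref{th:alg:unique}, every $a$-exponential automorphism is then of the form $\beta_\mu:=\exp(\mu x)\alpha_0$ with $\mu\in k$, and by the swapped version of \cref{rem:beta}\cref{rem:beta:other} we have $q_{\beta_\mu}(m)=q_{\alpha_0}(m)+[\mu x,m]$ for all $m\in L$. Thus the entire problem reduces to finding the unique $\mu$ for which \cref{eq:delta-std} holds.

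Next I would compute both sides explicitly. Since $f=[y,e]$ and $[x,[y,e]]=-e$ by \cref{pr:5gr}\cref{5gr:iso}, we get $q_{\beta_\mu}(f)=q_{\alpha_0}(f)-\mu e$. As $q_{\alpha_0}(f)\in L_{-1}$ and $[d,\delta]\in V'\leq L_{-1}$, the bracket $[q_{\beta_\mu}(f),[d,\delta]]$ lies in $L_{-2}=\langle x\rangle$. The crucial numerical input is $[e,[d,\delta]]=-x$: indeed \cref{def:Q and T} gives $[e,[\delta,d]]=T(e,\delta)x$, while $T(e,\delta)=1$ by \cref{def:ef}. Hence $[q_{\beta_\mu}(f),[d,\delta]]=[q_{\alpha_0}(f),[d,\delta]]+\mu x$, an affine function of $\mu$ with nonzero coefficient of $\mu$; writing $[q_{\alpha_0}(f),[d,\delta]]=\nu x$, the unique solution is $\mu=-\nu$, which proves existence and uniqueness of the required $\alpha$.

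For the ``moreover'' I would show that \emph{every} $a$-exponential automorphism with $a\in X$ fixes $c$ and $d$, so the chosen one does in particular. Since $[a,c]=[a,d]=0$ by \cref{recover quadr:decoomp eq 6} and $c,d\in L_0$, the grading-lowering maps force $\alpha(c)=c+q_\alpha(c)$ and $\alpha(d)=d+q_\alpha(d)$ with $q_\alpha(c),q_\alpha(d)\in L_{-2}=\langle x\rangle$. But $\alpha(c)$ is again extremal and $(x,c)\in E_0$ by \cref{recover quadr:existence ext elements}, so case (c) of \cref{pr:2pts} forces the $\langle x\rangle$-part to vanish; thus $\alpha(c)=c$, and likewise $\alpha(d)=d$. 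Then $\alpha$ commutes with $\ad_{[c,d]}$, and since each $L'_i$ is contained in the $i$-eigenspace of $\ad_{[c,d]}$ (\cref{pr:5gr}\cref{5gr:grading der}) and these eigenspaces exhaust $L$, they coincide with the $L'_i$; hence $\alpha$ preserves the grading \cref{recover quadr:grading 2}. Finally, for the last clause I would decompose $\alpha([y,v])$ along the first grading: with $[y,v]\in L_1\cap L'_{-1}$ its image is $[y,v]+[a,[y,v]]+q_\alpha([y,v])+n_\alpha([y,v])$, with terms in $L_1,L_0,L_{-1},L_{-2}$. Grading-$2$ invariance gives $\alpha([y,v])\in L'_{-1}$, and intersecting with the $L_i$ through the refinement $L'_{-1}=V\oplus X'\oplus[y,V]$ of \cref{recover quadr:decomp in 3} forces the $L_{-1}$-part $q_\alpha([y,v])$ into $L_{-1}\cap L'_{-1}=V$ (and, as a byproduct, $n_\alpha([y,v])=0$). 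The main obstacle throughout is the simultaneous bookkeeping of the two interlocking gradings together with the sign in $[e,[d,\delta]]=-x$, on which the nondegeneracy of the affine condition, and hence the uniqueness, entirely rests.
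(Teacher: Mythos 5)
Your existence-and-uniqueness argument is sound and is, in substance, the paper's own: both parametrize all $a$-exponential automorphisms as $\exp(\mu x)\alpha_0$ via \cref{th:alg}\cref{th:alg:unique} (legitimate here after scalar extension, per the remark following \cref{recover:Galois descent}), and both reduce \cref{eq:delta-std} to an affine equation in $\mu$ whose non-vanishing leading coefficient comes from $[e,[d,\delta]] = -T(e,\delta)x = -x$. Your derivation of $\alpha(c)=c$ and $\alpha(d)=d$ (via $[a,c]=[a,d]=0$, the extremality of $\alpha(c)$, and case (c) of \cref{pr:2pts}) is also exactly the paper's. The fact that you settle the affine equation before discussing the second grading is harmless, since that step only needs $[q_\alpha(f),[d,\delta]]\in L_{-2}$, not $q_\alpha(f)\in V$.

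The genuine gap is the step deducing grading preservation from the fact that $\alpha$ commutes with $\ad_{[c,d]}$. \cref{pr:5gr}\cref{5gr:grading der} only asserts that $L'_i$ is \emph{contained} in the $i$-eigenspace, and your identification of the $L'_i$ with the full eigenspaces requires the scalars $-2,-1,0,1,2$ to be pairwise distinct in $k$. This fails precisely in the characteristics the paper insists on covering: if $\Char(k)=2$, commuting with $\ad_{[c,d]}$ only forces $\alpha$ to preserve $L'_{-2}\oplus L'_0\oplus L'_2$ and $L'_{-1}\oplus L'_1$; if $\Char(k)=3$, it only forces preservation of $L'_{-2}\oplus L'_1$, $L'_{-1}\oplus L'_2$ and $L'_0$. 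Neither suffices to conclude $\alpha(L'_i)=L'_i$, so the "moreover" clause (and with it the claim $q_\alpha([y,v])\in V$) is unproved in characteristic $2$ and $3$. The repair is immediate from what you have already established: since $\alpha$ fixes $c$ and $d$, it maps $L'_{-2}=\langle c\rangle$ and $L'_2=\langle d\rangle$ to themselves, and \cref{recover quadr:prop aut fixing x and y} --- whose proof rests on the characteristic-free descriptions in \cref{pr:5gr}\cref{5gr:indep} rather than on eigenvalues --- then gives $\alpha(L'_i)=L'_i$ for all $i$. This is the route the paper takes; with that substitution, the rest of your argument, including the refinement placing $q_\alpha([y,v])$ in $L_{-1}\cap L'_{-1}=V$, goes through verbatim.
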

\begin{proof}
	 By \cref{recover:Galois descent}, there exists an $a$\dash exponential automorphism $\alpha \in \Aut(L)$.
	 We first show that $\alpha$ preserves the $L'_i$-grading \cref{recover quadr:grading 2} of $L$.
	 Since $c \in L_0$, we can write $\alpha(c) = c + [a,c] + q_\alpha(c)$.
	 Now $[a,c] = 0$, and if $q_\alpha(c) \neq 0$, then $\alpha(c)$ is an extremal element of the form $c + \lambda x$, which is impossible.
	 Hence $\alpha(c) = c$, and similarly $\alpha(d) = d$.
	 By \cref{recover quadr:prop aut fixing x and y}, this implies that $\alpha$ preserves the $L'_i$ grading \cref{recover quadr:grading 2}.
	 
	 In particular, $\alpha(f) \in L'_{-1}$, so we can write
	 \[ \alpha(f) = f + [a,f] + q_\alpha(f) \]
	 with $q_\alpha(f) \in L_{-1} \cap L'_{-1} = V$.
	 By \cref{th:alg}\cref{th:alg:unique}, any other such $a$-exponential automorphism $\alpha'$ can be written as $\alpha' = \exp(\lambda x) \alpha$ for some $\lambda \in k$.
	 By \cref{rem:beta}\cref{rem:beta:other}, we then have $q_{\alpha'}(f) = q_\alpha(f) + [\lambda x, f] = q_\alpha(f) - \lambda e$.
	 Notice now that $[q_\alpha(f), [d, \delta]] \in \langle x \rangle$.
	 Since $- [e, [d, \delta]] = T(e, \delta)x = x$ by \cref{def:Q and T,def:ef}, there is indeed a unique choice of $\lambda$ such that \cref{eq:delta-std} holds.
\end{proof}

\begin{definition}\label{def:theta}
	\begin{enumerate}
		\item For each $a \in X$, we denote the $a$-exponential automorphism $\alpha \in \Aut(L)$ satisfying \cref{eq:delta-std} by $\Theta_a$. The corresponding (uniquely defined) maps $q_\alpha$, $n_\alpha$ and $v_\alpha$ will be denoted by $q_a$, $n_a$ and $v_a$, respectively.
		\item For each $a \in X$ and each $v \in V$, we define $\theta(a,v) := q_a([y,v]) \in V$. In particular, we have
			\begin{equation}\label{eq:Theta}
				\Theta_a([y,v]) = [y,v] + [a, [y,v]] + \theta(a,v) = [y,v] + [a \cdot v, f] + \theta(a,v) ,
			\end{equation}
			where the second equality holds by \cref{le:.}\cref{le:.:f}.
			Notice that for each $\lambda \in k$, the automorphism $\Theta_{\lambda a}$ coincides with $(\Theta_a)_\lambda$ as defined in \cref{th:alg}\cref{th:alg:scalar}; it follows that $\theta(\lambda a, v) = \lambda^2 \theta(a,v)$.
		\item\label{def:pi} For each $a \in X$, we define $\pi(a) := \theta(a,e) = q_a(f) \in V$. In particular, we have
			\[ \Theta_a(f) = f + [a, f] + \pi(a) . \]
			Notice that \cref{eq:delta-std} tells us that
			\begin{equation}\label{eq:Tdelta}
				T(\pi(a), \delta) = 0 ,
			\end{equation}
			which corresponds precisely to condition (iii) in \cite[Definition 4.1]{Weiss2006} and \cite[Definition 7.1]{Muehlherr2019}.
	\end{enumerate}
\end{definition}

When $\Char(k) \neq 2$, we can describe $\theta$ and $\pi$ in terms of $h$ and $\cdot$; cfr.\@~\cite[Proposition 4.5(i) and Remark 4.8]{Weiss2006}.
\begin{proposition}\label{pr:theta char not 2}
	If $\Char(k) \neq 2$, then $\Theta_a = e_-(a)$ as in \cref{th:alg}\cref{th:alg:char}.
	Moreover, we have $\pi(a) = \tfrac{1}{2}h(a,a)$ and $\theta(a,v) = \tfrac{1}{2}h(a, a \cdot v)$.
\end{proposition}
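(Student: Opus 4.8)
The plan is to reduce the first assertion, $\Theta_a = e_-(a)$, to the uniqueness statement already proved in \cref{pr:delta-std}. Under our running assumptions the $L_{-1}$-form of \cref{th:alg}\cref{th:alg:char} is available (by the remark following \cref{recover:Galois descent}), so there is a unique $a$-exponential automorphism $e_-(a)$ with $q_{e_-(a)}(m) = \tfrac12[a,[a,m]]$ for all $m\in L$. Since $\Theta_a$ is by \cref{def:theta} the \emph{unique} $a$-exponential automorphism satisfying \cref{eq:delta-std}, it suffices to verify that $e_-(a)$ satisfies \cref{eq:delta-std}; the identification $\Theta_a = e_-(a)$ then follows, and the two displayed formulas drop out at once. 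Indeed $\pi(a) = q_a(f) = \tfrac12[a,[a,f]] = \tfrac12 h(a,a)$ by \cref{eq:def h}, and $\theta(a,v) = q_a([y,v]) = \tfrac12[a,[a,[y,v]]]$; since $[a\cdot v, f] = [a,[y,v]]$ by \cref{le:.}\cref{le:.:f}, this last expression equals $\tfrac12[a,[a\cdot v,f]] = \tfrac12 h(a,a\cdot v)$.

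So everything reduces to checking \cref{eq:delta-std} for $\alpha = e_-(a)$. Here $q_\alpha(f) = \tfrac12[a,[a,f]] = \tfrac12 h(a,a)\in V$, and because $\Char(k)\neq 2$ we have $\delta = \tfrac12 e$ by \cref{def:ef}, so $[d,\delta] = \tfrac12[d,e] = \tfrac12 e'$. Thus \cref{eq:delta-std} is equivalent (up to the nonzero scalar $\tfrac14$) to the single bracket identity
\[ [[a,[a,f]], e'] = 0 . \]
I would prove this by a short Jacobi computation governed entirely by the grading of \cref{fig:qa}.

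The one local fact driving the computation is $[a,e'] = 0$, which is the relation $[X,V']=0$ recorded in \cref{recover quadr:decoomp eq 2} (note $e'\in V'$). Setting $b := [a,f]\in X'$ and using this relation with the Jacobi identity gives $[[a,b],e'] = [a,[b,e']]$ and likewise $[b,e'] = [[a,f],e'] = [a,[f,e']]$. Now $[f,e'] = [x,y]-[c,d]$ by \cref{eq:ef}, and since $a\in L_{-1}\cap L'_0$, applying \cref{pr:5gr}\cref{5gr:grading der} to both gradings yields $[a,[x,y]] = a$ and $[a,[c,d]] = 0$; hence $[b,e'] = a$, and therefore $[[a,[a,f]],e'] = [a,[b,e']] = [a,a] = 0$. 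This confirms \cref{eq:delta-std} for $e_-(a)$, so $\Theta_a = e_-(a)$, and combined with the first paragraph the proof is complete.

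The main (and essentially only) obstacle is the bracket identity $[[a,[a,f]],e'] = 0$; everything else is bookkeeping. I expect the displayed chain to go through cleanly, the one point requiring care being the disciplined use of \emph{both} gradings—tracking the bigraded component of each intermediate bracket—so that the grading derivations $\ad_{[x,y]}$ and $\ad_{[c,d]}$ are applied with the correct eigenvalues ($-1$ on $L_{-1}$ and $0$ on $L'_0$, respectively).
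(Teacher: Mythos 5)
Your proposal is correct and takes essentially the same route as the paper: both reduce the identification $\Theta_a = e_-(a)$ to checking \cref{eq:delta-std} for $e_-(a)$ via the uniqueness in \cref{pr:delta-std}, and both carry out the same Jacobi computation using $[a,e']=0$ twice together with \cref{eq:ef} and the grading eigenvalues, before reading off the formulas for $\pi$ and $\theta$. The only cosmetic difference is that you verify $[[a,[a,f]],e']=0$ directly, whereas the paper phrases the identical condition as $T([a,[a,f]],e)=0$ via \cref{def:Q and T}.
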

\begin{proof}
	By definition, we have
	\begin{align*}
		\Theta_a(f) &= f + [a,f] + \pi(a), \\
		e_-(a)(f) &= f + [a,f] + \tfrac{1}{2}[a, [a,f]] = f + [a,f] + \tfrac{1}{2} h(a,a) .
	\end{align*}
	We claim that $T(\tfrac{1}{2}[a, [a,f]], \delta) = 0$. Recall that $\delta = \tfrac{1}{2} e$ since $\Char(k) \neq 2$.
	Using twice the fact that $[a, e'] = 0$ and using \cref{eq:ef}, we get
	\begin{align*}
		T([a, [a,f]], e)
		&= [[a, [a,f]], [e,d]] = [e', [a, [a,f]]] = [a, [e', [a,f]]] \\
		&= [a, [a, [e',f]]] = [a, [a, [c,d] - [x,y]]] = [a, a] = 0 ,
	\end{align*}
	proving our claim. This shows that $e_-(a)$ satisfies the assumption \cref{eq:delta-std}, so it coincides with $\Theta_a$.
	In particular, $\pi(a) = \tfrac{1}{2}h(a,a)$. Moreover,
	\[ \Theta_a([y,v]) = e_-(a)([y,v]) = [y,v] + [a, [y,v]] + \tfrac{1}{2}[a, [a, [y,v]]] , \]
	hence $\theta(a,v) = \tfrac{1}{2} [a, [a, [y,v]]] = \tfrac{1}{2} h(a, a \cdot v)$ by \cref{le:.}\cref{le:.:h}.
\end{proof}

Next, we return to the case where $\Char(k)$ is arbitrary and we recover identity \cref{prelim:def quadr alg ax 7} from \cref{prelim:def quadr alg}.
\begin{proposition}\label{pr:theta a+b}
	There exists a function $\gamma \colon X \times X \to k$ such that
	\[ \theta(a+b, v) = \theta(a,v) + \theta(b,v) + h(a, b \cdot v) - \gamma(a,b) v  \]
	for all $a,b \in X$ and all $v \in V$.
	Moreover, we have $\gamma(a,b) = T(h(a,b), \delta)$ for all $a,b \in X$.
\end{proposition}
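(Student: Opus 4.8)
The plan is to compute $\theta(a+b,v) = q_{a+b}([y,v])$ directly using the product structure of the $\Theta$-automorphisms, since $\theta$ is defined via the quadratic part $q$ of a distinguished $a$-exponential automorphism. First I would observe that $\Theta_a \Theta_b$ is an $(a+b)$-exponential automorphism by \cref{th:alg}\cref{th:alg:sum}, but it is not necessarily the \emph{distinguished} one $\Theta_{a+b}$ satisfying the normalization \cref{eq:delta-std}. By \cref{th:alg}\cref{th:alg:unique}, the two differ by a factor $\exp(\lambda x)$ for some scalar $\lambda = \lambda(a,b) \in k$, i.e. $\Theta_{a+b} = \exp(\lambda x)\,\Theta_a\Theta_b$. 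This scalar is precisely what will become $\gamma(a,b)$ (up to sign/identification), and pinning it down is the crux.

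Next I would apply \cref{rem:beta}\cref{rem:beta:product} to the product $\Theta_a \Theta_b$ to get the explicit formula for its quadratic part, namely
\[
q_{\Theta_a\Theta_b}(m) = q_a(m) + [a, [b, m]] + q_b(m)
\]
for all $m \in L$. Evaluating at $m = [y,v]$ and using $\theta(a,v) = q_a([y,v])$, $\theta(b,v) = q_b([y,v])$, together with the identity $[a,[b,[y,v]]] = h(a, b\cdot v)$ from \cref{le:.}\cref{le:.:h}, this gives
\[
q_{\Theta_a\Theta_b}([y,v]) = \theta(a,v) + \theta(b,v) + h(a, b\cdot v).
\]
Then I would account for the correction factor $\exp(\lambda x)$: by \cref{rem:beta}\cref{rem:beta:other}, passing from $\Theta_a\Theta_b$ to $\Theta_{a+b} = \exp(\lambda x)\Theta_a\Theta_b$ modifies the quadratic part by $m \mapsto q(m) + [\lambda x, m]$. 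Since $[\lambda x, [y,v]] = \lambda[x,[y,v]] = -\lambda v$ by \cref{pr:5gr}\cref{5gr:grading der}, I obtain
\[
\theta(a+b,v) = q_{\Theta_{a+b}}([y,v]) = \theta(a,v) + \theta(b,v) + h(a,b\cdot v) - \lambda v,
\]
so setting $\gamma(a,b) := \lambda(a,b)$ yields the desired functional identity with a well-defined $\gamma$ (independent of $v$).

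The main obstacle is identifying $\gamma(a,b)$ explicitly as $T(h(a,b),\delta)$. To do this I would exploit the normalization \cref{eq:delta-std}, which characterizes $\Theta_{a+b}$ by $[q_{a+b}(f), [d,\delta]] = 0$, equivalently $T(\pi(a+b), \delta) = 0$ by \cref{eq:Tdelta}. Specializing the just-derived identity to $v = e$ gives $\pi(a+b) = \theta(a+b,e) = \pi(a) + \pi(b) + h(a, b\cdot e) - \gamma(a,b)e$, and since $b\cdot e = b$ by \cref{le:.}\cref{le:.:e}, this reads $\pi(a+b) = \pi(a)+\pi(b)+h(a,b) - \gamma(a,b)e$. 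Applying $T(-,\delta)$ and using $T(\pi(a+b),\delta) = T(\pi(a),\delta) = T(\pi(b),\delta) = 0$ from \cref{eq:Tdelta}, together with $T(e,\delta) = 1$ from \cref{def:ef}, I get
\[
0 = T(h(a,b),\delta) - \gamma(a,b),
\]
whence $\gamma(a,b) = T(h(a,b),\delta)$, completing the proof. The only subtle point is verifying that $\lambda(a,b)$ genuinely depends only on $a$ and $b$ and not on $v$, which is automatic because it arises from comparing two fixed automorphisms before evaluation.
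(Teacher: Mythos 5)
Your proof is correct and follows essentially the same route as the paper: both compare $\Theta_a\Theta_b$ (an $(a+b)$-exponential automorphism by \cref{th:alg}\cref{th:alg:sum}) with the normalized $\Theta_{a+b}$ via \cref{th:alg}\cref{th:alg:unique}, evaluate on $[y,v]$ to extract the functional identity, and then specialize to $v=e$ and use $T(\pi(\cdot),\delta)=0$ together with $T(e,\delta)=1$ to identify $\gamma(a,b)=T(h(a,b),\delta)$. The only (harmless) differences are that you organize the middle computation through the product formula of \cref{rem:beta}\cref{rem:beta:product} and \cref{le:.}\cref{le:.:h} instead of expanding $\Theta_a(\Theta_b([y,v]))$ directly, and that the identity $[x,[y,v]]=-v$ should be cited as \cref{pr:5gr}\cref{5gr:iso} rather than \cref{pr:5gr}\cref{5gr:grading der}.
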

\begin{proof}
	Let $a,b \in X$.
	By \cref{th:alg}\cref{th:alg:sum}, the composition $\Theta_a \Theta_b$ is an $(a+b)$-exponential automorphism of $L$, so by \cref{th:alg}\cref{th:alg:unique}, there exists some $\lambda \in k$ such that
	\begin{equation}\label{eq:Theta_a Theta_b}
		\Theta_a \Theta_b = \exp(\lambda x) \Theta_{a+b} .
	\end{equation}
	We write $\gamma(a,b) := - \lambda$.
	
	Observe now that $[b, f] \in [X,f] = X'$. Since $\Theta_a$ preserves the $L'_i$-grading by \cref{pr:delta-std}, we thus get
	\begin{equation}\label{eq:Theta2}
		\Theta_a([b,f]) = [b,f] + [a, [b, f]] = [b,f] + h(a,b) .
	\end{equation}
	Now let $v \in V$; then using \cref{eq:Theta,eq:Theta2}, we get
	\begin{align*}
		\Theta_a \Theta_b([y,v])
		&= \Theta_a\bigl( [y,v] + [b \cdot v, f] + \theta(b,v) \bigr) \\
		&= [y,v] + [a \cdot v, f] + \theta(a,v) + [b \cdot v, f] + h(a, b \cdot v) + \theta(b,v) .
	\end{align*}
	On the other hand,
	\[ \Theta_{a+b}([y,v]) = [y,v] + [(a+b) \cdot v, f] + \theta(a+b, v) , \]
	and since $[x, [y,v]] = -v$, we get
	\[ \exp(\lambda x) \Theta_{a+b}([y,v]) = [y,v] + [(a+b) \cdot v, f] + \theta(a+b, v) - \lambda v . \]
	Since $\lambda = - \gamma(a,b)$, the required expression for $\theta(a+b)$ now follows from \cref{eq:Theta_a Theta_b}.
	
	Next, if we set $v = e$ in this expression, then we get, using \cref{le:.}\cref{le:.:e},
	\begin{equation}\label{eq:pi a+b}
		\pi(a+b) = \pi(a) + \pi(b) + h(a, b) - \gamma(a,b) e .
	\end{equation}
	By \cref{eq:Tdelta}, this implies that $T(h(a,b) - \gamma(a,b)e, \delta) = 0$, and since $T(e,\delta) = 1$, this implies that
	$\gamma(a,b) = T(h(a,b), \delta)$, as claimed.
\end{proof}

Recall from \cref{prelim: def quadr space} that the quadratic form $Q$ with base point $e$ has an associated involution
\[ \sigma \colon V \to V \colon v \mapsto v^\sigma := T(v,e)e - v . \]

\begin{lemma}\label{le:a.vsigma}
	Let $a \in X$ and $v \in V$.
	Then $a \cdot v^\sigma = [a, [v, f']] = [v, [a, f']]$.
\end{lemma}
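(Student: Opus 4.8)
The goal is to prove $a \cdot v^\sigma = [a, [v,f']] = [v, [a, f']]$ for $a \in X$ and $v \in V$. First I would establish the last equality $[a, [v,f']] = [v, [a,f']]$, which is purely a matter of the Jacobi identity: we have $[a,[v,f']] - [v,[a,f']] = [[a,v], f']$, and since $a \in X$ and $v \in V$ satisfy $[X,V] = 0$ by \cref{recover quadr:decoomp eq 2}, this bracket $[a,v]$ vanishes, giving the equality immediately.

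The substantive part is the first equality. My plan is to unwind the definition $v^\sigma = T(v,e)e - v$ and compute $a \cdot v^\sigma$ using the bilinearity of $\cdot$ in its second argument (which holds by construction, since $a \cdot v = [a,[e',[v,y]]]$ is built from iterated brackets that are linear in $v$). Thus
\[ a \cdot v^\sigma = T(v,e)\,(a \cdot e) - a \cdot v = T(v,e)\,a - a\cdot v, \]
using $a \cdot e = a$ from \cref{le:.}\cref{le:.:e}. So it remains to show that $[a,[v,f']] = T(v,e)\,a - a \cdot v$. I would expand the right-hand side using $a \cdot v = [a,[e',[v,y]]]$ from \cref{eq:def .} and try to relate $[a,[v,f']]$ to it by moving brackets around. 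The natural tool is to use $f' = [y,e'] = [d,f]$ (from \cref{def:ef}) together with the Jacobi identity to rewrite $[v,f']$ and then bring in the key bracket relations $[e,f'] = [c,d]+[x,y]$ from \cref{eq:ef}, as well as the grading-based eigenvalue action of $[c,d]$ and $[x,y]$ from \cref{pr:5gr}\cref{5gr:grading der}.

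The main obstacle will be bookkeeping the Jacobi-identity manipulations so that all the spurious terms either vanish by one of the bracket-annihilation relations in \cref{co:easy} (such as $[c,X]=[d,X]=0$, $[V,X]=[V',X]=0$) or combine into the eigenvalue terms $T(v,e)\,a$. Concretely, I expect to write $f' = [y,e']$ and use Jacobi to expand $[a,[v,[y,e']]]$, pushing the bracket with $[y,e']$ inward; the term involving $[a,[v,y]]$ paired with $e'$ should reproduce $a \cdot v$ (via the second form in \cref{eq:def .}), while the commutator correction term $[[a,v],\dots]$ vanishes since $[a,v]=0$, and a further Jacobi step involving $[v,e']$ or the action of $e'$ together with \cref{co:XX'} and the definition of $T$ should produce the scalar multiple $T(v,e)\,a$. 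Keeping track of signs and the precise placement of $e'$ versus $f'$ is where the care is needed, but each individual rewriting is forced by the grading in \cref{fig:qa}, so no genuinely new idea is required beyond patient application of the Jacobi identity and the established bracket relations.
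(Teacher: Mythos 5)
Your proposal is correct and follows essentially the same route as the paper's proof: reduce the last equality to the Jacobi identity plus $[X,V]=0$, compute $a\cdot v^\sigma = T(v,e)a - a\cdot v$ from bilinearity of $\cdot$ and $a\cdot e=a$, and then expand $[a,[v,[y,e']]]$ via Jacobi, using $[v,e']=[v,[d,e]]=-T(v,e)x$ (the definition of $T$) together with the eigenvalue action of $[x,y]$ on $L_{-1}$ to produce $T(v,e)a$, and the bracket $[a,[e',[v,y]]]=a\cdot v$ for the other term. The only cosmetic difference is that the paper organizes the computation by first evaluating $[v,e']$ and $[v,f']$ explicitly before bracketing with $a$; your appeal to \cref{co:XX'} is unnecessary but harmless.
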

\begin{proof}
	Notice that
	\begin{align*}
		[v, e'] &= [v, [d, e]] = - T(v,e)x \quad \text{and} \\
		[v, f'] &= [v, [y, e']] = - [[v, e'], y] - [e', [v, y]] = T(v,e)[x,y] - [e', [v,y]] .
	\end{align*}
	Since $X \leq L_{-1}$, it follows that
	\[ [a, [v, f']] = T(v,e)a - [a, [e', [v,y]]] = T(v,e)a - a \cdot v = a \cdot v^\sigma . \]
	The second equality follows from the fact that $[a,v] = 0$.
\end{proof}

\begin{proposition}\label{pr:a v vs}
	Let $a \in X$ and $v \in V$. Then
	\[ (a \cdot v)\cdot v^\sigma = Q(v)a . \]
\end{proposition}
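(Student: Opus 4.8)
The plan is to start from \cref{le:a.vsigma}, which expresses the dot product with $v^\sigma$ as an ordinary nested bracket and so removes all reference to $\sigma$. Applying it with $a\cdot v\in X$ in place of $a$ (and the same $v$) gives $(a\cdot v)\cdot v^\sigma = [a\cdot v, [v,f']]$. The entire argument will then consist of transporting this expression, via repeated use of the Jacobi identity and the vanishing relations read off from the two gradings in \cref{fig:qa}, into a bracket of the shape $[a, [v,[y,[d,v]]]]$, at which point \eqref{eq:v y dv} injects the factor $Q(v)$.

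Concretely, I would first use $[X,V]=0$ from \eqref{recover quadr:decoomp eq 2}, so that Jacobi turns $[a\cdot v,[v,f']]$ into $[v,[a\cdot v,f']]$. Next I would rewrite $f'=[d,f]$ (see \cref{def:ef}); since $[d,X]=0$ by \eqref{recover quadr:decoomp eq 6} and $a\cdot v\in X$, Jacobi together with \cref{le:.}\cref{le:.:f} yields $[a\cdot v,f']=[d,[a\cdot v,f]]=[d,[a,[y,v]]]$. Two further applications of Jacobi, now using $[d,a]=0$ (again \eqref{recover quadr:decoomp eq 6}) and $[d,y]=0$ (from \cref{def:ef}), rewrite this as $[a,[y,[d,v]]]$. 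Feeding this back in and using $[v,a]=0$ once more, the left-hand side becomes $[a,[v,[y,[d,v]]]]$.

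The essential input enters here: by \eqref{eq:v y dv} we have $[v,[y,[d,v]]]=Q(v)([c,d]+[x,y])$, whence $(a\cdot v)\cdot v^\sigma = Q(v)\,[a,[c,d]+[x,y]]$. To finish, I would observe that $a\in X\subseteq L_{-1}\cap L'_0$, so the grading derivations of \cref{pr:5gr}\cref{5gr:grading der} applied to the two gradings \eqref{recover quadr:grading 1} and \eqref{recover quadr:grading 2} give $[a,[c,d]]=0$ and $[a,[x,y]]=a$, and therefore $(a\cdot v)\cdot v^\sigma=Q(v)a$, as required. The only genuinely non-formal step is the appeal to \eqref{eq:v y dv}; everything else is grading bookkeeping. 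Accordingly, the main thing to get right is recognizing that the doubly-$v$ quantity $[v,[y,[d,v]]]$ is exactly the one computed in that earlier identity, since this is the single point at which the quadratic form $Q$ is introduced into an otherwise purely combinatorial calculation, and one must be careful that the signs align so that $[c,d]+[x,y]$ (rather than $[c,d]-[x,y]$) acts as the identity on $X$.
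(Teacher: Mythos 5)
Your proof is correct and follows essentially the same route as the paper's: both begin by applying \cref{le:a.vsigma} to $a \cdot v$, perform the same Jacobi/grading bookkeeping using $[X,V]=0$, $[d,X]=0$, $[d,y]=0$ and \cref{le:.}\cref{le:.:f}, and conclude with the $Q(v)$-identity established in the same lemma. The only (cosmetic) difference is that you invoke the companion identity \cref{eq:v y dv} directly, whereas the paper uses \cref{eq:vd vy} after first rewriting $[v,[d,[v,y]]]$ as $[[v,d],[v,y]]$ modulo $\langle [c,d]\rangle$; your variant is marginally shorter.
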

\begin{proof}
	Since $[d, a \cdot v] \in [d, X] = 0$, we have, using \cref{le:.}\cref{le:.:f}, that
	\begin{equation}\label{eq:avf'}
		[a \cdot v, f'] = [a \cdot v, [d, f]] = [d, [a \cdot v, f]] = [d, [a, [y, v]]] .
	\end{equation}
	By \cref{le:a.vsigma}, we then get
	\begin{align*}
		(a \cdot v) \cdot v^\sigma
		&= [a \cdot v, [v, f']] \\
		&= [v, [a \cdot v, f']] && \text{since $[X, V] = 0$} \\
		&= [v, [d, [a, [y, v]]]] && \text{by \cref{eq:avf'}} \\
		&= [v, [a, [d, [y, v]]]] && \text{since $[d, X] = 0$} \\
		&= [a, [v, [d, [y, v]]]] && \text{since $[X, V] = 0$} \\
		&= [[v, [d, [v, y]]], a] .
	\end{align*}
	Now $[[v,d], [v,y]] = [v, [d, [v,y]]] - [d, [v, [v,y]]] = [v, [d, [v,y]]] + \lambda [c,d]$ for some $\lambda \in k$ because $[v, [v, y]] \in \langle c \rangle$ by the grading.
	By \cref{pr:5gr}\cref{5gr:grading der}, however, $[[c,d], a] = 0$, hence
	\[ (a \cdot v) \cdot v^\sigma = \bigl[ [[v,d], [v,y]], a \bigr] . \]
	The result now follows from \cref{eq:vd vy} together with \cref{pr:5gr}\cref{5gr:grading der}.
\end{proof}

\begin{lemma}\label{le:abX}
	Let $a,b \in X$. Then $[a,b] = - T(h(a,b), e)x$.
	In particular, the bilinear map
	\[ X \times X \to k \colon (a,b) \mapsto T(h(a,b), e) \]
	is non-degenerate.
\end{lemma}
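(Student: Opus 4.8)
The plan is to prove the identity $[a,b] = -T(h(a,b),e)x$ first, and then deduce the non-degeneracy of the bilinear form $(a,b) \mapsto T(h(a,b),e)$ as an immediate corollary. First I would observe that since $a,b \in X = L_{-1} \cap L'_0$, the grading forces $[a,b] \in L_{-2} \cap L'_0 = \langle x \rangle$ (because $L_{-2} = \langle x \rangle \leq L'_0$, as recorded in the lemma identifying the positions of $x,y,c,d,p,q$-type elements; here $x \in L'_0$ by the lemma preceding \cref{recover quadr:prop g}). So I may write $[a,b] = \mu x$ for some $\mu \in k$, and the entire content of the identity is to compute $\mu$ by pairing against a suitable element.

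The natural way to extract the scalar $\mu$ is to bracket $[a,b]$ with something that sends $x$ to a known nonzero multiple of an extremal element whose coefficient I can read off, or to apply the trace form $g$. The cleaner route is to use the defining relation $T(h(a,b),e)x = [h(a,b),[e,d]]$ from \cref{def:Q and T}, rewriting $T(h(a,b),e)$ via $h(a,b) = [a,[b,f]]$ from \cref{eq:def h}. Concretely, I would compute $[[a,b],\,\cdot\,]$ or, better, directly expand $[h(a,b),[e,d]]$ using the Jacobi identity and the already-established bracket relations. Recall $f = [y,e]$ and $e' = [d,e]$, and from \cref{co:XX'} and the surrounding computations we have good control of brackets among $e,e',f,f'$. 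The key computational step will be to show $[h(a,b),e'] = [[a,[b,f]],[d,e]]$ reduces, after repeatedly moving brackets past the elements $d$ and $e$ that annihilate $X$ (using $[d,X]=0$ from \cref{recover quadr:decoomp eq 6} and $[X,V]=0$ from \cref{recover quadr:decoomp eq 2}, together with $[a,b] \in \langle x\rangle$), to a multiple of $[a,b]$ itself. Tracking the sign through these Jacobi manipulations is where I expect the main obstacle to lie: the identity carries a specific minus sign, and getting it right requires careful bookkeeping of each transposition, especially the interplay between $f = [y,e]$, the relation $[e,f'] = [c,d]+[x,y]$ from \cref{eq:ef}, and the action of $\ad_{[c,d]}$ and $\ad_{[x,y]}$ via \cref{pr:5gr}\cref{5gr:grading der}.

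Once the identity $[a,b] = -T(h(a,b),e)x$ is established, non-degeneracy of the form $B(a,b) := T(h(a,b),e)$ follows immediately. Indeed, suppose $a \in X$ satisfies $B(a,b) = 0$ for all $b \in X$; then $[a,b] = 0$ for all $b \in X$, i.e. $a \in Z$ in the sense of the subspace $\{z \in L_{-1} \mid [z,L_{-1}] = 0\}$ after I also check $[a,V] = 0$ and $[a,V'] = 0$. The brackets $[a,V]=0$ and $[a,X]=0$ hold by the grading and \cref{recover quadr:decoomp eq 2}, and $[a,V']$ lands in $L_{-2}\cap L'_1 = 0$ by combining both gradings (see \cref{fig:qa}); hence $[a, L_{-1}] = 0$. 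Then \cref{recover:T nondeg} forces $a = 0$, and by the symmetry of $h$ (hence of $B$) the same argument applies in the other variable, giving non-degeneracy. The only subtlety here is confirming that $B$ is genuinely bilinear and symmetric: bilinearity is inherited from bilinearity of $h$ and $T$, and symmetry follows from $[a,b] = -[b,a]$ combined with the just-proved identity, which shows $T(h(a,b),e) = T(h(b,a),e)$, consistent with $h$ being symmetric.
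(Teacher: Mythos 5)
Your overall strategy coincides with the paper's: write $T(h(a,b),e)x = [h(a,b),[e,d]]$ using \cref{def:Q and T}, substitute $h(a,b)=[a,[b,f]]$ from \cref{eq:def h}, simplify by Jacobi, and deduce non-degeneracy from \cref{recover:T nondeg}. The non-degeneracy half of your argument is complete and correct, and matches the paper. But the identity itself --- the actual content of the lemma --- is left as a plan with an admitted unresolved sign problem, and the ingredients you name are not the ones that make the computation close. The crucial relation is $[e',a]=[e',b]=0$, i.e.\ $[V',X]=0$ from \cref{recover quadr:decoomp eq 2}, which you never single out (you cite $[d,X]=0$ and $[X,V]=0$ instead, neither of which is used). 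With it, Jacobi gives in one stroke
\[ T(h(a,b),e)x = -[h(a,b),e'] = [e',[a,[b,f]]] = [a,[e',[b,f]]] = [a,[b,[e',f]]] , \]
and then you need $[e',f]=[c,d]-[x,y]$ --- the \emph{other} half of \cref{eq:ef} than the one you quote ($[e,f']=[c,d]+[x,y]$). Since $b\in L_{-1}\cap L'_0$ is killed by $\ad_{[c,d]}$ and is a $(-1)$-eigenvector of $\ad_{[x,y]}$ by \cref{pr:5gr}\cref{5gr:grading der}, this yields $[b,[e',f]]=-b$, hence $T(h(a,b),e)x=-[a,b]$, with no further sign bookkeeping. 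As written, your proposal has not proved the identity.

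There is also a genuine error at the end: your claim that $B(a,b)=T(h(a,b),e)$ is symmetric, "consistent with $h$ being symmetric," is false. Combining $[a,b]=-[b,a]$ with the identity gives $T(h(a,b),e)=-T(h(b,a),e)$, so the form is alternating, and $h$ is \emph{not} symmetric: indeed \cref{co:h skew}, which the paper derives from this very lemma, gives $h(a,b)-h(b,a)=T(h(a,b),e)e$. Fortunately this does not damage the non-degeneracy conclusion --- skew-symmetry transfers triviality of the radical between the two variables just as well, and in any case the core argument only needs that $T(h(a,\cdot),e)\equiv 0$ forces $a=0$, which you establish correctly via $[a,L_{-1}]=0$ and \cref{recover:T nondeg}.
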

\begin{proof}
	Using the definitions of $T$ and $h$, we get
	\[ T(h(a,b), e)x = [h(a,b), [e,d]] = [[a, [b,f]], -e'] = [e', [a, [b, f]]] . \]
	Since $[e', a] = [e', b] = 0$, it follows that
	\[ T(h(a,b), e)x = [a, [b, [e', f]]] . \]
	Once again using \cref{eq:ef} and \cref{pr:5gr}\cref{5gr:grading der}, we get $[b, [e', f]] = -b$ and the required identity follows.
	
	Next, suppose that $a \in X$ is such that $T(h(a,b), e) = 0$ for all $b \in X$. Then $[a,b] = 0$ for all $b \in X$.
	Since also $[a,V] = 0$ and $[a, V'] = 0$ by the grading, this implies that $[a, L_{-1}] = 0$.
	By \cref{recover:T nondeg}, however, this implies $a = 0$, as required.
\end{proof}
\begin{corollary}\label{co:h skew}
	Let $a,b \in X$. Then $h(a,b)^\sigma = -h(b,a)$.
\end{corollary}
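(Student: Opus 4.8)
The plan is to reduce the whole corollary to the single identity
\[ h(a,b) = h(b,a) + T(h(a,b),e)\,e, \]
from which the claim follows immediately from the definition $v^\sigma = T(v,e)e - v$ of the involution. Indeed, granting this identity (and noting $h(a,b) \in V$, so that $\sigma$ applies), I would simply compute
\[ h(a,b)^\sigma = T(h(a,b),e)e - h(a,b) = T(h(a,b),e)e - \bigl( h(b,a) + T(h(a,b),e)e \bigr) = -h(b,a), \]
which is exactly the assertion.

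To obtain the displayed identity, I would unfold the definition $h(a,b)=[a,[b,f]]$ from \cref{def:h and dot} and apply the Jacobi identity to the difference:
\[ h(a,b) - h(b,a) = [a,[b,f]] - [b,[a,f]] = [[a,b],f]. \]
Now \cref{le:abX} gives $[a,b] = -T(h(a,b),e)x$, so $[[a,b],f] = -T(h(a,b),e)\,[x,f]$. Since $f = [y,e]$ with $e \in V \subseteq L_{-1}$ (see \cref{def:ef}), \cref{pr:5gr}\cref{5gr:iso} --- which asserts that $\ad_x$ is inverse to $-\ad_y$ between $L_1$ and $L_{-1}$ --- yields $[x,f] = [x,[y,e]] = -e$. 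Substituting this back gives $[[a,b],f] = T(h(a,b),e)e$, which is precisely the desired identity $h(a,b) - h(b,a) = T(h(a,b),e)e$.

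The computation is entirely mechanical and I do not expect any serious obstacle. The one point that demands care is the sign bookkeeping: one must use that the inverse of $\ad_x \colon L_1 \to L_{-1}$ is $-\ad_y$ (not $\ad_y$), so that $\ad_x\ad_y$ acts as $-\id$ on $L_{-1}$ and hence $[x,[y,e]] = -e$. Getting this sign wrong would propagate into the final formula and destroy the skew-symmetry, so it is the only step worth double-checking.
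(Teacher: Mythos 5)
Your proposal is correct and follows essentially the same route as the paper's proof: both reduce the claim to the identity $h(a,b) - h(b,a) = T(h(a,b),e)e$, obtained by combining the Jacobi identity with $[a,b] = -T(h(a,b),e)x$ from \cref{le:abX} and the sign fact $[x,f] = [x,[y,e]] = -e$. Your sign bookkeeping (that $\ad_x$ has inverse $-\ad_y$, hence $[x,[y,e]]=-e$) is exactly the point the paper also relies on, and it is handled correctly.
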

\begin{proof}
	Since $[x,f] = -e$, it follows from \cref{le:abX} that
	\[ T(h(a,b), e) e = [[a,b], f] = [a, [b, f]] - [b, [a, f]] = h(a,b) - h(b,a) . \]
	Since $h(a,b)^\sigma = T(h(a,b), e)e - h(a,b)$, the result follows.
\end{proof}

\begin{proposition}\label{pr:axiom 4}
	Let $a,b \in X$ and $v\in V$. Then
	\[ T(h(a, b), v) = T(h(a \cdot v, b), e) = T(h(a, b \cdot v^\sigma), e) . \]
\end{proposition}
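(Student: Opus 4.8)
The plan is to prove the two claimed equalities separately. I begin with the second equality, $T(h(a,b),v) = T(h(a,b\cdot v^\sigma),e)$. This looks like it should follow directly from \cref{le:.}\cref{le:.:dh} together with the definition of $T$ on $V$. Recall that $T(u,v)x = [u,[v,d]]$, so I would compute $T(h(a,b),v)x = [h(a,b),[v,d]]$ and try to relate this to the expression $[a,[b,f']]$ from \cref{le:.}\cref{le:.:dh}. Actually, a cleaner route is to observe that $h(a,b\cdot v^\sigma)$ and $h(a,b)$ are both elements of $V$, and the statement $T(h(a,b),v) = T(h(a,b\cdot v^\sigma),e)$ can be reduced to a Lie-bracket identity in $L$ via the definitions $h(a,b)=[a,[b,f]]$ and $a\cdot v = [a,[e',[v,y]]]$. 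I expect to expand $T(h(a,b\cdot v^\sigma),e)x = [h(a,b\cdot v^\sigma),[e,d]] = [e',[a,[b\cdot v^\sigma,f]]]$ (up to signs) and push the bracket with $e'$ through, using $[e',a]=0$ and $[e',b]=0$.

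For the first equality, $T(h(a,b),v) = T(h(a\cdot v,b),e)$, I would use \cref{le:abX}, which gives $[a',b] = -T(h(a',b),e)x$ for $a',b\in X$; applying this with $a' = a\cdot v$ yields $T(h(a\cdot v,b),e)x = -[a\cdot v, b]$. So the task reduces to showing $T(h(a,b),v)x = -[a\cdot v, b]$, i.e., $[h(a,b),[v,d]] = -[[a,[e',[v,y]]],b]$ up to the correct sign. The natural strategy here is to expand $[a\cdot v, b]$ using the definition $a\cdot v = [[a,[y,v]],e']$ (the second form in \cref{eq:def .}, valid since $[V',X]=0$), and then repeatedly apply the Jacobi identity together with the vanishing brackets from \cref{co:easy} ($[V,X]=0$, $[V',X]=0$, etc.) and \cref{le:.} to move the element $[v,d]\in V'$ or $[y,v]\in[y,V]$ into the right position. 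In fact, axiom \cref{prelim:def quadr alg ax 4} of the quadrangular algebra is exactly this first identity, so the Lie-theoretic translation should make it provable by a direct bracket manipulation.

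I anticipate the main obstacle to be the careful bookkeeping of signs and of which vanishing relations from \cref{fig:qa} apply at each step, since there are several components ($V$, $X$, $V'$, $[y,V]$, etc.) in play and the brackets must land in $L_{-2}=\langle x\rangle$ at the end for the scalar identity to make sense. A secondary subtlety is that one must use the second expression for $a\cdot v$ in \cref{eq:def .} interchangeably with the first, justified by $[V',X]=0$, and must invoke \cref{le:.}\cref{le:.:f} (giving $[a\cdot v,f]=[[a,y],v]$) at the right moment. The cleanest presentation will likely be to first establish the auxiliary identity $T(h(a,b),v)x = -[a\cdot v,b]$ by Jacobi expansion, then deduce the first equality from \cref{le:abX}, and finally obtain the second equality either by a symmetric computation or by combining the first equality with \cref{le:a.vsigma} and \cref{co:h skew} (which relate $b\cdot v^\sigma$ to $[b,[v,f']]$ and control the skewness of $h$). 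I would keep each Jacobi step explicit but short, flagging every appeal to a vanishing bracket.
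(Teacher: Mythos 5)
Your proposal assembles exactly the right ingredients (the definition of $T$, \cref{le:.}\cref{le:.:dh}, \cref{le:a.vsigma}, \cref{le:abX}, \cref{co:h skew}, Jacobi plus the vanishing brackets), and your first paragraph's ``first instinct'' for the second equality is precisely the paper's proof: $T(h(a,b),v)x = [v,[h(a,b),d]] = -[a,[b,[v,f']]] = -[a,b\cdot v^\sigma]$ by \cref{le:.}\cref{le:.:dh}, $[V,X]=0$ and \cref{le:a.vsigma}, then \cref{le:abX}. (Your ``cleaner route'' there is not actually cleaner: expanding $T(h(a,b\cdot v^\sigma),e)x$ through $e'$ only re-derives \cref{le:abX} for the pair $(a,b\cdot v^\sigma)$, and you still need the computation above to connect it to $T(h(a,b),v)$.)

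The genuine issue is your proposed \emph{ordering}: the auxiliary identity $T(h(a,b),v)x = -[a\cdot v,b]$ cannot be established by a one-shot Jacobi expansion before the second equality is known. Expanding $[a\cdot v,b]$ via $a\cdot v=[[a,[y,v]],e']$, $[e',b]=0$, \cref{le:.}\cref{le:.:f}, \cref{le:abX}, $[x,[y,v]]=-v$ and $[e',w]=T(e,w)x$ yields
\[ [a\cdot v,b] \;=\; T(h(a,b\cdot v),e)\,x \;-\; T(h(a,b),e)\,T(e,v)\,x , \]
and identifying the right-hand side with $-T(h(a,b),v)x$ is exactly the second equality in linearized form, via $b\cdot v^\sigma = T(v,e)b - b\cdot v$ (using \cref{le:.}\cref{le:.:e} and bilinearity of $\cdot$). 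So the two equalities are intertwined and the first is not available ``for free'' ahead of the second. The fix is simply to reorder: prove the second equality first (as the paper does, and as your first paragraph essentially describes), and then your expansion of $[a\cdot v,b]$ displayed above, combined with \cref{le:abX}, does give the first equality. Interestingly, this reordered version of your argument is a genuine (if slightly more computational) alternative to the paper's second step, which instead transfers the second equality to the first by the $\sigma$-isometry of $T$ together with two applications of \cref{co:h skew}; your route avoids \cref{co:h skew} altogether.
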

\begin{proof}
	Using the definition of $T$, \cref{le:.}\cref{le:.:dh} and the fact that $[V, X] = 0$, we get
	\[ T(h(a, b), v)x = [v, [h(a,b), d]] = - [v, [a, [b, f']]] = - [a, [b, [v, f']]] . \]
	By \cref{le:a.vsigma}, we have $[b, [v, f']] = b \cdot v^\sigma$, so together with \cref{le:abX}, we get
	\[ T(h(a, b), v)x = - [a, b \cdot v^\sigma] = - T(h(a, b \cdot v^\sigma), e) x . \]
	Next, since $\sigma$ is an involutory isometry of $Q$, we also have, using \cref{co:h skew},
	\begin{multline*}
		T(h(a, b), v)x = T(h(a,b)^\sigma, v^\sigma) x = - T(h(b,a), v^\sigma) x = - T(h(b, a \cdot v), e) x \\
		= - T(h(b, a \cdot v)^\sigma, e^\sigma) x = T(h(a \cdot v, b), e) x . \qedhere
	\end{multline*}
\end{proof}

\begin{proposition}\label{pr:axiom 3}
	Let $a,b \in X$ and $v\in V$. Then
	\[ h(a, b\cdot v) = h(b, a \cdot v) + T(h(a, b), e)v. \]
\end{proposition}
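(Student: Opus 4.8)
The plan is to reduce the whole identity to a single application of the Jacobi identity, after rewriting all three $h$-terms as iterated brackets. First I would invoke \cref{le:.}\cref{le:.:h}, which expresses the relevant images of $h$ under the dot-action as triple brackets: $h(a, b\cdot v) = [a, [b, [y,v]]]$ and, by the same lemma with the roles of $a$ and $b$ interchanged, $h(b, a\cdot v) = [b, [a, [y,v]]]$. Subtracting these and applying the Jacobi identity with $w := [y,v]$ gives
\[ h(a, b\cdot v) - h(b, a\cdot v) = [a, [b, w]] - [b, [a, w]] = [[a,b], w] = [[a,b], [y,v]] . \]

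The remaining task is to identify $[[a,b], [y,v]]$ with $T(h(a,b), e)v$. For this I would use \cref{le:abX}, which gives $[a,b] = -T(h(a,b), e)x$, so that
\[ [[a,b], [y,v]] = -T(h(a,b), e)\,[x, [y,v]] . \]
Finally, since $v \in V \leq L_{-1}$, the isomorphism statement of \cref{pr:5gr}\cref{5gr:iso} (that $\ad_x$ and $-\ad_y$ are mutually inverse between $L_1$ and $L_{-1}$) yields $[x,[y,v]] = -v$; combining the two minus signs then produces exactly $T(h(a,b), e)v$, which is the desired right-hand side.

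I do not expect any genuine obstacle here: the statement is essentially the Jacobi identity in disguise, and the only additional input is the already-established formula $[a,b] = -T(h(a,b),e)x$ for brackets inside $X$ together with the elementary relation $[x,[y,v]] = -v$. The one point that warrants a moment of care is bookkeeping of the two minus signs—one coming from \cref{le:abX} and one from $[x,[y,v]] = -v$—which must cancel for the final sign to come out correctly.
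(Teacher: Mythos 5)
Your proof is correct, and it takes a genuinely different route from the paper. The paper obtains this identity as a corollary of \cref{pr:theta a+b}: symmetry of $\theta(a+b,v)$ in $a$ and $b$ gives $h(a,b\cdot v)-\gamma(a,b)v = h(b,a\cdot v)-\gamma(b,a)v$, and then $\gamma(a,b)-\gamma(b,a)$ is evaluated via $\gamma(a,b)=T(h(a,b),\delta)$, \cref{co:h skew} and $T(e,\delta)=1$. That argument leans on the $l$-exponential automorphism machinery (the $\Theta_a$ and the composition law behind \cref{pr:theta a+b}), whereas yours needs only \cref{le:.}\cref{le:.:h}, \cref{le:abX}, the Jacobi identity, and $[x,[y,v]]=-v$ from \cref{pr:5gr}\cref{5gr:iso} — all purely bracket-level facts. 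In effect you have generalized the computation in the proof of \cref{co:h skew} (which is the specialization $[[a,b],f]=h(a,b)-h(b,a)$, i.e.\ $v=e$) to an arbitrary $v\in V$, and the antisymmetrization $[a,[b,w]]-[b,[a,w]]=[[a,b],w]$ does the rest. Your version is more elementary and self-contained; the paper's version costs nothing extra at that point in the text (since \cref{pr:theta a+b} is needed anyway for axiom \cref{prelim:def quadr alg ax 7}) and simultaneously records the relation $\gamma(a,b)-\gamma(b,a)=T(h(a,b),e)$ linking the function $\gamma$ to $h$, which your route does not produce. The sign bookkeeping you flag is indeed the only delicate point, and it checks out: the minus from $[a,b]=-T(h(a,b),e)x$ cancels against the minus from $[x,[y,v]]=-v$.
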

\begin{proof}
	By \cref{pr:theta a+b}, we have
	\[ h(a, b \cdot v) - \gamma(a,b) v = h(b, a \cdot v) - \gamma(b,a) v . \]
	Moreover, we have
	\[ \gamma(a,b) - \gamma(b,a) = T(h(a,b) - h(b,a), \delta), \]
	so by \cref{co:h skew}, we get, using $T(e, \delta) = 1$, that
	\[ \gamma(a,b) - \gamma(b,a) = T \bigl( T(h(a,b), e)e, \delta \bigr) = T(h(a,b), e). \]
	The result follows.
\end{proof}

We now introduce some auxiliary notation.
\begin{notation}\label{not:Theta_a(y)}
	Let $a \in X$ and let $\Theta_a$ be as in \cref{def:theta}. Then we can decompose
	\[ \Theta_a(y) = y + [a, y] + \ell_a + \tilde a + \lambda_a x \]
	with $\ell_a \in L_0 \cap L'_0$, $\tilde a \in X$ and $\lambda_a \in k$.
\end{notation}
It turns out that understanding the adjoint action of $\ell_a$ explicitly on the various components in the $5 \times 5$-grading gives a lot of insight, so this is what we will examine in the next few results.

\begin{lemma}\label{le:lc=0}
	Let $a \in X$. Then $[\ell_a, c] = 0$ and $[\ell_a, d] = 0$.
\end{lemma}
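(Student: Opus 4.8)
The plan is to exploit the fact, established in the proof of \cref{pr:delta-std}, that $\Theta_a$ fixes both $c$ and $d$, and to apply $\Theta_a$ to the vanishing brackets $[c,y]$ and $[d,y]$. Indeed, by \cref{recover quadr:existence ext elements} the pairs $(c,y)$ and $(y,d)$ lie in $E_0$, so they are symplectic and hence $[c,y]=0=[d,y]$ by \cref{pr:2pts}. Applying the automorphism $\Theta_a$ and using $\Theta_a(c)=c$, $\Theta_a(d)=d$ together with the decomposition
\[ \Theta_a(y) = y + [a, y] + \ell_a + \tilde a + \lambda_a x \]
from \cref{not:Theta_a(y)}, this turns into two identities whose terms I will sort by their bidegree in the two gradings \cref{recover quadr:grading 1,recover quadr:grading 2}.

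First I would treat $c$. From $0 = \Theta_a([c,y]) = [c,\Theta_a(y)]$ we obtain
\[ 0 = [c,y] + [c,[a,y]] + [c,\ell_a] + [c,\tilde a] + \lambda_a [c,x] . \]
Recording the positions $c \in L_0 \cap L'_{-2}$, $[a,y] \in L_1 \cap L'_0$, $\ell_a \in L_0 \cap L'_0$, $\tilde a \in X = L_{-1}\cap L'_0$ and $x \in L_{-2}\cap L'_0$ (see \cref{fig:qa}), I would check term by term: $[c,y]=0$ and $[c,x]=0$ since $(c,y),(x,c)\in E_0$; next $[c,[a,y]] \in L_1 \cap L'_{-2} = L_1 \cap \langle c\rangle = 0$ and $[c,\tilde a]\in L_{-1}\cap L'_{-2} = L_{-1}\cap\langle c\rangle = 0$, both because $\langle c\rangle = L'_{-2}\subseteq L_0$. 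The only surviving term is $[c,\ell_a]\in L_0\cap L'_{-2}=\langle c\rangle$, so the identity forces $[c,\ell_a]=0$.

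The argument for $d$ is entirely symmetric, using $d \in L_0 \cap L'_2$ and $L'_2 = \langle d\rangle \subseteq L_0$: applying $\Theta_a$ to $[d,y]=0$ gives $0 = [d,y] + [d,[a,y]] + [d,\ell_a] + [d,\tilde a] + \lambda_a[d,x]$, where $[d,y]=[d,x]=0$ by symplecticity of $(y,d),(d,x)$, while $[d,[a,y]]\in L_1\cap\langle d\rangle = 0$ and $[d,\tilde a]\in L_{-1}\cap\langle d\rangle = 0$, leaving $[d,\ell_a]=0$. There is no real obstacle here: the entire proof is a bookkeeping of bidegrees against the two gradings, and the only inputs beyond \cref{fig:qa} are the four symplectic relations from \cref{recover quadr:existence ext elements} and the fixing of $c$ and $d$ by $\Theta_a$. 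The one point to state carefully is that the two mixed brackets vanish because $\langle c\rangle$ and $\langle d\rangle$ sit inside $L_0$, so they cannot meet $L_{\pm 1}$.
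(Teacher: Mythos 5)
Your proof is correct and follows essentially the same route as the paper: since $\Theta_a$ fixes $c$ and $d$ (from the proof of \cref{pr:delta-std}) and $[y,c]=[y,d]=0$, applying $\Theta_a$ to these brackets and reading off components against the two gradings forces $[\ell_a,c]=[\ell_a,d]=0$. The paper compresses the term-by-term bookkeeping into a single "in particular," but the content is identical.
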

\begin{proof}
	Since $\Theta_a(c) = c$ and $[y,c] = 0$, we have $[\Theta_a(y), c] = 0$, so in particular, $[\ell_a, c] = 0$. The proof of $[\ell_a, d] = 0$ is similar.
\end{proof}

\begin{lemma}\label{le:lv}
	Let $a \in X$ and $v \in V$. Then
	\begin{align*}
		[\ell_a, v] &= \theta(a,v) \quad \text{and} \\
		[\ell_a, [d,v]] &= [d, \theta(a,v)] .
	\end{align*}
\end{lemma}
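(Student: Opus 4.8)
The plan is to exploit the fact that $\ell_a$ is, by \cref{not:Theta_a(y)}, nothing but the $L_0\cap L'_0$-component $q_a(y)$ of $\Theta_a(y)$, and to read off $[\ell_a,v]$ from the automorphism identity $\Theta_a([y,v])=[\Theta_a(y),\Theta_a(v)]$. The decisive simplification is that $\Theta_a$ acts as the identity on $V$: since $a\in X\leq L_{-1}$, the automorphism $\Theta_a$ is $a$-exponential with degree-\emph{lowering} maps $q_a,n_a,v_a$, so for $v\in V\leq L_{-1}$ these all vanish on $v$ (they would land in $L_{\leq -3}=0$), while $[a,v]=0$ by \cref{recover quadr:decoomp eq 2}; hence $\Theta_a(v)=v$ and therefore $\Theta_a([y,v])=[\Theta_a(y),v]$.

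For the first identity I would then expand both sides and compare the $V=L_{-1}\cap L'_{-1}$-components. On the left, \cref{eq:Theta} gives $\Theta_a([y,v])=[y,v]+[a,[y,v]]+\theta(a,v)$, whose $V$-component is exactly $\theta(a,v)$. On the right, writing $\Theta_a(y)=y+[a,y]+\ell_a+\tilde a+\lambda_a x$ and bracketing with $v$, a bidegree count using \cref{fig:qa} shows that $[\tilde a,v]$ and $\lambda_a[x,v]$ vanish (they would lie in $L_{-2}\cap L'_{-1}=0$, the former also by $[X,V]=0$), that $[y,v]$ and $[[a,y],v]$ land in $[y,V]=L_1\cap L'_{-1}$ and $X'=L_0\cap L'_{-1}$ respectively, and that the only surviving term in $V$ is $[\ell_a,v]$ (since $\ell_a\in L_0\cap L'_0$ and $v\in L_{-1}\cap L'_{-1}$). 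Matching $V$-components yields $\theta(a,v)=[\ell_a,v]$.

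The second identity is then immediate from the Jacobi identity together with \cref{le:lc=0}: since $[\ell_a,d]=0$, we have
\[ [\ell_a,[d,v]] = [[\ell_a,d],v]+[d,[\ell_a,v]] = [d,[\ell_a,v]] = [d,\theta(a,v)], \]
using the first identity in the last step. There is no genuine obstacle here; the only point requiring care is the bookkeeping in the first part — namely verifying that $\Theta_a$ fixes $v$ and that every spurious term in $[\Theta_a(y),v]$ falls outside $V$ — both of which are settled directly by the bidegrees recorded in \cref{fig:qa}.
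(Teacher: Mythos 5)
Your proof is correct. For the first identity you follow exactly the paper's route: justify $\Theta_a(v)=v$ (the paper asserts this without comment; your degree argument plus $[X,V]=0$ from \cref{recover quadr:decoomp eq 2} is the right justification), expand $[\Theta_a(y),v]$ against \cref{eq:Theta}, and compare $V$-components. One cosmetic slip: the term $\lambda_a[x,v]$ vanishes because $[x,v]\in L_{-3}=0$, not because it lies in $L_{-2}\cap L'_{-1}$; this does not affect anything.

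For the second identity your route differs from the paper's. The paper applies the grading-reversing automorphism $\varphi=\exp(d)\exp(c)\exp(d)$ of \cref{recover:switching grading}, taken with respect to the $L'_i$-grading \cref{recover quadr:grading 2}: by \cref{le:lc=0} it fixes $\ell_a$, it sends $v\mapsto[d,v]$ and $\theta(a,v)\mapsto[d,\theta(a,v)]$, so the second identity is the image of the first under $\varphi$. You instead use the Jacobi identity together with $[\ell_a,d]=0$ from \cref{le:lc=0}, which is more elementary and avoids invoking \cref{recover:switching grading} altogether; the paper's automorphism trick has the mild advantage of displaying the second identity as literally "the same identity seen through $\varphi$", a pattern it reuses elsewhere (cf.\@ \cref{rem:swap}), but for this lemma both arguments are equally short and both rest on the same input, namely \cref{le:lc=0}.
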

\begin{proof}
	Since $\Theta_a(v) = v$, we have
	\begin{align*}
		\Theta_a([y,v]) = [\Theta_a(y), \Theta_a(v)]
		&= \bigl[ y + [a, y] + \ell_a + \tilde a + \lambda_a x, \ v \bigr] \\
		&= [y, v] + [[a,y], v] + [\ell_a, v] .
	\end{align*}
	On the other hand, recall from \cref{eq:Theta} that
	\[ \Theta_a([y,v]) = [y,v] + [a \cdot v, f] + \theta(a,v) . \]
	Hence $[\ell_a, v] = \theta(a,v)$.
	Next, consider the automorphism $\varphi$ from \cref{recover:switching grading} with respect to the $L'_i$-grading \cref{recover quadr:grading 2}.
	Then by \cref{recover:switching grading,le:lc=0},
	\begin{align*}
		\varphi(\ell_a) &= \ell_a + [c, [d, \ell_a]] = \ell_a, \\
		\varphi(v) &= [d, v], \\
		\varphi(\theta(a,v)) &= [d, \theta(a,v)],
	\end{align*}
	so the second identity follows from the first by applying $\varphi$.
\end{proof}
\begin{proposition}\label{pr:lx}
	Let $a \in X$ and $v \in V$. Then
	\begin{align}
		[\ell_a, x] &= T(\pi(a), e) x, \notag \\
		[\ell_a, y] &= - T(\pi(a), e) y \notag
	\intertext{and}
		T(\theta(a,v), v) &= Q(v) T(\pi(a), e) . \label{eq:Ttheta}
	\end{align}
\end{proposition}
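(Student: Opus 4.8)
The plan is to reduce the first two identities to the computation of a single scalar. Since $\ell_a \in L_0 \cap L'_0$ by \cref{not:Theta_a(y)}, the grading forces $[\ell_a, x] \in L_{-2} = \langle x \rangle$ and $[\ell_a, y] \in L_2 = \langle y \rangle$, so I may write $[\ell_a, x] = \mu x$ and $[\ell_a, y] = \nu y$ for scalars $\mu,\nu \in k$. Using the associativity of $g$ from \cref{pr:g} together with $g(x,y) = 1$, I would compute $\mu = g([\ell_a, x], y) = g(\ell_a, [x,y])$, and likewise $\nu = g(x, [\ell_a, y]) = g([x, \ell_a], y) = -g([\ell_a, x], y) = -\mu$. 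Hence the second identity is an immediate consequence of the first, and it remains only to show that $\mu = T(\pi(a), e)$.

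To identify $\mu$ in a characteristic-free way, I would avoid expanding $[x,y]$ via \cref{eq:ef} as $\tfrac{1}{2}([e,f'] - [e',f])$, since that division by $2$ collapses in characteristic $2$. Instead I isolate the coefficient of $[x,y]$ using $[\ell_a, c] = 0$ from \cref{le:lc=0}: because $[e,f'] = [c,d] + [x,y]$ by \cref{eq:ef} and $g(\ell_a, [c,d]) = g([\ell_a, c], d) = 0$, I obtain $\mu = g(\ell_a, [x,y]) = g(\ell_a, [e,f'])$. Applying associativity once more together with $[\ell_a, e] = \theta(a,e) = \pi(a)$ (from \cref{le:lv} and \cref{def:theta}) turns this into $g(\pi(a), f')$. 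Finally, unwinding the definition of $T$ in \cref{def:Q and T} and using $[[e,d],y] = f'$ (which follows from $e' = [d,e]$, $f' = [y,e']$), I check that $T(\pi(a),e) = g([\pi(a), [e,d]], y) = g(\pi(a), [[e,d],y]) = g(\pi(a), f')$, giving $\mu = T(\pi(a),e)$ as desired.

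For the third identity I would run the same strategy, keeping the whole computation inside the $g$-pairing to remain uniform. Writing $\theta(a,v) = [\ell_a, v]$ via \cref{le:lv} and unwinding \cref{def:Q and T}, I get $T(\theta(a,v), v) = g([\theta(a,v), [v,d]], y)$; moving the inner bracket across $g$ by associativity and using $[y,d] = 0$ to rewrite $[[v,d], y] = [y, [d,v]]$, this becomes $g(\ell_a, [v, [y, [d,v]]])$. Now \cref{eq:v y dv} supplies precisely the ``diagonal'' relation $[v, [y, [d,v]]] = Q(v)([c,d] + [x,y])$, and one last application of $g(\ell_a, [c,d]) = 0$ together with $g(\ell_a, [x,y]) = \mu = T(\pi(a),e)$ from the first part yields $T(\theta(a,v), v) = Q(v)\,T(\pi(a),e)$.

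I expect the one genuinely delicate point to be the characteristic-$2$ uniformity: every Jacobi expansion that passes through $[v,[v,d]] = 2Q(v)x$ or $[e,e'] = -2x$ degenerates when $\Char(k) = 2$, so the entire argument is arranged to extract the relevant scalar as the $g$-coefficient of $[x,y]$, annihilating the parasitic $[c,d]$-contribution through $[\ell_a, c] = 0$ rather than by inverting $2$. The remaining steps — verifying $[[e,d],y] = f'$ and $[[v,d],y] = [y,[d,v]]$ from $[y,d] = 0$, and the repeated use of associativity of $g$ — are routine bracket bookkeeping.
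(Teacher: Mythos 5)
Your proof is correct, and it takes a genuinely different route from the paper's. The paper proves all three identities by applying the automorphism $\Theta_a$ to the bracket identity \cref{eq:vd vy}: since $\Theta_a$ fixes $[c,d]$ and $[d,v]$, and since $\Theta_a([x,y])$ and $\Theta_a([y,v])$ have known expansions, comparing the $L_{-2}$-components of both sides yields $Q(v)[\ell_a,x] = T(\theta(a,v),v)\,x$ in one stroke; setting $v=e$ gives the first identity, substituting back gives \cref{eq:Ttheta}, and the second identity follows from $[\ell_a,[x,y]]=0$. You, by contrast, never touch the automorphism in the proof itself: you express each scalar as a $g$-pairing, move brackets across $g$ by associativity (\cref{pr:g}), annihilate the parasitic $[c,d]$-contribution via $[\ell_a,c]=0$ (\cref{le:lc=0}), and feed in $[\ell_a,v]=\theta(a,v)$ (\cref{le:lv}) together with \cref{eq:ef} and \cref{eq:v y dv}. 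Both arguments are characteristic-free, and all statements you invoke precede the proposition, so there is no circularity. What your approach buys is that the proposition becomes a purely form-theoretic consequence of previously established bracket identities ($\Theta_a$ enters only through \cref{le:lv}, which is already proved), and your derivation $\nu=-\mu$ of the second identity from the first via $g$ is slightly more direct than the paper's. What the paper's approach buys is economy: the single identity $Q(v)[\ell_a,x]=T(\theta(a,v),v)\,x$ contains the first and third statements simultaneously, whereas you must run the pairing computation twice (once against $[e,f']$, once against $[v,[y,[d,v]]]$).
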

\begin{proof}
	Since $\Theta_a(x) = x$, we have
	\begin{equation}\label{eq:Theta xy}
		\Theta_a([x,y]) = \bigl[ x, \ y + [a, y] + \ell_a + \tilde a + \lambda_a x \bigr]
		= [x,y] + a - [\ell_a, x] .
	\end{equation}
	We also have $\Theta_a([c,d]) = [\Theta_a(c), \Theta_a(d)] = [c,d]$ and $\Theta_a([d,v]) = [d,v]$.
	We now apply $\Theta_a$ to \cref{eq:vd vy}, so we get, using \cref{eq:Theta,eq:Theta xy},
	\[ Q(v) \bigl( [c,d] - [x,y] - a + [\ell_a, x] \bigr) = \bigl[ [d,v], [y,v] + [a \cdot v, f] + \theta(a,v) \bigr] . \]
	In particular, the $L_{-2}$-components of both sides are equal, so we get
	\[ Q(v) [\ell_a, x] = \bigl[ [d,v], \theta(a,v) \bigr] = T(\theta(a,v), v) . \]
	If we set $v = e$, then we get $[\ell_a, x] = T(\pi(a), e)$. Substituting this again in the identity for general $v$ then yields the third identity.
	Finally, to get the second identity, write $[\ell_a, y] = \mu y$. Since $[\ell_a, [x,y]] = 0$ by \cref{pr:5gr}\cref{5gr:grading der}, we have $[[\ell_a, x], y] = [[\ell_a, y], x]$ and hence $\mu = - T(\pi(a), e)$.
\end{proof}
\begin{corollary}\label{co:Ttheta}
	Let $a \in X$ and $v,w \in V$. Then
	\[  T(\theta(a,v), w) + T(\theta(a,w), v) = T(v,w) T(\pi(a), e) \]
	and
	\[ \theta(a, v^\sigma)^\sigma = \theta(a,v) - T(e,v) \pi(a) + T(\pi(a), v)e . \]
\end{corollary}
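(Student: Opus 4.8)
The plan is to derive both identities purely by polarization from the single scalar relation \eqref{eq:Ttheta} of \cref{pr:lx}, namely $T(\theta(a,v),v) = Q(v)\,T(\pi(a),e)$. Throughout I will use that the map $v \mapsto \theta(a,v) = q_a([y,v])$ is linear in $v$ (since $q_a$ is a linear map and $\ad_y$ is linear), that $T$ is the symmetric bilinear form associated with $Q$ (so $Q(v+w) = Q(v) + T(v,w) + Q(w)$), and the explicit formula $v^\sigma = T(v,e)e - v$ for the involution $\sigma$.

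For the first identity I would substitute $v+w$ for $v$ in \eqref{eq:Ttheta}. Using linearity of $\theta(a,\cdot)$ and bilinearity of $T$, the left-hand side expands as
\[ T(\theta(a,v),v) + T(\theta(a,v),w) + T(\theta(a,w),v) + T(\theta(a,w),w), \]
while the right-hand side becomes $\bigl(Q(v)+T(v,w)+Q(w)\bigr)T(\pi(a),e)$. Subtracting the instances of \eqref{eq:Ttheta} for $v$ and for $w$ cancels the two ``diagonal'' terms $T(\theta(a,v),v)$ and $T(\theta(a,w),w)$ on the left and the terms $Q(v)T(\pi(a),e)$ and $Q(w)T(\pi(a),e)$ on the right, leaving exactly
\[ T(\theta(a,v),w) + T(\theta(a,w),v) = T(v,w)\,T(\pi(a),e), \]
which is the first claimed identity.

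For the second identity I would first record the special case $w=e$ of the identity just proved: since $\theta(a,e) = \pi(a)$, it reads $T(\theta(a,v),e) + T(\pi(a),v) = T(v,e)T(\pi(a),e)$, that is,
\[ T(\theta(a,v),e) = T(v,e)\,T(\pi(a),e) - T(\pi(a),v). \]
Next, using linearity of $\theta(a,\cdot)$ together with $v^\sigma = T(v,e)e - v$, I compute $\theta(a,v^\sigma) = T(v,e)\pi(a) - \theta(a,v)$. Applying $\sigma$ via $w^\sigma = T(w,e)e - w$ and expanding gives
\[ \theta(a,v^\sigma)^\sigma = \bigl( T(v,e)T(\pi(a),e) - T(\theta(a,v),e) \bigr) e - T(v,e)\pi(a) + \theta(a,v). \]
Substituting the expression just derived for $T(\theta(a,v),e)$ collapses the coefficient of $e$ to precisely $T(\pi(a),v)$, and since $T$ is symmetric the middle term equals $-T(e,v)\pi(a)$; this yields $\theta(a,v) - T(e,v)\pi(a) + T(\pi(a),v)e$, as required.

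I expect no serious obstacle: the whole argument is formal polarization plus one use of the explicit form of $\sigma$. The only point that demands care is the bookkeeping in the $\sigma$-computation and the consistent use of the symmetry of $T$ (so that $T(v,e)=T(e,v)$), which is exactly what makes the coefficient of $e$ match on the nose.
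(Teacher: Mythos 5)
Your proposal is correct and follows essentially the same route as the paper: the first identity is obtained by linearizing \cref{eq:Ttheta} (substituting $v+w$ and subtracting the diagonal instances), and the second by setting $w=e$ and then unwinding the definition of $\sigma$ together with the linearity of $\theta(a,\cdot)$. The paper states these two steps very tersely; your write-up just fills in the same bookkeeping explicitly, and the computations check out.
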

\begin{proof}
	The first identity follows by linearizing \cref{eq:Ttheta}.
	If we choose $w = e$, then we get
	\[  T(\theta(a,v), e) + T(\pi(a), v) = T(v,e) T(\pi(a), e) . \]
	The second identity then follows from the definition of $\sigma$ together with this identity.
\end{proof}

\begin{lemma}
	Let $a,b \in X$. Then
	\begin{equation}\label{eq:lbf}
		[\ell_a, [b, f]] = - [ a \cdot h(a,b), f ]
	\end{equation}
	and
	\begin{equation}\label{eq:h tilde}
		\theta(a, h(a,b)) + h(\tilde a, b) = 0 .
	\end{equation}
\end{lemma}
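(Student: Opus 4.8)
The plan is to deduce both identities at once from the single automorphism relation obtained by applying $\Theta_a$ to $[y,[b,f]]$. First I would record that $[y,[b,f]] = 0$: since $b \in X = L_{-1}\cap L'_0$ and $f \in [y,V] = L_1\cap L'_{-1}$ (see \cref{fig:qa}), we have $[b,f] \in X' = L_0 \cap L'_{-1}$, whence $[y,[b,f]] \in L_2 \cap L'_{-1} = 0$. As $\Theta_a$ is an automorphism, this gives the key relation $[\Theta_a(y),\Theta_a([b,f])] = 0$.

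Next I would identify the two factors. By \cref{not:Theta_a(y)}, $\Theta_a(y) = y + [a,y] + \ell_a + \tilde a + \lambda_a x$, whose summands lie in $L_2\cap L'_0$, $L_1\cap L'_0$, $L_0\cap L'_0$, $L_{-1}\cap L'_0$ and $L_{-2}\cap L'_0$, respectively. For the second factor, since $[b,f] \in L'_{-1}$ and $\Theta_a$ preserves the $L'_i$-grading \cref{recover quadr:grading 2} by \cref{pr:delta-std}, expanding $\Theta_a([b,f])$ along the $L_i$-grading shows the only possibly nonzero terms are $[b,f]$, $[a,[b,f]] = h(a,b)$ and a term in $\langle x\rangle \subseteq L'_0$; the last must vanish since $\Theta_a([b,f]) \in L'_{-1}$, so $\Theta_a([b,f]) = [b,f] + h(a,b)$.

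The core of the proof is then bookkeeping in the $5\times5$-grading. Writing $w := [b,f] \in L_0 \cap L'_{-1}$ and $g := h(a,b) \in V = L_{-1}\cap L'_{-1}$, I would expand $[\Theta_a(y), w + g] = 0$ and sort the ten resulting brackets by their bihomogeneous component. Because $\Theta_a(y)$ is supported in the column $L'_0$ while $w,g \in L'_{-1}$, every term lands in $L'_{-1}$, and the $L_i$-degree then separates them; only three components survive. The $X' = L_0\cap L'_{-1}$ component yields $[\ell_a, w] + [[a,y],g] = 0$, and the $V = L_{-1}\cap L'_{-1}$ component yields $[\tilde a, w] + [\ell_a, g] = 0$ (the surviving $[y,V]$-component gives an auxiliary identity we do not need).

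Finally I would translate these two equations. For \cref{eq:lbf}, \cref{le:.}\cref{le:.:f} gives $[[a,y], g] = [a \cdot h(a,b), f]$, so the $X'$-relation becomes $[\ell_a,[b,f]] = -[a\cdot h(a,b), f]$. For \cref{eq:h tilde}, the definition \cref{eq:def h} of $h$ gives $[\tilde a, w] = h(\tilde a, b)$, while \cref{le:lv} gives $[\ell_a, g] = [\ell_a, h(a,b)] = \theta(a, h(a,b))$, so the $V$-relation becomes $\theta(a,h(a,b)) + h(\tilde a, b) = 0$. I expect the only delicate point to be the grading bookkeeping—confirming that each of the ten brackets lies in a single piece of \cref{fig:qa}, so that the direct sum forces each component to vanish independently; once this is verified, both identities follow with no further computation.
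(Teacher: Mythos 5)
Your proposal is correct and follows essentially the same route as the paper: both apply $\Theta_a$ to the relation $[y,[b,f]]=0$, use \cref{not:Theta_a(y)} together with $\Theta_a([b,f]) = [b,f]+h(a,b)$ (which you rederive exactly as the paper does in establishing \cref{eq:Theta2}), extract the graded components of the resulting identity, and translate them via \cref{le:.}\cref{le:.:f}, \cref{le:lv} and \cref{eq:def h}. The only cosmetic difference is that you sort terms by the full $5\times 5$ bigrading, whereas the paper extracts just the $L_0$- and $L_{-1}$-components; the resulting two equations are identical.
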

\begin{proof}
	We have $[y, [b,f]] = 0$ by the grading, hence $[\Theta_a(y), \Theta_a([b,f])] = 0$.
	Recall from \cref{eq:Theta2} that
	\[ \Theta_a([b,f]) = [b,f] + h(a,b) \in X' \oplus V . \]
	Combining this with \cref{not:Theta_a(y)}, we get
	\[ \bigl[ y + [a, y] + \ell_a + \tilde a + \lambda_a x, \ [b,f] + h(a,b) \bigr] = 0 . \]
	In particular, the $L_0$-component and $L_{-1}$-component of the left hand side are $0$, so
	\begin{align*}
		&\bigl[ [a,y], h(a,b) \bigr] + \bigl[ \ell_a, [b,f] \bigr] = 0, \\
		&\bigl[ \ell_a, h(a,b) \bigr] + \bigl[ \tilde a, [b,f] \bigr] = 0 .
	\end{align*}
	Applying \cref{le:.}\cref{le:.:f} on the first identity now yields \eqref{eq:lbf},
	and applying \cref{le:lv,eq:def h} on the second identity yields \eqref{eq:h tilde}.
\end{proof}

\begin{lemma}\label{le:pi sigma}
	Let $a \in X$ and $v \in V$. Then
	\begin{equation}\label{eq:haav}
		h(a, a \cdot v) + T(\pi(a), e)v = 2 \theta(a,v) .
	\end{equation}
	In particular, $\pi(a)^\sigma = \pi(a) - h(a,a)$.
	Moreover, we have $T(\pi(a), e) = \gamma(a,a)$ and $2 \gamma(a,a) = 0$, i.e., $\gamma(a,a) = 0$ when $\Char(k) \neq 2$.
\end{lemma}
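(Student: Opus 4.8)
The statement to prove is \cref{le:pi sigma}: for $a \in X$ and $v \in V$, we have $h(a, a \cdot v) + T(\pi(a), e)v = 2\theta(a,v)$, that $\pi(a)^\sigma = \pi(a) - h(a,a)$, that $T(\pi(a), e) = \gamma(a,a)$, and that $2\gamma(a,a) = 0$. I would prove these in order, since each later statement feeds off the earlier ones. The engine for the first identity will be the additivity relation from \cref{pr:theta a+b}, specialized by setting $b = a$.

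First I would establish \eqref{eq:haav}. Applying \cref{pr:theta a+b} with $b := a$ gives $\theta(a+a, v) = 2\theta(a,v) + h(a, a \cdot v) - \gamma(a,a)v$. On the other hand, by the scaling behaviour recorded in \cref{def:theta}(ii) (namely $\theta(\lambda a, v) = \lambda^2\theta(a,v)$), the left-hand side equals $\theta(2a, v) = 4\theta(a,v)$. Equating the two expressions yields $4\theta(a,v) = 2\theta(a,v) + h(a, a\cdot v) - \gamma(a,a)v$, i.e.\ $2\theta(a,v) = h(a, a \cdot v) + \gamma(a,a)v$. Comparing with the desired \eqref{eq:haav}, this already shows both that \eqref{eq:haav} holds \emph{and} that $T(\pi(a), e) = \gamma(a,a)$, provided I have matched the coefficient of $v$: so the identity $T(\pi(a), e) = \gamma(a,a)$ comes out essentially for free from this same computation. (Alternatively, one reads $T(\pi(a), e) = \gamma(a,a)$ directly from \cref{pr:theta a+b}, where $\gamma(a,b) = T(h(a,b), \delta)$, together with the specialization $v = e$ in \eqref{eq:haav} and $\pi(a) = \theta(a,e)$, $a \cdot e = a$ from \cref{le:.}\cref{le:.:e}.)

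Next I would derive $\pi(a)^\sigma = \pi(a) - h(a,a)$ by setting $v = e$ in \eqref{eq:haav}. Using $a \cdot e = a$ (by \cref{le:.}\cref{le:.:e}) and $\theta(a,e) = \pi(a)$ (by \cref{def:theta}\cref{def:pi}), the identity becomes $h(a,a) + T(\pi(a), e)e = 2\pi(a)$, hence $T(\pi(a),e)e - \pi(a) = \pi(a) - h(a,a)$. The left-hand side is precisely $\pi(a)^\sigma$ by the definition of the involution $\sigma$ in \cref{prelim: def quadr space} (recall $v^\sigma = T(v,e)e - v$), giving the claim. Finally, for $2\gamma(a,a) = 0$: I would apply $\sigma$ to the just-obtained relation $\pi(a)^\sigma = \pi(a) - h(a,a)$ and use that $\sigma$ is an involution, so $\pi(a) = \pi(a)^{\sigma\sigma} = (\pi(a) - h(a,a))^\sigma = \pi(a)^\sigma - h(a,a)^\sigma$; substituting $\pi(a)^\sigma = \pi(a) - h(a,a)$ and using $h(a,a)^\sigma = -h(a,a)$ from \cref{co:h skew} (taking $b = a$) yields $\pi(a) = \pi(a) - h(a,a) + h(a,a) = \pi(a)$, which is a consistency check rather than the goal. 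The cleaner route to $2\gamma(a,a) = 0$ is to compute $T(h(a,a), e)$: from $\gamma(a,a) = T(h(a,a), \delta)$ and the skew relation $h(a,a)^\sigma = -h(a,a)$, i.e.\ $T(h(a,a),e)e - h(a,a) = -h(a,a)$, one gets $T(h(a,a), e) = 0$; then applying $T(-,e)$ to \eqref{eq:haav} with $v = e$ gives $2 T(\pi(a),e) = T(h(a,a),e) + 2T(\pi(a),e) \cdot \tfrac{1}{1}$, and careful bookkeeping (or simply $2\gamma(a,a)e = 2\pi(a) - 2\pi(a) + \ldots$) collapses to $2\gamma(a,a) = 0$.

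\textbf{The main obstacle.}
The delicate point is the last assertion $2\gamma(a,a) = 0$, because the coefficient manipulations must be done without dividing by $2$ (the whole interest of this lemma is characteristic $2$). I expect the cleanest argument to avoid the $\sigma$-consistency loop and instead extract $2\gamma(a,a)e = 0$ directly by doubling \eqref{eq:haav} at $v = e$: since $h(a,a) + \gamma(a,a)e = 2\pi(a)$ and $\pi(a)^\sigma = \pi(a) - h(a,a)$, adding $\pi(a)$ and $\pi(a)^\sigma$ gives $2\pi(a) - h(a,a) = \pi(a) + \pi(a)^\sigma = T(\pi(a),e)e$, whence $h(a,a) = 2\pi(a) - T(\pi(a),e)e = 2\pi(a) - \gamma(a,a)e$; comparing with $h(a,a) = 2\pi(a) - \gamma(a,a)e$ (from the $v=e$ case) forces no new information, so one genuinely needs the skew relation $T(h(a,a),e)=0$ to conclude $\gamma(a,a)e$ is annihilated after doubling. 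Because $e \neq 0$, the scalar relation $2\gamma(a,a) = 0$ follows, and it is automatic that $\gamma(a,a) = 0$ once $\Char(k) \neq 2$.
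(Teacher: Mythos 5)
There is a genuine gap, and it sits at the main identity \eqref{eq:haav}. Your opening step is fine (modulo a sign slip): specializing \cref{pr:theta a+b} to $b=a$ and using $\theta(2a,v)=4\theta(a,v)$ gives
\[ 2\theta(a,v) = h(a, a\cdot v) - \gamma(a,a)v , \]
with a \emph{minus} sign, not the $+\gamma(a,a)v$ you wrote. But from here you cannot ``compare with the desired \eqref{eq:haav}'' and conclude that \eqref{eq:haav} holds and that $T(\pi(a),e)=\gamma(a,a)$: that comparison presupposes \eqref{eq:haav}, so the argument is circular, and your parenthetical alternative is circular for the same reason (it invokes ``the specialization $v=e$ in \eqref{eq:haav}''). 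What the comparison actually shows is only that, granted the displayed identity, \eqref{eq:haav} is \emph{equivalent} to the scalar identity $T(\pi(a),e)=-\gamma(a,a)$ --- and that is precisely the content still to be proven. The obstruction is not cosmetic: in characteristic $2$ (the case this lemma exists for) the specialization of \cref{pr:theta a+b} at $b=a$ degenerates to $h(a,a\cdot v)=\gamma(a,a)v$, with every $\theta$- and $\pi$-term annihilated, so no manipulation of it can ever produce the link between $h(a,a\cdot v)$ and $T(\pi(a),e)$ that \eqref{eq:haav} asserts. Since $\theta$ and $\pi$ are defined through the Lie-algebra maps $q_a$, some return to the Lie algebra is unavoidable; the paper proves \eqref{eq:haav} by applying $\Theta_a$ to $[[x,y],y]=2y$, extracting the $L_0$-component via \cref{not:Theta_a(y)}, \cref{pr:lx} and \cref{eq:Theta xy} to get $[a,[a,y]] - T(\pi(a),e)[x,y] = 2\ell_a$, and then bracketing with $v$ using \cref{le:lv} and \cref{le:.}\cref{le:.:h}.

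The later claims also need repair. Your deduction of $\pi(a)^\sigma=\pi(a)-h(a,a)$ from \eqref{eq:haav} at $v=e$ is correct and is exactly the paper's step. However, comparing \eqref{eq:haav} at $v=e$ with $2\pi(a)=h(a,a)-\gamma(a,a)e$ yields $T(\pi(a),e)=-\gamma(a,a)$; the stated equality $T(\pi(a),e)=\gamma(a,a)$ therefore \emph{requires} $2\gamma(a,a)=0$, which your ``careful bookkeeping'' does not deliver --- the equation you write down reduces, using $T(h(a,a),e)=0$ and $T(e,e)=2$, to the tautology $2T(\pi(a),e)=2T(\pi(a),e)$. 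The actual argument is short and you never use its key ingredient: $2\gamma(a,a)=0$ is vacuous in characteristic $2$, and when $\Char(k)\neq 2$ we have $\delta=\tfrac12 e$ by \cref{def:ef}, so $\gamma(a,a)=T\bigl(h(a,a),\tfrac12 e\bigr)=0$ by \cref{le:abX}.
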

\begin{proof}
	By \cref{pr:5gr}\cref{5gr:grading der}, we have $[[x,y], y] = 2y$.
	Applying $\Theta_a$ and using \cref{not:Theta_a(y),pr:lx,eq:Theta xy} then gives
	\[ \bigl[ [x,y] + a - T(\pi(a), e)x, \ y + [a, y] + \ell_a + \tilde a + \lambda_a x \bigr] = 2 \bigl( y + [a, y] + \ell_a + \tilde a + \lambda_a x \bigr) . \]
	In particular, the $L_0$-components of both sides are equal, so using $[[x,y], \ell_a] = 0$ (by \cref{pr:5gr}\cref{5gr:grading der} again), we get
	\[  [a, [a, y]] - T(\pi(a), e) [x,y] = 2 \ell_a . \]
	Now notice that by \cref{le:.}\cref{le:.:h} and using $[a,v] = 0$, we have $\bigl[ [a, [a,y]], v \bigr] = h(a, a \cdot v)$. Moreover, $[[x,y], v] = -v$ by \cref{pr:5gr}\cref{5gr:grading der} again, and $[\ell_a, v] = \theta(a,v)$ by \cref{le:lv}, so we get \cref{eq:haav}.
	In particular, we have $h(a,a) + T(\pi(a), e)e = 2 \pi(a)$, so
	\[ \pi(a)^\sigma = T(\pi(a), e)e - \pi(a) = \pi(a) - h(a,a) . \]
	Moreover, by \cref{pr:theta a+b} with $a=b$ and $v=e$, we have
	\[ 2 \pi(a) = h(a,a) - \gamma(a,a)e , \]
	so by comparing with the previous identity, we get $T(\pi(a), e) = - \gamma(a,a)$.
	
	Finally, observe that when $\Char(k) \neq 2$, then $\delta = \tfrac{1}{2}e$, so by \cref{pr:theta a+b}, $\gamma(a,a) = T(h(a,a), \tfrac{1}{2} e)$, which is $0$ by \cref{le:abX}.
\end{proof}

\begin{proposition}\label{pr:tilde a}
	Let $a \in X$ and $v \in V$.
	Then $\tilde a = - a \cdot \pi(a)^\sigma$ and
	\begin{equation}\label{eq:a theta av}
		a \cdot \theta(a,v) = a \cdot \pi(a) \cdot v .
	\end{equation}
	Moreover, we have
	\begin{equation}\label{eq:lambda_a v}
		\lambda_a v = \theta(a, \theta(a,v)) - h(a \cdot \pi(a)^\sigma, a \cdot v) .
	\end{equation}
\end{proposition}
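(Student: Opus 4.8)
The plan is to extract all three identities from the single automorphism relation
\[ [\Theta_a(y), \Theta_a([y,v])] = \Theta_a([y,[y,v]]) = 0, \]
which holds because $[y,v]\in L_1$ forces $[y,[y,v]]\in L_3 = 0$. First I would substitute the two explicit decompositions already available: $\Theta_a(y) = y + [a,y] + \ell_a + \tilde a + \lambda_a x$ from \cref{not:Theta_a(y)}, and $\Theta_a([y,v]) = [y,v] + [a\cdot v, f] + \theta(a,v)$ from \eqref{eq:Theta} (using that $\Theta_a$ fixes $v$). Expanding the bracket and collecting homogeneous components with respect to the $L_i$-grading of \cref{fig:qa} yields one relation in each degree, and the useful information sits in degrees $0$ and $-1$.

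In degree $0$ the surviving terms are $[[a,y],\theta(a,v)]$, $[\ell_a,[a\cdot v,f]]$ and $[\tilde a,[y,v]]$. I would rewrite the first and third as $[a\cdot\theta(a,v),f]$ and $[\tilde a\cdot v,f]$ via \cref{le:.}\cref{le:.:f}, and the middle one as $-[a\cdot h(a,a\cdot v),f]$ via \eqref{eq:lbf}. Since $\ad_f$ restricts to an isomorphism $X\to X'$ by \cref{co:XX'}, I can strip off the outer $[\,\cdot\,,f]$ to get $a\cdot\theta(a,v) - a\cdot h(a,a\cdot v) + \tilde a\cdot v = 0$. Feeding in $h(a,a\cdot v) = 2\theta(a,v) - T(\pi(a),e)v$ from \eqref{eq:haav} and using bilinearity of $\cdot$ collapses this to the master relation $\tilde a\cdot v = a\cdot\theta(a,v) - T(\pi(a),e)(a\cdot v)$. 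Specializing at $v=e$, with $a\cdot e = a$ and $\theta(a,e)=\pi(a)$ from \cref{le:.}\cref{le:.:e} and \cref{def:theta}\cref{def:pi}, and unwinding $\pi(a)^\sigma = T(\pi(a),e)e - \pi(a)$, gives $\tilde a = a\cdot\pi(a) - T(\pi(a),e)a = -a\cdot\pi(a)^\sigma$, which is the first claim. Substituting this expression for $\tilde a$ back into the master relation and cancelling the common summand $T(\pi(a),e)(a\cdot v)$ then yields exactly \eqref{eq:a theta av}.

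In degree $-1$ the surviving terms are $[\ell_a,\theta(a,v)]$, $[\tilde a,[a\cdot v,f]]$ and $[\lambda_a x,[y,v]]$. Here I would use \cref{le:lv} to identify $[\ell_a,\theta(a,v)] = \theta(a,\theta(a,v))$, the definition \eqref{eq:def h} to identify $[\tilde a,[a\cdot v,f]] = h(\tilde a, a\cdot v)$, and \cref{pr:5gr}\cref{5gr:iso} to evaluate $[\lambda_a x,[y,v]] = -\lambda_a v$. This produces $\lambda_a v = \theta(a,\theta(a,v)) + h(\tilde a, a\cdot v)$; inserting the already-proven $\tilde a = -a\cdot\pi(a)^\sigma$ and using bilinearity of $h$ turns the last term into $-h(a\cdot\pi(a)^\sigma, a\cdot v)$, which is \eqref{eq:lambda_a v}.

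The only real obstacle is the bookkeeping: one must verify the exact $L_i\cap L'_j$ placement of each of the nine brackets $[(\,\cdot\,)_i,(\,\cdot\,)_j]$, so as to know which vanish (for instance $[\tilde a,\theta(a,v)]\in[X,V]=0$ kills degree $-2$) and which combine, and then recognize each nonzero term as the correct instance of $\cdot$, $h$ or $\theta$ through the structural lemmas. I expect no characteristic-dependent difficulty: every step is a cancellation or a substitution with no division, so the argument is uniform in all characteristics, in line with the paper's stated aim of avoiding case distinctions.
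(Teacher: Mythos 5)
Your proposal is correct and follows essentially the same route as the paper's proof: the same starting identity $[\Theta_a(y),\Theta_a([y,v])]=0$, the same extraction of the $L_0$- and $L_{-1}$-components, and the same supporting lemmas (\cref{le:.}\cref{le:.:f}, \cref{eq:lbf}, \cref{co:XX'}, \cref{eq:haav}, \cref{le:lv}, \cref{eq:def h}) applied in the same order, including the specialization at $v=e$ and resubstitution. No gaps.
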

\begin{proof}
	We have $[y, [y,v]] = 0$ by the grading, hence $[\Theta_a(y), \Theta_a([y,v])] = 0$.
	By \cref{eq:Theta,not:Theta_a(y)}, we get
	\begin{equation}\label{eq:theta-proof}
		\bigl[ y + [a, y] + \ell_a + \tilde a + \lambda_a x, \ [y,v] + [a \cdot v, f] + \theta(a,v) \bigr] = 0 .
	\end{equation}
	In particular, the $L_0$-component of the left hand side is $0$, so
	\[ \bigl[ [a,y], \theta(a,v) \bigr] + \bigl[ \ell_a, [a \cdot v, f] \bigr] + \bigl[ \tilde a, [y, v] \bigr] = 0 . \]
	Now using \cref{le:.}\cref{le:.:f} for the first and third term and using \cref{eq:lbf} for the second term, we get
	\[ \bigl[ a \cdot \theta(a,v) - a \cdot h(a, a \cdot v) + \tilde a \cdot v, \ f \bigr] = 0 \]
	and hence, by \cref{co:XX'},
	\[ a \cdot \theta(a,v) - a \cdot h(a, a \cdot v) + \tilde a \cdot v = 0 . \]
	By \cref{eq:haav}, this can be rewritten as
	\[ \tilde a \cdot v = a \cdot \theta(a,v) - T(\pi(a), e) a \cdot v . \]
	If we set $v = e$, then we get $\tilde a = - a \cdot \pi(a)^\sigma$. Substituting this again in the previous identity, we get, using $\pi(a)^\sigma = T(\pi(a), e)e - \pi(a)$, that
	\[ a \cdot \theta(a,v) = - a \cdot \pi(a)^\sigma \cdot v + T(\pi(a), e) a \cdot v = a \cdot \pi(a) \cdot v . \]
	To prove the second identity, we again start from \cref{eq:theta-proof}, but this time we compute the $L_{-1}$-component, so we get
	\[ \bigl[ \ell_a, \theta(a,v) \bigr] + \bigl[ \tilde a, [a \cdot v, f] \bigr] + \bigl[ \lambda_a x, [y,v] \bigr] = 0. \]
	Using $\tilde a = - a \cdot \pi(a)^\sigma$, \cref{eq:def h} and \cref{le:lv}, we get \cref{eq:lambda_a v}.
\end{proof}

\begin{lemma}\label{le:lyv}
	Let $a \in X$ and $v \in V$. Then
	\[ [\ell_a, [y, v]] = [y, \theta(a,v) - T(\pi(a), e)v] . \]
	In particular, $[\ell_a, f] = - [y, \pi(a)^\sigma]$.
\end{lemma}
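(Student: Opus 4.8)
The plan is to apply the automorphism $\Theta_a$ to the bracket $[y, [y,v]]$, exploiting that this bracket vanishes by the grading, in exactly the same spirit as the proof of \cref{pr:tilde a}. Indeed, since $[y, [y,v]] = 0$, we have $[\Theta_a(y), \Theta_a([y,v])] = 0$, and expanding this using \cref{not:Theta_a(y)} for $\Theta_a(y)$ and \cref{eq:Theta} for $\Theta_a([y,v])$ gives exactly \cref{eq:theta-proof}:
\[ \bigl[ y + [a, y] + \ell_a + \tilde a + \lambda_a x, \ [y,v] + [a \cdot v, f] + \theta(a,v) \bigr] = 0 . \]
The point is that this single identity encodes the adjoint action of $\ell_a$ on $[y,v]$ in one of its graded components, so I would extract the appropriate component rather than redoing the computation from scratch.

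First I would determine which graded component isolates $[\ell_a, [y,v]]$. Looking at the degrees in the $L_i$-grading \cref{recover quadr:grading 1}: $y \in L_2$, $[a,y] \in L_1$ (since $a \in L_{-1}$), $\ell_a \in L_0$, $\tilde a \in L_{-1}$, $\lambda_a x \in L_{-2}$; and $[y,v] \in L_1$, $[a \cdot v, f] \in L_0$ (since $a \cdot v \in L_{-1}$, $f \in L_1$), $\theta(a,v) \in L_{-1}$. The bracket $[\ell_a, [y,v]]$ lands in $L_1$, so I would collect the $L_1$-component of the expanded bracket and set it to zero. The $L_1$-terms arising from the product are $[y, \theta(a,v)]$ (degree $2 + (-1) = 1$), $[[a,y], [a\cdot v, f]]$ (degree $1 + 0 = 1$), $[\ell_a, [y,v]]$ (degree $0 + 1 = 1$), and $[\tilde a, [a\cdot v, f]]$ would be degree $-1$, so not in $L_1$—I would need to recheck each term's degree carefully, which is the one genuinely fiddly bookkeeping step.

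The main work is then to simplify the cross terms $[y, \theta(a,v)]$ and $[[a,y], [a \cdot v, f]]$ into the claimed form $[y, \theta(a,v) - T(\pi(a),e)v]$. The term $[y,\theta(a,v)]$ already appears directly. For the remaining term $[[a,y],[a\cdot v, f]]$, I expect to rewrite it using \cref{le:.}\cref{le:.:f} (which says $[a \cdot v, f] = [[a,y],v]$, up to the identifications there) together with the Jacobi identity, and to invoke \cref{eq:haav} from \cref{le:pi sigma}, namely $h(a, a\cdot v) + T(\pi(a),e)v = 2\theta(a,v)$, to convert the resulting $h$-expression into a $\theta$-expression and produce the $-T(\pi(a),e)v$ correction inside the bracket with $y$. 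The hard part will be pinning down these bracket manipulations precisely—making sure each Jacobi expansion respects $[a,v]=0$, $[d,X]=0$, and the grading, and that the scalar coefficient $T(\pi(a),e)$ comes out correctly; this is where an off-by-a-sign or a misplaced term is most likely. Once the general identity is established, the special case $[\ell_a, f]$ follows immediately by setting $v = e$: then $[y,v] = f$, $\theta(a,e) = \pi(a)$, and $T(\pi(a),e)e = \pi(a) + \pi(a)^\sigma$ (from $\pi(a)^\sigma = T(\pi(a),e)e - \pi(a)$), so $[y, \pi(a) - T(\pi(a),e)e] = [y, -\pi(a)^\sigma] = -[y,\pi(a)^\sigma]$, giving the stated conclusion.
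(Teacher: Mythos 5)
Your proposal is correct, but it follows a genuinely different route from the paper's. The paper's proof is two lines: it expands $[\ell_a,[y,v]] = [[\ell_a,y],v] + [y,[\ell_a,v]]$ by the Jacobi identity and substitutes $[\ell_a,y] = -T(\pi(a),e)y$ (\cref{pr:lx}) and $[\ell_a,v] = \theta(a,v)$ (\cref{le:lv}); the case $v=e$ then gives the second claim exactly as you do. You instead extract the $L_1$-component of \cref{eq:theta-proof} (the paper uses only its $L_0$- and $L_{-1}$-components, in \cref{pr:tilde a}), and your degree bookkeeping is right: that component reads $[y,\theta(a,v)] + [[a,y],[a\cdot v,f]] + [\ell_a,[y,v]] = 0$. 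The step you flagged as risky does close, as follows: since $[y,[a\cdot v,f]] \in [L_2, L_0 \cap L'_{-1}] \leq L_2 \cap L'_{-1} = 0$, the Jacobi identity gives $[[a,y],[a\cdot v,f]] = -[y,[a,[a\cdot v,f]]] = -[y, h(a,a\cdot v)]$ by \cref{eq:def h}, and then \cref{eq:haav} turns this into $-2[y,\theta(a,v)] + T(\pi(a),e)[y,v]$; substituting back yields $[\ell_a,[y,v]] = [y,\theta(a,v) - T(\pi(a),e)v]$ as required, and no sign goes astray. What the paper's route buys is economy: the adjoint action of $\ell_a$ on $y$ and on $V$ was already isolated in \cref{pr:lx,le:lv}, so no new computation is needed. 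What your route buys is independence from those two results: you use \cref{le:pi sigma} instead (which precedes this lemma, so no circularity arises), at the cost of a longer computation through the master identity.
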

\begin{proof}
	Using \cref{le:lv,pr:lx}, we get
	\[ [\ell_a, [y, v]] = [[\ell_a, y], v] + [y, [\ell_a, v]] = - T(\pi(a), e)[y,v] + [y, \theta(a,v)] . \]
	The second identity follows by setting $v = e$ since $f = [y, e]$.
\end{proof}

\begin{lemma}\label{le:lb}
	Let $a,b \in X$. Then
	\[ [\ell_a, b] = b \cdot \pi(a)^\sigma - a \cdot h(a,b) . \]
\end{lemma}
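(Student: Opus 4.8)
The plan is to reduce the claim to the already-computed adjoint actions of $\ell_a$ by rewriting $b$ as an iterated bracket. Since $\ad_f \colon X \to X'$ has inverse $\ad_{e'}$ by \cref{co:XX'}, I can write $b = [e', [f,b]]$. Applying the derivation $\ad_{\ell_a}$ and using the Jacobi identity then gives
\[ [\ell_a, b] = \bigl[ [\ell_a, e'], [f,b] \bigr] + \bigl[ e', [\ell_a, [f,b]] \bigr], \]
so the whole problem splits into identifying these two terms.

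For the first factor of each term I would appeal to the earlier results. Since $e' = [d,e]$, \cref{le:lv} (with $v = e$) gives $[\ell_a, e'] = [d, \theta(a,e)] = [d, \pi(a)]$. For the second factor, rewriting $[f,b] = -[b,f]$ and using \eqref{eq:lbf} yields $[\ell_a, [f,b]] = [a \cdot h(a,b), f]$. With this, the second summand collapses immediately: setting $w := a \cdot h(a,b) \in X$ and applying \cref{co:XX'} again, we get $[e', [w,f]] = -[e', [f,w]] = -w = -a \cdot h(a,b)$, which is exactly the second term of the target formula.

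The genuine work, and what I expect to be the crux, is the first summand $\bigl[ [d,\pi(a)], [f,b] \bigr]$. Writing $u := \pi(a) \in V$, I would expand $[[d,u],[f,b]] = -[[d,u],[b,f]]$ by the Jacobi identity into $-[d,[u,[b,f]]] + [u,[d,[b,f]]]$. The first of these vanishes by grading bookkeeping, since $[u,[b,f]]$ lands in $L_{-1} \cap L'_{-2} = L_{-1} \cap \langle c\rangle = 0$. For the surviving term, I would use that $f' = [d,f]$ (see \cref{def:ef}) together with $[d,b]=0$ (by \cref{recover quadr:decoomp eq 6}) to rewrite $[d,[b,f]] = [b,f']$, and then recognize via \cref{le:a.vsigma} that $[u, [b,f']] = b \cdot u^\sigma = b \cdot \pi(a)^\sigma$.

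Combining the two terms produces $[\ell_a, b] = b \cdot \pi(a)^\sigma - a \cdot h(a,b)$, as desired. Conceptually the identity is forced once $b$ is expressed through $\ad_{e'}\ad_f$; the only real subtlety is keeping the signs and the grading positions straight, so the main care needed is in the bookkeeping of the $5\times 5$-grading rather than in any deep new argument.
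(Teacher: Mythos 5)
Your proof is correct; every step checks out, including the grading argument $[\pi(a),[b,f]] \in L_{-1}\cap L'_{-2} = L_{-1}\cap\langle c\rangle = 0$, the use of $[d,X]=0$, and the sign bookkeeping. It is, however, a rearrangement of the paper's argument rather than an identical one. The paper applies $\ad_f$ to the unknown element $[\ell_a,b]\in X$: by the Jacobi identity, $[[\ell_a,b],f]=[\ell_a,[b,f]]-[b,[\ell_a,f]]$, where the first term is \eqref{eq:lbf} and the second is handled by \cref{le:lyv} (namely $[\ell_a,f]=-[y,\pi(a)^\sigma]$) together with \cref{le:.}; one then cancels $\ad_f$ via \cref{co:XX'}. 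You instead insert the identity $\ad_{e'}\ad_f=\id_X$ \emph{before} differentiating, writing $b=[e',[f,b]]$, and push the derivation $\ad_{\ell_a}$ through; this replaces \cref{le:lyv} by \cref{le:lv} (to evaluate $[\ell_a,e']=[d,\pi(a)]$) and by \cref{le:a.vsigma}, at the cost of one extra Jacobi expansion and the grading-vanishing check. Both proofs rest on the same two pillars, \cref{co:XX'} and \eqref{eq:lbf}; the paper's version is slightly shorter because hitting $[\ell_a,b]$ with $\ad_f$ splits at once into two brackets that are already known, whereas your route needs \cref{le:a.vsigma} to recognize the term $b\cdot\pi(a)^\sigma$.
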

\begin{proof}
	Using \cref{eq:lbf}, \cref{le:lyv} and \cref{le:.}\cref{le:.:h}, we get
	\begin{align*}
		[[\ell_a, b], f] = [\ell_a, [b, f]] - [b, [\ell_a, f]]
			&= - [a \cdot h(a,b), f] + [b, [y, \pi(a)^\sigma]] \\
			&= [- a \cdot h(a,b) + b \cdot \pi(a)^\sigma, f] .
	\end{align*}
	The result now follows from \cref{co:XX'}.
\end{proof}

We can now explicitly determine the value of $\lambda_a$ introduced in \cref{not:Theta_a(y)}.
\begin{proposition}\label{pr:lambda_a}
	Let $a \in X$.
	Then $\lambda_a = Q(\pi(a))$.
\end{proposition}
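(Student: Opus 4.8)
The goal is to identify the scalar $\lambda_a$ appearing in the decomposition $\Theta_a(y) = y + [a,y] + \ell_a + \tilde a + \lambda_a x$ from \cref{not:Theta_a(y)}. The natural strategy is to exploit \cref{eq:lambda_a v}, which already expresses $\lambda_a v$ in terms of $\theta$ and $h$, and to specialize it at a convenient choice of $v$ so that the right-hand side collapses to a recognizable expression. Since $Q(\pi(a))$ is the answer, I expect the base point $e$ (with $Q(e)=1$) to be the right test vector, together with the normalization $T(\pi(a),\delta)=0$ from \cref{eq:Tdelta}.

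First I would substitute $v = e$ into \cref{eq:lambda_a v}. Since $a \cdot e = a$ by \cref{le:.}\cref{le:.:e}, and $\theta(a,e) = \pi(a)$ by \cref{def:theta}\cref{def:pi}, the right-hand side becomes
\[
\lambda_a e = \theta(a, \pi(a)) - h(a \cdot \pi(a)^\sigma, a) .
\]
To extract a scalar from this identity, I would pair both sides with $\delta$ using the trace form $T$; since $T(e,\delta) = 1$ by \cref{def:ef}, the left-hand side yields exactly $\lambda_a$. So it remains to compute $T(\theta(a,\pi(a)), \delta)$ and $T(h(a \cdot \pi(a)^\sigma, a), \delta)$ and to show their difference equals $Q(\pi(a))$.

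For the two terms on the right, the main tools are the identities already established. For $T(\theta(a,\pi(a)), \delta)$ I would use \cref{co:Ttheta} (the linearized form $T(\theta(a,v),w) + T(\theta(a,w),v) = T(v,w)T(\pi(a),e)$) with a suitable pairing, together with \cref{eq:Ttheta}: taking $v = \pi(a)$ there gives $T(\theta(a,\pi(a)), \pi(a)) = Q(\pi(a)) T(\pi(a), e)$, which relates the quadratic form $Q(\pi(a))$ to a trace of $\theta$. For the $h$-term, I would rewrite $h(a \cdot \pi(a)^\sigma, a)$ using \cref{pr:axiom 4} (so that $T(h(a \cdot v, b), e) = T(h(a,b), v)$) and \cref{pr:axiom 3}, converting the pairing with $\delta$ into expressions involving $\gamma(a,a) = T(\pi(a),e)$ (via \cref{le:pi sigma}) and the non-degeneracy facts from \cref{le:abX}. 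The normalization $T(\pi(a),\delta) = 0$ will be essential to kill the spurious terms that appear, and the relation $\pi(a)^\sigma = \pi(a) - h(a,a)$ from \cref{le:pi sigma} will let me replace $\pi(a)^\sigma$ by $\pi(a)$ at the cost of controlled correction terms.

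The main obstacle will be the bookkeeping in the $h$-term: expanding $h(a \cdot \pi(a)^\sigma, a)$ and pairing with $\delta$ requires juggling \cref{pr:axiom 3,pr:axiom 4}, the skew-symmetry \cref{co:h skew}, and the definition of $\sigma$ simultaneously, and in characteristic $2$ one cannot simply divide by $2$ to isolate $\theta(a,v)$ from \cref{eq:haav}. I therefore expect the cleanest route is to avoid a direct assault on the $h$-term and instead compute the $L_{-1}$-component differently: rather than pairing \cref{eq:lambda_a v} at $v=e$ with $\delta$, I would combine \cref{eq:Ttheta} at $v = \pi(a)$ (giving $T(\theta(a,\pi(a)), \pi(a)) = Q(\pi(a))T(\pi(a),e)$) with the defining relation $T(\pi(a),\delta)=0$ to show that $\theta(a,\pi(a))$ and $h(a\cdot\pi(a)^\sigma, a)$ differ from the scalar $Q(\pi(a))\,e$ by vectors orthogonal to $\delta$, so that the $\delta$-pairing isolates precisely $Q(\pi(a))$. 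Verifying that these correction vectors indeed lie in $\delta^\perp$ is the delicate characteristic-free step, and it is where I expect the argument to require the most care.
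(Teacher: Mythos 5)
Your opening move coincides with the paper's: specializing \cref{eq:lambda_a v} at $v=e$ and using \cref{le:.}\cref{le:.:e} together with $\theta(a,e)=\pi(a)$ gives $\lambda_a e = \theta(a,\pi(a)) - h(a\cdot\pi(a)^\sigma,a)$. From there, however, your plan has a genuine gap. The mechanism you propose for making $Q(\pi(a))$ appear --- \cref{eq:Ttheta} at $v=\pi(a)$, i.e.\ $T(\theta(a,\pi(a)),\pi(a)) = Q(\pi(a))\,T(\pi(a),e)$ --- cannot deliver it: by \cref{le:pi sigma} we have $T(\pi(a),e)=\gamma(a,a)$ with $2\gamma(a,a)=0$, so in characteristic $\neq 2$ this identity reads $T(\theta(a,\pi(a)),\pi(a))=0$ and contains no information about $Q(\pi(a))$ at all, while in characteristic $2$ it only produces $Q(\pi(a))\gamma(a,a)$. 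Your fallback is circular: since the combination $\theta(a,\pi(a)) - h(a\cdot\pi(a)^\sigma,a)$ is already known to equal $\lambda_a e$, and $T(e,\delta)=1$ means $e\notin\delta^\perp$, the claim that it ``differs from $Q(\pi(a))e$ by a vector orthogonal to $\delta$'' is literally equivalent to $\lambda_a = Q(\pi(a))$, i.e.\ to the proposition itself. Moreover, the identities you list only ever control the symmetrized traces $T(\theta(a,v),w)+T(\theta(a,w),v)$ (\cref{co:Ttheta}) or $T(\theta(a,v),v)$ (\cref{eq:Ttheta}), never the single pairing $T(\theta(a,\pi(a)),\delta)$ that your route requires. (In characteristic $\neq 2$ your scheme can in fact be salvaged: there $\delta=\tfrac{1}{2}e$, $\theta(a,v)=\tfrac{1}{2}h(a,a\cdot v)$ by \cref{eq:haav}, and $h(a,a)=2\pi(a)$, after which \cref{pr:axiom 4} closes the computation; but in characteristic $2$ \cref{eq:haav} degenerates to a statement with no $\theta$-content, and the approach collapses exactly where this paper needs it to work.)

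The paper's proof avoids the trace form entirely and is characteristic-free: it applies the map $v \mapsto a\cdot v$ to the vector identity instead of pairing with $\delta$. By \cref{eq:a theta av}, $a\cdot\theta(a,\pi(a)) = a\cdot\pi(a)\cdot\pi(a)$; by \cref{eq:h tilde} with $b=a$ and $\tilde a=-a\cdot\pi(a)^\sigma$ from \cref{pr:tilde a}, one gets $h(a\cdot\pi(a)^\sigma,a)=\theta(a,h(a,a))$, hence $a\cdot h(a\cdot\pi(a)^\sigma,a) = a\cdot\pi(a)\cdot h(a,a)$; therefore $\lambda_a a = a\cdot\pi(a)\cdot\bigl(\pi(a)-h(a,a)\bigr) = a\cdot\pi(a)\cdot\pi(a)^\sigma = Q(\pi(a))\,a$, using \cref{le:pi sigma} and \cref{pr:a v vs}. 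The two identities \cref{eq:h tilde,eq:a theta av}, which your sketch never invokes, are exactly what replaces the division-by-$2$ manipulations and makes the argument work in all characteristics.
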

\begin{proof}
	Since $\tilde a = - a \cdot \pi(a)^\sigma$ by \cref{pr:tilde a}, it follows from \eqref{eq:h tilde} with $b=a$ that
	$h(a \cdot \pi(a)^\sigma, a) = \theta(a, h(a,a))$,
	so by \cref{eq:a theta av}, we get
	\begin{equation}\label{eq:a pi h}
		a \cdot h(a \cdot \pi(a)^\sigma, a) = a \cdot \pi(a) \cdot h(a,a) .
	\end{equation}
	By \eqref{eq:lambda_a v} with $v = e$, we have
	$\lambda_a e = \theta(a, \pi(a)) - h(a \cdot \pi(a)^\sigma, a)$,
	hence
	\[ \lambda_a a = a \cdot \theta(a, \pi(a)) - a \cdot h(a \cdot \pi(a)^\sigma, a) . \]
	By \cref{eq:a theta av} again and by \cref{eq:a pi h}, we can rewrite this as
	\[ \lambda_a a = a \cdot \pi(a) \cdot \pi(a) - a \cdot \pi(a) \cdot h(a,a) . \]
	Since $\pi(a) - h(a,a) = \pi(a)^\sigma$ by \cref{le:pi sigma}, we get, using \cref{pr:a v vs}, that
	\[ \lambda_a a = a \cdot \pi(a) \cdot \pi(a)^\sigma = Q(\pi(a)) a . \qedhere \]
\end{proof}
We can now summarize our information about $\Theta_a(y)$.
\begin{corollary}\label{co:Theta_a y}
	Let $a \in X$. Then
	\[ \Theta_a(y) = y + [a, y] + \ell_a - a \cdot \pi(a)^\sigma + Q(\pi(a)) x . \]
\end{corollary}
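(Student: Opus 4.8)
The plan is to observe that this corollary is simply an assembly of the pieces established in the preceding results, so the proof will be essentially a single substitution. I would begin by recalling the decomposition recorded in \cref{not:Theta_a(y)}, namely
\[ \Theta_a(y) = y + [a, y] + \ell_a + \tilde a + \lambda_a x , \]
with $\ell_a \in L_0 \cap L'_0$, $\tilde a \in X$ and $\lambda_a \in k$. Every term here is already named, so nothing needs to be computed from scratch; the task is only to identify the two unknown summands $\tilde a$ and $\lambda_a$ explicitly.

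Next I would invoke \cref{pr:tilde a}, which gives $\tilde a = - a \cdot \pi(a)^\sigma$, and \cref{pr:lambda_a}, which gives $\lambda_a = Q(\pi(a))$. Substituting these two identities directly into the displayed decomposition yields
\[ \Theta_a(y) = y + [a, y] + \ell_a - a \cdot \pi(a)^\sigma + Q(\pi(a)) x , \]
which is exactly the claimed formula. This completes the argument.

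There is no genuine obstacle here: all the difficulty has been absorbed into \cref{pr:tilde a,pr:lambda_a}, each of which carried out the real analysis of the $L_0$- and $L_{-1}$-components of the image under $\Theta_a$ of suitable grading relations (the bracket relations $[y,[y,v]]=0$ and the $\mathfrak{sl}_2$-identity $[[x,y],y]=2y$). Since the present statement only collects those conclusions, the proof is immediate. I would therefore keep it to two sentences, pointing to \cref{not:Theta_a(y),pr:tilde a,pr:lambda_a} and performing the substitution, and I would not attempt to re-derive either of the two ingredient formulas.
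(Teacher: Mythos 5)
Your proposal is correct and is exactly the paper's own proof: the paper's argument for \cref{co:Theta_a y} is the single line ``This follows from \cref{not:Theta_a(y),pr:tilde a,pr:lambda_a}'', i.e.\ precisely the substitution of $\tilde a = -a\cdot\pi(a)^\sigma$ and $\lambda_a = Q(\pi(a))$ into the decomposition of $\Theta_a(y)$. Nothing further is needed.
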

\begin{proof}
	This follows from \cref{not:Theta_a(y),pr:tilde a,pr:lambda_a}.
\end{proof}

We deduce another interesting identity, which is not required for the proof of our main result (\cref{th:quadr main}), but which is interesting in its own right, and relevant also in the reconstruction process (see \cref{rem:qa reconstruct} below).
\begin{proposition}\label{pr:Vabc}
	Let $a,b,c \in X$.
	Write $V_{a,b} := [a, [y,b]] \in L_0 \cap L'_0$.
	Then
	\[ [V_{a,b}, c] = a \cdot h(b,c) + b \cdot h(a,c) + c \cdot h(b,a) . \]
\end{proposition}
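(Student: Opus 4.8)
The plan is to avoid a direct triple-bracket expansion of $[[a,[y,b]],c]$: one checks that this only pins down $[V_{a,b},c]$ up to the relation forcing $W-R$ (where $R$ denotes the claimed right-hand side) to be symmetric in its last two slots, and polarizing the diagonal then yields merely $2(W-R)=0$, which is useless in characteristic $2$. Instead I would exploit the group law of the automorphisms $\Theta_a$ to \emph{bilinearize} the quadratic element $\ell_a$ from \cref{not:Theta_a(y)}, and then reduce everything to the already-established adjoint action of $\ell_a$ on $X$ given by \cref{le:lb}.

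First I would record the key identity
\[ V_{a,b} = \ell_a + \ell_b - \ell_{a+b} + \gamma(a,b)[x,y] \]
for all $a,b \in X$. To prove it, I would start from the relation $\Theta_a \Theta_b = \exp(-\gamma(a,b)x)\,\Theta_{a+b}$ coming from \cref{pr:theta a+b} (this is \cref{eq:Theta_a Theta_b} with $\lambda = -\gamma(a,b)$), apply both sides to $y$, and compare their $(L_0 \cap L'_0)$-components. The computation is clean because $\Theta_a$ preserves the $L'_i$-grading (\cref{pr:delta-std}) and because $q_a$ kills $L_1$ (as $L_3 = 0$), so that $\Theta_a([y,b]) = [y,b] + [a,[y,b]] = [y,b] + V_{a,b}$. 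Writing $\Theta_b(y) = y + [b,y] + \ell_b + \tilde b + \lambda_b x$ as in \cref{not:Theta_a(y)} and reading off the $L_0$-part of $\Theta_a(\Theta_b(y))$ yields $\ell_a + \ell_b - V_{a,b}$; on the right-hand side the $L_0$-part of $\exp(-\gamma(a,b)x)\Theta_{a+b}(y)$ is $\ell_{a+b} - \gamma(a,b)[x,y]$, the second term being the image of the $L_2$-part $y$ under $-\gamma(a,b)\,\ad_x$. Equating gives the identity.

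With this in hand I would simply apply $\ad_c$. Using \cref{le:lb} to compute $[\ell_a, c] = c \cdot \pi(a)^\sigma - a \cdot h(a,c)$ (and likewise for $b$ and $a+b$), together with $[[x,y],c] = -c$ from \cref{pr:5gr}\cref{5gr:grading der}, the terms involving the dot-action split into a ``$c\cdot$'' block and an ``$h$-block''. For the $c\cdot$ block I would use the additivity $\pi(a)+\pi(b)-\pi(a+b) = -h(a,b) + \gamma(a,b)e$ from \cref{eq:pi a+b}, the linearity of $\sigma$, the skew relation $h(a,b)^\sigma = -h(b,a)$ from \cref{co:h skew}, and $e^\sigma = e$ (since $T(e,e) = 2Q(e) = 2$ gives $e^\sigma = T(e,e)e-e = e$), to collapse it to $c \cdot h(b,a) + \gamma(a,b)c$. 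The $h$-block, using bilinearity of $h$ and of $\cdot$, collapses to $a \cdot h(b,c) + b \cdot h(a,c)$. Finally the stray $\gamma(a,b)c$ cancels against $\gamma(a,b)[[x,y],c] = -\gamma(a,b)c$, leaving exactly $a \cdot h(b,c) + b \cdot h(a,c) + c \cdot h(b,a)$.

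The main obstacle is really the first step: isolating the correct $(L_0 \cap L'_0)$-components and, in particular, noticing that the term $\gamma(a,b)[x,y]$ is forced by the $\exp(-\gamma(a,b)x)$ prefactor acting on the $L_2$-component $y$. This bilinearization is precisely what keeps the argument characteristic-free; a naive polarization of the diagonal case $[V_{a,a},c]$ (which one can verify separately using \cref{le:lb} and \cref{le:pi sigma}) would only determine the answer up to $2$-torsion and hence fail in characteristic $2$.
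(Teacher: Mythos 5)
Your proposal is correct and is essentially the paper's own proof: the paper likewise applies \cref{eq:Theta_a Theta_b} to $y$, compares $L_0$-components to obtain $\ell_a + [a,[b,y]] + \ell_b = \ell_{a+b} - \gamma(a,b)[x,y]$ (your key identity), then brackets with $c$ and finishes via \cref{le:lb}, \cref{eq:pi a+b} and \cref{co:h skew}. One inessential slip: for $a \in X \leq L_{-1}$ the maps $q_a, n_a$ shift degrees \emph{downward}, so your justification that ``$q_a$ kills $L_1$ since $L_3 = 0$'' is backwards (indeed $q_a([y,b])$ lies in $L_{-1} \cap L'_0 = X$, which the grading alone does not force to vanish); fortunately your argument only uses the $L_0$-component of $\Theta_a([b,y])$, which equals $[a,[b,y]]$ regardless, so the derivation stands.
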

\begin{proof}
	We apply \cref{eq:Theta_a Theta_b} on $y$ and invoke \cref{co:Theta_a y} to get
	\begin{multline*}
		\Theta_a \bigl( y + [b,y] + \ell_b - b \cdot \pi(b)^\sigma + Q(\pi(b)) x \bigr) \\
			= \exp( -\gamma(a,b)x ) \bigl( y + [a+b,y] + \ell_{a+b} - (a+b) \cdot \pi(a+b)^\sigma + Q(\pi(a+b)) x \bigr) .
	\end{multline*}
	Comparing the $L_0$-component of both sides yields
	\[ \ell_a + [a, [b,y]] + \ell_b = \ell_{a+b} - \gamma(a,b) [x,y] . \]
	Taking the Lie bracket with $c \in X \leq L_{-1}$ now gives
	\[ [\ell_a, c] - [V_{a,b}, c] + [\ell_b, c] = [\ell_{a+b}, c] + \gamma(a,b) c . \]
	We now apply \cref{le:lb} and we get
	\begin{multline}\label{eq:Vabc}
		[V_{a,b}, c] = \bigl( c \cdot \pi(a)^\sigma - a \cdot h(a,c) \bigr) + \bigl( c \cdot \pi(b)^\sigma - b \cdot h(b,c) \bigr) \\
		- \bigl( c \cdot \pi(a+b)^\sigma - (a+b) \cdot h(a+b,c) \bigr) - \gamma(a,b) c.
	\end{multline}
	By \cref{eq:pi a+b,co:h skew}, we have
	\[ c \cdot \pi(a+b)^\sigma = c \cdot \pi(a)^\sigma + c \cdot \pi(b)^\sigma - c \cdot h(b,a) - \gamma(a,b) c . \]
	Substituting this expression in \cref{eq:Vabc}, the result follows.
\end{proof}
\begin{remark}
	The notation $V_{a,b}$ that we use is not a coincidence: the expression for $[V_{a,b}, c]$ is exactly the expression for the \emph{$V$-operators} in the theory of structurable algebras (in characteristic $\neq 2,3$) related to quadrangular algebras.
	See \cite[Theorem 5.4]{Boelaert2013}.
\end{remark}

We have now arrived at the most difficult identity, which we will prove in \cref{pr:hardest} below.
The crucial ingredient will be \cref{le:Theta_av}, proving the equality of certain products of automorphisms, which is, in fact, a typical ``long commutator relation'' in disguise.

Similarly to \cref{pr:delta-std,def:theta}, we will need exponential maps for elements of $X'$, with respect to the $L'_i$-grading \cref{recover quadr:grading 2}.	
\begin{definition}
	\begin{enumerate}
		\item 
			For each $[a,f] \in X' \leq L'_{-1}$, we write $\hat\Theta_a$ for the unique $[a,f]$-exponential automorphism $\alpha \in \Aut(L)$ such that $[q_\alpha(e'), [d, \delta]] = 0$. The corresponding (uniquely defined) maps $q_\alpha$, $n_\alpha$ and $v_\alpha$ will be denoted by $\hat q_a$, $\hat n_a$ and $\hat v_a$, respectively.
		\item
			For each $a \in X$ and each $v \in V$, we define $\hat\theta(a,v) := \hat q_a([d,v]) \in V$. In particular, we have
			\begin{equation}\label{eq:Theta hat}
				\hat\Theta_a([d,v]) = [d,v] + [[a,f], [d,v]] + \hat\theta(a,v) = [d,v] + a \cdot v^\sigma + \hat\theta(a,v) ,
			\end{equation}
			where the second equality holds by \cref{le:a.vsigma} since $f' = [d,f]$.
	\end{enumerate}
\end{definition}

For the next lemma, recall that $\alpha_{\theta(a,v)}$ and $\beta_v$ have been introduced in \cref{def:v-exp,def:yv-exp}, respectively.
\begin{lemma}\label{le:Theta_av}
	Let $a \in X$ and $v \in V$.
	Then there is a (unique) $\mu =: \mu(a,v) \in k$ such that
	\[ \exp(- \mu c) \, \Theta_a \, \beta_v = \beta_v \, \hat\Theta_{a \cdot v} \, \alpha_{\theta(a,v)} \, \Theta_a . \]
\end{lemma}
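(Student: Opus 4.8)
The plan is to prove the identity by right-multiplying both sides by $\Theta_a^{-1}$ and recognizing the two resulting automorphisms as $l''$-exponential automorphisms \emph{with respect to the second grading} \cref{recover quadr:grading 2}, for one and the same $l'' \in L'_{-1}$. After cancelling $\Theta_a$ on the right, the asserted identity becomes
\[ \exp(-\mu c)\,\Theta_a\beta_v\Theta_a^{-1} = \beta_v\,\hat\Theta_{a\cdot v}\,\alpha_{\theta(a,v)} . \]
First I would analyze the right-hand side. Each factor is an exponential automorphism with respect to the $L'_i$-grading: $\beta_v$ is $[y,v]$-exponential by \cref{def:yv-exp}, $\hat\Theta_{a\cdot v}$ is $[a\cdot v,f]$-exponential by its definition, and $\alpha_{\theta(a,v)}$ is $\theta(a,v)$-exponential with respect to \cref{recover quadr:grading 2} by \cref{pr:v-exp both gradings} (using $\theta(a,v)\in V = L'_{-1}$). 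Hence, by the grading-$2$ version of \cref{th:alg}\cref{th:alg:sum} applied twice, the product $\beta_v\hat\Theta_{a\cdot v}\alpha_{\theta(a,v)}$ is an $l''$-exponential automorphism with respect to \cref{recover quadr:grading 2}, where $l'' := [y,v] + [a\cdot v,f] + \theta(a,v)$.

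Next I would treat the left-hand side. By \cref{pr:delta-std}, $\Theta_a$ preserves the $L'_i$-grading, so conjugation by $\Theta_a$ carries any $w$-exponential automorphism (with respect to grading $2$) to a $\Theta_a(w)$-exponential automorphism: since $\Theta_a$ is an automorphism, the linear term $[w,\Theta_a^{-1}(-)]$ transforms into $[\Theta_a(w),-]$, while the higher maps $q,n,v$ stay grading-$2$ homogeneous of the correct shift because $\Theta_a$ preserves grading $2$. Applying this to $\beta_v$ shows that $\Theta_a\beta_v\Theta_a^{-1}$ is $\Theta_a([y,v])$-exponential with respect to \cref{recover quadr:grading 2}. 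The decisive point is that \cref{eq:Theta} gives $\Theta_a([y,v]) = [y,v] + [a\cdot v,f] + \theta(a,v) = l''$, so $\Theta_a\beta_v\Theta_a^{-1}$ and $\beta_v\hat\Theta_{a\cdot v}\alpha_{\theta(a,v)}$ are $l''$-exponential automorphisms for the \emph{same} $l''$. This matching is precisely what the parameters $a\cdot v$ and $\theta(a,v)$ of $\hat\Theta_{a\cdot v}$ and $\alpha_{\theta(a,v)}$ were engineered to achieve, and it is the heart of the argument.

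Finally, I would invoke uniqueness. By the grading-$2$, swapped form of \cref{th:alg}\cref{th:alg:unique} (see \cref{rem:swap}), two $l''$-exponential automorphisms with respect to \cref{recover quadr:grading 2} differ by an element of $\Exp(c)$, the root group attached to the bottom end $L'_{-2} = \langle c\rangle$. Thus there is $\lambda$ with $\beta_v\hat\Theta_{a\cdot v}\alpha_{\theta(a,v)} = \exp(\lambda c)\,\Theta_a\beta_v\Theta_a^{-1}$, and in fact $\lambda\in k$ (not merely in $k'$), because the left-hand automorphism and $\Theta_a\beta_v\Theta_a^{-1}$ are both defined over $k$, so their quotient $\exp(\lambda c)$ stabilizes $L$. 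Setting $\mu := -\lambda$ and multiplying by $\Theta_a$ on the right yields exactly the claimed identity, and uniqueness of $\mu$ follows since $\exp(\lambda c)=\exp(\lambda' c)$ forces $\lambda=\lambda'$ (as $[c,d]\neq 0$). The only genuinely delicate step is the conjugation claim combined with the parameter bookkeeping through \cref{eq:Theta}; the remainder is a direct application of the uniqueness and sum machinery of \cref{th:alg}.
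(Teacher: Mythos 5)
Your proposal is correct and follows essentially the same route as the paper: both identify $\Theta_a\beta_v\Theta_a^{-1}$ as a $\Theta_a([y,v])$-exponential automorphism with respect to the grading \cref{recover quadr:grading 2} (using that $\Theta_a$ preserves that grading), match it via \cref{eq:Theta} with the $l$-exponential automorphism $\beta_v\,\hat\Theta_{a\cdot v}\,\alpha_{\theta(a,v)}$ obtained from \cref{th:alg}\cref{th:alg:sum} and \cref{pr:v-exp both gradings}, and conclude by the uniqueness statement \cref{th:alg}\cref{th:alg:unique}. Your explicit remark that the resulting scalar lies in $k$ rather than $k'$ (since all automorphisms involved stabilize $L$) is a point the paper leaves implicit, but it is a refinement, not a divergence.
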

\begin{proof}
	Consider the automorphism $\beta := \Theta_a \, \beta_v \, \Theta_a^{-1}$ and recall that $\beta_v$ is a $[y,v]$\dash exponential automorphism with respect to the $L'_i$-grading \cref{recover quadr:grading 2}. Write
	\[ \beta_v = \id + \ad_{[y,v]} + q_{[y,v]} + n_{[y,v]} + v_{[y,v]} . \]
	Now let $l_i \in L'_i$.
	Since $\Theta_a$ preserves the $L'_i$-grading \cref{recover quadr:grading 2}, also $\Theta_a^{-1}(l_i) \in L'_i$, so we can write
	\[ \beta_v \, \Theta_a^{-1}(l_i)
		= \underbrace{\Theta_a^{-1}(l_i)}_{\in L'_i}
		+ \underbrace{[[y,v], \Theta_a^{-1}(l_i)]}_{\in L'_{i-1}}
		+ \underbrace{q_{[y,v]}(\Theta_a^{-1}(l_i))}_{\in L'_{i-2}} . \]
	Hence
	\[ \beta(l_i)
		= \underbrace{l_i}_{\in L'_i}
		+ \underbrace{\bigl[ \Theta_a([y,v]), l_i \bigr]}_{\in L'_{i-1}}
		+ \underbrace{\Theta_a(q_{[y,v]}(\Theta_a^{-1}(l_i)))}_{\in L'_{i-2}} . \]
	This shows that $\beta$ is an $l$-exponential automorphism for $l = \Theta_a([y,v])$, with respect to the $L'_i$-grading~\cref{recover quadr:grading 2}. Now
	\[ l = [y,v] + [a \cdot v, f] + \theta(a,v) , \]
	so by \cref{th:alg}\cref{th:alg:sum}, also $\beta_v \, \hat\Theta_{a \cdot v} \, \alpha_{\theta(a,v)}$ is an $l$-exponential automorphism.
	(Notice that we have implicitly used \cref{pr:v-exp both gradings} for $\alpha_{\theta(a,v)}$ here.)
	The result now follows from \cref{th:alg}\cref{th:alg:unique}.
\end{proof}

\begin{definition}
	Let $a \in X$ and $v \in V$. Then we set
	\[ \phi(a,v) := \mu(a,v) - \mu(a \cdot v, e) . \]
\end{definition}
\begin{proposition}\label{pr:hardest}
	Let $a \in X$ and $v,w \in V$. Then
	\[ \hat\theta(a,v) = \theta(a, v^\sigma)^\sigma + \mu(a,e) v \]
	and
	\[ \theta(a \cdot v, w^\sigma)^\sigma = Q(v) \theta(a,w) - T(v,w) \theta(a,v) + T(\theta(a,v), w)v + \phi(a,v)w . \]
\end{proposition}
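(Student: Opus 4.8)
The plan is to read off both identities from the single automorphism identity of \cref{le:Theta_av}, namely
\[ \exp(-\mu(a,v)c)\,\Theta_a\,\beta_v = \beta_v\,\hat\Theta_{a\cdot v}\,\alpha_{\theta(a,v)}\,\Theta_a , \]
by evaluating both sides on the test element $[d,w]\in V'=L_{-1}\cap L'_1$ (with $w\in V$) and comparing the components that lie in $V=L_{-1}\cap L'_{-1}$. The intermediate goal is the auxiliary identity
\begin{equation}
	\hat\theta(a\cdot v,w)=Q(v)\theta(a,w)-T(v,w)\theta(a,v)+T(\theta(a,v),w)v+\mu(a,v)w , \tag{$\star$}
\end{equation}
from which, as I explain at the end, both displayed identities follow formally.

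On the right-hand side I would use the explicit actions already recorded. Since $\Theta_a$ preserves the $L'_i$-grading and its leading correction $[a,[d,w]]$ lands in $L_{-2}\cap L'_1=0$, we have $\Theta_a([d,w])=[d,w]$; then $\alpha_{\theta(a,v)}([d,w])=[d,w]-T(\theta(a,v),w)x$ by \cref{def:Q and T}; then $\hat\Theta_{a\cdot v}$ fixes $x$ and sends $[d,w]$ to $[d,w]+(a\cdot v)\cdot w^\sigma+\hat\theta(a\cdot v,w)$ by \eqref{eq:Theta hat}. Finally $\beta_v$ lowers the $L'_i$-grading with leading term $\ad_{[y,v]}$, so among the four resulting summands the summands $[d,w]\in V'$ and $(a\cdot v)\cdot w^\sigma\in X$ contribute nothing in $V$, the summand $\hat\theta(a\cdot v,w)\in V$ survives into the $V$-component unchanged, and $-T(\theta(a,v),w)x$ contributes $-T(\theta(a,v),w)[[y,v],x]=-T(\theta(a,v),w)v$, using $[[y,v],x]=v$ from \cref{pr:5gr}\cref{5gr:iso}. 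Hence the $V$-component of the right-hand side equals $\hat\theta(a\cdot v,w)-T(\theta(a,v),w)v$.

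On the left-hand side I would expand $\beta_v([d,w])=[d,w]+[[y,v],[d,w]]+q_{\beta_v}([d,w])$, whose three summands lie in $V'$, in $L_0\cap L'_0$, and in $[y,V]$ respectively; then apply $\Theta_a$ summand-by-summand, collect the $V$-component, and apply $\exp(-\mu(a,v)c)$. The scalar $\mu(a,v)$ enters precisely here: the summand $[d,w]\in V'$ is fixed by $\Theta_a$, and $\exp(-\mu(a,v)c)$ turns it into $-\mu(a,v)[c,[d,w]]=\mu(a,v)w$ since $[c,[d,w]]=-w$ by \cref{recover quadr:decoomp eq 1}. The remaining contributions $Q(v)\theta(a,w)$ and $-T(v,w)\theta(a,v)$ come from applying $\Theta_a$ to the middle piece $[[y,v],[d,w]]\in L_0\cap L'_0$ and to $q_{\beta_v}([d,w])\in[y,V]$; for these I would use \eqref{eq:Theta} together with the explicit adjoint formulas of \cref{le:lv,le:lyv,le:lb}, \cref{pr:yvw} and \cref{def:Q and T}. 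Equating the two $V$-components yields $(\star)$.

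It then remains to derive the two stated identities from $(\star)$. Setting $v=e$ (so $a\cdot e=a$, $Q(e)=1$ and $\theta(a,e)=\pi(a)$) and substituting $\theta(a,w^\sigma)^\sigma=\theta(a,w)-T(e,w)\pi(a)+T(\pi(a),w)e$ from \cref{co:Ttheta} gives the first identity $\hat\theta(a,w)=\theta(a,w^\sigma)^\sigma+\mu(a,e)w$. Feeding this first identity back into $(\star)$ with $a$ replaced by $a\cdot v$, i.e. $\hat\theta(a\cdot v,w)=\theta(a\cdot v,w^\sigma)^\sigma+\mu(a\cdot v,e)w$, and using $\phi(a,v)=\mu(a,v)-\mu(a\cdot v,e)$, produces the second identity. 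The main obstacle is the left-hand side computation of the previous paragraph — in particular isolating exactly $Q(v)\theta(a,w)-T(v,w)\theta(a,v)$ from the action of $\Theta_a$ on the middle component $[[y,v],[d,w]]$ and on $q_{\beta_v}([d,w])$ — which must be carried out uniformly in every characteristic, including $2$, where $q_{\beta_v}$ is not a multiple of $\ad_{[y,v]}^2$.
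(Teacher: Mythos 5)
Your global strategy is the same as the paper's: evaluate both sides of \cref{le:Theta_av} on the test element $[d,w]$, compare $V$-components to obtain your intermediate identity $(\star)$ (this is exactly \cref{eq:hardest3} in the paper), then set $v=e$ and invoke \cref{co:Ttheta} for the first identity, and substitute $a \cdot v$ for $a$ together with the definition of $\phi$ for the second. Your right-hand side computation and the formal endgame are correct. But the left-hand side computation, which you yourself flag as ``the main obstacle,'' is left genuinely open, and it is the crux of the whole proposition. Knowing only that $q_{\beta_v}([d,w])$ lies in $[y,V]$ (an assertion you also do not justify) is not enough: to extract the $V$-component of $\Theta_a\bigl(q_{\beta_v}([d,w])\bigr)$ via \cref{eq:Theta} you must know \emph{which} element $[y,u]$ of $[y,V]$ it is, since that component is $\theta(a,u)$; none of the lemmas you cite (\cref{le:lv,le:lyv,le:lb,pr:yvw}) determines $q_{\beta_v}$ on $V'$, and in characteristic $2$ there is no formula $q_{\beta_v}=\tfrac12\ad_{[y,v]}^2$ to fall back on. There is also a misattribution: since $\Theta_a$ preserves the $L'_i$-grading (\cref{pr:delta-std}) and $\exp(-\mu(a,v)c)$ maps $L'_0$ into $L'_0 \oplus L'_{-2}$, the middle piece $[[y,v],[d,w]] \in L_0 \cap L'_0$ contributes \emph{nothing} to the $V$-component; the entire contribution $Q(v)\theta(a,w) - T(v,w)\theta(a,v)$ must come from $q_{\beta_v}([d,w])$ alone.

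The missing idea, which is how the paper closes this gap uniformly in every characteristic, is to bypass any abstract knowledge of $q_{\beta_v}$ and compute $\beta_v([d,w])$ from the automorphism property applied to a bracket of elements on which $\beta_v$ is explicitly known:
\[ \beta_v([d,w]) = [\beta_v(d), \beta_v(w)]
	= \bigl[\, d + [[y,v],d] + Q(v)y, \ w + T(v,w)c \,\bigr] , \]
using $\beta_v(d) = d + [[y,v],d] + Q(v)y$ from \cref{def:yv-exp} and $\beta_v(w) = w + [[y,v],w] = w + T(v,w)c$ from \cref{pr:yvw} (the quadratic correction on $w$ vanishes for degree reasons). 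Expanding and simplifying with the Jacobi identity gives
\[ \beta_v([d,w]) = [d,w] + \bigl[[y,v],[d,w]\bigr] + \bigl[\, y, \ Q(v)w - T(v,w)v \,\bigr] , \]
so $q_{\beta_v}([d,w]) = [y,\, Q(v)w - T(v,w)v]$ explicitly. Applying $\Theta_a$ and \cref{eq:Theta} to this last summand yields the $V$-contribution $\theta\bigl(a, Q(v)w - T(v,w)v\bigr) = Q(v)\theta(a,w) - T(v,w)\theta(a,v)$, which together with the term $\mu(a,v)w$ coming from $\exp(-\mu(a,v)c)([d,w])$ gives \cref{eq:hardest1} and hence $(\star)$. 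With this one computation inserted, and the role of the middle piece corrected, your argument becomes the paper's proof.
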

\begin{proof}
	We apply both sides of \cref{le:Theta_av} to $[d,w] \in V'$ and we extract the $V$\dash component of the resulting equality (so there are many terms that we do not need to compute explicitly).
	We first compute
	\begin{align*}
		\beta_v([d,w])
		&= [\beta_v(d), \beta_v(w)] \\
		&= \bigl[ d + [[y,v], d] + Q(v)y , \ w + [[y,v], w] \bigr] \\
		&= \bigl[ d + [[y,v], d] + Q(v)y , \ w + T(v,w)c \bigr] \quad \text{(by \cref{pr:yvw})} \\
		&= [d,w] + \bigl[ [y,v], [d,w] \bigr] + \bigl[ [[y,v], d], \ T(v,w)c \bigr] + [Q(v)y, w] \\
		&= [d,w] + \bigl[ [y,v], [d,w] \bigr] + \bigl[ y, Q(v)w - T(v,w)v \bigr] .
	\end{align*}
	After applying $\exp(-\mu(a,v) c) \Theta_a$ on $\beta_v([d,w])$, we see from the grading in \cref{fig:qa} that only two terms contribute to the $V$\dash component, namely
	\[ \exp(-\mu(a,v) c)([d,w]) \quad \text{and} \quad \Theta_a \bigl( \bigl[ y, Q(v)w - T(v,w)v \bigr] \bigr) , \]
	and the resulting $V$-component is equal to
	\begin{equation}\label{eq:hardest1}
		\mu(a,v) w + Q(v) \theta(a,w) - T(v,w) \theta(a,v) .
	\end{equation}
	On the other hand, we compute the $V$-component of
	\begin{align*}
		\beta_v \, \hat\Theta_{a \cdot v} \, \alpha_{\theta(a,v)} \, \Theta_a([d,w])
		&= \beta_v \, \hat\Theta_{a \cdot v} \bigl( [d,w] + [\theta(a,v), [d,w]] \bigr) \\
		&= \beta_v \, \hat\Theta_{a \cdot v} \bigl( [d,w] - T(\theta(a,v), w)x \bigr) .
	\end{align*}
	Again, only two terms contribute, namely
	\[ \hat\Theta_{a \cdot v}([d,w]) \quad \text{and} \quad \beta_v\bigl( - T(\theta(a,v), w)x \bigr) , \]
	and the resulting $V$-component is equal to
	\begin{equation}\label{eq:hardest2}
		\hat\theta(a \cdot v, w) - T(\theta(a,v), w) v .
	\end{equation}
	Since the expressions in \cref{eq:hardest1,eq:hardest2} coincide, we get
	\begin{equation}\label{eq:hardest3}
		\hat\theta(a \cdot v, w) = Q(v) \theta(a,w) - T(v,w) \theta(a,v) + T(\theta(a,v), w) v + \mu(a,v) w .
	\end{equation}
	Setting $v = e$ and invoking \cref{co:Ttheta} then gives
	\begin{align*}
		\hat\theta(a, w) &= \theta(a,w) - T(e,w) \pi(a) + T(\pi(a), w) e + \mu(a,e) w \\
		&= \theta(a, w^\sigma)^\sigma + \mu(a,e) w .
	\end{align*}
	Finally, substituting $a \cdot v$ for $a$ in this identity gives
	\[ \hat\theta(a \cdot v, w) = \theta(a \cdot v, w^\sigma)^\sigma + \mu(a \cdot v, e) w . \]
	Comparing this with \cref{eq:hardest3} gives the required identity.
\end{proof}

We have now assembled all that is required to prove our main result of this section.

\begin{theorem}\label{th:quadr main}
	Let $L$ be as in \cref{ass:quadr}, let $V$ and $X$ be as in \cref{recover quadr:two gradings}, let $Q$ and $T$ be as in \cref{def:Q and T}, let $e$ and $\delta$ be as in \cref{def:ef}, let $h$ and $\cdot$ be as in \cref{def:h and dot} and let $\theta$ be as in \cref{def:theta}.
	
	Then the system $(k,V,Q,T,e,X,\cdot,h,\theta)$ is a quadrangular algebra, which is $\delta$\dash standard and for which both $T$ and the bilinear map $X \times X \to k \colon (a,b) \mapsto T(h(a,b), e)$ are non-degenerate (so in particular, $h$ is non-degenerate).
\end{theorem}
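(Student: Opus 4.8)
The plan is to recognize that \cref{th:quadr main} is a synthesis result: each piece of data and each defining axiom of a quadrangular algebra (\cref{prelim:def quadr alg}) has already been produced in the preceding lemmas and propositions, so the proof consists of assembling these and checking that they fit the axiom list, together with the regularity, base-point, $\delta$-standard and non-degeneracy claims.

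First I would confirm the underlying data. The spaces $V$ and $X$ are $k$-vector spaces by construction, and $\cdot$ and $h$ are bilinear and take values in $X$ and $V$, respectively, directly from \cref{def:h and dot}. By \cref{recover quadr:Q quadr sp}, $Q$ is a quadratic form on $V$ with associated bilinear form $T$; its regularity is immediate from \cref{le:T nondeg}, which even gives $\{v \in V \mid T(v,V) = 0\} = 0$. By \cref{recover quadr:rescaling} (see \cref{def:ef}), $e$ is a base point, i.e.\ $Q(e) = 1$.

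Next I would verify the nine axioms of \cref{prelim:def quadr alg} one at a time, simply citing the relevant result: axiom \cref{prelim:def quadr alg ax 1} is \cref{le:.}\cref{le:.:e}; axiom \cref{prelim:def quadr alg ax 2} is \cref{pr:a v vs}; axiom \cref{prelim:def quadr alg ax 3} is \cref{pr:axiom 3}; axiom \cref{prelim:def quadr alg ax 4} is \cref{pr:axiom 4}; axiom \cref{prelim:def quadr alg ax 5} holds because $\theta(a,-) = q_a \circ \ad_y$ is linear by construction (\cref{def:theta}); axiom \cref{prelim:def quadr alg ax 6} is the scaling relation recorded in \cref{def:theta}, which follows from \cref{th:alg}\cref{th:alg:scalar}; axiom \cref{prelim:def quadr alg ax 7} is \cref{pr:theta a+b} (which moreover identifies $\gamma(a,b) = T(h(a,b),\delta)$); and axiom \cref{prelim:def quadr alg ax 9} is equation \eqref{eq:a theta av} of \cref{pr:tilde a}, on recalling $\pi(a) = \theta(a,e)$ from \cref{prelim:not pi}. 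The only axiom requiring a small manipulation is \cref{prelim:def quadr alg ax 8}: the second identity of \cref{pr:hardest} gives a formula for $\theta(a\cdot v, w^\sigma)^\sigma$, and substituting $w \mapsto w^\sigma$ and applying the involutory isometry $\sigma$ (using $(w^\sigma)^\sigma = w$ and $T(v, w^\sigma) = T(w, v^\sigma)$) turns it into exactly the required expression for $\theta(a \cdot v, w)$, with $\phi$ as defined there.

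Finally I would dispatch the remaining assertions. The $\delta$-standard property is precisely $T(\pi(a), \delta) = 0$, which is \eqref{eq:Tdelta}, built into the normalization \eqref{eq:delta-std} defining $\Theta_a$. Non-degeneracy of $T$ is \cref{le:T nondeg}, and non-degeneracy of $(a,b)\mapsto T(h(a,b),e)$ is \cref{le:abX}; the latter also yields non-degeneracy of $h$, since $h(a,X) = 0$ forces $T(h(a,b),e) = 0$ for all $b$ and hence $a = 0$. The one genuine obstacle has already been cleared in \cref{pr:hardest}, whose proof hinges on the ``long commutator relation'' \cref{le:Theta_av}; the delicate point there is checking that the auxiliary scalar $\phi(a,v)$ is well-defined and that the $V$-components of the two products in \cref{le:Theta_av} are extracted consistently. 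Everything in the present theorem beyond that is a direct citation, so I expect the write-up to be short.
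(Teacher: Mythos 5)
Your proposal is correct and follows essentially the same route as the paper's own proof: both assemble the previously established results (\cref{le:.}, \cref{pr:a v vs}, \cref{pr:axiom 3}, \cref{pr:axiom 4}, \cref{pr:theta a+b}, \cref{pr:hardest}, \cref{pr:tilde a}) to verify the nine axioms of \cref{prelim:def quadr alg}, then cite \cref{eq:Tdelta}, \cref{le:T nondeg} and \cref{le:abX} for the $\delta$-standard and non-degeneracy claims. Your extra care on axiom \cref{prelim:def quadr alg ax 8} (substituting $w \mapsto w^\sigma$ in \cref{pr:hardest} and applying the isometry $\sigma$) and on deducing non-degeneracy of $h$ is a correct spelling-out of steps the paper leaves implicit.
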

\begin{proof}
	We first observe that $Q \colon V \to k$ is a regular quadratic form with associated bilinear form $T$, by \cref{def:Q and T} and \cref{le:T nondeg}, so in fact, $T$ is non-degenerate. By \cref{def:ef}, $e \in V$ is a base point for $Q$.
	By definition, the maps $\cdot \colon X \times V \to X$ and $h \colon X \times X \to V$ defined in \cref{def:h and dot} are bilinear maps.
	Finally, the map $\theta \colon X \times V \to V$ has been introduced in \cref{def:theta}.
	
	We now verify each of the defining axioms \cref{prelim:def quadr alg ax 1}--\cref{prelim:def quadr alg ax 9} from \cref{prelim:def quadr alg}.
	\begin{enumerate}
		\item This is \cref{le:.}\cref{le:.:e}.
		\item This is \cref{pr:a v vs}.
		\item This is \cref{pr:axiom 3}.
		\item This is \cref{pr:axiom 4}.
		\item This follows from the definition of $\theta$ in \cref{def:theta} because for each $a \in X$, $\Theta_a$ is a linear map.
		\item This was observed already in \cref{def:theta}.
		\item This is \cref{pr:theta a+b}.
		\item This is \cref{pr:hardest}.
		\item This is \cref{pr:tilde a}.
	\end{enumerate}
	This shows that $(k,V,Q,T,e,X,\cdot,h,\theta)$ is a quadrangular algebra.
	By \cref{def:ef,eq:Tdelta}, it is $\delta$-standard.
	Finally, the bilinear map $X \times X \to k \colon (a,b) \mapsto T(h(a,b), e)$ is non-degenerate by \cref{le:abX}.
\end{proof}

\begin{remark}\label{rem:QA conditions}
	The conditions on the quadrangular algebra in the statement of \cref{th:quadr main} imply that if $X \neq 0$, then either the quadrangular algebra is ``generic'' (as in \cite[\S 8]{Muehlherr2019}) or it is ``of split type $F_4$'' (as in \cite[\S 10]{Muehlherr2019}). (We ignore the detail that $|K|>4$ is required in the generic case in \cite[Hypothesis 8.1]{Muehlherr2019} and $|K|>3$ is required in the split $F_4$-case in \cite[Hypothesis 10.1]{Muehlherr2019}.)
	Notice, however, that we allow the situation where $X = 0$, in which case the quadrangular algebra is nothing more than a non-degenerate quadratic space with base point.
\end{remark}

\begin{remark}\label{rem:MH}
	When the quadrangular algebra is anisotropic, it ought to be possible to show that the resulting \emph{inner ideal geometry} is a Moufang generalized quadrangle and that it is isomorphic to the Moufang quadrangle obtained from the quadrangular algebra as in \cite{Weiss2006}. (Notice that in this case, the \emph{extremal geometry} has no lines. The inner ideal geometry is the geometry with point set $\E(L)$ and as line set the collection of inner ideals containing at least two extremal points and minimal with respect to this property.)
	
	The explicit computations might be lengthy in general, but in the case that $\Char(k) \neq 2$, this has been worked out in \cite[\S 4.5.2]{MeulewaeterPhD}.
\end{remark}

\begin{remark}\label{rem:qa reconstruct}
	We are confident that, similarly to \cref{co:cns reconstruct}, the Lie algebra can be completely reconstructed from the quadrangular algebra, but we have not worked out the details.
	This time, we let
	\[ K_0 := \langle [x,y], [c,d] \rangle \oplus [X, [y,X]] \oplus [V, [y, V']] , \]
	and we follow exactly the same procedure as in the proof of \cref{co:cns reconstruct}.
	However, working out all the different cases of Lie brackets between the $13$ different pieces in the grading requires substantially more computational effort than in the case of \cref{se:lines}, and one crucial ingredient seems to be \cref{pr:Vabc}.
	We leave the details to the courageous reader.
\end{remark}

\begin{remark}\label{rem:qa 3rd grading}
	As we can see, there is a third interesting grading on the Lie algebra $L$ that we obtain from the diagonals in \cref{fig:qa}.
	(There is, of course, also a fourth grading from the other diagonals.)
	Notice that this time, the ends of the grading are not one-dimensional! More precisely, we have
	\[ L = L''_{-2} \oplus L''_{-1} \oplus L''_0 \oplus L''_1 \oplus L''_2 \]
	with
	\begin{align*}
		L''_{-2} &= \langle x \rangle \oplus V \oplus \langle c \rangle \\
		L''_{-1} &= X \oplus X' \\
		L''_{0} &= V' \oplus (L_0 \cap L'_0) \oplus [y,V] \\
		L''_{1} &= [d, X'] \oplus [y,X] \\
		L''_{2} &= \langle d \rangle \oplus [y,V'] \oplus \langle y \rangle .
	\end{align*}
	This grading has been used implicitly in \cite{Boelaert2015} in the context of $J$-ternary algebras. See in particular its Theorem~3.5, where the Jordan algebra $J$ is of the form $k \oplus V \oplus k$, and where the space called $X$ plays the role of $L''_{\pm 1}$ and admits a Peirce decomposition into two isomorphic parts $X = X_0 \oplus X_1$ (introduced in its Lemma~3.4).
\end{remark}

\appendix

\section{Tables}

In this appendix, we present an overview of the possible Lie algebras \emph{of exceptional type} that arise in this context.
For each of the two situations (cubic norm structures vs.\@~quadrangular algebras), we present two tables:
\begin{itemize}
	\item We first list the dimensions of the different pieces in the decomposition of the Lie algebra, depending only on the absolute type of the Lie algebra.
	\item We then present a more detailed table with the possible Tits indices along with the corresponding precise form of the algebraic structure.
\end{itemize}
We do not provide proofs and we rely instead on the corresponding information for Tits hexagons (taken from \cite{Muehlherr2022}) and Tits quadrangles (taken from \cite{Muehlherr2019}), respectively.
Providing a direct connection between Tits hexagons and Tits quadrangles of index type and our Lie algebras is an interesting project in its own right. (See also \cref{rem:MQ,rem:MH}.)

\subsection{Cubic norm structures}

In the case of cubic norm structures, corresponding to the $G_2$-grading as in \cref{fig:cns} on p.\@~\pageref{fig:cns}, we see that
\[ \dim L = 6 + 6 \dim J + \dim (L_0 \cap L'_0) . \]
We have listed the six possibilities in \cref{table:dimG2}.
\begin{table}[p]
\[
\begin{array}{c|c|cc}
	& \dim L & \dim J & \dim (L_0 \cap L'_0) \\
	\hline
	G_2 & 14 & 1 & 2 \\
	D_4 & 28 & 3 & 4 \\
	F_4 & 52 & 6 & 10 \\
	E_6 & 78 & 9 & 18 \\
	E_7 & 133 & 15 & 37 \\
	E_8 & 248 & 27 & 80
\end{array}
\]
\caption{Dimensions of the pieces for the $G_2$-grading}
\label{table:dimG2}
\end{table}

In \cref{ta:exc G2}, we rely on \cite[Theorem 2.5.22]{Muehlherr2022} and we adopt its notation $\mathcal{H}(C, k)$ for the Freudenthal algebra constructed from the composition algebra $C$ over $k$. (See also \cite[Notation 4.1.72]{Muehlherr2022}.)

\begin{table}[p]
\renewcommand{\arraystretch}{2.3}
\hspace*{-4ex}
\begin{center}
\begin{tabular}{|c|c|c|c|}
	\hline
	{Tits index} & {\begin{minipage}{4.5ex}rel.\\type\end{minipage}} & $\dim J$ & Cubic norm structure (up to isotopy) \\
	\hline
	\hline
	\begin{tikzpicture}[line width=.8pt, scale=.6, baseline={(0,-.2)}]
	    \draw (0,0) -- (1,0);
	    \draw (0,.1) -- (1,.1);
	    \draw (0,-.1) -- (1,-.1);
	    \diagnode{(0,0)}
	    \diagnode{(1,0)}
	    \distorbit{(0,0)}
	    \distorbit{(1,0)}
		\draw[line width=.7pt] (.7,.3) -- (.4,0) -- (.7,-.3);
	\end{tikzpicture}
	& $G_2$ & $1$ & $J=k$
	\\
	\hline
	\begin{tikzpicture}[line width=.8pt, scale=.6, baseline={(0,-.2)}]
	    \draw (0,0) arc (180:90:.4) -- (1,.4);
	    \draw (0,0) arc (180:270:.4) -- (1,-.4);
	    \draw (0,0) -- (1,0);
	    \diagnode{(0,0)}
	    \diagnode{(1,0)}
	    \diagnode{(1,.4)}
	    \diagnode{(1,-.4)}
	    \distorbit{(0,0)}
	    \distlongorbit{1}
	\end{tikzpicture}
	& $G_2$ & $3$ & $J/k$ sep.\@~cubic field ext.
	\\
	\hline
	\begin{tikzpicture}[line width=.8pt, scale=.6, baseline={(0,-.2)}]
		\draw (0,0) -- (1,0);
		\draw (1,.06) -- (2,.06);
		\draw (1,-.06) -- (2,-.06);
		\draw (2,0) -- (3,0);
		\draw[line width=.7pt] (1.7,.3) -- (1.4,0) -- (1.7,-.3);
		\diagnode{(0,0)}
		\diagnode{(1,0)}
		\diagnode{(2,0)}
		\diagnode{(3,0)}
		\distorbit{(0,0)}
		\distorbit{(1,0)}
		\distorbit{(2,0)}
		\distorbit{(3,0)}
	\end{tikzpicture}
	& $F_4$ & $6$ & $\mathcal{H}(k, k)$
	\\
	\hline
	\begin{tikzpicture}[line width=.8pt, scale=.6, baseline={(0,.05)}]
	    \draw (0,0) -- (4,0);
	    \draw (2,0) -- (2,.8);
	    \diagnode{(0,0)}
	    \diagnode{(1,0)}
	    \diagnode{(2,0)}
	    \diagnode{(3,0)}
	    \diagnode{(4,0)}
	    \diagnode{(2,.8)}
	    \distorbit{(2,0)}
	    \distorbit{(2,.8)}
	\end{tikzpicture}
	& $G_2$ & $9$ & cubic division algebra
	\\
	\hline
	\begin{tikzpicture}[line width=.8pt, scale=.6, baseline={(0,-.2)}]
	    \draw (0,0) -- (1,0);
	    \draw (1,0) arc (180:90:.4) -- (3,.4);
	    \draw (1,0) arc (180:270:.4) -- (3,-.4);
	    \diagnode{(0,0)}
	    \diagnode{(1,0)}
	    \diagnode{(2,.4)} \diagnode{(2,-.4)}
	    \diagnode{(3,.4)} \diagnode{(3,-.4)}
	    \distorbit{(0,0)}
	    \distorbit{(1,0)}
	\end{tikzpicture}
	& $G_2$ & $9$ & \begin{minipage}{5.5cm}\begin{center} cubic division algebra over $E/k$ \\ with involution of the second kind \strut \end{center}\end{minipage}
	\\
	\hline
	\begin{tikzpicture}[line width=.8pt, scale=.6, baseline={(0,-.2)}]
	    \draw (0,0) -- (1,0);
	    \draw (1,0) arc (180:90:.4) -- (3,.4);
	    \draw (1,0) arc (180:270:.4) -- (3,-.4);
	    \diagnode{(0,0)}
	    \diagnode{(1,0)}
	    \diagnode{(2,.4)} \diagnode{(2,-.4)}
	    \diagnode{(3,.4)} \diagnode{(3,-.4)}
	    \distorbit{(0,0)}
	    \distorbit{(1,0)}
	    \distlongorbit{2}
	    \distlongorbit{3}
	\end{tikzpicture}
	& $F_4$ & $9$ & $\mathcal{H}(E, k)$, $E/k$ sep.\@~quadr.\@~field ext.
	\\
	\hline
	\begin{tikzpicture}[line width=.8pt, scale=.6, baseline={(0,.05)}]
	    \draw (0,0) -- (4,0);
	    \draw (2,0) -- (2,.8);
	    \diagnode{(0,0)}
	    \diagnode{(1,0)}
	    \diagnode{(2,0)}
	    \diagnode{(3,0)}
	    \diagnode{(4,0)}
	    \diagnode{(2,.8)}
	    \distorbit{(0,0)}
	    \distorbit{(1,0)}
	    \distorbit{(2,0)}
	    \distorbit{(3,0)}
	    \distorbit{(4,0)}
	    \distorbit{(2,.8)}
	\end{tikzpicture}
	& $E_6$ & $9$ & $M_3(k)$
	\\
	\hline
	\begin{tikzpicture}[line width=.8pt, scale=.6, baseline={(0,.05)}]
	    \draw (0,0) -- (5,0);
	    \draw (2,0) -- (2,.8);
	    \diagnode{(0,0)}
	    \diagnode{(1,0)}
	    \diagnode{(2,0)}
	    \diagnode{(3,0)}
	    \diagnode{(4,0)}
	    \diagnode{(5,0)}
	    \diagnode{(2,.8)}
	    \distorbit{(0,0)}
	    \distorbit{(1,0)}
	    \distorbit{(2,0)}
	    \distorbit{(4,0)}
	\end{tikzpicture}
	& $F_4$ & $15$ & $\mathcal{H}(\mathcal{Q}, k)$, $\mathcal{Q}$ quaternion division
	\\
	\hline
	\begin{tikzpicture}[line width=.8pt, scale=.6, baseline={(0,.05)}]
	    \draw (0,0) -- (5,0);
	    \draw (2,0) -- (2,.8);
	    \diagnode{(0,0)}
	    \diagnode{(1,0)}
	    \diagnode{(2,0)}
	    \diagnode{(3,0)}
	    \diagnode{(4,0)}
	    \diagnode{(5,0)}
	    \diagnode{(2,.8)}
	    \distorbit{(0,0)}
	    \distorbit{(1,0)}
	    \distorbit{(2,0)}
	    \distorbit{(3,0)}
	    \distorbit{(4,0)}
	    \distorbit{(5,0)}
	    \distorbit{(2,.8)}
	\end{tikzpicture}
	& $E_7$ & $15$ & $\mathcal{H}(\mathcal{Q}, k)$, $\mathcal{Q}$ quaternion split
	\\
	\hline
	\begin{tikzpicture}[line width=.8pt, scale=.6, baseline={(0,.05)}]
	    \draw (0,0) -- (6,0);
	    \draw (2,0) -- (2,.8);
	    \diagnode{(0,0)}
	    \diagnode{(1,0)}
	    \diagnode{(2,0)}
	    \diagnode{(3,0)}
	    \diagnode{(4,0)}
	    \diagnode{(5,0)}
	    \diagnode{(6,0)}
	    \diagnode{(2,.8)}
	    \distorbit{(5,0)}
	    \distorbit{(6,0)}
	\end{tikzpicture}
	& $G_2$ & $27$ & Albert division algebra
	\\
	\hline
	\begin{tikzpicture}[line width=.8pt, scale=.6, baseline={(0,.05)}]
	    \draw (0,0) -- (6,0);
	    \draw (2,0) -- (2,.8);
	    \diagnode{(0,0)}
	    \diagnode{(1,0)}
	    \diagnode{(2,0)}
	    \diagnode{(3,0)}
	    \diagnode{(4,0)}
	    \diagnode{(5,0)}
	    \diagnode{(6,0)}
	    \diagnode{(2,.8)}
	    \distorbit{(0,0)}
	    \distorbit{(4,0)}
	    \distorbit{(5,0)}
	    \distorbit{(6,0)}
	\end{tikzpicture}
	& $F_4$ & $27$ & $\mathcal{H}(\mathcal{O}, k)$, $\mathcal{O}$ octonion division
	\\
	\hline
	\begin{tikzpicture}[line width=.8pt, scale=.6, baseline={(0,.05)}]
	    \draw (0,0) -- (6,0);
	    \draw (2,0) -- (2,.8);
	    \diagnode{(0,0)}
	    \diagnode{(1,0)}
	    \diagnode{(2,0)}
	    \diagnode{(3,0)}
	    \diagnode{(4,0)}
	    \diagnode{(5,0)}
	    \diagnode{(6,0)}
	    \diagnode{(2,.8)}
	    \distorbit{(0,0)}
	    \distorbit{(1,0)}
	    \distorbit{(2,0)}
	    \distorbit{(3,0)}
	    \distorbit{(4,0)}
	    \distorbit{(5,0)}
	    \distorbit{(6,0)}
	    \distorbit{(2,.8)}
	\end{tikzpicture}
	& $E_8$ & $27$ & $\mathcal{H}(\mathcal{O}, k)$, $\mathcal{O}$ octonion split
	\\
	\hline
\end{tabular}
\renewcommand{\arraystretch}{1}
\newline
\caption{Exceptional Tits indices with $G_2$-graded Lie algebra}\label{ta:exc G2}
\end{center}
\vspace*{-2ex}
\end{table}

\begin{remark}
	The example of type $D_4$ in the second row of \cref{ta:exc G2} also occurs in non-triality forms, which we have not included in the list because they are not exceptional.
	These correspond to the Tits indices
	\[
	\begin{tikzpicture}[line width=.8pt, scale=.6, baseline={(0,-.1)}]
	    \draw (0,0) -- (1,0);
	    \draw (1,0) arc (180:90:.4) -- (2,.4);
	    \draw (1,0) arc (180:270:.4) -- (2,-.4);
	    \diagnode{(0,0)}
	    \diagnode{(1,0)}
	    \diagnode{(2,.4)}
	    \diagnode{(2,-.4)}
	    \distorbit{(0,0)}
	    \distorbit{(1,0)}
	    \distlongorbit{2}
	\end{tikzpicture}
	\quad \text{ and } \quad
	\begin{tikzpicture}[line width=.8pt, scale=.6, baseline={(0,-.1)}]
	    \draw (0,0) -- (1,0);
	    \draw (1,0) -- (1.8,.4);
	    \draw (1,0) -- (1.8,-.4);
	    \diagnode{(0,0)}
	    \diagnode{(1,0)}
	    \diagnode{(1.8,.4)}
	    \diagnode{(1.8,-.4)}
	    \distorbit{(0,0)}
	    \distorbit{(1,0)}
	    \distorbit{(1.8,.4)}
	    \distorbit{(1.8,-.4)}
	\end{tikzpicture}
	\]
	and have corresponding $3$-dimensional $J$ of the form $J = k \times E$, where $E/k$ is a separable quadratic field extension, for the first case,
	and of the form $J = k \times k \times k$, a split cubic étale extension, for the second case.
\end{remark}

\subsection{Quadrangular algebras}

In the case of quadrangular algebras, corresponding to the $BC_2$-grading as in \cref{fig:qa} on p.\@~\pageref{fig:qa}, we see that
\[ \dim L = 4 + 4 \dim X + 4 \dim V + \dim (L_0 \cap L'_0) . \]
We have listed the four possibilities in \cref{table:dimBC2}.
\begin{table}[p]
\[
\begin{array}{c|c|ccc}
	& \dim L & \dim V & \dim X & \dim (L_0 \cap L'_0) \\
	\hline
	F_4 & 52 & 4 & 5 & 12 \\
	E_6 & 78 & 6 & 8 & 18 \\
	E_7 & 133 & 8 & 16 & 33 \\
	E_8 & 248 & 12 & 32 & 68
\end{array}
\]
\caption{Dimensions of the pieces for the $BC_2$-grading}
\label{table:dimBC2}
\end{table}

By \cref{rem:QA conditions}, our quadrangular algebras are either generic or of split type $F_4$.
By \cite[Theorems 8.16 and 10.16]{Muehlherr2019}, these are either \emph{special} (i.e., not exceptional), or they are anisotropic, or they are isotopic to $\mathcal{Q}_2(C, k)$ for some octonion algebra $C$ or to $\mathcal{Q}_4(C, k)$ for some composition algebra~$C$.
(The definition of these quadrangular algebras can be found in \cite[Notations 4.12 and 4.14]{Muehlherr2019}.)
In other words, we get all cases from \cite[Table 1]{Muehlherr2019} except the last two rows.
We have essentially reproduced this table in \cref{ta:exc BC2}.

\begin{table}[p]
\renewcommand{\arraystretch}{2.3}
\hspace*{-4ex}
\begin{center}
\begin{tabular}{|c|c|c|c|c|}
	\hline
	{Tits index} & {\begin{minipage}{4.5ex}rel.\\type\end{minipage}} & $\dim V$ & $\dim X$ & Quadrangular algebra (up to isotopy) \\
	\hline
	\hline
	\begin{tikzpicture}[line width=.8pt, scale=.6, baseline={(0,-.2)}]
		\draw (0,0) -- (1,0);
		\draw (1,.06) -- (2,.06);
		\draw (1,-.06) -- (2,-.06);
		\draw (2,0) -- (3,0);
		\draw[line width=.7pt] (1.7,.3) -- (1.4,0) -- (1.7,-.3);
		\diagnode{(0,0)}
		\diagnode{(1,0)}
		\diagnode{(2,0)}
		\diagnode{(3,0)}
		\distorbit{(0,0)}
		\distorbit{(1,0)}
		\distorbit{(2,0)}
		\distorbit{(3,0)}
	\end{tikzpicture}
	& $F_4$ & $4$ & $5$ & $\mathcal{Q}_4(k, k)$
	\\
	\hline
	\begin{tikzpicture}[line width=.8pt, scale=.6, baseline={(0,-.2)}]
	    \draw (0,0) -- (1,0);
	    \draw (1,0) arc (180:90:.4) -- (3,.4);
	    \draw (1,0) arc (180:270:.4) -- (3,-.4);
	    \diagnode{(0,0)}
	    \diagnode{(1,0)}
	    \diagnode{(2,.4)} \diagnode{(2,-.4)}
	    \diagnode{(3,.4)} \diagnode{(3,-.4)}
	    \distorbit{(0,0)}
	    \distlongorbit{3}
	\end{tikzpicture}
	& $BC_2$ & $6$ & $8$ & anisotropic of type $E_6$
	\\
	\hline
	\begin{tikzpicture}[line width=.8pt, scale=.6, baseline={(0,-.2)}]
	    \draw (0,0) -- (1,0);
	    \draw (1,0) arc (180:90:.4) -- (3,.4);
	    \draw (1,0) arc (180:270:.4) -- (3,-.4);
	    \diagnode{(0,0)}
	    \diagnode{(1,0)}
	    \diagnode{(2,.4)} \diagnode{(2,-.4)}
	    \diagnode{(3,.4)} \diagnode{(3,-.4)}
	    \distorbit{(0,0)}
	    \distorbit{(1,0)}
	    \distlongorbit{2}
	    \distlongorbit{3}
	\end{tikzpicture}
	& $F_4$ & $6$ & $8$ & $\mathcal{Q}_4(E, k)$, $E/k$ sep.\@ quadr.\@ field ext.
	\\
	\hline
	\begin{tikzpicture}[line width=.8pt, scale=.6, baseline={(0,.05)}]
	    \draw (0,0) -- (4,0);
	    \draw (2,0) -- (2,.8);
	    \diagnode{(0,0)}
	    \diagnode{(1,0)}
	    \diagnode{(2,0)}
	    \diagnode{(3,0)}
	    \diagnode{(4,0)}
	    \diagnode{(2,.8)}
	    \distorbit{(0,0)}
	    \distorbit{(1,0)}
	    \distorbit{(2,0)}
	    \distorbit{(3,0)}
	    \distorbit{(4,0)}
	    \distorbit{(2,.8)}
	\end{tikzpicture}
	& $E_6$ & $6$ & $8$ & $\mathcal{Q}_4(k \times k, k)$
	\\
	\hline
	\begin{tikzpicture}[line width=.8pt, scale=.6, baseline={(0,.05)}]
	    \draw (0,0) -- (5,0);
	    \draw (2,0) -- (2,.8);
	    \diagnode{(0,0)}
	    \diagnode{(1,0)}
	    \diagnode{(2,0)}
	    \diagnode{(3,0)}
	    \diagnode{(4,0)}
	    \diagnode{(5,0)}
	    \diagnode{(2,.8)}
	    \distorbit{(0,0)}
	    \distorbit{(4,0)}
	\end{tikzpicture}
	& $BC_2$ & $8$ & $16$ & anisotropic of type $E_7$
	\\
	\hline
	\begin{tikzpicture}[line width=.8pt, scale=.6, baseline={(0,.05)}]
	    \draw (0,0) -- (5,0);
	    \draw (2,0) -- (2,.8);
	    \diagnode{(0,0)}
	    \diagnode{(1,0)}
	    \diagnode{(2,0)}
	    \diagnode{(3,0)}
	    \diagnode{(4,0)}
	    \diagnode{(5,0)}
	    \diagnode{(2,.8)}
	    \distorbit{(0,0)}
	    \distorbit{(4,0)}
	    \distorbit{(5,0)}
	\end{tikzpicture}
	& $C_3$ & $8$ & $16$ & $\mathcal{Q}_2(\mathcal{O}, k)$, $\mathcal{O}$ octonion division
	\\
	\hline
	\begin{tikzpicture}[line width=.8pt, scale=.6, baseline={(0,.05)}]
	    \draw (0,0) -- (5,0);
	    \draw (2,0) -- (2,.8);
	    \diagnode{(0,0)}
	    \diagnode{(1,0)}
	    \diagnode{(2,0)}
	    \diagnode{(3,0)}
	    \diagnode{(4,0)}
	    \diagnode{(5,0)}
	    \diagnode{(2,.8)}
	    \distorbit{(0,0)}
	    \distorbit{(1,0)}
	    \distorbit{(2,0)}
	    \distorbit{(4,0)}
	\end{tikzpicture}
	& $F_4$ & $8$ & $16$ & $\mathcal{Q}_4(Q, k)$, $Q$ quaternion division
	\\
	\hline
	\begin{tikzpicture}[line width=.8pt, scale=.6, baseline={(0,.05)}]
	    \draw (0,0) -- (5,0);
	    \draw (2,0) -- (2,.8);
	    \diagnode{(0,0)}
	    \diagnode{(1,0)}
	    \diagnode{(2,0)}
	    \diagnode{(3,0)}
	    \diagnode{(4,0)}
	    \diagnode{(5,0)}
	    \diagnode{(2,.8)}
	    \distorbit{(0,0)}
	    \distorbit{(1,0)}
	    \distorbit{(2,0)}
	    \distorbit{(3,0)}
	    \distorbit{(4,0)}
	    \distorbit{(5,0)}
	    \distorbit{(2,.8)}
	\end{tikzpicture}
	& $E_7$ & $8$ & $16$ & $\mathcal{Q}_4(Q, k)$, $Q$ quaternion split
	\\
	\hline
	\begin{tikzpicture}[line width=.8pt, scale=.6, baseline={(0,.05)}]
	    \draw (0,0) -- (6,0);
	    \draw (2,0) -- (2,.8);
	    \diagnode{(0,0)}
	    \diagnode{(1,0)}
	    \diagnode{(2,0)}
	    \diagnode{(3,0)}
	    \diagnode{(4,0)}
	    \diagnode{(5,0)}
	    \diagnode{(6,0)}
	    \diagnode{(2,.8)}
	    \distorbit{(0,0)}
	    \distorbit{(6,0)}
	\end{tikzpicture}
	& $BC_2$ & $12$ & $32$ & anisotropic of type $E_8$
	\\
	\hline
	\begin{tikzpicture}[line width=.8pt, scale=.6, baseline={(0,.05)}]
	    \draw (0,0) -- (6,0);
	    \draw (2,0) -- (2,.8);
	    \diagnode{(0,0)}
	    \diagnode{(1,0)}
	    \diagnode{(2,0)}
	    \diagnode{(3,0)}
	    \diagnode{(4,0)}
	    \diagnode{(5,0)}
	    \diagnode{(6,0)}
	    \diagnode{(2,.8)}
	    \distorbit{(0,0)}
	    \distorbit{(4,0)}
	    \distorbit{(5,0)}
	    \distorbit{(6,0)}
	\end{tikzpicture}
	& $F_4$ & $12$ & $32$ & $\mathcal{Q}_4(\mathcal{O}, k)$, $\mathcal{O}$ octonion division
	\\
	\hline
	\begin{tikzpicture}[line width=.8pt, scale=.6, baseline={(0,.05)}]
	    \draw (0,0) -- (6,0);
	    \draw (2,0) -- (2,.8);
	    \diagnode{(0,0)}
	    \diagnode{(1,0)}
	    \diagnode{(2,0)}
	    \diagnode{(3,0)}
	    \diagnode{(4,0)}
	    \diagnode{(5,0)}
	    \diagnode{(6,0)}
	    \diagnode{(2,.8)}
	    \distorbit{(0,0)}
	    \distorbit{(1,0)}
	    \distorbit{(2,0)}
	    \distorbit{(3,0)}
	    \distorbit{(4,0)}
	    \distorbit{(5,0)}
	    \distorbit{(6,0)}
	    \distorbit{(2,.8)}
	\end{tikzpicture}
	& $E_8$ & $12$ & $32$ & $\mathcal{Q}_4(\mathcal{O}, k)$, $\mathcal{O}$ octonion split
	\\
	\hline
\end{tabular}
\renewcommand{\arraystretch}{1}
\newline
\caption{Exceptional Tits indices with $BC_2$-graded Lie algebra}\label{ta:exc BC2}
\end{center}
\vspace*{-2ex}
\end{table}

\begin{remark}\label{rem:both}
	Comparing \cref{ta:exc G2,ta:exc BC2}, we see that the Tits indices that occur in \emph{both} tables are the following:
	\begin{enumerate}
		\item The split forms of type $F_4$, $E_6$, $E_7$ and $E_8$;
		\item The forms of \emph{relative} type $F_4$ (and of absolute type $F_4$, $E_6$, $E_7$ and $E_8$).
	\end{enumerate}
	In other words, these seven types of Lie algebras are precisely the ones admitting both a $G_2$-grading and a $BC_2$-grading, and hence they are parametrized both by a cubic norm structure and by a quadrangular algebra.
	As we have alluded to in the introduction, it is an interesting question to investigate the connection between these two different algebraic structures for each of these seven types.
\end{remark}

\afterpage{\clearpage}

\bibliographystyle{alpha}
\bibliography{doctoraat}

\end{document}